\newcommand{\veps}{\varepsilon}
\newcommand{\ve}{\varepsilon}
 \newcommand{\XX}{\mathfrak{X}}
\newcommand{\wzero}{w_0}
\newcommand{\zzero}{z_0}
\newcommand{\W}{\overline{W}}
\newcommand{\Zt}{\tilde Z}
\newcommand{\wt}{\widetilde}
\numberwithin{equation}{section}
\renewcommand\subsubsection{\@secnumfont}{\bfseries}%
\renewcommand\subsubsection{\@startsection{subsubsection}{3}
  \z@{.5\linespacing\@plus.7\linespacing}{-.5em}%
  {\normalfont\bfseries}}
\theoremstyle{plain}
\newtheorem{theorem}{Theorem}[section]
\newtheorem{lemma}[theorem]{Lemma}
\newtheorem{proposition}[theorem]{Proposition}
\theoremstyle{definition} 
\theoremstyle{remark}
\newtheorem{remark}{Remark}
\newtheorem*{defi*}{Definition}
\let\alignts@preamble\align@preamble
\patchcmd{\alignts@preamble}{\displaystyle}{\textstyle}{}{}
\patchcmd{\alignts@preamble}{\displaystyle}{\textstyle}{}{}
\def\alignts{\let\align@preamble\alignts@preamble\start@align\@ne\st@rredfalse\m@ne}
\title[Singularity for the  Euler Poisson system]{Structure of singularities for the Euler-Poisson system of ion dynamics}
\author[J. Bae]{Junsik Bae}
\address[JB]{Department of Mathematical Sciences, Ulsan National Institute of Science and Technology, Ulsan, 44919, Korea}
\email{junsikbae@unist.ac.kr}
\author[Y. Kim]{Yunjoo Kim}
\address[YK]{Department of Mathematical Sciences, Ulsan National Institute of Science and Technology, Ulsan, 44919, Korea}
\email{gomuli3@unist.ac.kr}
\author[B. Kwon]{Bongsuk Kwon}
\address[BK]{Department of Mathematical Sciences, Ulsan National Institute of Science and Technology, Ulsan, 44919, Korea}
\email{bkwon@unist.ac.kr}
\date{\today}
\subjclass{Primary: 	35Q35,  35Q53   Secondary:  	35Q31, 76B25}
\begin{document}
	
	\maketitle 
	
	\begin{abstract}
		We study the formation of singularity for the isothermal Euler-Poisson system arising from plasma physics. 
		Contrast to the previous studies yielding only limited information on the blow-up solutions, for instance, sufficient conditions for the blow-up and the temporal blow-up rate along the characteristic curve, we rather give a constructive proof of singularity formation from smooth initial data. More specifically, employing the stable blow-up profile of the Burgers equation in  the self-similar variables,  we establish the global stability estimate in the self-similar time, which yields the asymptotic behavior of blow-up solutions near the singularity point. Our analysis indicates that the smooth solution to the Euler-Poisson system can develop a cusp-type singularity; it exhibits $C^1$ blow-up in a finite time, while it belongs to $C^{1/3}$ at the blow-up time,
  provided that smooth initial data  are sufficiently  close to the blow-up profile in some weighted $C^4$-topology. We also present a similar result for the isentropic case, and discuss noteworthy differences in the analysis.
		%
		%

		\noindent{\it Keywords}:
		Euler-Poisson system; Singularity formation 
	\end{abstract}

	\section{Introduction}
	We consider the isothermal Euler-Poisson system  in a non-dimensional form:
	\begin{subequations}\label{EP}
		\begin{align}
			& \rho_t +  \left(\rho u\right)_x = 0, \label{EP_1} 
\\ 
			& \rho( u_t  + u u_x ) +K  \rho_x 
			= -  \rho \phi_x, \label{EP_2} 
			\\
			& - \phi_{xx} = \rho - e^\phi, \label{EP_3} 
		\end{align}
	\end{subequations} 
	where $\rho>0$, $u$ and $\phi$ are the unknown functions of $\left(x,t\right) \in \mathbb{R}\times [-\veps,\infty)$ representing the ion density, the fluid velocity for ions and  the electric potential, respectively, and $K> 0$ is a constant. 
	The Euler-Poisson equations under consideration govern the dynamics of ions with a given Boltzmann electron background, including a pressure term. There are a number of variants of Euler-Poisson equations, but this one is most common and significant in the study of the dynamics of ions in plasmas, see \cite{Ch}. We refer to \cite{GGPS} for the rigorous derivation of \eqref{EP} from the two-species Euler-Poisson system.
	
	The Euler-Poisson system \eqref{EP} demonstrates rich dynamics due to the nonlocal effect of electric force, and it has been extensively  studied for the purpose of mathematically justifying various plasma phenomena. %
	These include plasma sheath formation \cite{S, NOS}, quasi-neutral limit problems \cite{GHR}, and plasma solitary waves \cite{BK2, BK}. 
	Especially in \cite{BK},  the asymptotic behavior and stability of solitary waves to \eqref{EP} are investigated. In fact, the global existence of smooth solutions is the first step to study nonlinear stability. 
Regarding global existence, however, no satisfactory theory is available up to date. 
No global existence of smooth solution is known for \eqref{EP}, and only a few results on the finite time blow-up are known. 

In the previous study, \cite{BCK},  the authors prove that smooth solutions to \eqref{EP} leave $C^1$  class in a finite time when the gradients of the Riemann functions are initially large. Since the proof is based on  a  contradiction argument, only limited information on the blow-up solution is obtained such as some sufficient conditions leading $C^1$ blow-up and the temporal blow-up rate.  On the other hand, in \cite{BCK}, the authors find that the derivatives of density and velocity blow up, while their $L^\infty$ norms stay bounded up to the singularity time. In fact, numerical experiments in \cite{BCK} demonstrate that \eqref{EP} may exhibit a cusp-like singularity in a finite time, and it develops into a shock-like one. 

The main objective of the present work is to investigate the exact structure of singularities for \eqref{EP} at the blow-up time, when it occurs. 
To this end, we study the asymptotic behavior of blow-up profile near the singularity point.
		Borrowing the ideas of \cite{BSV, BSV2} used to show the singularity formation for the multi-D Euler equations, we make use of the blow-up profile for the Burgers equation in self-similar variables. To control the derivatives of the electric potential $\phi$, we adopt the idea from \cite{BCK} which utilizes the conserved (Hamiltonian) energy of the Euler-Poisson system \eqref{EP}.  We establish the stability estimates in some topology to show that the blow-up solutions behave   similarly to those of the Burgers equation near the singular point.
				More precisely, 
		our result indicates that the smooth solution to the Euler-Poisson system  exhibits $C^1$ blows up in a finite time, while it belongs to $C^{1/3}$ at the blow-up time, provided that smooth initial data are sufficiently close to the blow-up profile in some weighted $C^4$-topology.  This specifically implies that the solutions we construct here form a singularity of cusp-type, rather than shock-type (jump discontinuities). The question of whether the solutions we construct indeed evolve into shock waves \textit{after the blow-up time} remains open, and is a challenging problem.

	There have been extensive studies  of shock formation for the Euler equations, particularly in the case of one spatial dimension, for which  use is made of the method of characteristics. Employing the Riemann invariants, Lax \cite{L} showed that finite-time singularity may form for general $2\times2$ genuinely nonlinear hyperbolic systems, 
and  John \cite{J} proved 
for $n\times n$ genuinely nonlinear
hyperbolic systems, for which Liu \cite{TL} later generalized the result further.  See \cite{D} for a
more extensive bibliography of 1D hyperbolic systems. 
We also refer to \cite{CGIM, CGM} concerning singularity formation for   two dimensional unsteady
Prandtl’s system and 
the Burgers equation with transverse viscosity. It is also worth mentioning that the works of \cite{OP, Y} study the blow-up profiles of the Burgers equation with nonlocal perturbations.

\subsection{Riemann functions}
	We begin with introducing the \textit{Riemann functions} $w$ and $z$, 
	\begin{equation}\label{w-z}
			w:=u+\sqrt{K}\log \rho,\quad z:=u-\sqrt{K}\log \rho, 
		\end{equation} 
		 associated to the wave speeds $\lambda_{+}$ and $ \lambda_{-}$ of the hyperbolic part of \eqref{EP}, 
	\[ \lambda_{+}:=\frac{w+z}{2}+\sqrt{K}, \quad \lambda_{-}:=\frac{w+z}{2}-\sqrt{K},
	\] 
	respectively. 	The original unknown functions $\rho$ and $u$ are recovered via the relations
	\begin{equation}\label{w-z1}  
	 u = \frac12(w+z), \quad \rho -1 = e^{\frac{w-z}{2\sqrt{K}}} -1.
	\end{equation}
	By setting $\tilde{t}=t/2$, 
	and abusing $t$ for $\tilde{t}$, we see that 
	\eqref{EP} is equivalent to  
	\begin{subequations}\label{iso_EP}
		\begin{align}
			&w_t+ (w+   z+2\sqrt{K}     ) w_x=-2\phi_x, \label{iso_w}
			\\
			&z_t+ (z+ w-2\sqrt{K} ) z_x=-2\phi_x, \qquad\quad (x,t)\in\mathbb{R}\times[-\veps,\infty), \label{iso_z}
			\\
			&-\phi_{xx}=e^{\frac{w-z}{2\sqrt{K}}}-e^{\phi}. \label{iso_phi}
		\end{align} 
	\end{subequations}
	In order to study the exact regularity of blow-up profiles for \eqref{iso_EP}, we  consider  
	 the Burgers equation: 
	\begin{equation}\label{1D-Burgers}
		\partial_t \tilde{w}+\tilde{w}\partial_x \tilde{w} =0.
	\end{equation}

\subsection{Self-similar Burgers profiles}
	Introducing the self-similar variables
	\begin{equation*}\label{change_var-1}
		y\left(x,t\right)= \frac{x}{\left( -t\right)^{3/2}}, \quad s(t)=-\log\left(  - t \right),
	\end{equation*}
	and a new unknown function $\mathcal{W}(y,s)$ defined by  
	\begin{equation*}\label{WZPhi-1}
		\tilde{w}(x,t) = e^{-s/2} \mathcal{W}(y,s), 
	\end{equation*}
	one finds from \eqref{1D-Burgers} that $\mathcal{W}$ satisfies
	\begin{equation}\label{W-eq-0-1}
		\left( \partial_s-\frac{1}{2} \right)\mathcal{W}+\left( \frac{3}{2} y+\mathcal{W} \right)\mathcal{W}_y =0. 
	\end{equation}
	It is known, see \cite{CSW, EF} for instance, that \eqref{W-eq-0-1} admits the \emph{stable}\footnote{The linearized operator around $\overline{W}$ associated with \eqref{W-eq-0-1} has four non-negative eigenvalues $3/2$, $1$, $1/2$, $0$. These eigenvalues are related to the spatial translation, the time translation, the Galilean invariance, and the space-amplitude scaling invariance, respectively. }  steady solution $\overline{W}$,  
%
	i.e., 
	$\overline{W}$ is a solution to \begin{equation}\label{Burgers_SS}
		-\frac12 \overline{W}(y) + \frac32 y \overline{W}'(y) + \W (y) \W'(y) =0.
	\end{equation}
Multiplying \eqref{Burgers_SS} by $\overline{W}^{-4}$
 and integrating the resultant  with respect to $y$,  we find that
\begin{equation}\label{Burgers_SS'}
	y+\overline{W} + c \overline{W}^3 =0,
\end{equation}
where    $c>0$ is a constant of integration. 
 In fact, $\overline{W}$ is a smooth solution to \eqref{Burgers_SS'} for some $c>0$ if and only if it is  a 
smooth solution to \eqref{Burgers_SS}.
We remark that $\{\overline{W}_c: c>0\}$ forms a one-parameter family of smooth solutions to \eqref{Burgers_SS}. 
Differentiating \eqref{Burgers_SS'} with respect to $y$ successfully and evaluating the resultants at  $y=0$, we find that 
$\overline{W}_c'(0)=-1, \overline{W}_c''(0)=0,  \overline{W}_c'''(0)=6c>0,  \overline{W}_c^{(4)}(0)=0$, and so on. 
Moreover, the asymptotic behavior of $\overline{W}_c$ for large $|y|$ can be obtained. Since  $\overline{W}_c(y)\to\mp \infty$     as  $y\to \pm\infty$ from \eqref{Burgers_SS'}, we see that $y^{-1/3} \overline{W}_c(y) \to \mp c^{-1/3}$ as $y\to \pm \infty$. 

Owing to  the space-amplitude scaling invariance,   we set $c=1$ in what follows, without loss of generality.
Then we have
\begin{equation}\label{wx-exp-inf}
y^{-1/3} \overline{W}(y) \to  -1 \quad \text{ and } 	\quad |y^{2/3} \overline{W}'(y)|\rightarrow \frac{1}{3}
	\quad 
	\text{as } |y| \rightarrow \infty 
\end{equation}
and 
 \begin{equation}\label{Wbar-zero}
	\overline{W}'(0)=-1, \quad \overline{W}''(0)=0, \quad \overline{W}'''(0)=6, \quad \overline{W}^{(4)}(0)=0. 
\end{equation}
%
%


Although one can obtain an explicit form of $\overline{W}$ by solving \eqref{Burgers_SS}, it is not useful in our analysis due to its complicated form. We rather use the quantitative properties of $\overline{W}$ which can be obtained from \eqref{Burgers_SS'} itself. We derive the properties of $\overline{W}$ in subsection~\ref{inequalities}.	\subsection{Main result}\label{Initial_subs} 
	We first describe the initial conditions that lead to the finite-time blow up of $\partial_x w$ while $\partial_x z$ remains bounded.
	For simplicity, we let the initial time  $t=-\veps$, where $\veps>0$ is to be chosen  sufficiently small, and we consider the initial value problem for \eqref{iso_EP}  with the initial data 
	\begin{equation}\label{in-H-C} (w_0, z_0)(x) = (w,z)(-\veps,x) \in 
 H^5(\mathbb{R}) \subset 
 C^4(\mathbb{R}).
 \end{equation} 
	 
		We shall focus on the case in which $-\partial_x w_0$ is sufficiently large in a way that $\partial_{x}w_0$ attains its \emph{non-degenerate} global minimum at $x=0$. More specifically, we assume  that 
		\begin{equation}\label{init_w_3}
			 \partial_x \wzero (0)  =-\veps^{-1}, \quad \partial_x^2 \wzero (0)=0, \quad \partial_x^3w_0(0)=6\veps^{-4}.
		\end{equation}
We also assume that  the  derivatives of $w_0(x)$ and $z_0(x)$ satisfy 
	\begin{equation}\label{init_24}
		\begin{split}
			& \| \partial_x \wzero  \|_{L^{\infty}}\leq \veps^{-1},  \quad \|\partial_x^2\wzero \|_{L^{\infty}}\leq \veps^{-5/2},  \quad \|\partial_x^3\wzero \|_{L^{\infty}}\leq 7\veps^{-4}, 
			\\
			& 
			\|\partial_x^4  \wzero  \|_{L^{\infty}}\leq \veps^{-11/2}, \quad \| \zzero \|_{C^4}\leq 1/4. 
		\end{split}
	\end{equation} 
		In addition, we	assume that 	 
	\begin{equation}\label{w-z-L-inf}
			\| \wzero-\zzero \|_{L^{\infty}} \leq \frac{\sqrt{K}}{2}.
	\end{equation}
We remark that the condition \eqref{init_w_3} can be relaxed, which is discussed in subsection~\ref{scaling-IC}. 

To describe the exact blow-up regularity, we further impose the conditions on the asymptotic behavior of $\partial_x \wzero$ near $x=0$ and $x=\infty$ as follows:
	\begin{equation}
		\left|\varepsilon(\partial_x \wzero)\left( x\right)-\overline{W}'\left( \frac{x}{\varepsilon^{3/2}}\right)\right|\leq \min\left\{\frac{(\frac{x}{\varepsilon^{3/2 }})^2}{40(1+(\frac{x}{\varepsilon^{3/2}})^2)}, \frac{1}{24(8+(\frac{x}{\veps^{3/2 }})^{2/3 })} \right\} \label{4.3a}
	\end{equation}
	for all $x\in\mathbb{R}$, where $\overline{W}$ is the stable   self-similar blow-up profile to the Burgers equation solving \eqref{Burgers_SS'} with $c=1$. 
	Lastly we  assume that  
		\begin{equation}\label{tildeP_init}
			 \sup_{x\in \mathbb{R} }(1+x^{2/3}) \left|\rho_0(x) - 1\right| = \sup_{x\in \mathbb{R} }(1+x^{2/3}) \left|e^{\frac{\wzero(x)-\zzero(x)}{2\sqrt{K}}}-1\right|\leq \frac{1}{28}.
		\end{equation}

Throughout the paper,  $C^\beta(\Omega)$ denotes the H\"older space with exponent $\beta$, equipped  with  the associated H\"older norm defined  as 
\begin{equation}\label{holder}
[w]_{C^{\beta}(\Omega) } := \sup_{x,y\in \Omega, x\ne y} \frac{|w(x) - w(y) | }{|x-y|^{\beta}},
\end{equation} 
and $H^k(\Omega)$ denotes the $L^2$-based standard Sobolev space of order $k$. For notational simplicity, we let 
\begin{equation}\label{P-+}
	P_{-}:=\inf_{x\in\mathbb{R}}(w_0(x)-z_0(x)), \quad P_{+}:=\sup_{x\in\mathbb{R}}(w_0(x)-z_0(x)).
\end{equation}

Now we state our main theorem.


	\begin{theorem}\label{mainthm1}
There is a constant $\veps_0=\veps_0(\|(\rho_0 -1, u_0)\|_{L^2}, \inf_{x\in\mathbb{R}} \rho_0, \sup_{x\in\mathbb{R}} \rho_0)>0$ such that for each $\veps\in(0,\veps_0)$, if the initial data   $(\rho_0-1,u_0)\in H^5(\mathbb{R})$ satisfies  
\eqref{in-H-C}--\eqref{tildeP_init}, 
then there is a unique smooth solution $(\rho,u)\in C\left([-\veps,T_\ast); C^4(\mathbb{R})\right)$ to \eqref{EP}, where  the maximal existence time $T_\ast >-\ve$ is finite, and $T_\ast = O(\veps)$.
Furthermore, it holds that 
\begin{enumerate}[(i)]
\item
$ \sup_{t<T_*} \left[ (\rho, u) (\cdot, t) \right]_{C^\beta}<\infty \; \text{ for } \beta \leq 1/3$;
 \item 
 $\lim_{t\nearrow T_\ast} \left[ \rho (\cdot, t) \right]_{C^\beta} =\infty$ and $\lim_{t\nearrow T_\ast} \left[ u (\cdot, t) \right]_{C^\beta} =\infty$  for $\beta>1/3$;
\item   
 for $\beta > 1/3$, the temporal blow-up rate is obtained as
\ \begin{equation*} \left[ \rho (\cdot, t) \right]_{C^\beta},  \left[ u(\cdot, t) \right]_{C^\beta} \sim (T_*-t)^{-\frac{3\beta-1}{2}} 
\end{equation*} 
for all $t$ sufficiently close to $T_*$; \footnote{Here $A(t)\sim B(t)$ means that there is a constant $\theta_0>0$ such that $\theta_0^{-1} B(t) \le A(t) \le \theta_0 B(t)$ for $t$ sufficiently close to $T_*$.}
 \item $\inf_{x\in \mathbb{R},\; t<T_*} \rho(x,t) \ge \rho_*$  and $\sup_{t<T_\ast} \| (  \rho, u)(\cdot, t) \|_{L^\infty}\le M_*$ for some $ \rho_*, M_*>0$. 
 \end{enumerate}
%
	\end{theorem}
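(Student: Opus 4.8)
The plan is to transport the Burgers self-similar analysis to the full Euler--Poisson system \eqref{iso_EP} by introducing self-similar variables adapted to the dominant Riemann function $w$, and to treat the $z$-equation and the potential term $-2\phi_x$ as perturbations that are controlled by a bootstrap. First I would change variables as in the Burgers case, setting $y=x/(-t)^{3/2}$, $s=-\log(-t)$ (or more precisely track a dynamically-chosen blow-up location and time $x_\ast(t),\,T_\ast$), and write $\ep(\partial_x w)(x,t)=\W'(y)+\widetilde{W}(y,s)$ for the rescaled perturbation $\widetilde W$, with analogous rescalings for $\partial_x^2 w,\partial_x^3 w,\partial_x^4 w$. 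The modulation parameters (the location, the speed, and the value $x_\ast(t)$) are fixed by the usual orthogonality/constraint conditions — $\widetilde W(0,s)=\partial_y\widetilde W(0,s)=\partial_y^3\widetilde W(0,s)=0$ — which is why \eqref{init_w_3} is imposed on $w_0$ and which neutralizes the unstable/neutral eigendirections $3/2,1,1/2,0$ of the linearized operator around $\W$ noted in the footnote. The global existence/uniqueness in $C([-\ep,T_\ast);C^4)$ and the lower bound $\inf\rho\ge\rho_\ast$, $\|(\rho,u)\|_{L^\infty}\le M_\ast$ (item (iv)) come from local well-posedness for \eqref{iso_EP} together with the a priori bounds produced by the bootstrap — here one crucially uses, following \cite{BCK}, the conserved Hamiltonian energy of \eqref{EP} to bound $\|\phi_x\|_{L^\infty}$ (and hence the forcing $-2\phi_x$) in terms of $\|(\rho-1,u)\|_{L^2}$ and the density bounds, which is exactly why $\ep_0$ is allowed to depend on those three quantities.

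The core is a closed bootstrap in the self-similar time $s\in[-\log\ep,\infty)$ for the weighted $C^4$-norm of $\widetilde W$ (with weights $(1+y^2)^{-1}$ near the origin and $|y|^{-2/3}$-type weights at infinity, matching \eqref{4.3a}), together with $C^4$ control of $z$ in original variables and the weighted bound $(1+x^{2/3})|\rho-1|\lesssim 1$ coming from \eqref{tildeP_init}. The damping structure of the Burgers self-similar operator $(\partial_s-\tfrac12)+(\tfrac32 y+\W)\partial_y$ — after the three modulation constraints remove the non-decaying modes, the remaining linearized dynamics has a strictly negative spectral gap in the chosen weighted topology — provides exponential-in-$s$ contraction, so the perturbations injected by $\partial_x z$ (small by $\|z_0\|_{C^4}\le 1/4$ and a transport estimate along the $\lambda_-$ characteristics, which are transversal to the $\lambda_+$ ones thanks to \eqref{w-z-L-inf}, i.e.\ $\sqrt K$-separated speeds) and by $\partial_x^k\phi$ (estimated via differentiating \eqref{iso_phi}, elliptic regularity, and the $L^\infty$ bound on $\phi_x$) are absorbed. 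Concretely I would (a) derive the evolution equations for $\partial_y^k\widetilde W$, $k=0,\dots,3$, isolate the damping term $(k\cdot\tfrac32-\tfrac12+\text{lower})\partial_y^k\widetilde W$, estimate the transport and forcing along trajectories $\dot y=\tfrac32 y+\W+\widetilde W$; (b) close the $z$ and $\rho$ estimates by ordinary energy/transport arguments in original variables on the shrinking time interval $T_\ast+\ep=O(\ep)$; (c) show the modulation ODEs for the blow-up parameters are well-posed and that $T_\ast-(-\ep)=O(\ep)$, so $T_\ast=O(\ep)$.

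Once the bootstrap closes, $\widetilde W(\cdot,s)\to 0$ in the weighted topology, so in original variables $\ep\,\partial_x w(x,t)\approx \W'\!\big((x-x_\ast(t))/(T_\ast-t)^{3/2}\big)$, which has amplitude $\sim (T_\ast-t)^{-1}$ at its minimum but, because of the spatial profile $\W'$ and the asymptotics \eqref{wx-exp-inf}, $w(\cdot,t)$ stays uniformly bounded in $C^{1/3}$: indeed $[w(\cdot,t)]_{C^{1/3}}\sim \sup_y |y|^{2/3}|\W'(y)+\widetilde W|\cdot(\text{scaling factors})$, which is $O(1)$ by \eqref{wx-exp-inf} and the infinity-weight in \eqref{4.3a}. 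Taking $t\nearrow T_\ast$ and using that $\W\in C^{1/3}\setminus C^\beta$ for $\beta>1/3$ at $y=0$ transfers to $w$, hence to $\rho,u$ via \eqref{w-z1} and the fact that $z\in C^4$ and $\rho$ is bounded away from $0$; this gives (i) and (ii). For (iii), the precise rate $[\rho(\cdot,t)]_{C^\beta},[u(\cdot,t)]_{C^\beta}\sim (T_\ast-t)^{-(3\beta-1)/2}$ follows by plugging the self-similar ansatz into the Hölder seminorm: a difference over a spatial scale $\delta\sim (T_\ast-t)^{3/2}$ around $x_\ast$ contributes $\sim (T_\ast-t)^{-1}\cdot\delta / \delta^\beta = (T_\ast-t)^{-(3\beta-1)/2}$, and one checks no other scale dominates using the $y\to\infty$ asymptotics and the $z,\rho$ bounds. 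I expect the main obstacle to be (c)/(a) simultaneously: ensuring the modulation parameters stay in their prescribed ranges while the nonlocal forcing $-2\phi_x$ — which is only controlled in $L^\infty$ a priori and must be differentiated up to fourth order via \eqref{iso_phi} — does not destroy the delicate weighted $C^4$ estimates at spatial infinity, where the weights are weak ($|y|^{-2/3}$) and the profile $\W$ grows like $|y|^{1/3}$; handling the $y\to\infty$ region, reconciling the two parts of the $\min$ in \eqref{4.3a}, is where the argument is most technical.
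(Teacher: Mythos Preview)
Your plan matches the paper's architecture closely: self-similar variables for $w$ with dynamic modulations, bootstrap control of $W-\overline W$ in weighted $C^4$, transport estimates for $z$ along the transversal $\lambda_-$ characteristics, and the conserved-energy bound on $\phi_x$ from \cite{BCK} to tame the Poisson forcing. The H\"older-rate computation in (iii) and the mechanism for (i), (ii), (iv) are exactly as you describe.

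Two points where your description diverges from what the paper actually does, and which matter if you try to execute the argument:

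\emph{Modulation constraints.} You impose $\widetilde W(0,s)=\partial_y\widetilde W(0,s)=\partial_y^3\widetilde W(0,s)=0$. The paper instead fixes $W(0,s)=0$, $W_y(0,s)=-1$, $W_{yy}(0,s)=0$, i.e.\ the \emph{second} derivative of $\widetilde W$ vanishes at the origin, not the third. This is not cosmetic: the constraint $W_{yy}(0,s)=0$ is what pins the location of the minimum of $W_y$ to $y=0$, which is the meaning of the modulation $\xi(t)$. The third-derivative condition $\partial_y^3 W(0,s)=6$ is \emph{not} enforced by modulation; it holds only at $s=s_0$ (from \eqref{init_w_3}) and is thereafter propagated as a bootstrap bound $|\partial_y^3W(0,s)-6|\le 1$, closed to $O(\ve^{1/3})$ by a direct ODE estimate (Lemma~\ref{W30_lem}). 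Three modulations $(\tau,\kappa,\xi)$ match three constraints --- value, first, second derivative --- and the blow-up \emph{time} $\tau(t)$ is itself one of them, with $T_*$ defined as the first $t$ with $\tau(t)=t$.

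\emph{Stability, not contraction.} You write that the linearized dynamics has ``exponential-in-$s$ contraction'' and that ``$\widetilde W(\cdot,s)\to 0$ in the weighted topology.'' The paper proves neither. The damping in the $\widetilde W_y$ equation is \emph{degenerate} at $y=0$ (it vanishes like $y^2$ there), so no uniform spectral gap is available; the paper compensates by the weight $(1+y^2)/y^2$ and a Taylor argument near $y=0$ using the $\partial_y^3W(0,s)$ bound, and then applies a maximum-principle lemma (Lemma~\ref{max_2}) to maintain the bootstrap inequality $|\widetilde W_y|\le y^2/(20(1+y^2))$ --- a \emph{stability} bound, not decay. Similarly for the $y^{2/3}$-weighted bound (Proposition~\ref{decay_lem}). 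This suffices for (i)--(iii), since the H\"older conclusions only require $|W_y|\lesssim(1+y^2)^{-1/3}$ uniformly in $s$, not convergence of $W$ to $\overline W$. A related subtlety you do not mention: the $Z_y$ estimate cannot be closed by naive damping because the damping coefficient $W_y/(1-\dot\tau)$ can be as negative as $-1$; the paper instead integrates the equation for $P=\log\rho$ along the $Z$-characteristics to bound $\int W_y\,ds'$ directly via the $L^\infty$ bound on $\rho$ (Lemma~\ref{Zder_lem}).
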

	The proof of Theorem~\ref{mainthm1}  is given in the end of Section~\ref{C13_subsec}. 
	We remark that Theorem~\ref{mainthm1} specifically implies that a cusp-type  singularity can develop  from smooth initial data; the smooth solution exhibits $C^1$ blow-up in a finite time, while it still belongs to $C^{1/3}$ at the blow-up time.
	In \cite{BCK}, when the singularity occurs, the density and velocity are known to be bounded, while their derivatives blow up. No further detailed information can be obtained due to its limited ODE approach. However, our result gives more detailed geometric information on the singularity indicating that the singularity exhibits $C^{1/3}$ regularity. Furthermore, the assertion $(ii)$ indicates that the gradients of $\rho$ and  $u$ blow up simultaneously, which cannot be drawn by the approach employed in \cite{BCK}. The temporal blow-up rate is also obtained as in the assertion $(iii)$.

	A similar blow-up result can be established for the isentropic  Euler-Poisson system \eqref{EP-Isen} with the pressure law $P_\gamma (\rho) =  \rho^\gamma/\gamma,  \gamma>1$. 
	Under an appropriate set of the assumptions similar to those  in Theorem~\ref{mainthm1},  
	we obtain the same result as in Theorem~\ref{mainthm1}, i.e., 
	there is a unique smooth solution $(\rho,u)\in C\left([-\veps,T_\ast); C^4(\mathbb{R})\right)$ to \eqref{EP-Isen} whose the maximal existence time $T_\ast$ is finite and 
	  $T_*=O(\veps)$, and  $C^{1/3}$ norm of the solution is bounded. 	A more detailed statement is given in Theorem~\ref{main-isen}   in Section~\ref{Isen}.  The proof is quite similar to that of Theorem~\ref{mainthm1}, except some technical points arising from the different forms of the Riemann functions. We briefly outline the proof in Section~\ref{Isen}, with emphasis on significant differences.

 Outline of the paper: 
In Section~\ref{C13_subsec}, we establish the global stability estimates in the self-similar time and prove Theorem~\ref{mainthm1}. Section~\ref{sec3-boot} and Section~\ref{sec 4} are devoted to close the  bootstrap assumptions, of Proposition~\ref{mainprop} and Proposition~\ref{decay_lem}, respectively. In Section~\ref{Isen}, we discuss a blow-up result for the isentropic case, where we also give a sketch of the proof. In Section~\ref{appendix}, we present various preliminaries and some standard facts that are used in the course of our analysis. 

	\section{Stability estimates in the self-similar variables}\label{C13_subsec} 
	In this section, we establish the global stability estimates in the self-similar time and prove Theorem~\ref{mainthm1}. 
	 First, we rewrite \eqref{EP} in the self-similar variables incorporated with dynamic modulation functions. 
	\subsection{Self-similar variables and modulations}\label{modulation_sec}
	
	We define three dynamic modulation functions $\tau,\kappa, \xi : [-\veps, \infty)\rightarrow \mathbb{R}$ satisfying 
a system of ODEs:
\begin{subequations}\label{modulation-new}
			\begin{align}
\dot \tau & = ( \tau(t) - t) \partial_x z(\xi(t) , t) - 2 ( \tau(t) - t)^2 \partial_x^2 \phi (\xi(t) , t), 
\\
\dot{\kappa}&= \frac{-2( \tau(t) - t)^{-1} \partial_x^3 \phi  (\xi(t) , t)+ ( \tau(t) - t)^{-2} \partial_x^2 z (\xi(t) , t) }{\partial_x^3 w (\xi(t) , t)} - 2 \partial_x \phi (\xi(t) , t), 
\\
\dot{\xi}&= z(\xi(t) , t) + 2\sqrt{K} + \frac{ 2 \partial_x^3 \phi(\xi(t) , t) - (\tau(t) - t)^{-1} \partial_x^2 z(\xi(t) , t) }{ \partial_x^3 w(\xi(t) , t) } + \kappa(t)
\end{align}
\end{subequations}
	with the initial conditions 
	\begin{equation}\label{Modul_init}
		\tau(-\veps)=0, \quad \kappa_0:=\kappa(-\veps)=w_0(0), \quad \xi(-\veps)=0.
	\end{equation}
 Note that the initial value problem \eqref{EP} with \eqref{in-H-C}--\eqref{tildeP_init} admits a unique solution $(w,z, \phi)\in C^4([-\veps,T)\times \mathbb{R})$ for some $T>0$.
This means that all the functions $w,z$ and $\phi$ 
 and their derivatives appearing on the right-hand side of \eqref{modulation-new} are at least $C^1$. By a standard ODE theory, the initial value problem for \eqref{modulation-new} with the initial data \eqref{Modul_init} admits a unique local $C^1$ solution $(\tau,\kappa,\xi)$. 
	Here, we define $T_*$ as the first time such that $\tau(t)=t$,  
	i.e., 
	\begin{equation}\label{T-star}
	T_* := \inf\{ t \in [-\ve,\infty) : \tau(t) = t\}.
	\end{equation}
	Later we will show that $T_*$ is finite and is the blow-up time of $\partial_x w$ in the proof of Theorem~\ref{mainthm1}.
	
	Introducing the   self-similar variables with modulations  
	\begin{equation}\label{change_var}
		y(x,t)= \frac{x-\xi(t)}{(\tau(t)-t)^{3/2}}, \quad s(t)=-\log\left( \tau(t) - t \right),
	\end{equation} 
	we define new functions $(W,Z,\Phi)$ as 
	\begin{equation}\label{WZPhi}
		w(x,t) = e^{-s/2} W(y,s) + \kappa(t),  \quad z(x,t)=Z(y,s), \quad \phi(x,t) = \Phi(y,s).
	\end{equation}
	From  \eqref{Modul_init} and \eqref{change_var}, we see that  
	\begin{equation*}
		s_0 := s(-\ve) =-\log\veps. 
	\end{equation*}
	Using the Ansatz \eqref{WZPhi} for  the system \eqref{iso_EP} and using  
\begin{equation}\label{dx-dy}
	\frac{\partial y}{\partial t} = -\dot{\xi}e^{3s/2}+\frac{3}{2}ye^s(1-\dot{\tau}), \qquad \frac{\partial y}{\partial x} = e^{3s/2}, \qquad \frac{\partial s}{\partial t}=(1-\dot{\tau})e^s, 
\end{equation} 
	which follow from  \eqref{change_var}, 
	we obtain the equations  for $W,Z$ and $\Phi$:
	\begin{subequations}\label{EP2}
		\begin{align}
			& \partial_s W-\frac{1}{2}W+ U^W W_y=-\frac{2\Phi_y e^s}{1-\dot{\tau}}-\frac{\dot{\kappa}e^{-s/2}}{1-\dot{\tau}}, \label{EP2_1} 
			\\ 
			& \partial_s Z+ U^Z Z_y=-\frac{2\Phi_y e^{s/2}}{1-\dot{\tau}}, \label{EP2_2} 
			\\
			& -\Phi_{yy}e^{3s} = e^{\frac{e^{-s/2}W+\kappa-Z}{2\sqrt{K}}} - e^{\Phi}, \label{EP2_3} 
		\end{align}
	\end{subequations}  
where  
	\begin{equation}\label{UW}
	\begin{split}
		&U^W:=\frac{W}{1-\dot{\tau}}+\frac{3}{2}y + \sigma,  \quad   U^Z:=  U^W   -\frac{4\sqrt{K}e^{s/2}}{1-\dot{\tau}},  \\
		& \sigma: = \frac{e^{s/2}}{1-\dot{\tau}} (\kappa-\dot{\xi} + Z + 2\sqrt{K} ).
\end{split}
	\end{equation} 
	
	We let $P:=\frac{e^{-\frac{s}{2}}W+\kappa-Z}{2\sqrt{K}}(=\log \rho)$ for simplicity of notation. Subtracting \eqref{EP2_2} from \eqref{EP2_1},  and using \eqref{dx-dy}, we obtain  
		\begin{equation}\label{Peq}
			\partial_s P+U^Z P_y=-\frac{2W_y}{1-\dot{\tau}}.
		\end{equation} 
			Applying $\partial_y^k$ to \eqref{EP2_1}, for  $k=1,2,3,4$, we have 
	\begin{subequations}\label{EP2_2D}
		\begin{align}
			& \left( \partial_s + 1 +\frac{W_y}{1-\dot{\tau}}+\frac{e^{s/2}}{1-\dot{\tau}}Z_y \right)W_y + U^W W_{yy}=-\frac{2e^s}{1-\dot{\tau}}\Phi_{yy},\label{EP2_2D1} 
			\\ 
			& \left( \partial_s + \frac{5}{2} +\frac{3W_y}{1-\dot{\tau}}+\frac{2e^{s/2}}{1-\dot{\tau}}Z_y \right)W_{yy} + U^W  \partial_y^3W=-\frac{2e^s}{1-\dot{\tau}}\partial_y^3\Phi-\frac{e^{s/2}}{1-\dot{\tau}}Z_{yy}W_y,\label{EP2_2D2} 
			\\
   \begin{split} 
			& \left( \partial_s + 4 +\frac{4W_y}{1-\dot{\tau}}+\frac{3e^{s/2}}{1-\dot{\tau}}Z_y \right)\partial_y^3W + U^W  \partial_y^4W
   \\ & \qquad\qquad  
   =-\frac{2e^s}{1-\dot{\tau}}\partial_y^4\Phi-\frac{e^{s/2}}{1-\dot{\tau}}(\partial_y^3ZW_y+3Z_{yy}W_{yy})-\frac{3W_{yy}^2}{1-\dot{\tau}},\label{EP2_2D3}
   \end{split} 
			\\
			\begin{split}
			& \left( \partial_s + \frac{11}{2} +\frac{5W_y}{1-\dot{\tau}}+\frac{4e^{s/2}}{1-\dot{\tau}}Z_y \right)\partial_y^4W + U^W \partial_y^5W
			\\
			&\qquad \qquad    =  -\frac{2e^s}{1-\dot{\tau}}\partial_y^5\Phi-\frac{10W_{yy}\partial_y^3W}{1-\dot{\tau}}-\frac{e^{s/2}}{1-\dot{\tau}}(\partial_y^4ZW_y+4\partial_y^3ZW_{yy}+6Z_{yy}\partial_y^3W).\label{EP2_2D4}
			\end{split}
		\end{align}
	\end{subequations}
Similarly, applying $\partial_y^k$ to \eqref{EP2_2}, for $k=1,2$, we have 
	\begin{subequations}\label{EP2_3D}
		\begin{align}
			& \left( \partial_s + \frac{3}{2} +\frac{W_y}{1-\dot{\tau}}+\frac{e^{s/2}}{1-\dot{\tau}}Z_y \right)Z_y + U^Z Z_{yy}=-\frac{2e^{s/2}}{1-\dot{\tau}}\Phi_{yy},\label{EP2_3D1}
			\\ 
			& \left( \partial_s + 3 +\frac{2W_y}{1-\dot{\tau}}+\frac{3e^{s/2}}{1-\dot{\tau}}Z_y \right)Z_{yy} + U^Z  \partial_y^3Z=-\frac{2e^{s/2}}{1-\dot{\tau}}\partial_y^3\Phi-\frac{W_{yy}Z_y}{1-\dot{\tau}}.\label{EP2_3D2}   
		\end{align}
	\end{subequations}

We claim that the fact that $(\tau,\xi,\kappa)(t)$ satisfies the ODE system \eqref{modulation-new} is equivalent to 
		\begin{equation}\label{constraint}
			W(0,s) = 0, \quad  W_y(0,s) = -1, \quad W_{yy}(0,s)=0,
		\end{equation}
under the initial conditions \eqref{init_w_3} and \eqref{Modul_init}.  First of all, in view of \eqref{change_var} and \eqref{WZPhi}, the system \eqref{modulation-new} is equivalent to  the following system of ODEs:   
		\begin{subequations}\label{modulation}
			\begin{align}
				\dot{\tau} &= e^{s/2}Z_y(0,s)-2e^s\Phi_{yy}(0,s),
				 \label{mod1} \\
				\dot{\kappa}&=\frac{-2e^{3s/2}\partial_y^3\Phi(0,s)+e^sZ_{yy}(0,s) }{\partial_y^3W(0,s)}-2\Phi_y(0,s)e^{3s/2}, 
				\label{mod2} \\ 
				\dot{\xi}&=Z(0,s)+2\sqrt{K}+\frac{2e^{s/2}\partial_y^3\Phi(0,s)-Z_{yy}(0,s)}{\partial_y^3W(0,s)}+\kappa. \label{mod3}
			\end{align}
		\end{subequations}
It is straightforward to check that $(W(0,s), W_y(0,s), W_{yy}(0,s)) \equiv (0, -1, 0)$ is a solution to a set of equations \eqref{EP2_1}, \eqref{EP2_2D1} and \eqref{EP2_2D2} evaluated at $y=0$, as long as \eqref{modulation} is satisfied. On the other hand, \eqref{init_w_3} is equivalent to \eqref{constraint} at $s=s_0$. Now, the claim follows from the uniqueness.  

		\subsection{Bootstrap argument}\label{boots_sec}
In this section, we establish the global pointwise estimates  by a   bootstrap argument. We introduce the bootstrap assumptions for given  any  time interval $[s_0,\sigma_1]$. 
First, we assume that   for all $s\in [s_0,\sigma_1]$,  
\begin{equation}\label{tau}
	|\dot{\tau}|\leq 2\veps,
\end{equation}
and
	\begin{equation}\label{Boot_3}			 
				\|Z_y(\cdot, s)\|_{L^{\infty}} \le  e^{-3s/2}.  		 
		\end{equation}

		We also assume that the derivatives of $W$ are bounded as follows: for  $s\in[s_0,\sigma_1]$ and $y\in\mathbb{R}$,  
		\begin{subequations}\label{Boot_2}
			\begin{align}
     			& |W_y(y,s)-\overline W'(y)|  \leq \frac{y^2}{10(1+y^2)},\label{Boot_1} 
     			\\
				&|W_{yy}(y,s)| \leq \frac{15|y|}{(1+y^2)^{1/2}}, \label{EP2_1D3}
				\\
				&|\partial_y ^3 W(0,s)-6| \leq 1,  \label{EP2_1D2} 
				\\
				&\|\partial_y ^3 W(\cdot, s) \|_{L^{\infty}} \leq  M_3:=M^{5/6} , \label{EP2_1D5}
				\\
				&\|\partial_y ^4 W(\cdot, s) \|_{L^{\infty}} \leq M , \label{EP2_1D4}
			\end{align}
		\end{subequations}
 		where $M>0$ is a sufficiently large  constant.
 
 The bootstrap assumption \eqref{Boot_2} implies that $W_y$ is close to $\overline{W}'$ near $y=0$ in $C^2$ norm, ensuring the constraints \eqref{constraint} at $y=0$, and that $W$ stays uniformly bounded in $C^4$ norm. 
  In fact, the asymptotic behaviors of $W$ near $y=0$ plays an important role in resolving the degeneracy of the damping term around $y=0$ in the transport-type equations.
We close the bootstrap assumptions \eqref{tau}--\eqref{Boot_2} in the following proposition.  
\begin{proposition}\label{mainprop}
There is a sufficiently small constant $\veps_0=\veps_0(M,\|(w_0,z_0)\|_{L^2},P_\pm)>0$ such that for each $\veps\in (0,\veps_0)$, the following statements hold. Consider the smooth solution $(w,z)$ to \eqref{iso_EP} satisfying \eqref{in-H-C}--\eqref{tildeP_init}. Suppose that \eqref{tau} holds and the associated solution $(W,Z)$ in the self-similar variables satisfies  \eqref{Boot_3} and \eqref{Boot_2}. Then, it holds that for all $s \in [s_0,\sigma_1]$ and $y\in\mathbb{R}$,
			\begin{subequations}\label{Est1}
				\begin{align}
					&|\dot{\tau}|\leq \frac{3}{2}e^{-s},\label{0}
					\\
					&\|Z_y (\cdot, s) \|_{L^{\infty}} \le  \frac{7}{8}e^{-3s/2}, \label{7}
					\\
					&|W_y(y,s)-\overline W'(y)| \leq \frac{y^2}{20(1+y^2)}, \label{1}
					\\ 
					&|W_{yy}(y,s)| \leq \frac{14|y|}{(1+y^2)^{1/2}},   \label{3}
					\\
					&|\partial_y ^3 W(0,s)-6| \leq C\veps^{1/3},  \label{4}
					\\
					&\|\partial_y ^3 W (\cdot, s) \|_{L^{\infty}} \leq \frac{M^{5/6}}{2} ,\label{5}
					\\
					&\|\partial_y ^4 W (\cdot, s) \|_{L^{\infty}} \leq \frac{M }{2} \label{6}
				\end{align}
			\end{subequations}
			where $C>0$ is a constant.
		\end{proposition}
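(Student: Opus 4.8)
The plan is to close the seven estimates of \eqref{Est1} one at a time. It is convenient to fix the large constant $M$ first (its size is dictated by the last step below) and only afterwards shrink $\veps_0=\veps_0(M,\|(w_0,z_0)\|_{L^2},P_\pm)$; every inequality is then asserted for $\veps\in(0,\veps_0)$ and $s\in[s_0,\sigma_1]$. Three ingredients recur: (i) the Poisson equation \eqref{EP2_3} together with the conserved Hamiltonian energy of \eqref{EP}, which control $\Phi$ and its $y$-derivatives; (ii) the equivalent modulation ODEs \eqref{modulation} together with the constraints \eqref{constraint}, which pin $W,W_y,W_{yy}$ at $y=0$; and (iii) estimates along the characteristics of $U^W$ and $U^Z$ for the transport-type equations \eqref{EP2_2D}--\eqref{EP2_3D}, \eqref{Peq}, combined with the quantitative properties of the Burgers profile in \eqref{wx-exp-inf}--\eqref{Wbar-zero} — most importantly that $1+\overline{W}'(y)=\tfrac{3\overline{W}(y)^2}{1+3\overline{W}(y)^2}$ is $\gtrsim \tfrac{y^2}{1+y^2}$ and vanishes only at $y=0$, and that $\overline{W}$ is odd with $\overline{W}''(0)=\overline{W}^{(4)}(0)=0$.

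First I would treat the potential and \eqref{0}. Since \eqref{Peq}, \eqref{tildeP_init} and \eqref{Boot_2}--\eqref{Boot_3} give uniform $L^\infty$ bounds on $\rho=e^P$ and on $u$, the energy conservation for \eqref{EP} yields $\|\phi(\cdot,t)\|_{L^\infty}+\|\phi_x(\cdot,t)\|_{L^2}\le C$, hence $\|\Phi(\cdot,s)\|_{L^\infty}\le C$; feeding this and the control of $P=(e^{-s/2}W+\kappa-Z)/(2\sqrt K)$ into \eqref{EP2_3} and its $y$-derivatives gives $\|\partial_y^k\Phi(\cdot,s)\|_{L^\infty}\le Ce^{-3s}$ for $2\le k\le5$ and, after one integration, $\|\Phi_y(\cdot,s)\|_{L^\infty}\le Ce^{-3s}$. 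Then \eqref{mod1} and \eqref{Boot_3} give $|\dot\tau|\le e^{s/2}\|Z_y\|_{L^\infty}+2e^s\|\Phi_{yy}\|_{L^\infty}\le e^{-s}+Ce^{-2s}\le\tfrac32 e^{-s}$, which is \eqref{0}; the same inputs in \eqref{mod2}--\eqref{mod3} with \eqref{EP2_1D2} give $|\sigma(s)|\le Ce^{-s}$, and the analogous (easy, since the relevant damping coefficients are positive) treatment of \eqref{EP2_3D2} and its $y$-derivative yields the auxiliary bounds $\|Z_{yy}(\cdot,s)\|_{L^\infty}+\|\partial_y^3Z(\cdot,s)\|_{L^\infty}\le Ce^{-3s/2}$. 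For \eqref{7} I would integrate \eqref{EP2_3D1} along $\dot y=U^Z$: the drift $-\tfrac{4\sqrt K e^{s/2}}{1-\dot\tau}$ sweeps every characteristic past the region where $|\overline{W}'|$ is bounded away from $0$ within self-similar time $O(\veps^{1/2})$, after which the damping $\tfrac32+\tfrac{W_y}{1-\dot\tau}+\cdots$ exceeds $\tfrac32-o(1)$; with $\|Z_y(\cdot,s_0)\|_{L^\infty}=\veps^{3/2}\|\partial_x z_0\|_{L^\infty}\le\tfrac14 e^{-3s_0/2}$ (from \eqref{init_24}) and forcing $-\tfrac{2e^{s/2}}{1-\dot\tau}\Phi_{yy}=O(e^{-5s/2})$, Duhamel gives $\|Z_y(\cdot,s)\|_{L^\infty}\le(\tfrac14+o(1))e^{-3s/2}\le\tfrac78 e^{-3s/2}$.

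Next I would prove \eqref{1} and \eqref{3}. Writing $\widetilde W_y:=W_y-\overline{W}'$, $\widetilde W:=W-\overline{W}$ and subtracting the $y$-differentiated profile equation \eqref{Burgers_SS} from \eqref{EP2_2D1}, $\widetilde W_y$ solves a damped transport equation along $\dot y=U^W$ whose damping is $1+\tfrac{W_y+\overline{W}'}{1-\dot\tau}+\tfrac{e^{s/2}Z_y}{1-\dot\tau}$ — this equals $\tfrac{2(1+\overline{W}')}{1-\dot\tau}-1+\cdots$, so it degenerates/changes sign on a fixed punctured neighbourhood of $0$ — with forcing that is $O(e^{-s})$ plus $-\overline{W}''\tfrac{\widetilde W}{1-\dot\tau}$; using $|\widetilde W(y)|\le\tfrac{|y|-\arctan|y|}{10}$ (integrate \eqref{Boot_1}) together with the explicit decay of $\overline{W}''$, one checks that $\|\overline{W}''\widetilde W\|_{L^\infty}$ is a small absolute constant. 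On $\{|y|\le\ell\}$ I would use the constraints: \eqref{constraint} and \eqref{Wbar-zero} force $\widetilde W_y(0,s)=\partial_y\widetilde W_y(0,s)=0$ with $\partial_y^2\widetilde W_y(0,s)=\partial_y^3W(0,s)-6$, so Taylor's theorem with \eqref{4} and \eqref{EP2_1D4} bounds $|\widetilde W_y(y,s)|$ by $C(\veps^{1/3}+\ell M)y^2$, which is $\le\tfrac{y^2}{20(1+y^2)}$ once $\ell=\ell(M)$ is chosen so that $\ell M$ is a small constant; on $\{\ell\le|y|\lesssim 1\}$, where the damping may be negative, the $U^W$-characteristics expand like $e^{(s-s_0)/2}$ (since $U^W(y,s)=\tfrac12 y+O(y^3)+\sigma$ there), so the amplification accrued in crossing this zone is beaten by the $O(\ell^2)$-smallness of $\widetilde W_y$ at $|y|=\ell$; and on $\{|y|\gtrsim 1\}$, where the damping is bounded below by a positive constant, one propagates with inflow/initial data $\le\tfrac{y^2}{40(1+y^2)}$ (from \eqref{4.3a}) and the small forcing. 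Estimate \eqref{3} follows from the same three-region scheme applied to $W_{yy}$, with the weight $\tfrac{|y|}{(1+y^2)^{1/2}}$, the constraint $W_{yy}(0,s)=0$, and $W_{yy}=\int_0^y\partial_y^3W$ near $0$. \emph{This is the step I expect to be the main obstacle}: because the transport damping degenerates (for $W_y$, whose profile carries the neutral $\lambda=0$ mode) or changes sign near $y=0$, the \emph{improved} sup-norm constants can only be reached by matching the vanishing orders of solution and forcing there, exploiting the modulation constraints, invoking sharp quantitative inequalities for $\overline{W}$ to control the forcing, and using the exponential escape of the $U^W$-characteristics from the origin (which converts an amplification $\lesssim\ell^{-2}$ into an $O(1)$ one).

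Finally I would handle \eqref{4}, \eqref{5}, \eqref{6}. Evaluating \eqref{EP2_2D3} at $y=0$ with the constraints (so $W_y(0,s)=-1$, $W_{yy}(0,s)=0$, $U^W(0,s)=\sigma$) and using $4-\tfrac{4}{1-\dot\tau}=\tfrac{-4\dot\tau}{1-\dot\tau}=O(e^{-s})$, the quantity $g(s):=\partial_y^3W(0,s)$ solves $\dot g=O(e^{-s})g-\sigma\,\partial_y^4W(0,s)+O(e^{-2s})+\tfrac{e^{s/2}}{1-\dot\tau}\partial_y^3Z(0,s)$ with $g(s_0)=6$ exactly (this is \eqref{init_w_3} rescaled via $\partial_x^3w=e^{4s}\partial_y^3W$); since $|\sigma|\le Ce^{-s}$, $\|\partial_y^4W\|_{L^\infty}\le M$ and $\|\partial_y^3W\|_{L^\infty}\le M^{5/6}$, Gr\"onwall gives $|g(s)-6|\le C\veps M+C\veps M^{5/6}+C\veps\le C\veps^{1/3}$ for $\veps<\veps_0(M)$, i.e.\ \eqref{4}. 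For \eqref{6} I would integrate \eqref{EP2_2D4} along characteristics: the damping $\tfrac{11}{2}+\tfrac{5W_y}{1-\dot\tau}+\cdots$ is $\ge\tfrac12-O(\veps)>\tfrac13$ (using $1+\overline{W}'\ge0$), the data obey $\|\partial_y^4W(\cdot,s_0)\|_{L^\infty}=\veps^{11/2}\|\partial_x^4w_0\|_{L^\infty}\le1$, and the worst forcing term $-\tfrac{10W_{yy}\partial_y^3W}{1-\dot\tau}$ is $\le CM^{5/6}$ by \eqref{EP2_1D3}--\eqref{EP2_1D5}, so $\|\partial_y^4W(\cdot,s)\|_{L^\infty}\le1+CM^{5/6}\le\tfrac M2$ once $M$ is fixed large. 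For \eqref{5} the point is that the damping in \eqref{EP2_2D3}, namely $4+\tfrac{4W_y}{1-\dot\tau}+\cdots=\tfrac{12\overline{W}(y)^2}{1+3\overline{W}(y)^2}+o(1)$, is $\ge0$ and vanishes only at $y=0$ (the neutral mode), while the forcing — whose worst term is $-\tfrac{3W_{yy}^2}{1-\dot\tau}$, hence $\le C$ by \eqref{EP2_1D3} — is bounded by an \emph{absolute} constant; since the $U^W$-characteristics are repelled from the origin, the time spent where both damping and forcing are small is harmless, so Duhamel yields $\|\partial_y^3W(\cdot,s)\|_{L^\infty}\le C_0$ with $C_0$ absolute, hence $\le\tfrac{M^{5/6}}{2}$ for $M$ fixed large. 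The ordering ``fix $M$, then $\veps_0(M)$'' is exactly what makes the terms $C\veps M$, $C\veps M^{5/6}$, $CM^{5/6}$ harmless.
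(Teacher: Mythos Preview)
Your overall architecture is right, and the arguments for \eqref{0}, \eqref{4}, \eqref{6} match the paper closely. However, the argument for \eqref{7} has a genuine gap, and the approach to \eqref{1}--\eqref{3} differs from the paper's in a way worth noting.

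\textbf{The gap in \eqref{7}.} Your plan is to track the $U^Z$-characteristics: the drift $-\tfrac{4\sqrt K e^{s/2}}{1-\dot\tau}$ sweeps them out of any bounded region in self-similar time $O(\veps^{1/2})$, ``after which the damping $\tfrac32+\tfrac{W_y}{1-\dot\tau}+\cdots$ exceeds $\tfrac32-o(1)$''. This last claim fails under the hypotheses of the Proposition. You only have \eqref{Boot_1}, which at $|y|\to\infty$ gives $|W_y-\overline W'|\le\tfrac{1}{10}$, not decay; hence $W_y$ can be anywhere in $[-\tfrac{1}{10},\tfrac{1}{10}]$ far out. After subtracting the $\tfrac32$ (i.e.\ working with $\widetilde Z_y:=e^{3s/2}Z_y$), the effective damping is $D_{\widetilde Z}=\tfrac{W_y}{1-\dot\tau}$, which can stay at $-\tfrac{1}{10}+O(\veps)$ for all large $s$, so $e^{-\int D_{\widetilde Z}}$ can grow like $e^{s/10}$ and Duhamel does not close. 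The paper circumvents this by a non-obvious trick: since $P=\log\rho$ obeys \eqref{Peq}, namely $(\partial_s+U^Z\partial_y)P=-\tfrac{2W_y}{1-\dot\tau}$, integrating along the \emph{same} $U^Z$-characteristic gives $\int_{s_0}^s \tfrac{W_y}{1-\dot\tau}\circ\psi\,ds'=-\tfrac12\bigl(P\circ\psi|_{s_0}^s\bigr)$, hence $\bigl|\int D_{\widetilde Z}\bigr|\le\tfrac12\cdot\tfrac65=\tfrac35$ uniformly (using only $|\rho-1|<\tfrac25$). This yields $\|\widetilde Z_y\|\le\tfrac14 e^{3/5}+C\veps<\tfrac78$. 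This link between the integrated damping and the amplitude of $\log\rho$ is the missing idea.

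\textbf{Different route for \eqref{1}--\eqref{3}.} Your three-zone characteristic-tracking scheme (Taylor near $0$, amplification bookkeeping on $\ell\le|y|\lesssim1$, positive damping farther out) is plausible but delicate, since one must check that the exact amplification $\sim(|y|/\ell)^2$ from the degenerate zone preserves the sharp constants $\tfrac{1}{20}$ and $14$. The paper instead introduces the weighted unknowns $V=\tfrac{1+y^2}{y^2}\widetilde W_y$ and $\widetilde V=\tfrac{(1+y^2)^{1/2}}{y}W_{yy}$, so that the transport of the weight adds $\tfrac{2U^W}{y(1+y^2)}$ (resp.\ $\tfrac{U^W}{y(1+y^2)}$) to the damping; the identities for $\overline W$ then show the resulting damping is strictly positive for $|y|\ge\ell$, and the term $-\tfrac{\overline W''}{1-\dot\tau}\widetilde W$ becomes a nonlocal kernel in $V$ that is \emph{dominated} by this damping. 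A tailored maximum principle (Lemma~\ref{max_2}) then closes without tracking characteristics.

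\textbf{Minor points.} Your claimed rates $\|Z_{yy}\|+\|\partial_y^3Z\|\le Ce^{-3s/2}$ are too strong: the damping for $Z_{yy}$ is only $3+\tfrac{2W_y}{1-\dot\tau}+\cdots\ge 1-o(1)$, so one gets at best $e^{-5s/6}$ (the paper's bound). This propagates to $|\sigma(0,s)|\le Ce^{-s/3}$, not $Ce^{-s}$; the conclusion \eqref{4} still holds, with $C$ depending on $M$. Also $\|\Phi_y\|_{L^\infty}\le Ce^{-3s/2}$ (from $\|\phi_x\|_{L^\infty}\le C$), not $Ce^{-3s}$: you cannot integrate $\Phi_{yy}$ over an unbounded domain. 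For \eqref{5} the paper does not obtain an absolute $C_0$; it uses the interplay $M_3=M^{5/6}$ so that near $0$ Taylor gives $|\partial_y^3W|\le 7+M|y|\le M_3/4$ on $|y|\le M_3/(8M)$, while on the complement the damping $\ge\tfrac{M_3^2}{2(64M^2+M_3^2)}$ times $m_0=M_3/4$ beats the $O(1)$ forcing precisely because $M_3^3/M^2=M^{1/2}\gg1$.
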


 The proof of Proposition~\ref{mainprop} is rather lengthy, so we decompose it into several lemmas presented in Section~\ref{sec3-boot}.  
The desired estimates are established in  Lemma~\ref{lem-3.7}
--Lemma~\ref{Wy4_lem}.

 Together with the estimates for the higher order derivatives of $Z$ (see Lemma~\ref{difZn}), we will see that Proposition~\ref{mainprop} implies that $C^4$ norm of $(W,Z)$ is uniformly bounded for  all $s\ge s_0$ by a standard  continuation argument. In particular, in view of 	\eqref{T-star}--\eqref{WZPhi}, the estimate \eqref{Boot_3} implies that $\|z_x(\cdot,t)\|_{L^\infty}$ is uniformly bounded in $t\in[-\ve, T_\ast]$.

Here we collect several simple inequalities that are frequently used in the course of our analysis.  Using \eqref{Boot_1} and \eqref{y-w-y2}, we have the uniform bound of $W_y$, 
\begin{equation} \label{Uy1}
| W_y | \le | \overline{W}' | + | W_y - \overline{W}' | \le  \frac{1}{1+\frac{3y^2}{(3y^2+1)^{2/3}}}+\frac{y^2}{10(1+y^2)} \leq 1.
\end{equation} 
By the fundamental theorem of calculus, it follows from $W(0,s)=\overline{W}(0) =  0$,  $|W_y| \leq 1$, $|\overline{W}'|\leq 1$, \eqref{Boot_1} and \eqref{y-w-y2} that
\begin{equation}\label{Wbd1}
|W| \leq |y|, \qquad |\overline{W}| \leq |y|, \qquad |W - \overline{W}| \leq \frac{|y|^3}{30}.
\end{equation}

In order to  study the exact  regularity of $w$ at the blow up time $t=T_\ast$, it is crucial to obtain the decaying property of $W $ as $|y| \to +\infty$. 
We suppose as a part of the bootstrap assumptions that  
\begin{subequations}\label{boot1} 
		\begin{align} 
			&(y^{2/3}+8)|W_y(y,s)-\overline{W}'(y)| \le 1, \label{Utildey_M}
			\\
			&\sup_{ y\in \mathbb{R} } \; (1+ e^{-s} y^{2/3})\left|e^{\frac{e^{-s/2}W+\kappa-Z}{2\sqrt{K}}}-1\right|\leq \frac{1}{14}. \label{Boot_L}
		\end{align}
	\end{subequations}
Note that the estimate \eqref{Boot_L} for $e^{\frac{e^{-s/2}W+\kappa-Z}{2\sqrt{K}}}-1 ( = \rho-1)$ is required to control the electric force term $\Phi$.
\begin{proposition}\label{decay_lem}
There is a sufficiently small constant $\veps_0=\veps_0(\|(w_0,z_0)\|_{L^2},P_\pm)>0$  such that for each $\veps\in(0,\veps_0)$, if  the same assumptions as in  Proposition~\ref{mainprop} and   \eqref{boot1} hold, then  it holds that  
	\begin{subequations}\label{Est2}
		\begin{align}
			&(y^{2/3}+8)|W_y(y,s)-\overline{W}'(y)| \le \frac{24}{25},\label{Utildey_M'}
			\\
			&\sup_{ y\in \mathbb{R} } \; (1+ e^{-s}y^{2/3})\left|e^{\frac{e^{-s/2}W+\kappa-Z}{2\sqrt{K}}}-1\right|\leq \frac{3}{56}.\label{pi}
		\end{align}
	\end{subequations}
\end{proposition}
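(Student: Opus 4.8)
The plan is to close the two bootstrap inequalities in \eqref{boot1} by a continuity-in-$s$ argument, improving the constants from the assumed bounds to the strictly better ones in \eqref{Est2}. Both estimates are of the form ``a weighted quantity that starts strictly below the target at $s=s_0$, and whose evolution is governed by a transport equation with a favorable (damping) structure for large $|y|$, stays below the target.'' The initial-time strict inequalities come directly from the hypotheses \eqref{4.3a} and \eqref{tildeP_init}: at $s=s_0=-\log\veps$ we have $e^{-s/2}W(y,s_0)=\veps(\partial_x w_0)(\veps^{3/2}y)$ up to the modulation normalization, so \eqref{4.3a} gives $(y^{2/3}+8)|W_y-\overline W'|(y,s_0)\le \tfrac{1}{24}\cdot 24=1$ with room to spare once one checks the $s_0$-dependence is harmless, and similarly \eqref{tildeP_init} translates to the $\rho-1$ weight bound at $s_0$ with constant $\tfrac{1}{28}<\tfrac{3}{56}$.

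For \eqref{Utildey_M'}, I would write the equation for the difference $\tilde W_y := W_y - \overline W'$. Subtracting the self-similar Burgers identity \eqref{Burgers_SS} (in differentiated form) from \eqref{EP2_2D1}, one gets a transport equation for $\tilde W_y$ along the flow of $U^W$, with a linear damping coefficient of the form $1 + \tfrac{W_y}{1-\dot\tau}$ plus error terms. The key point, already flagged in the paper, is that for large $|y|$ one has $W_y \approx \overline W'(y) \to 0$ like $|y|^{-2/3}$, so the bare damping constant is close to $1$; multiplying by the weight $(y^{2/3}+8)$ and using that $\partial_y$ of the weight interacts with the transport term $U^W \approx \tfrac32 y$ to produce an \emph{extra} favorable contribution (the weight grows slower than the characteristic speed, giving effective damping strength $> \tfrac12$), the weighted quantity $g := (y^{2/3}+8)\tilde W_y$ satisfies an inequality $\partial_s \|g\|_{L^\infty} \le -\delta \|g\|_{L^\infty} + (\text{small})$ with $\delta>0$. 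The ``small'' terms collect: the electric-force term $\tfrac{2e^s}{1-\dot\tau}\partial_{yy}\Phi$ weighted appropriately (controlled using \eqref{Boot_L}/\eqref{pi} and the Poisson equation \eqref{EP2_3}, as in the $\Phi$-estimates of the paper), the $Z$-contributions (controlled by \eqref{Boot_3}, which carries the tiny factor $e^{-3s/2}$), the modulation errors $\dot\tau$ (tiny by \eqref{tau}), and near $y=0$ the difference is $O(|y|^3)$ by Taylor expansion so the weight is harmless there. Gronwall then yields $\|g(\cdot,s)\|_{L^\infty}\le e^{-\delta(s-s_0)}\|g(\cdot,s_0)\|_{L^\infty} + C\veps^{\alpha} \le \tfrac{24}{25}$ for $\veps$ small.

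For \eqref{pi}, let $q := e^{P} - 1 = \rho - 1$ where $P = \tfrac{e^{-s/2}W+\kappa-Z}{2\sqrt K}$. From \eqref{Peq} one has $\partial_s P + U^Z P_y = -\tfrac{2W_y}{1-\dot\tau}$, hence $\partial_s q + U^Z q_y = -\tfrac{2W_y}{1-\dot\tau}(1+q)$. The source is $O(1)$ (since $|W_y|\le 1$), but it is multiplied against the weight $1 + e^{-s}y^{2/3}$ whose time derivative $-e^{-s}y^{2/3}$ contributes damping, and again the transport term $U^Z \approx \tfrac32 y$ acting on $e^{-s}y^{2/3}$ gives an extra damping of strength comparable to $\tfrac23\cdot\tfrac32 = 1$; the net effect is that the weighted sup norm of $q$ obeys $\partial_s\|(1+e^{-s}y^{2/3})q\|_{L^\infty}\le -\delta'\|(1+e^{-s}y^{2/3})q\|_{L^\infty} + C e^{-s/2}$ (the last term because $W_y$ only contributes $O(1)$ unweighted and the weight near $y=0$ is bounded, while for large $|y|$ the factor $W_y\approx \overline W'(y)\to 0$ kills the growth of the weight). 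Integrating gives $\|(1+e^{-s}y^{2/3})q(\cdot,s)\|_{L^\infty}\le \tfrac{1}{28} + C\veps^{1/2}\le \tfrac{3}{56}$.

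The main obstacle, as usual in these Merle–Raphaël/Buckmaster–Shkoller-type schemes, is making the weighted damping estimates genuinely quantitative: one must verify that for \emph{every} $y$ the combination (bare damping) $+$ (weight-transport interaction) $-$ (weight's $\partial_s$) is bounded below by a fixed $\delta>0$, which requires the precise asymptotics $\overline W'(y)\sim -\tfrac13|y|^{-2/3}$ from \eqref{wx-exp-inf} together with the algebraic inequalities for $\overline W$ derived in subsection~\ref{inequalities} (e.g. \eqref{y-w-y2}), and one must carefully track that all error terms carry a positive power of $\veps$ (equivalently $e^{-s_0}$) or of $e^{-s}$ so that they are absorbable. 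The $\Phi$-term is the most delicate of these: bounding $e^{s}\partial_{yy}\Phi$ in the appropriate weighted norm requires inverting the Poisson equation $-\Phi_{yy}e^{3s} = \rho - e^\Phi$, using the weighted smallness of $\rho-1$ from \eqref{Boot_L} and a corresponding bound on $\Phi$ itself (via the conserved energy, as in \cite{BCK}); this coupling is precisely why \eqref{Boot_L} is built into the bootstrap alongside \eqref{Utildey_M}.
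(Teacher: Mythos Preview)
Your proposal has the right high-level architecture but contains two genuine gaps that would make the argument fail as written.

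\textbf{For \eqref{Utildey_M'}.} The damping coefficient for $\nu := (y^{2/3}+8)\tilde W_y$ is \emph{not} uniformly bounded below by any $\delta>0$. Writing the equation carefully, the linear coefficient is
\[
D^\nu = 1 + \tilde W_y + 2\overline W' - \tfrac{2y^{2/3}}{3(y^{2/3}+8)}\Bigl(\tfrac{3}{2} + \tfrac{\tilde W + \overline W}{y}\Bigr),
\]
which is approximately $-1$ near $y=0$ (from $2\overline W'(0)=-2$) and tends to $0$ as $|y|\to\infty$ (since $\overline W'\to 0$ and $\tfrac{2}{3}\cdot\tfrac{3}{2}=1$ cancels the $1$). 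More seriously, you have omitted a term: the RHS of the $\tilde W_y$ equation contains $-\tfrac{\tilde W}{1-\dot\tau}\overline W''$, and since $\tilde W(y) = \int_0^y \tilde W_y$, this becomes a \emph{non-local} integral term $\int K^\nu(y,s;y')\nu(y')\,dy'$ in the $\nu$ equation. This term is $O(1)$, not small. The paper's argument is instead: (i) on $|y|\le 3$ control $\nu$ directly from \eqref{Boot_1}; (ii) on $|y|\ge 3$ prove the pointwise inequality $D^\nu(y,s)\ge \int_{\mathbb R}|K^\nu(y,s;y')|\,dy'$, which reduces to a sharp algebraic inequality about $\overline W$ (this is \eqref{0605_m_7} in the appendix); (iii) then argue by contradiction at the first time $\|\nu\|_{L^\infty}$ reaches $24/25$, using that the maximum must lie in $|y|\ge 3$ and that the forcing is $O(e^{-s/3})$. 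A separate preliminary step establishes $\limsup_{|y|\to\infty}|\nu|\le 3/4$ so that the maximum is attained.

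\textbf{For \eqref{pi}.} There is no damping $-\delta'$ on $\tilde P := (1+e^{-s}y^{2/3})(e^P-1)$: the $\partial_s$ of the weight contributes $-e^{-s}y^{2/3}/(1+e^{-s}y^{2/3})$, but the transport term $U^Z\approx\tfrac{3}{2}y$ acting on the weight contributes $+e^{-s}y^{2/3}/(1+e^{-s}y^{2/3})$, and these cancel. The equation \eqref{weightP} is simply $\partial_s\tilde P + U^Z\tilde P_y = F_P$ with no linear term in $\tilde P$ carrying a definite sign. The paper's mechanism is different: characteristics starting at $|y|\ge e^{s_0}$ stay in $|\psi|\ge e^s$ for all $s$, and on that region $|W_y|\lesssim |y|^{-2/3}\le e^{-2s/3}$ (from \eqref{Utildey_M} and the asymptotics of $\overline W'$), so the full RHS $F_P$ is $O(e^{-s/3})$ and hence integrable in $s$. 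For $|y|<e^s$ one bounds $|\tilde P|\le (1+1)\cdot\tfrac{2}{5}\cdot e^{-s/3}$ directly. Your last paragraph hints at this (``$W_y\to 0$ kills the growth of the weight''), but the argument you actually wrote down, with a Gronwall-type damping inequality, does not work.
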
  
The proof of Proposition~\ref{decay_lem} is given in Section~\ref{sec 4}.
  
  			\begin{remark}\label{init_rmk}{(Initial values of $W,Z,\Phi$)}
		The set of initial conditions \eqref{init_w_3}--\eqref{tildeP_init} for \eqref{iso_EP} implies that the new unknowns $(W,Z, \Phi)$  in  the self-similar variables $(y,s)$  satisfy 
		\begin{subequations}\label{EP-W-IC}
			\begin{align}
				&\|Z_y(\cdot, s_0)\|_{L^{\infty}} \le  \frac{\veps^{3/2}}{4}, \label{Zyinit} 
				\\
				& |W_y(y,s_0)-\overline W'(y)| \leq \frac{y^2}{40(1+y^2)},   \label{W-y2}
				\\
				&|W_{yy}(y, s_0)| \leq 1, \label{1D3}
				\\
				&|\partial_y ^3 W(0, s_0)-6| = 0,  \label{1D2} 
				\\
				&\|\partial_y ^3 W(\cdot, s_0) \|_{L^{\infty}} \leq 7, \label{1D5}
				\\
				&\|\partial_y ^4 W( \cdot, s_0) \|_{L^{\infty}} \leq 1, \label{1D4}
				\\
				&(y^{2/3}+8)|W_y(y,s_0)-\overline{W}'(y)| \le \frac{1}{24},\label{Utildey_init}
					\\
				&\sup_{ y\in \mathbb{R} } \; ( 1+ \veps y^{2/3} ) \left|e^{\frac{e^{-s/2}W(y,s_0)+\kappa_0-Z(y,s_0)}{2\sqrt{K}}}-1\right|\leq \frac{1}{28}.\label{pi_init}
			\end{align}
		\end{subequations}
By the Taylor expansion with \eqref{1D2} and \eqref{1D4}, 
			\begin{equation*}
				|W_{yy}(y,s_0)|\leq|y||\partial_y^3W(0,s_0)|+\frac{y^2}{2}\|\partial_y^4W(\cdot,s_0)\|_{L^{\infty}}\leq 6|y|+\frac{y^2}{2}. 
			\end{equation*}
			Combining this with \eqref{1D3}, we obtain
			\begin{equation}\label{1D3'}
				|W_{yy}(y,s_0)|\leq \min\left\{ 6 |y|+\frac{y^2}{2},1\right\}\leq \frac{7|y|}{(1+y^2)^{1/2}}
			\end{equation}
			for all $y \in \mathbb{R}$.

			Also, from \eqref{modulation-new} and \eqref{Modul_init}, we get
			\begin{equation*}
				|\dot{\tau}(-\veps)|\leq\veps |\partial_xz(0,-\veps)|+2\veps^2|\partial_x^2\phi(0,-\veps)|.
			\end{equation*}
   Since
   $$|\partial_x^2\phi(0,-\veps)|= |\rho(0,-\ve) - e^{\phi(0,-\ve)}| \le | \rho(0, -\ve) -1 | + |e^{\phi(0,-\ve)} -1 | <c_0,$$ 
   where $c_0$ is a positive constant depending only on $\| \rho_0 \|_{L^\infty}$ (see Lemma~\ref{LemmaAppen}),   
   we obtain
			\begin{equation}\label{dottau_init}
				|\dot{\tau}(-\veps)|\leq \frac{\veps}{4}+2\veps^2 c_0
    \leq \veps
			\end{equation} 
			for any $0<\veps<\veps_0$, where $\veps_0=\veps_0(\|\rho_0\|_{L^{\infty}})$ is sufficiently small.
  			%
			Hence, \eqref{EP-W-IC}--\eqref{dottau_init} imply that the initial data satisfies the bootstrap assumptions \eqref{tau}--\eqref{boot1}. 
  \end{remark}
  We present a local existence theorem for the initial value problem \eqref{iso_EP} with the initial data \eqref{in-H-C}. 
  \begin{lemma}[Local existence]\label{criterion}
  Let $(w_0, z_0)\in H^5(\mathbb{R})$. Then there is $T\in(-\ve, \infty)$ such that the initial value problem \eqref{iso_EP}  admits a unique solution $(w,z)\in C([-\ve, T); H^5(\mathbb{R}))$. 
As long as the solution exists, the energy conservation holds:
	\begin{equation}\label{H}
		H(t):=\int_{\mathbb{R}}\frac{1}{2}\rho u^2+\mathcal{P}(\rho)+\frac{1}{2}|\phi_x|^2+(\phi-1)e^{\phi}+1\,dx = H(-\veps), 
	\end{equation} 
	where $\mathcal{P}(\rho):=K(\rho\log\rho -\rho+1)$.   Suppose further that \[\lim_{t\nearrow T} \| (w_x, z_x) (t) \|_{L^\infty(\mathbb{R})} <\infty.\] Then  the solution can be continuously extended beyond $t=T$, i.e., there exists a unique solution \[(w,z) \in C([-\ve, T+\delta); H^5(\mathbb{R}))\] for some $\delta>0$.  
  \end{lemma}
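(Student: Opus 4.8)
The statement to prove is the local existence Lemma~\ref{criterion}: local well-posedness in $H^5$ for the Riemann-variable system \eqref{iso_EP}, energy conservation \eqref{H}, and a continuation criterion in terms of $\|(w_x,z_x)\|_{L^\infty}$.

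\medskip

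\noindent\textbf{Proof proposal.}

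The plan is to treat \eqref{iso_EP} as a quasilinear symmetric hyperbolic system for $(w,z)$ coupled to the elliptic equation \eqref{iso_phi} for $\phi$, and to run the standard energy-method iteration scheme. First I would set up the functional-analytic framework: given $(w,z)\in H^5(\mathbb{R})$ with $\rho = e^{(w-z)/2\sqrt{K}}$ bounded above and below, solve the semilinear elliptic problem $-\phi_{xx}+e^\phi = \rho$ for $\phi$. Since $e^\phi$ is monotone in $\phi$, this has a unique solution by a sub/super-solution argument (with barriers $\phi_\pm$ determined by $\inf\rho,\sup\rho$), and elliptic regularity plus the algebra property of $H^k(\mathbb R)$ for $k\ge 1$ gives $\phi-\bar\phi\in H^{7}$ with $\|\phi_x\|_{H^{6}} \lesssim F(\|\rho-1\|_{H^5})$ for a continuous $F$; in particular $\phi_x$ is two derivatives smoother than the source, so the forcing $-2\phi_x$ in \eqref{iso_w}–\eqref{iso_z} is a tame, locally Lipschitz function of $(w,z)$ in $H^5$. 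This reduces the system to $\partial_t w + \lambda_+(w,z)w_x = -2\phi_x(w,z)$ and similarly for $z$, which is diagonal (hence trivially symmetric) hyperbolic with $H^5$ coefficients. Then I would construct solutions by the usual linearized iteration $w^{(n+1)}_t + \lambda_+(w^{(n)},z^{(n)})w^{(n+1)}_x = -2\phi_x(w^{(n)},z^{(n)})$: each step is a linear transport equation solvable by characteristics, $H^5$ a priori estimates via $\partial_x^j$-differentiation, commutator (Moser/Kato–Ponce) estimates, and Grönwall give a uniform time $T$ and uniform bound on $[-\ve,T)$; contraction in the low norm $L^2$ (or $H^1$) gives convergence, and the limit is the unique solution in $C([-\ve,T);H^5)\cap C^1([-\ve,T);H^4)$. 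Uniqueness follows from the same $L^2$ energy estimate for the difference of two solutions.

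\medskip

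Next, energy conservation \eqref{H}. I would derive it directly from the physical form \eqref{EP} (equivalently \eqref{iso_EP}, keeping track of the $\tilde t = t/2$ rescaling). Multiply \eqref{EP_2} by $u$, use \eqref{EP_1} to rewrite $\rho(u_t+uu_x)u = \partial_t(\tfrac12\rho u^2) + \partial_x(\tfrac12\rho u^3)$, and handle the pressure term $K\rho_x u$ together with \eqref{EP_1} to produce $\partial_t\mathcal P(\rho) + \partial_x(\cdots)$ with $\mathcal P(\rho)=K(\rho\log\rho-\rho+1)$. The electric term $-\rho\phi_x u = -\phi_x(\rho u) = \phi_x(\rho)_t\cdot(\cdots)$; using $\rho = -\phi_{xx}+e^\phi$ from \eqref{EP_3}, $\phi_x\rho_t = \phi_x(-\phi_{xxt}+e^\phi\phi_t) = -\partial_x(\phi_x\phi_{xt}) + \partial_t(\tfrac12\phi_x^2) + \partial_t\big((\phi-1)e^\phi\big)$, after an integration by parts in $x$. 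Integrating over $\mathbb R$ and using that the $H^5$ solution decays at infinity (so all boundary and flux terms vanish), one gets $\tfrac{d}{dt}H(t)=0$. The $+1$ inside the integrand is the constant chosen so the integrand is integrable, i.e. vanishes as $|x|\to\infty$ where $\rho\to1,\phi\to0$. I should double-check the factor from the $t\mapsto 2\tilde t$ change of variables, but conservation is insensitive to constant time rescaling.

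\medskip

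Finally the continuation criterion. Suppose the $H^5$ solution exists on $[-\ve,T)$ with $\limsup_{t\nearrow T}\|(w_x,z_x)(t)\|_{L^\infty}<\infty$. The standard argument: the $H^5$ energy inequality, after using the commutator estimates and the tameness of $\phi_x$ established above, closes in the form $\tfrac{d}{dt}\|(w,z)(t)\|_{H^5}^2 \lesssim \big(1+\|(w_x,z_x)(t)\|_{L^\infty}+\|\phi_x\|_{W^{1,\infty}}\big)\|(w,z)(t)\|_{H^5}^2$, where the $\phi_x$ terms are themselves controlled by $\|\rho-1\|_{H^5}\lesssim \|(w,z)\|_{H^5}$ plus lower order — crucially the top-order coefficient derivative appears only through $\|(w_x,z_x)\|_{L^\infty}$, not $\|(w,z)\|_{H^5}$. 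Hence if $\|(w_x,z_x)\|_{L^\infty}$ stays bounded on $[-\ve,T)$, Grönwall keeps $\|(w,z)(t)\|_{H^5}$ bounded up to $t=T$; $(w,z)(T^-)$ then exists in $H^5$ (by completeness, using $C^1$ in time into $H^4$ and weak continuity into $H^5$), and re-applying the local existence theorem from time $T$ extends the solution to $[-\ve,T+\delta)$.

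\medskip

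The main obstacle — really the only non-routine point — is the elliptic estimate for $\phi$: establishing existence/uniqueness for $-\phi_{xx}+e^\phi=\rho$ on the line with $\rho-1\in H^5$, and, more importantly, the \emph{tame} bound showing $\phi_x$ gains two derivatives over $\rho$ and depends continuously (and locally Lipschitz-ly in the appropriate norms) on $\rho$. Everything downstream — the iteration scheme, uniqueness, and the continuation criterion — is then the textbook quasilinear hyperbolic argument, with the one cosmetic subtlety of bookkeeping the $\tilde t=t/2$ rescaling in the energy identity.
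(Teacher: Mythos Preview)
Your proposal is correct and is precisely the ``standard energy method'' argument the paper invokes; the paper itself does not give a proof of this lemma but simply observes that the $(w,z)$ and $(\rho-1,u)$ formulations are equivalent in $H^5$ and refers the reader to \cite{LLS} for the local well-posedness theory. Your outline---elliptic regularity for $\phi$ as a tame map of $\rho$, linearized iteration for the diagonal hyperbolic part, the direct derivation of \eqref{H} from the physical variables, and the Gr\"onwall continuation argument driven by $\|(w_x,z_x)\|_{L^\infty}$---is in fact more detailed than what the paper provides.
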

  
From \eqref{w-z} and \eqref{w-z1}, we see that  $(\rho-1, u)\in H^5(\mathbb{R})$ if and only if $(w,z)\in H^5(\mathbb{R})$. 
The local well-posedness of \eqref{iso_EP} with the initial data $(w_0, z_0)\in H^5(\mathbb{R})$, which is \eqref{in-H-C}, is equivalent to that of \eqref{EP} with the initial data $(\rho_0-1, u_0)\in H^5(\mathbb{R})$. 
Then, Lemma~\ref{criterion} can be obtained from the local well-posedness for \eqref{EP} with  the initial data $(\rho_0-1, u_0)\in H^5(\mathbb{R})$ that can be proved by a standard energy method, similarly as a standard local existence theory for hyperbolic system. We refer the readers to \cite{LLS} for more details. 
  Note that by the Sobolev embedding, we have $H^5(\mathbb{R}) \hookrightarrow C^4(\mathbb{R})$, which implies $(w,z)\in C^4([-\ve, T) \times \mathbb{R})$. 
		%

		From Remark~\ref{init_rmk} and Lemma~\ref{criterion}, there is a time-interval $[s_0, \sigma_1]$ for some $\sigma_1>s_0$, in which the solution $(W,Z)$ satisfies the bootstrap assumptions \eqref{Boot_2}--\eqref{boot1}.
\subsection{Global continuation and Proof of Theorem~\ref{mainthm1}}\label{global-conti}
We claim that the desired estimates in Proposition~\ref{mainprop}--\ref{decay_lem} hold globally, i.e., for all $s\ge s_0$. 

Let us define  the vector $\{V_i(s)\}_{1\leq i\leq 9}$ as follows:
		\begin{equation*}
		\begin{split}
			V_1 & :=|\dot{\tau}|, \quad
			 V_2  : = e^{3s/2} \| Z_y (\cdot, s) \|_{L^\infty}, \quad 
			  V_3 := \sup_{y\in\mathbb{R}} \left( \frac{1+y^2}{y^2} |W_y(y,s)-\overline{W}'(y)| \right) ,
			  \\
			  V_4 &:= \sup_{y\in\mathbb{R}} \left( \frac{(1+y^2)^{1/2}}{|y|} | W_{yy}(y,s) | \right), \quad
			 V_5  := | \partial_y^3 W(0,s) -6 |,  \quad V_6  := \| \partial_y^3 W (\cdot, s) \|_{L^\infty}, 
			 \\
			 V_7 &:= \| \partial_y^4 W (\cdot, s) \|_{L^\infty}, \quad    
			V_8 := \sup_{y\in\mathbb{R}} \left( (y^{2/3} + 8 ) | W_y(y,s) - \overline W'(y) | \right), 
			\\
			V_9  &:=  \sup_{y\in\mathbb{R}} \left( (1+ e^{-s} y^{2/3})\left|e^{\frac{e^{-s/2}W+\kappa-Z}{2\sqrt{K}}}-1\right| \right).
			\end{split}
		\end{equation*}
		Similarly,
		let $\{K_i\}_{1\leq i\leq 9}$ be such that 
		\begin{equation*}
			\begin{split}
					K_1:=2\veps, \quad 
				K_2:=1,\quad  K_3:= \frac{1}{10}, \ \ K_4:= 15, \ \ K_5:=1 , 
				\\
				\ \  K_6:= M^{5/6}, \ \ K_7 := M, \ \ K_8:= 1, \ \ K_9:= \frac{1}{14}. 
			\end{split}
		\end{equation*}
		For $\beta=(\beta_1, ...,\beta_9)\in(0,1)\times \cdots \times(0,1)$, we define the solution space $\mathfrak{X}_\beta(s)$ by 
		\begin{equation}\label{VK_boots}
			\mathfrak{X}_\beta(s) :=\{ (W,Z) \in C^4([s_0, s]\times \mathbb{R} ) : V_i(s')\leq \beta_i K_i, \; \forall s'\in[s_0,s], \; 1\leq i\leq 9  \}.
		\end{equation}
For simplicity,   let us  denote $\XX_\beta(s)$ with $\beta_i=1$ for $i=1, ..., 9$ by $\XX_1(s)$.  

   We see from  Remark~\ref{init_rmk} that
		\begin{equation}\label{VK_init}
			V_i(s_0)\leq \alpha_iK_i \quad \text{for some }  \alpha_i\in(0,1), \quad (1\leq i\leq 9),
		\end{equation}
	i.e., $(W,Z)\in\XX_\alpha(s_0)$	for some $ \alpha_i\in(0,1)$, $1\leq i\leq 9$.
		Proposition~\ref{mainprop} and Proposition~\ref{decay_lem} imply that 
if $(W,Z)\in\XX_1(\sigma_1)\cap \XX_\alpha(s_0)$, 
then $(W,Z)\in\XX_\beta(\sigma_1)$
		for some $\beta_i\in(\alpha_i,1)$, $1\leq i\leq 9$.


Now we will use a standard continuation argument together with local existence theory,   Lemma~\ref{criterion}, to show that the solution $(W, Z)$ exists globally and  $(W, Z)\in\XX_\beta(\infty)$. 
To see this, we define \begin{equation}\label{D-sig}
S:= \sup \{ s\ge s_0 : 
(W, Z) \in \mathfrak{X}_\beta(s)\}.
\end{equation} 
We claim that $S=\infty$. Suppose to the contrary that $S<\infty$. Then, there is a number $T=T(S)$ such that $S=-\log(\tau(T) - T)$ since $\dot{s}(t) = (1-\dot{\tau}(t))/(\tau(t)-t)>0$ for all $t\in[-\ve, T)$.  
Note that  \eqref{1} and \eqref{Utildey_M'} together with \eqref{y-w-y2} imply that 
\[ \sup_{s\in[s_0, S)} \| W_y(\cdot,s) \|_{L^\infty} \le 1. \]
This together with the fact that $w_x = e^{s} W_y$ implies $
 \|w_x(\cdot, t)\|_{L^\infty}\le e^{S}<\infty$ for all $t\le T$. Similarly by \eqref{7}, we have  $\|z_x\|_{L^\infty}\le 1$.
Thanks to Lemma~\ref{criterion}, 
 there is a unique extended solution $(w,z)\in C^4([-\ve, T+\delta))$ for some $\delta>0$, which immediately implies that 
$(W,Z)$ can be extended so that $(W,Z)\in \mathfrak{X}_1(S+\delta')$ for some $\delta'>0$. Then by Proposition~\ref{mainprop} and Proposition~\ref{decay_lem}, 
we see that $(W,Z)\in \mathfrak{X}_\beta(S+\delta')$. This contradicts to the definition of $S$, which concludes that  $S=\infty$.

\begin{remark}\label{T*_lem}
Using \eqref{dx-dy} and \eqref{tau}, we have 
	\[
	|\tau(t)|\leq \int^{t}_{-\veps}|\dot{\tau}(t')|\,dt' \leq 4\veps \int^{s}_{-\log\veps} e^{-s'}\,ds' \leq  4\veps^{2}
	\] 
for all $t>-\veps$ (or for all $s>s_0$).
\end{remark}

	Now we are ready to  prove Theorem~\ref{mainthm1}.

\begin{proof}[Proof of Theorem~\ref{mainthm1}]
 We split the proof into several steps.

Step 1 : We show that the solution $(w,z)$ exists up to $t=T_*$, i.e., $(w,z)\in C^4([-\ve, T_*) \times \mathbb{R})$, and that $|T_\ast|=O(\veps^2)$.   By a standard continuation argument, we have shown that $(W,Z)$ exists globally in $s$ and $(W,Z)\in \mathfrak{X}_1(\infty)$. This specifically implies that  the solution $(w,z)$
 exists on $[-\ve, T_*)$, i.e., $(w,z) \in C^4([-\ve, T_*) \times \mathbb{R})$,  where $T_\ast$ is the first time satisfying $\tau(T_\ast)=T_\ast$ (see \eqref{T-star} and \eqref{change_var}). Indeed, thanks to  \eqref{0}, which implies that $|\dot{\tau}(t)|<1/2$ for sufficiently small $\ve>0$, there exists a number $t_0<\infty$ such that $\tau(t_0)=t_0$ since $\tau(-\veps)=0$. This proves the existence of $T_\ast<\infty$. The fact that $|T_*| = O( \ve^2)$ directly follows from Remark~\ref{T*_lem}. Moreover, in Step 3, we will show that $\| w_x(\cdot, t) \|_{L^\infty} \nearrow \infty$ as $t\nearrow T_*$. This implies that $T_*$ is the life span of $C^1$ solution. 
 \\

Step 2: We show that $w$ and 
$z$ exhibit $C^{1/3}$ regularity at the blow-up time $T_\ast$.  
By a standard continuation argument, we have shown that  \eqref{Utildey_M'} holds globally in $s$, i.e., 
$$(y^{2/3}+8)|W_y(y,s)-\overline{W}'(y)| \le \frac{24}{25} \quad  \text{ for all } s\ge s_0, \ \  y\in \mathbb{R}.$$
From this and \eqref{y-w-y}, 
we have 
%
%
%
%
%
\begin{equation}\label{Wydec_fin_2}
|W_y (y,s) | < \frac{1}{y^{2/3}+8}+|\overline{W}'(y) |\leq \frac{C}{(1+y^2)^{1/3}}
\end{equation} 
 for some $C>0$.
Hence, we see that 
 for all $y \neq y'$, 
\begin{equation}\label{not-zero}
\begin{split} 
 {\frac{|W(y,s)-W(y',s)|}{{|y-y'|}^{1/3}}} & =  \frac{1}{{|y-y'|}^{1/3}} {\left|\int_{y'}^y W_y(\tilde y,s) d\tilde y \right|} 
 \\
 & \le   \frac{C}{{|y-y'|}^{1/3}}  \left| \int_{y'}^{y}{(1+\tilde y^2)^{-1/3}\,d\tilde y} \right| \lesssim 1.
 \end{split}
\end{equation} 

Consider any two points $x\neq x'\in \mathbb{R}$.  From \eqref{not-zero} and the change of variables,  \eqref{change_var} and \eqref{WZPhi}, we have
\begin{equation*}\label{Eq_holder}
\begin{split}
	\frac{|w(x,t)-w(x',t)|}{{|x-x'|}^{1/3}}
	& =\frac{|W(y,s)-W(y',s)|}{{|y-y'|}^{1/3}} \lesssim 1,
	\end{split}
\end{equation*} 
which proves the assertion (i), i.e.,   $w \in L^{\infty}([-\varepsilon,T_*];C^{1/3}(\mathbb{R}))$.  
\vspace{1\baselineskip}

Step 3: 
%
We claim that when $\beta > 1/3$, $C^\beta$ H\"older norm of $w$ blows up as $t\rightarrow T_*$. Again by  \eqref{change_var} and \eqref{WZPhi}, we note that 
\begin{equation}\label{main3_1}
	\frac{|w(x,t)-w(x',t)|}{|x-x'|^\beta}=e^{(\frac{3}{2}\beta-\frac{1}{2})s}\frac{|W(y,s)-W(y',s)|}{|y-y'|^{\beta}}. 
\end{equation} 
By the mean value theorem, we have
\begin{equation}\label{main3_2}
	\frac{|W(y,s)-W(0,s)|}{|y|^{\beta}}=|W_y(\overline{y},s)||y|^{1-\beta} 
\end{equation}
 for some $\overline{y}$ between $0$ and $y$. On the other hand, from \eqref{constraint} and \eqref{3}, it holds that
\begin{equation}\label{main3_3}
|W_y(y,s)| \geq 1/2
\end{equation}
for all $y$ sufficiently close to $0$ and all $s \geq s_0$. Now we let $y'=0$ and fix $y\ne 0$ sufficiently close to $0$. Combining \eqref{main3_1}--\eqref{main3_3}, we have  
\begin{equation}\label{LB-blow}
	[w(t,\cdot)]_{C^{\beta}}\geq \frac12 |y|^{1-\beta} e^{(\frac{3}{2}\beta-\frac{1}{2})s} \ge c_0 e^{(\frac{3}{2}\beta-\frac{1}{2})s}> 0.
\end{equation}
Since $\beta>1/3$, 
the right hand side becomes unbounded as $s\to \infty$, i.e., $t\to T_*$. 
On the other hand, 
using the fact 
\begin{equation*} 
|W(y,s)-W(y',s)| \le \int_{y'}^y |W_y (\tilde y,s) | d\tilde y \le C \int_{y'}^y  (1+\tilde y^2)^{(\beta-1)/2} d\tilde y \leq C |y-y'|^\beta
\end{equation*} 
where the second inequality holds thanks to  \eqref{Wydec_fin_2}, i.e., $|W_y (y,s) |  \leq C(1+y^2)^{-1/3} \le C (1+y^2)^{(\beta-1)/2}$ for any $\beta>1/3$, 
we have from \eqref{main3_1} that 
\begin{equation} \label{UB-blow}
\begin{split}
	\frac{|w(x,t)-w(x',t)|}{|x-x'|^\beta} & = e^{(\frac{3}{2}\beta-\frac{1}{2})s} \frac{|W(y,s)-W(y',s)|}{|y-y'|^{\beta}} \le C_0 e^{(\frac{3}{2}\beta-\frac{1}{2})s}
	\end{split} 
\end{equation} 
for some $C_0>0$. 
Note  from \eqref{tau} that 
\begin{equation}
\frac{1}{2}(T_\ast-t) \leq \tau(t) - t = \int_t^{T_\ast} (1-\dot{\tau})\,dt' \leq \frac{3}{2}(T_\ast-t).
\end{equation}
This together with \eqref{LB-blow} and \eqref{UB-blow} yields the desired blow-up rate as 
\[\left[ w(\cdot, t) \right]_{C^\beta} \sim (T_*-t)^{-\frac{3\beta-1}{2}}\] 
which together with $\left[ z(\cdot, t) \right]_{C^\beta} \lesssim 1$ for all $\beta>1/3$ and   \eqref{w-z}, in turn implies the assertion (ii) and  (iii). 
In particular, the blow up location of $w_x$ is  $x=\xi(T_*)$, and the blow up rate of $\| \partial_x w(\cdot, t) \|_{L^\infty}$ is $(t-T_*)^{-1}$.
\vspace{1\baselineskip}

Step 4: We shall show that the density is far from the vacuum state. From \eqref{iso_EP}, we  have 
	\[ (w-z)_t + ( w+z+ 2\sqrt{K} ) (w-z)_x + 4 \sqrt{K} z_x =0.\]
	By integrating the equation along the characteristic curve, we have 
	\[ \| ( w-z )(\cdot, t)\|_{L^\infty} \le \| w_0 - z_0 \|_{L^\infty} + 4 \sqrt{K} (\ve + T_\ast) \|z_x\|_{L^{\infty}}\le \frac{\sqrt{K}}{2} + O(\ve)\le \sqrt{K},\]
	where we have used \eqref{w-z-L-inf}, 
 the fact that $\sup_{t<T_\ast} \| z_x (\cdot, t) \|_{L^{\infty}}\le 1$, which follows from \eqref{Boot_3} for all $s\ge s_0$, and that $T_\ast=O(\ve)$. 
	This immediately yields the desired lower  bound for $\rho$ in the assertion (iv) since  
		$$\sup_{t< T_\ast} \|\rho(\cdot, t)-1\|_{L^{\infty}}= \sup_{t < T_\ast} \|e^{\frac{w-z}{2\sqrt{K}}}-1\|_{L^{\infty}}\leq \min\{e^{1/2}-1, 1-e^{-1/2} \} <2/5.$$
		Similarly, integrating \eqref{iso_w} and \eqref{iso_z} along the characteristic curves, respectively, and using \eqref{Phi_1_M} in Lemma~\ref{phi_lem}, we have 
		\[ \sup_{t< T_\ast}  \| (w, z)(\cdot, t) \|_{L^\infty} \le C\] for some $C>0$. This together with \eqref{w-z1} yields the desired upper bounds for  $\rho$ and $u$. We are done with   the proof of the assertion (iv). 
		\vspace{1\baselineskip}		
We finish the proof of Theorem~\ref{mainthm1}.
\end{proof}

\section{Closure of Bootstrap I}
\label{sec3-boot}
In this section, we first establish some preliminary estimates and prove Proposition~\ref{mainprop}.
\subsection{Preliminary estimates}		
In the following lemma, we show that  $|\rho(x,t)-1|<1$ as long as the bootstrap assumptions \eqref{tau} and \eqref{Boot_3} hold.  To see this,  
we let  $T_1$ be the number such that $\sigma_1 = -\log(\tau(T_1) - T_1)$ where  $\sigma_1>s_0$ is the time up to which the bootstrap assumptions hold. $T_1
$ is well-defined and $T_1<T_*$.

\begin{lemma}\label{rho-1_prop}
	Let $(\rho, u)$ be a smooth solution to \eqref{EP}   with  the initial data $(\rho_0,u_0)(x)$ satisfying \eqref{w-z-L-inf}.
 Assume also that  \eqref{tau} and \eqref{Boot_3} hold. 
Then it holds that for all $t\in[-\ve, T_1]$, 
	\begin{equation}\label{wz3M1}
		\|\rho(\cdot,t)-1\|_{L^{\infty}} < \frac{2}{5}.
	\end{equation}
\end{lemma}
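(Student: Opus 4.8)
The plan is to integrate the equation for $w-z$ along its characteristic curve, exactly as is done in Step 4 of the proof of Theorem~\ref{mainthm1}, but now on the shorter time interval $[-\ve, T_1]$ where the bootstrap hypotheses are in force. First I would record that $w-z = 2\sqrt{K}\log\rho$ by \eqref{w-z}, so controlling $\|\rho(\cdot,t)-1\|_{L^\infty}$ is equivalent to controlling $\|(w-z)(\cdot,t)\|_{L^\infty}$; more precisely $\|\rho(\cdot,t)-1\|_{L^\infty} \le \max\{e^{a}-1,\,1-e^{-a}\}$ if $\|(w-z)(\cdot,t)\|_{L^\infty}\le 2\sqrt{K}\,a/(2\sqrt{K}) $, i.e. it suffices to show $\|(w-z)(\cdot,t)\|_{L^\infty}$ stays below the threshold for which the right side is $<2/5$; since $\min\{e^{1/2}-1,\,1-e^{-1/2}\} = 1-e^{-1/2} < 2/5$, it is enough to prove $\|(w-z)(\cdot,t)\|_{L^\infty} \le \sqrt{K}$ for $t\in[-\ve,T_1]$, actually with strict inequality we get the strict bound $2/5$.

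Next I would subtract \eqref{iso_z} from \eqref{iso_w} to get
\begin{equation*}
(w-z)_t + (w+z+2\sqrt{K})(w-z)_x + 4\sqrt{K}\,z_x = 0,
\end{equation*}
which is a transport equation for $w-z$ with the forcing term $-4\sqrt{K}\,z_x$; note the nonlocal potential terms $-2\phi_x$ cancel in the difference, which is the key structural point. Integrating along the characteristic $\dot{X} = (w+z+2\sqrt{K})(X,t)$ emanating from an arbitrary point, I obtain
\begin{equation*}
\|(w-z)(\cdot,t)\|_{L^\infty} \le \|w_0 - z_0\|_{L^\infty} + 4\sqrt{K}\int_{-\ve}^{t} \|z_x(\cdot,t')\|_{L^\infty}\,dt'.
\end{equation*}
By hypothesis \eqref{w-z-L-inf}, $\|w_0-z_0\|_{L^\infty}\le \sqrt{K}/2$. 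For the integral, the bootstrap assumption \eqref{Boot_3} gives $\|Z_y(\cdot,s)\|_{L^\infty}\le e^{-3s/2}$, and since $z_x = e^{s}Z_y$ by \eqref{change_var}, \eqref{WZPhi} we get $\|z_x(\cdot,t)\|_{L^\infty} \le e^{-s/2} = (\tau(t)-t)^{1/2}$; with $|\tau(t)|\le 4\ve^2$ (Remark~\ref{T*_lem}, which only uses \eqref{tau}) and $|t|\le |T_1| \le |T_*| = O(\ve^2)$, this is $O(\ve)$ uniformly, so the whole time integral over an interval of length $O(\ve)$ is $O(\ve^2)$, in any case $\le C\ve$. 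Hence $\|(w-z)(\cdot,t)\|_{L^\infty} \le \sqrt{K}/2 + C\ve < \sqrt{K}$ for $\ve$ small enough, and therefore $\|\rho(\cdot,t)-1\|_{L^\infty} \le 1 - e^{-1/2} < 2/5$.

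The main obstacle — really the only nontrivial point — is making rigorous that the time interval $[-\ve,T_1]$ has length $O(\ve)$ before we know $T_*$ is finite: I would handle this by the same argument already used, namely $\dot{s}(t) = (1-\dot\tau(t))/(\tau(t)-t) > 0$ under \eqref{tau} guarantees $T_1$ is well-defined with $T_1 < T_*$, and $\tau(t)-t = e^{-s(t)} \le e^{-s_0} = \ve$ combined with $|\tau(t)| \le 4\ve^2$ bounds $|t| = |\tau(t) - (\tau(t)-t)| \le 4\ve^2 + \ve$, so $t + \ve = O(\ve)$. A subtlety worth flagging is that one must use only the hypotheses \eqref{tau} and \eqref{Boot_3} (not the full Proposition~\ref{mainprop}), since this lemma is a preliminary input to that proposition's proof; fortunately the argument above uses nothing else. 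I expect the write-up to be short.
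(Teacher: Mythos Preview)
Your approach is exactly the paper's: derive the transport equation for $w-z$, integrate along characteristics, use \eqref{w-z-L-inf} and the bootstrap bound on $z_x$ over a time interval of length $O(\ve)$, then convert to a bound on $\rho-1$. One correction: the change of variables gives $\partial y/\partial x = e^{3s/2}$ (see \eqref{dx-dy}), so $z_x = e^{3s/2}Z_y$, not $e^{s}Z_y$; hence \eqref{Boot_3} only yields $\|z_x(\cdot,t)\|_{L^\infty}\le 1$, not $e^{-s/2}$. This is harmless---the integral is then bounded by $4\sqrt{K}(\ve+T_1)\cdot 1 = O(\sqrt{K}\,\ve) < \sqrt{K}/2$ for $\ve$ small, and the rest of your argument goes through unchanged.
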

\begin{proof}
	From \eqref{iso_EP}, we  have 
	\[ (w-z)_t + ( w+z+ 2\sqrt{K} ) (w-z)_x + 4 \sqrt{K} z_x =0.\]
	By integrating along the characteristic curve  over $[-\ve, T_1]$, one has 
	\[ \| w-z\|_{L^\infty} \le \| w_0 - z_0 \|_{L^\infty} + 4 \sqrt{K} (\ve + T_1) \|z_x\|_{L^{\infty}}.\]
We note that, from Remark~\ref{T*_lem}, $\ve+T_1<1/8$ for sufficiently small $\ve>0$ and that, by  \eqref{Boot_3}, $\| z_x (\cdot, t) \|_{L^\infty}\le 1$ for $t\le T_1$. Hence,  using \eqref{w-z-L-inf}, we have   
	$$\|w-z\|_{L^{\infty}}\leq \sqrt{K},$$
which in turn, implies 
	$$\|\rho-1\|_{L^{\infty}}=\|e^{\frac{w-z}{2\sqrt{K}}}-1\|_{L^{\infty}}\leq \min\{e^{1/2}-1, 1-e^{-1/2} \} <2/5$$
	for all $t\in [-\veps,T_1)$.
\end{proof}

For the sake of convenience, we state an inequality (Lemma A.2 in \cite{BCK}). For the proof, we refer to \cite{BCK} and the references therein, for instance \cite{LLS}.

	\begin{lemma}[\cite{BCK}]
\label{LemmaAppen}
Assume that $\rho-1 \in L^\infty(\mathbb{R}) \cap L^2(\mathbb{R})$ satisfying $\inf_{x \in \mathbb{R}}\rho>0$ and $\lim_{|x|\to  \infty } \rho = 1$.  Let $\phi$ be the solution to the Poisson equation \eqref{EP_3}. Then, it holds that   
\begin{equation}\label{PhiH1Bd2}
\int_{\mathbb{R}} |\phi_x|^2 + (\phi-1)e^\phi +1  \,dx \leq \frac{1}{\theta_0} \int_{\mathbb{R}} |\rho-1|^2\,dx,
\end{equation}
where $\theta_0= \frac{1-  \underline \rho }{-\log  \underline \rho}$ if $\underline \rho:=\inf_{x\in\mathbb{R}}\rho \in (0,1)$, and $\theta_0=1$  if $\underline{\rho} \geq 1$. 
 
\end{lemma}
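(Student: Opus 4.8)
\textbf{Proof proposal for Lemma~\ref{LemmaAppen}.}

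The plan is to derive the inequality by multiplying the Poisson equation \eqref{EP_3} by $\phi$ itself, integrating over $\mathbb{R}$, and then using a convexity argument to compare the integrand on the right-hand side with $|\rho-1|^2$. First I would record that under the stated hypotheses (with $\rho - 1 \in L^2 \cap L^\infty$, $\inf \rho > 0$ and $\rho \to 1$ at infinity), the solution $\phi$ of $-\phi_{xx} = \rho - e^\phi$ decays at infinity, so that the formal integration by parts below is justified; this is the kind of decay estimate that can be taken for granted or cited from \cite{BCK, LLS}. Multiplying by $\phi$ and integrating, we get
\begin{equation*}
\int_{\mathbb{R}} |\phi_x|^2\,dx = \int_{\mathbb{R}} (\rho - e^\phi)\phi\,dx = \int_{\mathbb{R}} (\rho-1)\phi\,dx - \int_{\mathbb{R}}(e^\phi - 1)\phi\,dx.
\end{equation*}
Hence $\int |\phi_x|^2 + \int (e^\phi-1)\phi = \int (\rho-1)\phi$. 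The next step is to relate $\int (e^\phi-1)\phi$ to the quantity $\int (\phi-1)e^\phi + 1$ that actually appears in the statement: a direct computation gives $(e^\phi - 1)\phi - [(\phi-1)e^\phi + 1] = e^\phi - 1 - \phi \ge 0$ pointwise (by convexity of $t\mapsto e^t$). Therefore the left-hand side of \eqref{PhiH1Bd2} is bounded above by $\int (\rho-1)\phi\,dx$, and it remains to bound this last term.

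To bound $\int (\rho-1)\phi\,dx$ I would use Young's inequality in the form $(\rho-1)\phi \le \tfrac{1}{2\theta_0}(\rho-1)^2 + \tfrac{\theta_0}{2}\phi^2$ — but this only works if we can in turn control $\int \phi^2$ by the left-hand side. The cleaner route, and the one I expect the paper uses, is to exploit a pointwise inequality of the form $(\rho - 1)\phi \le \tfrac{1}{\theta_0}(\rho-1)^2 + \big[(\phi-1)e^\phi + 1 + |\phi_x|^2\big]$-type bound, or more precisely to show directly that $s\mapsto$ (the "energy" integrand) dominates a multiple of $(\rho-1)^2$ after using the Poisson equation to substitute $\rho - 1 = -\phi_{xx} + (e^\phi - 1)$. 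Substituting this into $\int(\rho-1)\phi$ reproduces $\int|\phi_x|^2 + \int(e^\phi-1)\phi$, so this is circular; the genuine gain must come from a scalar inequality relating $(\rho-1)\phi$, $(e^\phi-1)\phi$, and $(\rho-1)^2$ with the constant $\theta_0 = (1-\underline\rho)/(-\log\underline\rho)$. Concretely, writing $\phi$ and $\rho - 1$ and noting that where $\rho \ge \underline\rho$ one has the mean-value-type estimate: since $e^\phi$ ranges over an interval determined by $\underline\rho$, the function $\phi \mapsto (e^\phi - 1)/\phi$ is bounded below by $\theta_0$ on the relevant range, giving $(e^\phi-1)\phi \ge \theta_0 \phi^2$; combined with $\int|\phi_x|^2 \ge 0$ and Young's inequality $\int(\rho-1)\phi \le \tfrac{1}{4\theta_0}\int(\rho-1)^2 + \theta_0\int\phi^2$ we would absorb $\theta_0\int\phi^2$ into the left side. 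I'd need to be slightly careful with the constant to land exactly on $\tfrac{1}{\theta_0}$, possibly by tuning the Young split or by a sharper one-variable inequality.

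\emph{The main obstacle} is pinning down the precise scalar/convexity inequality that yields the constant $\theta_0 = (1-\underline\rho)/(-\log\underline\rho)$ — in particular justifying the lower bound on $(e^\phi-1)/\phi$ over the range of $\phi$ forced by $\inf\rho = \underline\rho$, and checking that the range of $\phi$ is indeed controlled by $\underline\rho$ (which itself uses a maximum-principle / comparison argument for the semilinear Poisson equation: at an interior extremum of $\phi$ one has $\phi_{xx}$ of a fixed sign, forcing $e^\phi \le \sup\rho$ and $e^\phi \ge \inf\rho$, hence $\phi \ge \log\underline\rho$). Once that range bound is in hand, the convexity estimate $e^t - 1 \ge \theta_0 t$ for $t \in [\log\underline\rho, \infty)$ (with $\theta_0$ the slope of the chord from $\log\underline\rho$ to $0$) does the rest, and the remaining manipulations are the routine integrations by parts and Young's inequality sketched above. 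Since the paper explicitly defers to \cite{BCK} and \cite{LLS} for the proof, I would at this point simply cite those references rather than reproduce the one-dimensional ODE comparison in full.
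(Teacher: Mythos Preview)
The paper gives no proof of this lemma: it is stated as Lemma~A.2 of \cite{BCK} and the reader is explicitly referred there (and to \cite{LLS}). So there is no ``paper's own proof'' to compare your attempt against; your closing remark that one should simply cite \cite{BCK} is exactly what the paper does.

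That said, your sketch is substantively correct and lands on the right ingredients by the end, even if the middle paragraphs wander. The clean argument is: multiply \eqref{EP_3} by $\phi$ and integrate to get $\int|\phi_x|^2 + \int(e^\phi-1)\phi = \int(\rho-1)\phi$; observe pointwise $(\phi-1)e^\phi+1 \le (e^\phi-1)\phi$, so the left side of \eqref{PhiH1Bd2} is $\le \int(\rho-1)\phi$; use the maximum principle (at an interior minimum $\phi_{xx}\ge 0$ forces $e^\phi\ge\rho\ge\underline\rho$) to get $\phi\ge\log\underline\rho$; then monotonicity of $t\mapsto(e^t-1)/t$ gives $(e^\phi-1)\phi\ge\theta_0\phi^2$ on that range. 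From $\theta_0\|\phi\|_2^2 \le \int(e^\phi-1)\phi \le \int(\rho-1)\phi \le \|\rho-1\|_2\|\phi\|_2$ one reads off $\|\phi\|_2\le\theta_0^{-1}\|\rho-1\|_2$, and plugging back into Cauchy--Schwarz yields $\int(\rho-1)\phi\le\theta_0^{-1}\|\rho-1\|_2^2$, which is exactly \eqref{PhiH1Bd2}. So there is no need to ``tune the Young split'' --- Cauchy--Schwarz delivers the constant $\theta_0^{-1}$ on the nose.
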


Together with \eqref{H}, \eqref{PhiH1Bd2} implies that the energy $H(t)$ is controlled by the size of $(w,z)$: 
\begin{equation}\label{H-L2}
 H(t)  \le  C  \| (\rho-1,u) \|_{L^2}^2   \le C  \| (w,z) \|_{L^2}^2   , 
\end{equation}
where $C>0$ depends only on $\inf_{x\in\mathbb{R}}\rho$ and $\sup_{x\in\mathbb{R}}\rho$.

The following lemma is proved in \cite[p.~8-9]{BCK}. Nevertheless, we restate it and give a brief proof in terms of $w$ and $z$ variables. Thanks to Lemma~\ref{rho-1_prop}, we see that \eqref{wz3M1} holds true as long as the bootstrap condition \eqref{tau} and \eqref{Boot_3} hold.
\begin{lemma}
\label{phi_lem} 
 Let $(\rho, u)$ be a smooth solution to \eqref{EP}   with  the initial data $(\rho_0,u_0)(x)$ satisfying \eqref{w-z-L-inf}. Then, it holds that 
\begin{equation}\label{Phi_0_M-1}
			| \phi(x,t) | \leq C_1,
\end{equation}
 and as long as $  \| \rho(\cdot,t) -1\|_{L^\infty(\mathbb{R})} <1$,
\begin{equation}\label{Phi_1_M}
			|\phi_x(x,t)| \leq C_2 
	\end{equation}
	for some 
	  $C_1>0$ and $C_2>0$. Here, $C_1$ and $C_2$ only depend on $\| (\rho_0-1,u_0) \|_{L^2}$, $\inf_{x\in\mathbb{R}}\rho_0$, and   $\sup_{x\in\mathbb{R}}\rho_0 $. 
\end{lemma}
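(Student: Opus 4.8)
The plan is to derive uniform bounds on $\phi$ and $\phi_x$ directly from the Poisson equation \eqref{EP_3} together with the conserved energy \eqref{H} and the coercivity estimate \eqref{PhiH1Bd2}. First I would observe that, under the standing hypothesis \eqref{w-z-L-inf}, Lemma~\ref{rho-1_prop} guarantees $\|\rho(\cdot,t)-1\|_{L^\infty}<2/5<1$, so $\rho$ stays uniformly bounded away from both vacuum and infinity, and the constant $\theta_0$ in Lemma~\ref{LemmaAppen} is uniformly controlled by $\inf_x\rho_0$ and $\sup_x\rho_0$ (these being propagated by the same argument). Then, combining \eqref{H}, \eqref{PhiH1Bd2} and the conservation $H(t)=H(-\ve)$, I get the time-uniform bound
\begin{equation*}
\int_{\mathbb{R}} |\phi_x|^2 + (\phi-1)e^\phi + 1 \, dx \le \frac{1}{\theta_0} H(-\ve) \le C\|(\rho_0-1,u_0)\|_{L^2}^2,
\end{equation*}
which in particular controls $\|\phi_x(\cdot,t)\|_{L^2}$ uniformly in $t$, and also $\|\phi(\cdot,t)\|_{L^2}$ via the elementary inequality $(\phi-1)e^\phi+1\gtrsim \phi^2$ valid on any bounded range of $\phi$ (one should first get a crude pointwise bound on $\phi$ — see below — to make this rigorous, or use the sharper fact that $s\mapsto (s-1)e^s+1$ is nonnegative and behaves quadratically near $s=0$).

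For the pointwise bound \eqref{Phi_0_M-1} on $\phi$ itself, the cleanest route is to use the Poisson equation in the form $-\phi_{xx}=\rho-e^\phi$ and a maximum-principle/barrier argument: at a point where $\phi$ attains a large positive value we have $\phi_{xx}\le 0$, hence $e^\phi\le \rho\le \sup\rho$, giving an upper bound $\phi\le \log\sup\rho$; symmetrically, at a large negative value $\phi_{xx}\ge 0$ forces $e^\phi\ge \rho\ge \inf\rho>0$, giving a lower bound. One must be slightly careful because $\phi$ need not attain its extrema, but since $\phi_x\in L^2$ and $\phi$ solves an elliptic equation with bounded right-hand side, $\phi\to 0$ at spatial infinity, so the sup/inf are attained or approached and the argument goes through. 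This yields $|\phi(x,t)|\le C_1$ with $C_1=C_1(\inf\rho_0,\sup\rho_0)$.

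With $\|\phi\|_{L^\infty}$ and $\|\phi_x\|_{L^2}$ in hand, the gradient bound \eqref{Phi_1_M} follows by a standard interpolation/elliptic argument: from $\phi_{xx}=e^\phi-\rho$ we get $\|\phi_{xx}\|_{L^\infty}\le e^{C_1}+\sup\rho$ (using $\|\rho-1\|_{L^\infty}<1$), and then the Landau–Kolmogorov-type inequality $\|\phi_x\|_{L^\infty}^2 \le C\|\phi\|_{L^\infty}\|\phi_{xx}\|_{L^\infty}$ (or equivalently $\|\phi_x\|_{L^\infty}\lesssim \|\phi\|_{L^\infty}+\|\phi_{xx}\|_{L^\infty}$ on $\mathbb{R}$) gives $|\phi_x(x,t)|\le C_2$. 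Alternatively one can integrate: writing $\phi_x(x) = \phi_x(x_0) + \int_{x_0}^x \phi_{xx}$ and choosing $x_0$ in an interval of fixed length where $|\phi_x(x_0)|$ is small (which exists because $\|\phi_x\|_{L^2}$ is bounded), then controlling the integral by $\|\phi_{xx}\|_{L^\infty}$.

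The main obstacle is not any single estimate but making the dependence of all constants transparent and genuinely time-uniform: one has to ensure that the lower bound $\inf_x\rho(\cdot,t)>0$ and the upper bound $\sup_x\rho(\cdot,t)<\infty$ are propagated (via Lemma~\ref{rho-1_prop} and the characteristic estimate for $w-z$), so that $\theta_0$ in Lemma~\ref{LemmaAppen} and the barrier constants in the maximum-principle argument do not degenerate in $t$; once that is secured, the energy conservation does all the work and the remaining steps are routine elliptic bootstrapping. Since the statement only asserts existence of constants $C_1,C_2$ depending on $\|(\rho_0-1,u_0)\|_{L^2}$, $\inf\rho_0$, $\sup\rho_0$, this is exactly the content of \cite[p.~8-9]{BCK}, and I would simply transcribe that argument into the $(w,z)$ notation as the excerpt promises.
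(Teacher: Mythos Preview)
Your argument for \eqref{Phi_1_M} via Landau--Kolmogorov interpolation is a correct alternative to the paper's route (which multiplies the Poisson equation by $\phi_x$, integrates from $-\infty$, and bounds $\int|\rho-1|^2$ and $e^\phi-\phi-1$ using $\|\rho-1\|_{L^\infty}<1$ and the conserved energy). Both work once $|\phi|\le C_1$ is in hand.

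The gap is in \eqref{Phi_0_M-1}. Your maximum-principle argument yields $\log\inf_x\rho(\cdot,t)\le\phi(\cdot,t)\le\log\sup_x\rho(\cdot,t)$, a bound in terms of $\rho$ \emph{at time $t$}, not in terms of initial data. To close you invoke Lemma~\ref{rho-1_prop}, but that lemma is stated under the bootstrap hypotheses \eqref{tau} and \eqref{Boot_3}, which are \emph{not} assumed in Lemma~\ref{phi_lem}; so what you actually establish is a strictly weaker conditional version. (Your parenthetical route through $\|\phi\|_{L^2}$ has the same circularity: extracting $\|\phi\|_{L^2}$ from $\int(\phi-1)e^\phi+1$ already requires a pointwise bound on $\phi$.) The paper avoids this entirely by following \cite{BCK}: from the conserved energy one obtains the unconditional pointwise estimate $V_-^{-1}(H(-\ve))\le\phi(x,t)\le V_+^{-1}(H(-\ve))$, where $V_\pm$ are explicit monotone functions built from $U(\tau)=(\tau-1)e^\tau+1$. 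Since $H(t)=H(-\ve)$ depends only on $(\rho_0,u_0)$, this gives $C_1=C_1(\text{initial data})$ with no hypothesis on $\rho(\cdot,t)$. You say at the end that you would ``simply transcribe'' the argument from \cite{BCK}, but the maximum-principle approach you outline is not that argument; the $V_\pm^{-1}$ energy estimate is the missing ingredient.
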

\begin{proof}
Using the energy conservation \eqref{H}, it is shown in \cite[p.~8]{BCK} that for the smooth solutions to \eqref{EP}, it holds that 
\begin{equation}\label{V-phi}
V_-^{-1}\left( H(-\veps) \right) \leq \phi(x,t) \leq V_+^{-1}\left( H(-\veps) \right),
\end{equation}
where $V_+^{-1}:[0,+\infty) \to [0,+\infty)$ and $V_-^{-1}:[0,+\infty) \to (-\infty,0]$ are the inverse functions of 
\begin{equation*}
V_+(z):= \displaystyle{ \int_0^z \sqrt{2U(\tau)}\,d\tau } \; \text{ for } z \geq 0,  \quad V_-(z):= \displaystyle{ \int_z^0 \sqrt{2U(\tau)}\,d\tau } \; \text{ for }  z \leq  0,
\end{equation*}
respectively, and $U(\tau):=(\tau-1)e^\tau + 1$ is nonnegative for all $\tau\in \mathbb{R}$. Combining \eqref{H-L2} and   \eqref{V-phi}, we have 
 \begin{equation*}\label{phi-c1} 
 |   \phi(x,t) | \le \max\{ V_+^{-1} (C \| (\rho_0-1,u_0)\|^2_{L^2(\mathbb{R})}) , V_-^{-1} (C \| (\rho_0-1,u_0)\|^2_{L^2(\mathbb{R})})\}=:C_1.
 \end{equation*}
 This proves \eqref{Phi_0_M-1}. 

Multiplying the Poisson equation \eqref{EP_3} by $-\phi_x$, and then integrating in $x$, we have 
\begin{equation}
\begin{split}\label{phi-x-2}
\frac{\phi_x^2}{2} 
& = \int_{-\infty}^x(\rho-1)(-\phi_x)\,dx + \int_{-\infty}^x(e^\phi-1)\phi_x\,dx \\
& \leq \frac{1}{2}  \int_{\mathbb{R}} | \rho-1 |^2 dx + \frac{1}{2} \int_{\mathbb{R}} | \phi_x|^2 dx + e^\phi - \phi -1. \\ 
%
%
%
%
%
%
\end{split}
\end{equation}
Here, we notice that as long as $|\rho(x,t)-1|<1$ for all $(x,t) \in \mathbb{R} \times [0,T]$, it holds that 
\begin{equation*}\label{phi-x-3}
\int_{-\infty}^\infty \frac{1}{4}|\rho-1|^2\,dx \leq \int_{-\infty}^\infty \rho \ln \rho - \rho +1 \,dx   \leq  \frac{H(t)}{K}. 
\end{equation*}
By the Taylor expansion using \eqref{Phi_0_M-1},
\[ 0\le e^\phi - 1 - \phi \lesssim   C_1^2.\]
Hence, by the energy conservation \eqref{H} and \eqref{H-L2}, we obtain from \eqref{phi-x-2} that 
\[ |\phi_x | \le C (  \| (\rho_0-1, u_0 ) \|_{L^2(\mathbb{R})}, \inf_{x\in\mathbb{R}}\rho_0, \sup_{x\in\mathbb{R}}\rho_0)  =: C_2.\] We finish the proof.

\end{proof}


Now using the uniform bounds for $\phi$ and $\phi_x$ in Lemma~\ref{phi_lem}, 
we obtain the uniform bounds for $\partial_y^i \Phi$, $i=0,1,2,3,4$.  
\begin{lemma}\label{phi_y1y2_lem} Let the same assumptions as in Lemma~\ref{rho-1_prop} hold. Then, there is a constant $C>0$ such that for all $s \in [s_0, \sigma_1]$, 
		\begin{equation}\label{Phiy2}
\|\Phi\|_{L^{\infty}} + e^{3s/2}\|\Phi_y\|_{L^{\infty}} + e^{3s}\|\Phi_{yy}\|_{L^{\infty}} \leq C.
		\end{equation} 
If we further assume that \eqref{Boot_3} and \eqref{Boot_2} hold, then for all $s \geq s_0$, we have 
\begin{equation}\label{Phiy34}
e^{7s/2} \|\partial_y^3\Phi\|_{L^{\infty}}   \leq C.
\end{equation}
In \eqref{Phiy2} and \eqref{Phiy34}, the constant $C >0$ depends on $\|(w_0,z_0)\|_{L^2}$, $P_{\pm}$ where $P_{\pm}$ is defined in \eqref{P-+}.			
		\end{lemma}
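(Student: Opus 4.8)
\textbf{Proof proposal for Lemma~\ref{phi_y1y2_lem}.}

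The plan is to bootstrap from the $L^\infty$ bounds on $\phi$ and $\phi_x$ supplied by Lemma~\ref{phi_lem} to bounds on the higher $x$-derivatives of $\phi$, and then translate everything into the self-similar variables via \eqref{change_var}--\eqref{WZPhi}, picking up the stated powers of $e^{s}$ from the chain rule $\partial_x = e^{3s/2}\partial_y$. First I would record that $\Phi(y,s) = \phi(x,t)$, $\Phi_y = e^{3s/2}\phi_x$, $\Phi_{yy} = e^{3s}\phi_{xx}$, and $\partial_y^k\Phi = e^{3ks/2}\partial_x^k\phi$ in general. Thus \eqref{Phiy2} is exactly the claim that $\|\phi\|_{L^\infty}, \|\phi_x\|_{L^\infty}, \|\phi_{xx}\|_{L^\infty}$ are bounded uniformly; the first two are Lemma~\ref{phi_lem} (using that Lemma~\ref{rho-1_prop} guarantees $\|\rho-1\|_{L^\infty}<2/5<1$ under \eqref{tau} and \eqref{Boot_3}, so the hypothesis of \eqref{Phi_1_M} holds), and the third follows from the Poisson equation \eqref{EP_3} itself: $-\phi_{xx} = \rho - e^{\phi} = (\rho - 1) - (e^{\phi}-1)$, which is bounded in $L^\infty$ by $\|\rho - 1\|_{L^\infty} + (e^{C_1}-1) \le C(\|(\rho_0-1,u_0)\|_{L^2}, \inf\rho_0, \sup\rho_0)$.

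For \eqref{Phiy34} I would differentiate the Poisson equation once in $x$: $-\phi_{xxx} = \rho_x - e^{\phi}\phi_x$. In self-similar variables, using $\rho = e^{P}$ with $P = \frac{e^{-s/2}W+\kappa-Z}{2\sqrt{K}}$, we have $\rho_x = e^{3s/2}\rho\, P_y = e^{3s/2} e^{P}\cdot \frac{e^{-s/2}W_y - Z_y}{2\sqrt{K}} = e^{s} e^{P}\frac{W_y}{2\sqrt{K}} - e^{3s/2}e^{P}\frac{Z_y}{2\sqrt{K}}$. The bootstrap assumptions \eqref{Boot_2} (via \eqref{Uy1}, giving $|W_y|\le 1$) and \eqref{Boot_3} (giving $\|Z_y\|_{L^\infty}\le e^{-3s/2}$), together with $\|P\|_{L^\infty}$ bounded (from $\|\rho-1\|_{L^\infty}<2/5$), show that $\|\rho_x\|_{L^\infty} \lesssim e^{s}$; and $\|e^{\phi}\phi_x\|_{L^\infty}\le e^{C_1}C_2$ is bounded. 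Hence $\|\phi_{xxx}\|_{L^\infty}\lesssim e^{s}$, so $\|\partial_y^3\Phi\|_{L^\infty} = e^{9s/2}\|\phi_{xxx}\|_{L^\infty}$... wait, that gives $e^{9s/2}\cdot e^{s}$, which is not $\le C e^{-7s/2}$. Let me reconsider: the claim $e^{7s/2}\|\partial_y^3\Phi\|_{L^\infty}\le C$ means $\|\partial_y^3\Phi\|_{L^\infty}\le C e^{-7s/2}$, i.e.\ $e^{9s/2}\|\phi_{xxx}\|_{L^\infty}\le C e^{-7s/2}$, i.e.\ $\|\phi_{xxx}\|_{L^\infty}\le C e^{-8s}$, which is a strong \emph{decay} statement — not plausible from the crude argument above. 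So the right approach must instead exploit structure: differentiate \eqref{EP2_3} (the self-similar Poisson equation $-\Phi_{yy}e^{3s} = e^{\frac{e^{-s/2}W+\kappa-Z}{2\sqrt{K}}} - e^{\Phi}$) directly in $y$, which gives $-\partial_y^3\Phi\, e^{3s} = \partial_y\big(e^{P} - e^{\Phi}\big) = e^{P}P_y - e^{\Phi}\Phi_y$, so $\|\partial_y^3\Phi\|_{L^\infty} \le e^{-3s}\big(\|e^{P}P_y\|_{L^\infty} + \|e^{\Phi}\Phi_y\|_{L^\infty}\big)$. Now $\|P_y\|_{L^\infty} = \frac{1}{2\sqrt K}\|e^{-s/2}W_y - Z_y\|_{L^\infty}\lesssim e^{-s/2}$ (using $|W_y|\le 1$ and $\|Z_y\|_{L^\infty}\le e^{-3s/2}$), and $\|\Phi_y\|_{L^\infty}\lesssim e^{-3s/2}$ from \eqref{Phiy2}; both are $\lesssim e^{-s/2}$, so $\|\partial_y^3\Phi\|_{L^\infty}\lesssim e^{-3s}e^{-s/2} = e^{-7s/2}$, which is precisely \eqref{Phiy34}.

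The main obstacle, and the thing to get right, is the bookkeeping of the exponential weights: one must consistently differentiate the \emph{self-similar form} of the Poisson equation \eqref{EP2_3} rather than the physical one and then convert, since only in that form do the gains $e^{-3s}$ from inverting $\partial_{yy}$ combine correctly with the smallness of $W_y, Z_y, \Phi_y$. The remaining care is to verify that all constants are uniform: $\|P\|_{L^\infty}$ is bounded via Lemma~\ref{rho-1_prop} (which needs only \eqref{tau}, \eqref{Boot_3}), and $\|\Phi\|_{L^\infty}, \|\Phi_y\|_{L^\infty}$ via Lemma~\ref{phi_lem} and \eqref{Phiy2}, all with constants depending only on $\|(w_0,z_0)\|_{L^2}$ (equivalently $\|(\rho_0-1,u_0)\|_{L^2}$), $\inf\rho_0$, $\sup\rho_0$, and $P_\pm$. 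Finally, for the first part \eqref{Phiy2} one should note the constant there does not require the full strength of \eqref{Boot_2}, only \eqref{tau} and \eqref{Boot_3} (through Lemma~\ref{rho-1_prop} and Lemma~\ref{phi_lem}), matching the statement; the stronger hypothesis is invoked solely for \eqref{Phiy34}, where $|W_y|\le 1$ is needed.
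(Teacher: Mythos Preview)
Your proposal is correct and follows essentially the same approach as the paper: deduce \eqref{Phiy2} from Lemma~\ref{phi_lem} together with the Poisson equation bound $|\phi_{xx}|=|\rho-e^{\phi}|\le C$, and obtain \eqref{Phiy34} by differentiating the self-similar Poisson equation \eqref{EP2_3} once in $y$ and bounding $e^{P}P_y$ and $e^{\Phi}\Phi_y$ using $|W_y|\le 1$, \eqref{Boot_3}, and \eqref{Phiy2}. (Your chain rule is written backwards --- one has $\phi_x=e^{3s/2}\Phi_y$, hence $\Phi_y=e^{-3s/2}\phi_x$, not the other way around --- but this slip does not affect your final argument, which proceeds correctly in the $y$-variable.)
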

		\begin{proof}
  Note that $\phi=\Phi$,  $\phi_x=\Phi_y e^{3s/2}$ and $\phi_{xx}=\Phi_{yy }e^{3s}$ from \eqref{WZPhi} and \eqref{dx-dy}.
 We also see from \eqref{EP_3}, \eqref{wz3M1} and \eqref{Phi_0_M-1} that \[ | \phi_{xx} (x,t) | = | \rho(x,t) - e^{\phi(x,t)}| \le | \rho (x,t) | + | e^{\phi(x,t)}   | \le C \]
for some $C>0$. 

Then by this together with  Lemma~\ref{phi_lem}, we obtain \eqref{Phiy2}. 
To show \eqref{Phiy34}, we differentiate  \eqref{EP2_3} with respect to  $y$ to obtain 
\begin{equation*}
-\partial_y^3\Phi e^{3s}  =\frac{1}{2\sqrt{K}}(e^{-s/2}W_y-Z_y)e^{\frac{e^{-s/2}W+\kappa-Z}{2\sqrt{K}}}-\Phi_ye^{\Phi}.
\end{equation*}
Recalling the relation \eqref{w-z}, yielding  $w-z = 2\sqrt{K} \log \rho$, and  using  \eqref{WZPhi} and  \eqref{wz3M1}, 
we have 
\begin{equation}\label{Phi3y-e3s} 
\left| e^{\frac{e^{-s/2}W+\kappa-Z}{2\sqrt{K}}} \right| = | \rho | \le \frac75. 
\end{equation}
Using this, \eqref{Boot_3}, \eqref{Uy1} and \eqref{Phiy2}, we have
\begin{equation*}
\begin{split}
					|\partial_y^3\Phi e^{3s}| 
					& \leq \frac{7}{10 \sqrt{K}}(e^{-s/2}|W_y|+|Z_y|) + Ce^{-3s/2} \\
					& \leq \frac{7}{10\sqrt{K}}(e^{-s/2}+e^{-3s/2}) + Ce^{-3s/2} \le C e^{-s/2}.
\end{split}
\end{equation*}
This gives the desired bound for $\partial_y^3\Phi$ in \eqref{Phiy34}. 
 We are done. 
		\end{proof}
		\begin{lemma}\label{difZn}
			If the same  assumptions as in Proposition~\ref{mainprop} hold, then there exists a constant $C>0$ satisfying 
			\begin{equation}\label{Zyy_56}
			\| Z_{yy}\|_{L^{\infty}} \leq e^{-5s/6},
			\end{equation}
			\begin{equation}\label{Phi4y}
\|\partial_y^4\Phi\|_{L^{\infty}} \leq Ce^{-7s/2}
			\end{equation}
   
   \begin{equation}\label{difZn_1}
				\|\partial_y^3 Z\|_{L^{\infty}} \leq C e^{-5s/6},  
			\end{equation}
			\begin{equation}\label{Phiy5}
				\|\partial_y^5\Phi \|_{L^{\infty}}\leq C e^{-7s/2},
			\end{equation}
			\begin{equation}\label{difZn_2}
			\|\partial_y^4 Z\|_{L^{\infty}} \leq C e^{-5s/6}
		\end{equation}
			for all $s\in[s_0,\sigma_1]$.
		\end{lemma}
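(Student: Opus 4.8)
The plan is to close these six estimates by the same transport-damping mechanism used throughout Section~\ref{sec3-boot}: each quantity solves a forced transport equation along the flow of $U^W$ or $U^Z$, the damping coefficient (the constant term in the parenthesis of \eqref{EP2_2D}--\eqref{EP2_3D}) is strictly positive, and the forcing decays fast enough in $s$ that the bound propagates. I would carry out the six estimates in the stated order because they are coupled: \eqref{Phi4y} needs \eqref{Zyy_56}, \eqref{difZn_1} needs \eqref{Phi4y} and \eqref{Zyy_56}, \eqref{Phiy5} needs \eqref{difZn_1}, and \eqref{difZn_2} needs \eqref{Phiy5} and \eqref{difZn_1}.

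\textbf{Step 1 (the bound \eqref{Zyy_56} for $Z_{yy}$).} I would work with \eqref{EP2_3D2}. The damping constant here is $3$, and the remaining zeroth-order terms are $\tfrac{2W_y}{1-\dot\tau}+\tfrac{3e^{s/2}}{1-\dot\tau}Z_y$, which by \eqref{Uy1}, \eqref{tau} and \eqref{Boot_3} are $O(1)$ (in fact the $W_y$-term is bounded by $2+o(1)$ and the $Z_y$-term is exponentially small), so the effective damping rate stays bounded below by, say, $5/6$ after absorbing the $W_y$ contribution—this is where the margin between $3$ and $5/6$ is used. The forcing is $-\tfrac{2e^{s/2}}{1-\dot\tau}\partial_y^3\Phi-\tfrac{W_{yy}Z_y}{1-\dot\tau}$; by \eqref{Phiy34} the first term is $O(e^{s/2}e^{-7s/2})=O(e^{-3s})$, and by \eqref{EP2_1D3} (giving $|W_{yy}|\le 15$) and \eqref{Boot_3} the second term is $O(e^{-3s/2})$. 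Integrating the transport inequality from $s_0$ with initial data $\|Z_{yy}(\cdot,s_0)\|_{L^\infty}\le 1$ from \eqref{1D3}, and using that $e^{-5s_0/6}=\veps^{5/6}$ while the initial datum is $1$—so one needs $\veps$ small enough that $1\le$ the constant times $e^{-5s_0/6}\cdot e^{5s_0/6}$... — actually the cleanest route is a Grönwall/continuation argument: define a first exit time, show the transported bound is strictly better than $e^{-5s/6}$ for $s$ slightly beyond $s_0$ using the fast-decaying forcing and the strict damping, contradiction. This is the standard scheme, so I would just invoke it.

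\textbf{Steps 2--5 (the remaining five).} For \eqref{Phi4y} and \eqref{Phiy5} I would differentiate the Poisson relation \eqref{EP2_3} three and four times in $y$ (as was done for $\partial_y^3\Phi$ in Lemma~\ref{phi_y1y2_lem}), solve algebraically for $\partial_y^4\Phi e^{3s}$ and $\partial_y^5\Phi e^{3s}$, and bound the right-hand side: it consists of products of $e^{-s/2}W_y,\,e^{-s/2}W_{yy},\,e^{-s/2}\partial_y^3W,\,e^{-s/2}\partial_y^4W$ with $Z_y,Z_{yy},\partial_y^3Z$ and lower $\Phi$-derivatives times $e^{\frac{e^{-s/2}W+\kappa-Z}{2\sqrt K}}=\rho$ (bounded by $7/5$ via \eqref{Phi3y-e3s}) and $e^\Phi$ (bounded via \eqref{Phi_0_M-1}). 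Using \eqref{Boot_2} (so $\|\partial_y^3W\|,\|\partial_y^4W\|\le M$), \eqref{Boot_3}, \eqref{Zyy_56}, \eqref{Phiy2}, \eqref{Phiy34}, the worst term is $e^{-s/2}\cdot(\text{bounded})$, giving $|\partial_y^4\Phi e^{3s}|,|\partial_y^5\Phi e^{3s}|\le Ce^{-s/2}$, i.e. the claimed $Ce^{-7s/2}$. For \eqref{difZn_1} I would use $\partial_y^3$ of \eqref{EP2_2} (an equation of the type \eqref{EP2_3D} with $k=3$, which is not displayed but has damping constant $\tfrac92$): the forcing involves $e^{s/2}\partial_y^4\Phi=O(e^{-3s})$, and terms $W_{yy}Z_{yy}$, $\partial_y^3W\,Z_y$, each $O(e^{-5s/6})$ by Step~1, \eqref{Boot_3} and \eqref{EP2_1D5}; the transport-damping argument then gives \eqref{difZn_1}. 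For \eqref{difZn_2} one uses $\partial_y^4$ of \eqref{EP2_2} (damping constant $6$) with forcing $e^{s/2}\partial_y^5\Phi=O(e^{-3s})$ and products like $\partial_y^4W\,Z_y$, $\partial_y^3W\,Z_{yy}$, $W_{yy}\partial_y^3Z$, all $O(e^{-5s/6})$ by the already-established bounds, closing the last estimate. Throughout, the initial data at $s=s_0$ are controlled because $z_0\in C^4$ is small by \eqref{init_24}–\eqref{EP-W-IC}, so in self-similar variables $\partial_y^k Z(\cdot,s_0)$ carries the factor $e^{-\frac{(2k-?)s_0/2}{}}$... more precisely each extra $y$-derivative costs a factor $e^{-3s_0/2}$ relative to $\partial_x^k z$, so $\|\partial_y^3Z(\cdot,s_0)\|,\|\partial_y^4Z(\cdot,s_0)\|$ are bounded by $\veps^{\text{(positive power)}}$, which is $\le Ce^{-5s_0/6}$ for $\veps$ small.

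\textbf{Main obstacle.} The real work is bookkeeping the $s$-weights so that every forcing term genuinely decays faster than the target rate $e^{-5s/6}$ (for $Z$-derivatives) or $e^{-7s/2}$ (for $\Phi$-derivatives), and checking that the zeroth-order coefficients perturbing the damping constants—namely $\tfrac{kW_y}{1-\dot\tau}+\tfrac{(k-1)e^{s/2}}{1-\dot\tau}Z_y$—are small enough to leave a strictly positive effective damping margin; the $W_y$ contribution is the delicate one since $|W_y|$ is only bounded by $1$, not small, so one must verify $k-1$ (or $k/2$, etc.) exceeds the loss, which it does for $k=3,4$. A secondary nuisance is that the equations $\partial_y^3$ of \eqref{EP2_2} and $\partial_y^4$ of \eqref{EP2_2} are not written out in the excerpt, so I would first derive them (routine Leibniz differentiation, exactly parallel to \eqref{EP2_2D}) before running the transport argument. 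None of this is conceptually hard once Lemmas~\ref{phi_y1y2_lem} and the bootstrap bounds \eqref{Boot_2}–\eqref{Boot_3} are in hand; it is the same argument repeated six times with progressively stronger inputs.
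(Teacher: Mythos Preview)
Your approach is essentially the paper's: the same cascade order, the same transport--damping argument along the $U^Z$ flow, the same algebraic differentiation of the Poisson equation for $\partial_y^4\Phi$ and $\partial_y^5\Phi$, and the same damping lower bounds ($D^Z_2\ge 5/6$, $D^Z_3\ge 1$, $D^Z_4\ge 1$).

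The one slip is in Step~1: you cite \eqref{1D3} for the initial datum, but \eqref{1D3} is $|W_{yy}(y,s_0)|\le 1$, not $Z_{yy}$. If the initial datum were really $1$, the target bound $e^{-5s_0/6}=\veps^{5/6}$ would fail at $s=s_0$ and no continuation argument would save it. The correct input is the one you yourself invoke at the end for $\partial_y^3Z,\partial_y^4Z$: from $\partial_y=e^{-3s/2}\partial_x$ and $\|z_0\|_{C^4}\le 1/4$ (see \eqref{init_24}) one has $\|Z_{yy}(\cdot,s_0)\|_{L^\infty}=e^{-3s_0}\|\partial_x^2 z_0\|_{L^\infty}\le \veps^3/4\ll \veps^{5/6}$, so the initial condition is compatible with the target and the straight integration along characteristics (no exit-time trick needed) gives
\[
\|Z_{yy}(\cdot,s)\|_{L^\infty}\le \tfrac{\veps^3}{4}e^{-5(s-s_0)/6}+C\int_{s_0}^s e^{-5(s-s')/6}e^{-3s'/2}\,ds'\le e^{-5s/6}
\]
for $\veps$ small. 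With this fix your Step~1 is exactly the paper's computation \eqref{Zyy-pf}, and the rest goes through as you outline.
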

		
		\begin{proof} We first prove \eqref{Zyy_56}. 
Recalling \eqref{EP2_3D2}, $Z_{yy}$ satisfies
\begin{equation}\label{Zyyeq}
	\partial_s Z_{yy}+D^Z_2 Z_{yy}+U^Z \partial_y^3Z = F^Z_2,
\end{equation}
where $U^Z$ is defined in \eqref{UW}, and
\begin{subequations}
	\begin{align*}
		D^Z_2(y,s) &:=3+\frac{2W_y}{1-\dot{\tau}}+\frac{3e^{s/2}Z_y}{1-\dot{\tau}},\\
		F^Z_2(y,s) &:= -\frac{2e^{s/2}\partial_y^3\Phi}{1-\dot{\tau}}-\frac{W_{yy}Z_y}{1-\dot{\tau}}.
	\end{align*}
\end{subequations}
Thanks to \eqref{tau}, \eqref{Boot_3} and \eqref{Uy1}, we see that
\begin{equation}\label{DZ2}
\begin{split}
	D^Z_{2}
	& \geq 3-(1+4\veps)(2+3e^{-s})  \geq \frac{5}{6}
\end{split}
\end{equation}
for all sufficiently small $\veps>0$.  Also, by \eqref{Boot_3}, \eqref{EP2_1D3} and \eqref{Phiy34}, we have
\begin{equation}\label{FZ2}
	|F^Z_2| \leq C(e^{-3s}+e^{-3s/2})\leq Ce^{-3s/2}.
\end{equation} 
Let  $\psi$ be the characteristic curve satisfying $\partial_s \psi=U^Z$ with $\psi(y,s_0)=y$. 
Integrating $Z_{yy}$ along $\psi$ gives
\begin{equation*}
	Z_{yy}(\psi(y,s),s)=Z_{yy}(y,s_0)e^{-\int^s_{s_0}(D^Z_2\circ\psi)\,ds'}+\int^s_{s_0}e^{-\int^s_{s'}(D^Z_2\circ\psi)\,ds''}(F^Z_2\circ\psi)\,ds'. 
\end{equation*}
We see, from $Z_{yy}(y,s_0) = e^{-3s_0} z_{xx}(x,-\ve)$ and \eqref{init_24} that
$|Z_{yy}(y,s_0)| \le e^{-3s_0}/4= \ve^3/4$. 
Using this, \eqref{DZ2} and \eqref{FZ2}, we get
\begin{equation}\label{Zyy-pf}
\begin{split}
	\|Z_{yy}\|_{L^{\infty}}
	& \leq \|Z_{yy}(y,s_0)\|_{L^{\infty}}e^{-5(s-s_0)/6} + C \int_{s_0}^s e^{-5(s-s')/6} e^{-3s'/2}\,ds' \\
	& \leq \frac{\veps^{13/6}}{4}e^{-5s/6}+ C\veps^{3/2}e^{-5s/6} \\
	& \leq e^{-5s/6}
\end{split}
\end{equation} 
for all sufficiently small $\veps>0$.
Here we have used the fact that $\psi(\cdot, s):\mathbb{R} \to \mathbb{R}$ is invertible. 
This completes the proof of \eqref{Zyy_56}.

Now we show \eqref{Phi4y}. 
Differentiating \eqref{EP2_3} with respect to $y$ twice, we have
\begin{equation*}
-\partial_y^4\Phi e^{3s} =\left(\frac{1}{2\sqrt{K}}(e^{-s/2}W_{yy}-Z_{yy})+\frac{1}{4K}(e^{-s}W_y-Z_y)^2\right)e^{\frac{e^{-s/2}W+\kappa-Z}{2\sqrt{K}}}-(\Phi_{yy}+\Phi_y^2)e^{\Phi}. 
\end{equation*}
We use \eqref{Boot_3}, \eqref{EP2_1D3}, \eqref{Uy1},  \eqref{Phiy2}, \eqref{Phi3y-e3s} and \eqref{Zyy-pf}, to  obtain the desired bound for $\partial_y^4\Phi$ in  \eqref{Phi4y}.

		    Next, we  prove \eqref{difZn_1}.  
			Taking $\partial_y$ of  \eqref{EP2_3D2}, we have the equation for  $\partial^3_yZ$,
			\begin{equation}\label{Zy3eq}
				\partial_s \partial_y^3Z +D^Z_3 \partial_y^3Z +U^Z \partial_y^4Z =F^Z_3,
			\end{equation}
			where $U^Z$ is defined in \eqref{UW}, and
			\begin{subequations}
				\begin{align*}
					D^Z_3(y,s) &:= \frac{9}{2}+\frac{3W_y}{1-\dot{\tau}}+\frac{4e^{s/2}Z_y}{1-\dot{\tau}}, 
					\\
					F^Z_3(y,s) &:= -\frac{2e^{s/2}\partial_y^4\Phi}{1-\dot{\tau}}-\frac{3e^{s/2}Z_{yy}^2}{1-\dot{\tau}}-\frac{1}{1-\dot{\tau}}(\partial_y^3WZ_y+3Z_{yy}W_{yy}).
				\end{align*}
			\end{subequations}
		   Similarly as \eqref{DZ2}, we have
			\begin{equation}\label{DZ3}
				D^Z_3 \geq \frac{9}{2}-(3 + O(\veps))\geq 1
			\end{equation}
			for sufficiently small $\veps>0$.  Also, we see that
			\begin{equation}\label{FZ3}
				\begin{split}
					|F^Z_3| & \leq 2 \left|2e^{s/2}\partial_y^4\Phi+3e^{s/2}Z_{yy}^2+(\partial_y^3WZ_y+3Z_{yy}W_{yy})\right|
					\\
					&\leq 2\left(Ce^{-3s }+3e^{-7s/6}+M_3e^{-3s/2}+45e^{-5s/6}\right)
					\\
					&\leq Ce^{-5s/6},
				\end{split}
			\end{equation}
			where we have used \eqref{tau} in the first inequality, and used \eqref{Boot_3}, \eqref{Boot_2},  \eqref{Phiy34}, \eqref{Zyy_56}, \eqref{Phi4y}, 
       in  the second inequality. Upon  integrating \eqref{Zy3eq} along $\psi$ as above, we see that  \eqref{DZ3} and \eqref{FZ3} yield  
			\begin{equation*}
				\begin{split}
					\|\partial_y^3Z\|_{L^{\infty}}
					&\leq \|\partial_y^3Z_0\|_{L^{\infty}}e^{-(s-s_0)}+ C\int_{s_0}^s e^{-(s-s')} e^{-5s'/6}\,ds' \\
					& \leq \frac{1}{4}\veps^{7/2}e^{-s}+ Ce^{-5s/6} \\
					& \leq C e^{-5s/6},
				\end{split}
			\end{equation*}
			where we have used \eqref{init_24}, i.e.,  $\veps^{-9/2}\|\partial_y^3Z(\cdot,s_0) \|_{L^{\infty}} = \|\partial_x^3z(\cdot,-\veps)\|_{L^{\infty}}\leq 1/4$. Thus, \eqref{difZn_1} holds for sufficiently small $\veps>0$.

Next we shall prove \eqref{Phiy5}. By taking $\partial_y^3$ of \eqref{EP2_3}, 
we have 
	\begin{equation*}
	\begin{split}
	-\partial_y^5\Phi e^{3s} 
	& =\frac{1}{2\sqrt{K}}(e^{-s/2}\partial_y^3W-\partial_y^3Z)\rho  	+\frac{3}{4K}(e^{-s}W_y-Z_y)(e^{-s}W_{yy}-Z_{yy})\rho  \\
	& \quad  +\frac{1}{8K\sqrt{K}}(e^{-s}W_y-Z_y)^3\rho -(\partial_y^3\Phi+3\Phi_y\Phi_{yy}+\Phi_y^3)e^{\Phi}.
	\end{split}
	\end{equation*}
Note that $\rho = 	e^{\frac{e^{-s/2}W+\kappa-Z}{2\sqrt{K}}} \leq 2	$ by \eqref{wz3M1}.  			
			We estimate each term. By \eqref{EP2_1D5} and \eqref{difZn_1}, we have
			\begin{equation*}
				|e^{-s/2}\partial_y^3W-\partial_y^3Z| \leq (M_3e^{-s/2}+Ce^{-5s/6}).
			\end{equation*} 
By  \eqref{Boot_3},  \eqref{EP2_1D3} and \eqref{Uy1}, we get 	
			\begin{multline*}
				|e^{-s}W_y-Z_y| |e^{-s}W_{yy}-Z_{yy}|
				  \leq 
				 (e^{-s}+e^{-3s/2})(15e^{-s}+e^{-5s/6}),
			\end{multline*} 
			\begin{equation*}
				|e^{-s}W_y-Z_y|^3 \leq (e^{-s}+e^{-3s/2})^3.
			\end{equation*}
By Lemma~\ref{phi_y1y2_lem}, we have 
\begin{equation*}
				|\partial_y^3\Phi+3\Phi_y\Phi_{yy}+\Phi_y^3|e^{\Phi}\leq C(e^{-7s/2}+3e^{-9s/2}+e^{-9s/2})e^{C}.
			\end{equation*}
		Combining the above estimates, we obtain \eqref{Phiy5}.

Next we show  \eqref{difZn_2}.  Taking $\partial_y^2$ of \eqref{EP2_3D2}, we have 
			\begin{equation}\label{Zy4eq}
				\partial_s \partial_y^4Z +D^Z_4 \partial_y^4Z +U^Z\partial_y^5Z = F^Z_4
			\end{equation}
			where $U^Z$ is defined in \eqref{UW}, and
			\begin{subequations}
				\begin{align*}
					D^Z_4 &:=6 +\frac{4W_y}{1-\dot{\tau}}+\frac{5e^{s/2}Z_y}{1-\dot{\tau}},
					\\
					F^Z_4 &:=-\frac{2e^{s/2}\partial_y^5\Phi}{1-\dot{\tau}}-\frac{10e^{s/2}Z_{yy}\partial_y^3Z}{1-\dot{\tau}}-\frac{1}{1-\dot{\tau}}(\partial_y^4WZ_y+4\partial_y^3WZ_{yy}+6W_{yy}\partial_y^3Z).
				\end{align*}
			\end{subequations}
In the same way as \eqref{DZ2}, we see that
			\begin{equation}\label{DZ4}
				D^Z_4 \geq 6-(4 + O(\veps))\geq 1
			\end{equation}
as $\veps \to 0$.  By \eqref{tau}, \eqref{Boot_3}, \eqref{Boot_2},   \eqref{Zyy_56}, \eqref{difZn_1} and \eqref{Phiy5}, we obtain for a constant $C>0$ that
		\begin{equation}\label{FZ4}
			\begin{split}
				|F^Z_4| & \leq 2 \left| 2e^{s/2}\partial_y^5\Phi+10e^{s/2}Z_{yy}\partial_y^3Z+\partial_y^4WZ_y+4\partial_y^3WZ_{yy}+6W_{yy}\partial_y^3Z \right|   \\ 
	&\leq C \left(e^{-3s}+ e^{-7s/6} +e^{-3s/2}+e^{-5s/6}+ e^{-5s/6}\right)
				\\
				&\leq C e^{-5s/6}.
			\end{split}
		\end{equation}
After integrating \eqref{Zy4eq} as above, it is straightforward to see that \eqref{difZn_2} follows by combining \eqref{init_24}, \eqref{DZ4} and \eqref{FZ4}.
			\end{proof}

\begin{lemma}
	For each $\veps\in(0,\veps_0)$, if the same  assumptions as in Proposition~\ref{mainprop} hold for all $s\in[s_0, \sigma_1]$, then it holds that 
\begin{equation}\label{temp_2}
		 |\sigma(y,s)| \leq 2|y|e^{-s} + 2e^{-s/3},
			\end{equation}
			\begin{equation}|\kappa|\leq |\kappa_0|+1 , \quad |\xi|\leq C\veps, \label{xibound}
			\end{equation} 
	\begin{equation}\label{UW_far}
		\inf_{\{|y|\geq 1, s\in[s_0,\sigma_1]\}} U^W(y,s) \frac{y}{|y|}  \geq \frac{1}{8}
	\end{equation}
	 for some $C>0$, where $U^W$ and $\sigma$ are defined in \eqref{UW}.
\end{lemma}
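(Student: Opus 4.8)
The four bounds follow from the explicit modulation ODEs \eqref{mod1}--\eqref{mod3} fed with the decay estimates already available: the bootstrap control \eqref{Boot_3}, \eqref{Boot_2}, \eqref{tau} on $(W,Z,\dot\tau)$, and the estimates of Lemma~\ref{phi_y1y2_lem} and Lemma~\ref{difZn} on $\Phi_y,\Phi_{yy},\dd_y^3\Phi,Z_{yy}$. I would prove them in the order $\sigma\to\kappa\to\xi\to U^W$, each one relying on the previous. The starting point is to substitute $\dot\xi$ from \eqref{mod3} into the definition of $\sigma$ in \eqref{UW}: the additive terms $\kappa$ and $2\sqrt K$ cancel, leaving the closed expression
\[
\sigma(y,s)=\frac{e^{s/2}}{1-\dot\tau}\left(Z(y,s)-Z(0,s)-\frac{2e^{s/2}\dd_y^3\Phi(0,s)-Z_{yy}(0,s)}{\dd_y^3W(0,s)}\right),
\]
which involves no modulation variable, only $(W,Z,\Phi)$. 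Bounding $|Z(y,s)-Z(0,s)|\le|y|\,\|Z_y\|_{L^\infty}\le|y|e^{-3s/2}$ via \eqref{Boot_3}, $|\dd_y^3\Phi(0,s)|\le Ce^{-7s/2}$ via \eqref{Phiy34}, $|Z_{yy}(0,s)|\le e^{-5s/6}$ via \eqref{Zyy_56}, $|\dd_y^3W(0,s)|\ge 5$ via \eqref{EP2_1D2}, and $(1-\dot\tau)^{-1}\le 2$ via \eqref{tau}, and then multiplying through by $e^{s/2}$, one turns $|y|e^{-3s/2}$ into $|y|e^{-s}$ and $e^{-5s/6}$ into $e^{-s/3}$, while the $\dd_y^3\Phi$ term is $O(e^{-8s/3})$ and hence absorbable since $s\ge s_0=-\log\veps$. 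This gives \eqref{temp_2}.

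For the $\kappa$ bound I would integrate \eqref{mod2} in $t$ and change variables to $s$ using $\dot s=(1-\dot\tau)e^s$ from \eqref{dx-dy}, so that $\kappa(s)-\kappa_0=\int_{s_0}^{s}\dot\kappa\,\frac{ds'}{(1-\dot\tau)e^{s'}}$. Using \eqref{EP2_1D2}, \eqref{Zyy_56}, \eqref{Phiy34} and \eqref{Phiy2} one gets $|\dot\kappa|\lesssim e^{s/6}+1$; the only term that is not decaying is $e^{s}Z_{yy}(0,s)/\dd_y^3W(0,s)$, controlled merely by the \emph{growing} factor $e^{s/6}$, but after multiplying by the Jacobian $(1-\dot\tau)^{-1}e^{-s}\le 2e^{-s}$ the integrand is $\lesssim e^{-5s/6}+e^{-s}$, integrable over $[s_0,\infty)$ with integral $\lesssim\veps^{5/6}+\veps$. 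Hence $|\kappa-\kappa_0|\le 1$ once $\veps_0$ is chosen small enough.

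Next, the $\xi$ bound: from \eqref{mod3}, $\dot\xi=Z(0,s)+2\sqrt K+\kappa+O(e^{-5s/6})$, and integrating \eqref{iso_z} along characteristics with $\|z_0\|_{C^4}\le 1/4$ (from \eqref{init_24}), Lemma~\ref{phi_lem}, and the fact that $[-\veps,T_1]$ has length $<2\veps$ (Remark~\ref{T*_lem}) yields $\|z(\cdot,t)\|_{L^\infty}\le 1/2$, hence $|Z(0,s)|\le 1/2$; combined with $|\kappa|\le|\kappa_0|+1$ just proved and $|\kappa_0|=|w_0(0)|\le\|w_0-z_0\|_{L^\infty}+\|z_0\|_{L^\infty}\le\sqrt K/2+1/4$ (from \eqref{w-z-L-inf}, \eqref{init_24}, \eqref{Modul_init}), this gives a uniform bound $|\dot\xi|\le C$, and integrating over $[-\veps,T_1]$ yields $|\xi|\le C\veps$. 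Finally, for $U^W$ with $|y|\ge 1$ I would write
\[
U^W(y,s)\,\tfrac{y}{|y|}=\tfrac{y}{|y|}\,\tfrac{W}{1-\dot\tau}+\tfrac32|y|+\tfrac{y}{|y|}\,\sigma\ \ge\ -\tfrac{|W|}{1-\dot\tau}+\tfrac32|y|-|\sigma|,
\]
and use $|W|\le|y|$ from \eqref{Wbd1}, $(1-\dot\tau)^{-1}\le 1+O(\veps)$, and \eqref{temp_2} with $e^{-s}\le\veps$, $e^{-s/3}\le\veps^{1/3}$ to get $U^W\tfrac{y}{|y|}\ge(\tfrac12-O(\veps))|y|-2\veps^{1/3}\ge\tfrac18$ for $\veps_0$ small, since $|y|\ge 1$.

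I expect the $\kappa$ estimate to be the delicate point — not conceptually, but because one must keep track of the precise exponential rates ($e^{-7s/2}$, $e^{-5s/6}$, $e^{-3s/2}$) surviving the various products in $\dot\kappa$ and check that after the change of variables $dt=ds/((1-\dot\tau)e^s)$ the net exponent of the dominant term ($e^{s/6}\cdot e^{-s}=e^{-5s/6}$) is strictly negative, so that the $t$-integral converges and is $o(1)$ as $\veps\to 0$. The $\sigma$-simplification (cancellation of $\kappa$) is what avoids a circular dependence between $\sigma$, $\dot\xi$ and $\kappa$, so I would establish it first.
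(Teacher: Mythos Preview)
Your proposal is correct and follows essentially the same approach as the paper: the substitution of $\dot\xi$ from \eqref{mod3} into $\sigma$ (your ``cancellation of $\kappa$'' observation) is exactly the paper's route via inequalities \eqref{ax2}--\eqref{p2}, and the subsequent bounds on $\dot\kappa$, $\dot\xi$, and $U^W\cdot y/|y|$ are carried out with the same ingredients and exponential rates. The only cosmetic difference is that the paper allows the constant $C$ in $|\xi|\le C\veps$ to depend on $\kappa_0$ rather than bounding $\kappa_0$ explicitly as you do.
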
		

\begin{proof}
Using \eqref{EP2_1D2}, \eqref{Phiy34} and \eqref{Zyy_56}, we first observe that
\begin{equation}\label{ax2} 
\left|\frac{-2e^{s/2}\partial_y^3\Phi(0,s)+Z_{yy}(0,s)}{\partial_y^3W(0,s)}\right| \leq e^{-5s/6}.
\end{equation}
Applying \eqref{ax2} to \eqref{mod3}, we have
			\begin{equation}\label{p1} 
			\begin{split}
				|\kappa-\dot{\xi} + Z(0,s)+2\sqrt{K}| \leq e^{-5s/6}.
				\end{split}
			\end{equation}
On the other hand, using the fundamental theorem of calculus,  \eqref{Boot_3} implies that
\begin{equation}\label{p2}
|Z(y,s)-Z(0,s)| \leq |y|e^{-3s/2}.
\end{equation}		
Using \eqref{tau} and combining \eqref{p1} and \eqref{p2}, we obtain \eqref{temp_2}:
\[
|\sigma(y,s)| \leq 2e^{s/2} ( |\kappa-\dot{\xi} + Z(0,s)+2\sqrt{K}| + |Z(y,s)-Z(0,s)| ) \leq 2 e^{-s/3} + 2|y|e^{-s}.
\]

To prove \eqref{xibound}, we observe that from \eqref{mod2}, 
			\begin{equation*}
			\begin{split}
				|\dot{\kappa}|
				&  \leq  e^s\left|\frac{-2e^{s/2}\partial_y^3\Phi(0,s)+Z_{yy}(0,s)}{\partial_y^3W(0,s)}\right| + |2\Phi_y(0,s)e^{3s/2}| 
				 \leq e^{s/6} + C,
			\end{split}
			\end{equation*}
			where we have used \eqref{Phiy2} and \eqref{ax2}.  The bound for $|\dot{\kappa}|$ yields that for sufficiently small $\veps>0$, 
			\begin{equation}\label{kap_bound}
				\begin{split}
					|\kappa| 
					& \leq |w_0(0)|+\int^t_{-\veps}|\dot{\kappa}|dt'    = |\kappa_0|+\int^s_{s_0} |\dot{\kappa}| \frac{e^{-s'}}{1-\dot{\tau}} ds'   
					\\
					&\leq |\kappa_0|+2\int^s_{s_0} (e^{s'/6}+C) e^{-s'} ds'   
					\\
					&\leq |\kappa_0|+1.
				\end{split}
			\end{equation}  
	For the bound of $\xi$, we take integration along the characteristic curves to \eqref{iso_z} and use \eqref{init_24}, \eqref{Phi_1_M} and Remark~\ref{T*_lem} to obtain 
	\begin{equation*}
		\begin{split}
			\|z(\cdot,t)\|_{L^{\infty}}&\leq \|z_0\|_{L^{\infty}}+2\int^t_{t_0}\|\phi_x(\cdot,t')\|_{L^{\infty}}\,dt'
			\\
			&\leq \|z_0\|_{L^{\infty}}+2C_2(T_*+\veps)\leq 1.
		\end{split}
	\end{equation*}
	Applying this and  \eqref{ax2}, \eqref{kap_bound} 
	to \eqref{mod3}, we have that for all sufficiently small $\veps>0$,
	\begin{equation*}
		\begin{split}
			|\dot{\xi}|
			& \leq  |Z(0,s) | + 2\sqrt{K} + \left|\frac{2e^{s/2}\partial_y^3\Phi(0,s)-Z_{yy}(0,s)}{\partial_y^3W(0,s)} \right| +|\kappa |  
			\leq C(K,\kappa_0),
		\end{split}
	\end{equation*} 
	from which we obtain the bound for $|\xi|$ in \eqref{xibound} in the same manner as \eqref{kap_bound}. 
	This finishes the proof of \eqref{xibound}.

To show \eqref{UW_far}, using \eqref{tau}, \eqref{Wbd1} and \eqref{temp_2}, one can check that   
	\begin{equation*}
		\begin{split}
			U^W&\geq -\frac{y}{1-\dot{\tau}}+\frac{3y}{2}-(2ye^{-s}+e^{-s/3})
			\ge  \frac{y}{4}-\veps^{1/3}\quad \text{ for }y\geq 0,
		\end{split}
	\end{equation*}
	\begin{equation*}
		\begin{split}
			U^W &\leq -\frac{y}{1-\dot{\tau}}+\frac{3y}{2}+(-2ye^{-s}+e^{-s/3})
			\leq \frac{y}{4}+\veps^{1/3}\quad \text{ for }y<0, \\
		\end{split}
	\end{equation*} 
from which \eqref{UW_far} directly follows. This finishes the proof of Lemma.
\end{proof}	 
Now we are ready to prove Proposition~\ref{mainprop}.  
\subsection{Proof of Proposition~\ref{mainprop} }\label{ch3.4}
In this section, we prove each assertion in Proposition~\ref{mainprop}. 
\subsubsection{The uniform bound of $\dot{\tau}$}
First, we close \eqref{tau}.
	\begin{lemma}\label{lem-3.7}
		If the same assumptions as in Proposition~\ref{mainprop} hold, then we have 
		\begin{equation}\label{dottau} 
			|\dot{\tau}| \le \frac{3}{2}e^{-s}
		\end{equation}
		for $s\in [s_0,\sigma_1]$.	
	\end{lemma}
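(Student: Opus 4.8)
The plan is to work directly from the equivalent form \eqref{mod1} of the modulation ODE, namely
\[
\dot\tau = e^{s/2}Z_y(0,s) - 2e^s\Phi_{yy}(0,s),
\]
and to bound the two terms on the right-hand side separately using the bootstrap hypotheses of Proposition~\ref{mainprop} together with the elliptic estimates for $\Phi$ already established in Lemma~\ref{phi_y1y2_lem}. The leading term is the transport contribution and the $\Phi_{yy}$ term is a genuinely lower-order correction, so the bound $\tfrac32 e^{-s}$ should follow with room to spare once $\ve$ is taken small.

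First I would estimate the transport term: by the bootstrap assumption \eqref{Boot_3} one has $\|Z_y(\cdot,s)\|_{L^\infty}\le e^{-3s/2}$, so $|e^{s/2}Z_y(0,s)|\le e^{s/2}e^{-3s/2}=e^{-s}$. This single term already produces the leading $e^{-s}$ in the claimed bound.

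Next I would estimate the electric-force term. Since the hypotheses of Proposition~\ref{mainprop} include \eqref{tau} and \eqref{Boot_3}, Lemma~\ref{rho-1_prop} gives $\|\rho(\cdot,t)-1\|_{L^\infty}<2/5$, so Lemma~\ref{phi_y1y2_lem} (estimate \eqref{Phiy2}) applies and yields $e^{3s}\|\Phi_{yy}(\cdot,s)\|_{L^\infty}\le C$, where $C$ depends only on $\|(w_0,z_0)\|_{L^2}$ and $P_\pm$. Hence $|2e^s\Phi_{yy}(0,s)|\le 2Ce^{-2s}$.

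Combining the two bounds,
\[
|\dot\tau|\le e^{-s}+2Ce^{-2s}=e^{-s}\bigl(1+2Ce^{-s}\bigr)\le e^{-s}\bigl(1+2Ce^{-s_0}\bigr)=e^{-s}(1+2C\ve),
\]
which is at most $\tfrac32 e^{-s}$ as soon as $\ve\le\ve_0$ with $\ve_0\le 1/(4C)$; this is compatible with the choice of $\ve_0$ in the statement of Proposition~\ref{mainprop}. There is no real obstacle here — the only point requiring care is to verify that the argument uses only the bootstrap assumptions \eqref{tau}, \eqref{Boot_3} and the already-proved lemmas, and never the conclusions \eqref{0}--\eqref{7} being established, so that no circularity is introduced.
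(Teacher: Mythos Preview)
Your proof is correct and follows essentially the same approach as the paper: both use the identity \eqref{mod1}, bound $|e^{s/2}Z_y(0,s)|\le e^{-s}$ via \eqref{Boot_3} and $|2e^s\Phi_{yy}(0,s)|\le 2Ce^{-2s}$ via \eqref{Phiy2}, then combine to get $(1+2C\ve)e^{-s}\le\tfrac32 e^{-s}$ for small $\ve$. Your write-up is slightly more explicit about the chain of lemmas invoked and about avoiding circularity, but the argument is identical.
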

	\begin{proof}
		From \eqref{mod1}, together with  \eqref{Boot_3} and \eqref{Phiy2}, 
		we have
		\begin{equation*}
			|\dot{\tau}|\leq |e^{s/2}Z_y(0,s)|+2e^s|\Phi_{yy}(0,s)|\leq  e^{-s}+2Ce^{-2s} \le (1+ 2C\ve) e^{-s} \leq \frac{3}{2}e^{-s} 
		\end{equation*}
  for sufficiently small $\ve>0$. 
		This proves \eqref{dottau}.
	\end{proof}

\subsubsection{The first derivative of $Z$}  
We close the bootstrap assumption \eqref{Boot_3}.
	\begin{lemma}\label{Zder_lem}
		If the same assumptions as in Proposition~\ref{mainprop} hold, then we have 
			\begin{equation*} 
			\|Z_y (\cdot, s) \|_{L^{\infty}} \le \frac{7}{8}e^{-3s/2}
			\end{equation*}
			for $s\in [s_0,\sigma_1]$.			 
		\end{lemma}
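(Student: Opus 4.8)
The plan is to derive a transport equation for $Z_y$ from \eqref{EP2_3D1}, identify its damping coefficient and forcing term, and then integrate along the characteristic flow of $U^Z$ to propagate the bound, exactly in the spirit of the proof of Lemma~\ref{difZn}. Recall from \eqref{EP2_3D1} that $Z_y$ satisfies
\begin{equation*}
\partial_s Z_y + D^Z_1 Z_y + U^Z \partial_y^2 Z = F^Z_1,
\end{equation*}
where $D^Z_1 := \tfrac32 + \tfrac{W_y}{1-\dot\tau} + \tfrac{e^{s/2}Z_y}{1-\dot\tau}$ and $F^Z_1 := -\tfrac{2e^{s/2}}{1-\dot\tau}\Phi_{yy}$. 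First I would bound the damping coefficient from below: using \eqref{tau}, the uniform bound $|W_y|\le 1$ from \eqref{Uy1}, and the bootstrap \eqref{Boot_3} which gives $e^{s/2}|Z_y|\le e^{-s}$, one gets $D^Z_1 \ge \tfrac32 - (1+O(\ve))(1+e^{-s}) \ge \tfrac32 - 1 - O(\ve) > \tfrac{1}{2} + \tfrac32 \cdot 0$; more precisely $D^Z_1 \ge \tfrac32 - 1 - c\ve \ge$ some constant strictly larger than $1/2$ — and crucially I want the lower bound to be at least $\tfrac32$ minus a small error so that, after integrating, the prefactor decays like $e^{-(3/2)(s-s_0)}$, which is what produces the $e^{-3s/2}$ rate. (A quick check: $D^Z_1 \ge \tfrac32 - \tfrac{|W_y|+e^{s/2}|Z_y|}{1-\dot\tau}$ and since $|W_y|\le 1$ this only gives $D^Z_1\gtrsim \tfrac12$; to recover the full $e^{-3s/2}$ decay one in fact needs to exploit that along the relevant characteristics $W_y$ is close to $\overline W'$, or that the $W_y$ contribution integrates against the flow favorably — this interplay is the delicate point, see below.)

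Next I would estimate the forcing: by Lemma~\ref{phi_y1y2_lem}, \eqref{Phiy2}, we have $\|\Phi_{yy}\|_{L^\infty}\le Ce^{-3s}$, hence $|F^Z_1|\le (1+O(\ve)) \cdot 2 e^{s/2}\cdot Ce^{-3s} \le Ce^{-5s/2}$. For the initial data, from $Z_y(y,s_0) = e^{-3s_0/2}\partial_x z_0 \cdot$(appropriate power) — precisely $Z_y(y,s_0)=e^{-3s_0/2}z_x(x,-\ve)$ — and \eqref{init_24} giving $\|z_x(\cdot,-\ve)\|_{L^\infty}\le 1/4$, one has $\|Z_y(\cdot,s_0)\|_{L^\infty}\le \tfrac14 e^{-3s_0/2} = \tfrac14 \ve^{3/2}$. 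Letting $\psi$ denote the characteristic flow with $\partial_s\psi = U^Z$, $\psi(y,s_0)=y$ (invertible on $\mathbb{R}$, as used in Lemma~\ref{difZn}), Duhamel along $\psi$ gives
\begin{equation*}
\|Z_y(\cdot,s)\|_{L^\infty} \le \tfrac14 \ve^{3/2} e^{-(3/2-c\ve)(s-s_0)} + C\int_{s_0}^s e^{-(3/2-c\ve)(s-s')} e^{-5s'/2}\,ds'.
\end{equation*}
The first term is $\le \tfrac14 \ve^{3/2} e^{-3s/2} e^{(3/2)s_0}e^{c\ve(s-s_0)} = \tfrac14 \ve^{3/2}\cdot\ve^{-3/2} e^{c\ve(s-s_0)}e^{-3s/2}$, which I need to be $\le \tfrac{3}{8}e^{-3s/2}$; the integral term is $O(\ve)e^{-3s/2}$. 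Summing, $\|Z_y(\cdot,s)\|_{L^\infty}\le \tfrac78 e^{-3s/2}$ for $\ve$ small enough.

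The main obstacle is the growth factor $e^{c\ve(s-s_0)}$ coming from the $O(\ve)$ loss in the damping lower bound: since $s-s_0$ ranges over an unbounded interval, $e^{c\ve(s-s_0)}$ is not bounded, so the naive estimate $D^Z_1\ge \tfrac32 - O(\ve)$ is \emph{not} by itself enough to close the bound globally in $s$. To fix this I would instead not absorb the $W_y$ term into a constant but keep it inside the integrating factor: writing the solution as $Z_y(\psi(y,s),s) = Z_y(y,s_0)\exp(-\int_{s_0}^s D^Z_1\circ\psi\,ds') + \cdots$ and noting $\int_{s_0}^s \tfrac{W_y\circ\psi}{1-\dot\tau}\,ds' \ge -\int_{s_0}^s |W_y\circ\psi|\,ds'$ — here one should use that along the flow starting away from $0$, $W_y\circ\psi$ decays (by \eqref{Wydec_fin_2}-type bounds), while near $0$, $W_y\approx \overline W' \approx -1$ contributes at most $+(s-s_0)$ to the exponent's decay, i.e. it \emph{helps}. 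The genuinely worst region is moderate $|y|$ where $W_y$ may be near its extremal value; there one argues that the characteristic exits this region in bounded self-similar time (because $U^W$, hence the relative speed, is bounded below by $|y|/8 - \ve^{1/3}$ via \eqref{UW_far}), so the accumulated loss is $O(1)$ uniformly in $s$, not $O(\ve(s-s_0))$. Once that uniform-in-$s$ bookkeeping of the damping integral is done, the Duhamel estimate closes exactly as above and yields the constant $7/8$, with room to spare, for all $s\in[s_0,\sigma_1]$ and all sufficiently small $\ve$.
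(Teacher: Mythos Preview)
You correctly identify the central difficulty: the naive lower bound $D^Z_1\ge \tfrac12 - O(\ve)$ coming from $|W_y|\le 1$ is too weak, and any $O(\ve)$ loss in the damping produces an uncontrolled factor $e^{c\ve(s-s_0)}$ on the unbounded interval $[s_0,\infty)$. However, your proposed repair is confused on two points. First, near $y=0$ one has $W_y\approx -1$, which makes $D^Z_1\approx \tfrac12$ and hence \emph{weakens} the decay to $e^{-(s-s_0)/2}$; it does not ``help'' as you claim. Second, the transport in \eqref{EP2_3D1} is by $U^Z$, not $U^W$, and these differ by the large term $-4\sqrt{K}e^{s/2}/(1-\dot\tau)$; the estimate \eqref{UW_far} you invoke is for $U^W$ and does not directly describe the $U^Z$-characteristics. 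A geometric argument along the lines you sketch (characteristics sweep through the bad region rapidly because of the $-4\sqrt{K}e^{s/2}$ drift) could in principle be made to work, but it is not what you have written.

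The paper's device avoids all of this. Set $\wt Z:=e^{3s/2}Z$, so that \eqref{EP2_3D1} becomes
\[
\partial_s\wt Z_y + U^Z\wt Z_{yy} + \frac{W_y}{1-\dot\tau}\,\wt Z_y = -\frac{2e^{2s}\Phi_{yy}}{1-\dot\tau}-\frac{e^{-s}\wt Z_y^2}{1-\dot\tau}=:F_{\tilde Z},
\]
with damping exactly $D_{\tilde Z}=W_y/(1-\dot\tau)$. The key observation is that the $U^Z$-characteristic integral of $D_{\tilde Z}$ is \emph{explicitly} bounded by the density: the equation \eqref{Peq} for $P=\log\rho$ reads $\partial_s P+U^Z P_y=-\tfrac{2W_y}{1-\dot\tau}$, so along any $U^Z$-characteristic $\psi$,
\[
\int_{s_0}^{s}\frac{W_y(\psi,s')}{1-\dot\tau}\,ds'=-\tfrac12\bigl(P(\psi(y,s),s)-P(y,s_0)\bigr).
\]
Lemma~\ref{rho-1_prop} gives $|\rho-1|<2/5$, hence $|P|\le 3/5$, so the right-hand side is bounded by $3/5$ \emph{uniformly in $s$}. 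This yields $e^{-\int D_{\tilde Z}\circ\psi}\le e^{3/5}$ with no growing factor whatsoever. Together with $\|F_{\tilde Z}\|_{L^\infty}\le Ce^{-s}$ (from \eqref{Phiy2} and the bootstrap \eqref{Boot_3}) and the initial bound $|\wt Z_y(\cdot,s_0)|\le 1/4$, Duhamel gives $|\wt Z_y|\le \tfrac14 e^{3/5}+C\ve<\tfrac78$. This is the missing idea: rather than estimating the time the characteristic spends near the origin, one recognizes $\int W_y\,ds'$ along $U^Z$-characteristics as (half of) a telescoping difference of $\log\rho$, which is a priori bounded.
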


		\begin{proof} 
		Setting $\wt{Z}=e^{3s/2}Z$, we have from \eqref{EP2_3D1} that
		\begin{equation}\label{tildeZy_eq}
			\partial_s\wt{Z}_y+U^Z\wt{Z}_{yy} + D_{\tilde Z}\wt{Z}_y =F_{\tilde Z},
		\end{equation}
 where $U^Z$ is defined in \eqref{UW} and
 \begin{subequations}
 	\begin{align*}
 		&D_{\tilde Z}(y,s):=\frac{W_y}{1-\dot{\tau}},
 		\qquad F_{\tilde Z}(y,s):= -\frac{2e^{2s}\Phi_{yy}}{1-\dot{\tau}}-\frac{e^{-s}\wt{Z}_y^2}{1-\dot{\tau}}.
 	\end{align*}
 \end{subequations}
We let $\psi=\psi(y,s)$  be the solution to $\partial_s\psi=U^Z(\psi,s)$ with $\psi(y,s_0)=y$. By integrating \eqref{tildeZy_eq} along $\psi$, we obtain 
		\begin{equation}\label{Zt}
			\wt{Z}_y(\psi(y,s),s)=\wt{Z}_y(y,s_0)e^{-\int^s_{s_0} ( D_{\tilde Z}\circ \psi )(s') \,ds'}+ \int^s_{s_0}e^{-\int^s_{s'} ( D_{\tilde Z}\circ \psi )(s'') \,ds''}(F_{\tilde Z}\circ \psi)(s')\,ds'.
		\end{equation} 
		Here, we have from \eqref{Boot_3} and \eqref{Phiy2}  that
		\begin{equation}\label{FZt}
			\|F_{\Zt}\|_{L^{\infty}}\leq Ce^{-s }.
		\end{equation}

On the other hand, by integrating \eqref{Peq} along $\psi$, we get
		\begin{equation}\label{*}
			P(\psi(y,s),s)-P(y,s_0)=-\int^s_{s_0}\frac{2 }{1-\dot{\tau}}W_y(\psi(y,s'),s')\,ds'.
		\end{equation}
Thanks to \eqref{wz3M1}, $P=\log\rho$ satisfies $|P|\leq 3/5$, so the left hand side terms satisfy
$$\sup_{s\in [s_0, \sigma_1] } \|P(\psi(y,s),s)-P(y,s_0)\|_{L^{\infty}_y}
\leq \frac65.$$
 Using this for \eqref{*}, we get
		\begin{equation*}
			\left|\int^s_{s_0}\frac{W_y(\psi(y,s'),s')}{1-\dot{\tau}}\,ds' \right|\leq \frac35 
		\end{equation*}
		for all $y\in \mathbb{R}$ and for all $s\in[ s_0, \sigma_1]$. 
		From this inequality, we obtain 
		\begin{equation}\label{DZt_s}
			-\int^s_{s_0} ( D_{\Zt}\circ \psi )(s') \,ds' \leq 
   \left|\int^s_{s_0}\frac{W_y(\psi(y,s'),s')}{1-\dot{\tau}}\,ds' \right|\leq \frac35 ,
		\end{equation}
		for all $s\geq s_0$.
		We can also simply obtain from this inequality that
		\begin{equation}\label{DZt_s'}
			-\int^s_{s'}D_{\Zt}\circ\psi \,ds' = -\int^s_{s_0}D_{\Zt}\circ\psi \,ds' + \int^{s'}_{s_0} D_{\Zt}\circ \psi\,ds' \leq \frac65 ,
		\end{equation}
		for all $s,s'\geq s_0$.
		Using \eqref{FZt}, \eqref{DZt_s} and \eqref{DZt_s'} for \eqref{Zt}, we get
		\begin{equation*}
			\begin{split}
				|\wt{Z}_y(\psi(s),s)|&\leq |\wt{Z}_y(y,s_0)|e^{3/5}+\int^s_{s_0} Ce^{6/5}e^{-s'}\,ds'
				\\
				&\leq \frac{1}{4}e^{3/5}+C\veps<\frac{7}{8}.
			\end{split}
		\end{equation*}
		Here we used \eqref{init_24} for the second inequality, and the last inequality holds by sufficiently small $\veps$.
		This completes the proof of Lemma~\ref{Zder_lem}.
\end{proof}

\subsubsection{The third derivative of $W$ at $y=0$}
In Lemma~\ref{W30_lem}, we close the bootstrap assumption \eqref{EP2_1D2}.
\begin{lemma}\label{W30_lem}
	For each $\veps\in(0,\veps_0)$, if the same assumptions as in Proposition~\ref{mainprop} hold, then we have 
	\begin{equation}\label{str_U3}
		 |\partial_y^3W(0,s)-6|\leq C\veps^{1/3}, 
	\end{equation}
	  for some $C>0$ and $s \in [s_0,\sigma_1]$.
\end{lemma}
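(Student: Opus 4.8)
The plan is to derive a closed ODE for $a(s) := \partial_y^3 W(0,s)$ by evaluating the evolution equation \eqref{EP2_2D2} (the equation for $W_{yy}$) at $y=0$, using the constraints \eqref{constraint}, i.e.\ $W(0,s)=0$, $W_y(0,s)=-1$, $W_{yy}(0,s)=0$, which hold for all $s$ thanks to the modulation system. First I would evaluate \eqref{EP2_2D2} at $y=0$: the transport term $U^W\partial_y^3 W$ contributes $U^W(0,s)\,a(s)$, and from the definition \eqref{UW} together with the constraints one computes $U^W(0,s) = \frac{W(0,s)}{1-\dot\tau} + \sigma(0,s) = \sigma(0,s)$, which by \eqref{temp_2} is $O(e^{-s/3})$. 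The damping coefficient $\partial_s + \frac52 + \frac{3W_y}{1-\dot\tau} + \frac{2e^{s/2}}{1-\dot\tau}Z_y$ evaluated at $y=0$ becomes $\partial_s + \frac52 - \frac{3}{1-\dot\tau} + \frac{2e^{s/2}}{1-\dot\tau}Z_y(0,s)$; since $\dot\tau = O(e^{-s})$ by \eqref{dottau} and $Z_y(0,s) = O(e^{-3s/2})$ by Lemma~\ref{Zder_lem}, this coefficient equals $-\frac12 + O(e^{-s})$. The right-hand side $-\frac{2e^s}{1-\dot\tau}\partial_y^3\Phi - \frac{e^{s/2}}{1-\dot\tau}Z_{yy}W_y$ evaluated at $y=0$ is $O(e^{-s/2}) + O(e^{-s/2}) = O(e^{-s/2})$ by the decay estimates \eqref{Phiy34} and \eqref{Zyy_56}.

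Putting these together, $a(s)$ satisfies an ODE of the form
\begin{equation*}
\dot a(s) = \tfrac12 a(s) + R_1(s)\,a(s) + R_2(s),
\end{equation*}
where $|R_1(s)| \lesssim e^{-s/3}$ (the worst contribution coming from $U^W(0,s)\,a(s) = \sigma(0,s)a(s)$, together with the $O(e^{-s})$ corrections to the damping) and $|R_2(s)| \lesssim e^{-s/2}$. This is a linear scalar ODE with an unstable $+\tfrac12$ mode, which would normally force us to exploit the modulation parameters; but here the point is that the \emph{initial} value is exactly right: by \eqref{1D2} we have $a(s_0) = 6$. So I would rewrite the equation for $b(s) := a(s) - 6$, obtaining $\dot b = \tfrac12 b + R_1(s)(b+6) + R_2(s)$, with $b(s_0)=0$. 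The inhomogeneous forcing is now $6R_1(s) + R_2(s) = O(e^{-s/3})$, integrable against the growing factor $e^{s/2}$ precisely because $e^{s/2}\cdot e^{-s/3} = e^{s/6}$ — wait, that is not integrable, so one must instead integrate the Duhamel formula \emph{backwards from $s_0$}: writing $b(s) = \int_{s_0}^s e^{\frac12(s-s') + \int_{s'}^s R_1}\big(6R_1(s') + R_2(s')\big)\,ds'$ and noting $e^{\frac12(s-s')}$ with $s'\le s$, the dominant balance gives $|b(s)| \lesssim e^{s/2}\int_{s_0}^s e^{-s'/2}\cdot e^{-s'/3}\,ds' \lesssim e^{s/2} e^{-5s_0/6} = e^{s/2}\veps^{5/6}$, which is far worse than what we want.

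The resolution — and the step I expect to be the main obstacle — is that one cannot simply Duhamel this unstable ODE and hope for smallness; instead one must use a Gr\"onwall/bootstrap argument restricted to the bootstrap window $[s_0,\sigma_1]$ together with the bootstrap hypothesis $|b(s)|\le 1$ from \eqref{EP2_1D2}. With $|b(s)|\le 1$ in hand on $[s_0,\sigma_1]$, the term $R_1(s)(b(s)+6)$ is bounded by $7|R_1(s)| \lesssim e^{-s/3}$, so the whole right side of the $b$-equation, \emph{except} the $\tfrac12 b$ term, is $O(e^{-s/3})$; but the $\tfrac12 b$ term is the problem. Here I would instead integrate the ODE for $a$ along the characteristic $\psi$ through $y=0$ rather than literally at $y=0$ — or, more cleanly, observe that the correct object to track is not $a(s)$ alone but a suitably modulated combination, and that the genuine estimate uses the finiteness of $\sigma_1$: since $|T_\ast|=O(\veps^2)$ and $s$ ranges over $[s_0,\infty)$ with $s_0 = -\log\veps$, the relevant integral $\int_{s_0}^s e^{\frac12(s-s')}e^{-s'/3}\,ds'$ must be re-examined — it equals $e^{s/2}\frac{e^{-5s_0/6}-e^{-5s/6}}{5/6}$, hence $|b(s)| \lesssim e^{s/2}\veps^{5/6}$, and this is $\lesssim \veps^{1/3}$ only while $e^{s/2}\lesssim \veps^{-1/2}$, i.e.\ only for $s$ close to $s_0$. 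Since in fact the lemma only claims the bound on $[s_0,\sigma_1]$ and $\sigma_1$ is finite but a priori large, the honest argument must be that the coefficient of the unstable mode is itself killed by the modulation choice: differentiating the constraint $W_{yy}(0,s)\equiv 0$ once more in $s$ and using $\partial_s W_{yy}(0,s) = 0$ reduces \eqref{EP2_2D2}$|_{y=0}$ to an \emph{algebraic} identity rather than a differential one, namely $\big(\tfrac52 - \tfrac{3}{1-\dot\tau} + \tfrac{2e^{s/2}}{1-\dot\tau}Z_y(0,s)\big)\cdot 0 + U^W(0,s)a(s) = \text{RHS}(0,s)$, which gives directly $a(s) = \text{RHS}(0,s)/U^W(0,s)$; but $U^W(0,s)=\sigma(0,s)$ is small, so this is the wrong identity too. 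The actual mechanism is that one uses \eqref{EP2_2D3}, the equation for $\partial_y^3 W$, evaluated at $y=0$: there the damping coefficient is $\partial_s + 4 + \frac{4W_y}{1-\dot\tau} + \cdots \to \partial_s + 4 - 4 = \partial_s + 0$ at $y=0$, again degenerate, so one differentiates once more. Working through this hierarchy, the clean statement is: $b(s)$ solves $\dot b = \tfrac12 b + g(s)$ with $b(s_0)=0$ and $|g(s)| \le C e^{-s/3} + C' e^{-s/3}|b(s)|$, and on the bootstrap interval where $|b|\le 1$ one gets $|g(s)|\le C_1 e^{-s/3}$; then — and this is the key point I would emphasize — because the forcing decays and the initial time is $s_0 = -\log\veps$ which is \emph{large}, solving backwards via $b(s) = -\int_s^\infty e^{-\frac12(s'-s)} g(s')\,ds'$ is illegitimate (growing mode), so instead one solves forward and accepts $|b(s)| \le C_1 \int_{s_0}^s e^{\frac12(s-s')} e^{-s'/3} ds' \le C_1 e^{s/2} \cdot \frac{6}{5} e^{-5 s_0/6} = \tfrac{6C_1}{5} e^{s/2} \veps^{5/6}$. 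To get $\veps^{1/3}$ one then uses that $e^{s/2}\veps^{5/6} = (e^s \veps)^{1/2}\veps^{1/3}$ and that $e^s \veps = \veps/(\tau(t)-t) \le \veps/(\tfrac12(T_\ast - t))$; since we only need the bound up to $\sigma_1$ and $T_\ast = O(\veps)$, near the blow-up time $e^s\veps$ can be large, so one must instead close this by a genuine bootstrap: assume $|b(s)|\le C\veps^{1/3}$ as the bootstrap hypothesis, plug into $g$, and recover $|b(s)| \le \tfrac12 C\veps^{1/3}$ using that $\int_{s_0}^s e^{\frac12(s-s')} e^{-s'/3}\,ds'$ is actually bounded by $Ce^{-s_0/3} = C\veps^{1/3}$ — which holds because $\frac12 - \frac13 = \frac16 > 0$ forces the integral $\int_{s_0}^s e^{\frac12 s - \frac56 s'}ds' = e^{s/2}[\,\tfrac{6}{5}e^{-\frac56 s_0} - \tfrac65 e^{-\frac56 s}\,] \le \tfrac65 e^{s/2 - \frac56 s_0}$, and since $s \ge s_0$ this is \emph{not} bounded by $\veps^{1/3}$ in general. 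The correct final move, which I would present, is therefore to recognize that the $e^{+\frac12(s-s')}$ Green's function must be paired against the \emph{full} decaying data including the decay of $U^W(0,s')=\sigma(0,s')$ which multiplies $a(s')$ and effectively replaces the naive forcing $e^{-s'/3}$ by $e^{-s'/3}$ while the genuine exponential weight in the Duhamel integral is $e^{-(s'-s)/2}$ integrated from $s$ downward — i.e.\ the $+\tfrac12$ eigenvalue is stable \emph{backwards}, and one uses the terminal condition at $s=\sigma_1$ supplied by the bootstrap bound $|b(\sigma_1)|\le 1$, yielding $|b(s)| \le e^{-\frac12(\sigma_1 - s)} + C\int_s^{\sigma_1} e^{-\frac12(s'-s)} e^{-s'/3}\,ds' \le e^{-\frac12(\sigma_1-s)} + C e^{-s/3}$, and then an induction downward in $s$ from $\sigma_1$ together with the exact initial value $b(s_0)=0$ to upgrade $e^{-\frac12(\sigma_1-s)}$ to $Ce^{-s/3}$; evaluating at the initial time and propagating gives $|b(s)|\le C e^{-s_0/3} = C\veps^{1/3}$ for all $s\in[s_0,\sigma_1]$. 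The main obstacle, then, is precisely this handling of the unstable $\tfrac12$-mode: one must combine the exact matching of the initial datum $\partial_y^3 W(0,s_0)=6$ from \eqref{1D2} with the decay of the modulation-controlled coefficients $\sigma(0,s)$, $Z_y(0,s)$, $Z_{yy}(0,s)$, $\partial_y^3\Phi(0,s)$ in such a way that the Duhamel integral is controlled by the slowest decaying forcing $e^{-s/3}$ evaluated at $s_0$, giving the stated $O(\veps^{1/3})$ bound.
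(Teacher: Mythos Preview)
Your proposal contains a fundamental error: there is no unstable $+\tfrac12$ mode. You actually briefly stumbled onto the correct approach when you wrote ``The actual mechanism is that one uses \eqref{EP2_2D3}, the equation for $\partial_y^3 W$, evaluated at $y=0$: there the damping coefficient is $\partial_s + 4 + \frac{4W_y}{1-\dot\tau} + \cdots \to \partial_s + 4 - 4 = \partial_s + 0$ at $y=0$'' --- but then you dismissed this as ``again degenerate'' and went back to fighting a spurious $\tfrac12$ eigenvalue. In fact, coefficient $\approx 0$ is precisely what makes the proof work, not a problem. With $W_y(0,s)=-1$ and $W_{yy}(0,s)=0$, equation \eqref{EP2_2D3} at $y=0$ reads
\[
\partial_s a(s) = -\Big(4 - \tfrac{4}{1-\dot\tau} + \tfrac{3e^{s/2}Z_y^0}{1-\dot\tau}\Big)a(s) - \sigma(0,s)\,\partial_y^4 W^0 - \tfrac{2e^s}{1-\dot\tau}\partial_y^4\Phi^0 + \tfrac{e^{s/2}}{1-\dot\tau}\partial_y^3 Z^0 .
\]
Since $4 - \tfrac{4}{1-\dot\tau} = -\tfrac{4\dot\tau}{1-\dot\tau} = O(e^{-s})$ by \eqref{dottau}, the coefficient multiplying $a(s)$ is $O(e^{-s})$, not $-\tfrac12$. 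Using the bootstrap bounds $|a(s)|\le 7$, $|\partial_y^4 W^0|\le M$, together with \eqref{temp_2}, \eqref{Phi4y}, \eqref{difZn_1}, every term on the right is $O(e^{-s/3})$. Hence $|\partial_s a(s)|\le Ce^{-s/3}$, and direct integration from $s_0$ with $a(s_0)=6$ gives $|a(s)-6|\le C\int_{s_0}^s e^{-s'/3}\,ds' \le C\veps^{1/3}$.

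Your initial attempt to use \eqref{EP2_2D2} at $y=0$ cannot produce an ODE for $a(s)$: since $W_{yy}(0,s)\equiv 0$ implies $\partial_s W_{yy}(0,s)\equiv 0$, that equation reduces to the algebraic identity $U^W(0,s)a(s) = \text{RHS}$, which (as you eventually noticed) is exactly the modulation condition \eqref{mod3} and carries no new information. All of the subsequent Duhamel-with-growing-mode analysis, backward integration, and ``induction downward in $s$'' is therefore unnecessary and rests on a miscomputed coefficient.
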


\begin{proof}
Evaluating \eqref{EP2_2D3} at $y=0$ and using  \eqref{constraint}, we have 
\begin{equation}\label{Eq_3rd1}
\begin{split} 
	 \partial_s \partial_y^3W^0 & = - \left( 4 - \frac{4}{1-\dot{\tau}}  +\frac{3e^{s/2}Z_y^0}{1-\dot{\tau}} \right) \partial_y^3W^0 
  \\
  & \qquad \qquad \qquad \qquad \qquad 
  -\sigma(0,s)\partial_y^4W^0 
  -\frac{2e^s\partial_y^4\Phi^0}{1-\dot{\tau}}+\frac{e^{s/2}\partial_y^3Z^0}{1-\dot{\tau}},
  \end{split}
\end{equation}
where $\partial_y^k f^0 := \partial_y^k f (0,s)$ for $f=W,Z$ or $\Phi$. 
Using \eqref{Boot_3}, \eqref{Boot_2}, \eqref{temp_2} and \eqref{dottau},  the first two terms of the right-hand side of \eqref{Eq_3rd1} are bounded as follows:
\begin{equation*}
\left|\left( 4 - \frac{4}{1-\dot{\tau}}  +\frac{3e^{s/2}Z_y^0}{1-\dot{\tau}} \right) \partial_y^3W^0    + \sigma(0,s)\partial_y^4W^0 \right| \leq C\left( \frac{|\dot{\tau}|}{|1-\dot{\tau}|}   + e^{-s} \right) + M e^{-s/3} \leq C e^{-s/3}.
\end{equation*} 
For the  remaining terms, using \eqref{tau}, \eqref{Phi4y} and \eqref{difZn_1}, we have 
\begin{equation*}
\left|\frac{2e^s\partial_y^4\Phi^0}{1-\dot{\tau}}- \frac{e^{s/2}\partial_y^3Z^0}{1-\dot{\tau}} \right|  \leq C e^{-s/3}.
\end{equation*}
Combining the above estimates, we obtain that 
\begin{equation*} 
	|\partial_y^3W^0(s)-\partial_y^3W^0(s_0)|\leq \int_{s_0}^{s}{|\partial_{s'}\partial_y^3W^0(s')|\,ds'}
 \leq C \int_{s_0}^{s}e^{-s'/3}\,ds'
 \leq C\veps^{1/3}.
\end{equation*}
Noting that $\partial_y^3W^0(s_0) = 6$ (see \eqref{1D2}), we conclude the proof.
\end{proof}

\subsubsection{The first derivative of $W$ near $y=0$}
We close the bootstrap assumption \eqref{Boot_1}. Define  
\begin{equation} \label{wtW}
\wt{W}(y,s) := W(y,s) - \overline{W}(y),
\end{equation} 
where $\overline{W}$ is the blow-up profile of Burgers' equation, which solves \eqref{Burgers_SS}.
Differentiating \eqref{Burgers_SS} in $y$ and using \eqref{EP2_2D1}, we have 
\begin{multline}
 \label{Eq_diff}
		  \left( \partial_s + 1 + \frac{\wt{W}_y + 2\overline{W}'}{1-\dot{\tau}} + \frac{e^{s/2}Z_y}{1-\dot{\tau}} \right) \wt{W}_y +U^W\wt{W}_{yy}   
		\\
  =
		  - \left( \frac{\wt {W} + \dot{\tau}\overline{W}}{1-\dot{\tau}}  + \sigma \right) \overline{W}'' - \left(\frac{\dot{\tau}\overline{W}'}{1-\dot{\tau}}+\frac{e^{s/2}Z_y}{1-\dot{\tau}}\right)\overline{W}' - \frac{2e^s}{1-\dot{\tau}}\Phi_{yy},
\end{multline}
where $U^W$ is defined in \eqref{UW}. Note that the damping term in \eqref{Eq_diff}, 
\[ D^{\widetilde{W}} := 1 + \frac{\wt{W}_y + 2\overline{W}'}{1-\dot{\tau}} + \frac{e^{s/2}Z_y}{1-\dot{\tau}} \] 
is \emph{degenerate} around $y=0$. More precisely, it is strictly negative around $y=0$. To capture the precise behavior of  $\wt {W}_y$ around $y=0$, we introduce a weight function and set 
\begin{equation}\label{Def-V}
V(y,s):=\frac{1+y^2}{y^2}\wt{W}_y(y,s).
\end{equation}
The order of the weight is chosen in view of the constraint \eqref{constraint} at $y=0$, and it does not affect the damping near $y=\infty$.

In terms of $V(y,s)$, one can rewrite \eqref{Eq_diff} as 
	\begin{equation}
	 \label{Eq_V}
		 \partial_s V + D^V  V +U^WV_y  =
		F^V + \int_{\mathbb{R}} K^V(y,s;y') V(y',s) dy',
	\end{equation}
where
\begin{subequations}
	\begin{align}
		& D^V(y,s)   :=  \left(  1 + \frac{\wt{W}_y + 2\overline{W}'}{1-\dot{\tau}} + \frac{e^{s/2}Z_y}{1-\dot{\tau}} \right) + \frac{2}{y(1+y^2)}\left(\frac{\overline{W}+\wt{W}}{1-\dot{\tau}}+\frac{3}{2}y + \sigma  \right), \label{DV0}\\
	& F^V(y,s)  :=  - \frac{1+y^2}{y^2} \left( \left(  \sigma + \frac{\dot{\tau}\overline{W}}{1-\dot{\tau}}   \right) \overline{W}'' 
		 + \left(\frac{\dot{\tau}\overline{W}'}{1-\dot{\tau}}+\frac{e^{s/2}Z_y}{1-\dot{\tau}}\right)\overline{W}' + \frac{2e^s}{1-\dot{\tau}}\Phi_{yy} \right), \label{FV-0} \\
		& K^V(y,s;y')   := - \mathbb{I}_{[0,y]}(y') \frac{\overline{W}''(y)}{1-\dot{\tau}} \frac{1+y^2}{y^2} \frac{(y')^2}{1+(y')^2},
	\end{align}
\end{subequations}
where $\mathbb{I}_{[0,y]}$ denotes an indicator function, i.e., $\mathbb{I}_{[0,y]}(y') = 1$ for $y'\in[0,y]$ and it is zero otherwise. 
An important point here is that a part of $D^V$ has a non-negative lower bound. More precisely, it holds that 
\begin{equation}\label{WbLb}
	1+2\overline{W}'+\frac{2}{y(1+y^2)}\left(\frac{3y}{2}+\overline W\right)\geq \frac{y^2}{5(1+y^2)}+\frac{16y^2}{5(1+8y^2)}
\end{equation}
for all $y\in \mathbb{R}$. 
This is verified in \eqref{0605_m_3}.
We will show that the remaining terms
are   absorbed into the lower bound on the region $|y| \geq l$ for some small $l>0$. To analyze the region around $y=0$, we use the Taylor approximation. 
\begin{lemma}\label{Wy1_lem}
		If the same assumptions as in Proposition~\ref{mainprop} hold, then we have 
	\begin{equation*}
		|W_y(y,s)-\overline{W}'(y)|\leq \frac{y^2}{20(1+y^2)} \qquad \text{ for all } y \in \mathbb{R}, \quad s\in[s_0,\sigma_1]. 
	\end{equation*}
\end{lemma}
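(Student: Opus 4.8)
The plan is to close the bootstrap estimate \eqref{Boot_1} for $W_y$ near $y=0$ by working with the rescaled quantity $V(y,s) = \frac{1+y^2}{y^2}\wt W_y(y,s)$, which satisfies the transport equation \eqref{Eq_V} with damping $D^V$, forcing $F^V$, and a mild nonlocal term with kernel $K^V$. The desired conclusion is equivalent to $\|V(\cdot,s)\|_{L^\infty}\le 1/20$, so I would first record that by \eqref{W-y2} (i.e.\ \eqref{4.3a} at $s=s_0$) the initial bound is $\|V(\cdot,s_0)\|_{L^\infty}\le 1/40$, and by the bootstrap assumption \eqref{Boot_1} we currently have $\|V(\cdot,s)\|_{L^\infty}\le 1/10$ on $[s_0,\sigma_1]$. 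The strategy is a Gr\"onwall/maximum-principle argument along the characteristics $\psi$ of $U^W$: integrate \eqref{Eq_V} along $\partial_s\psi = U^W(\psi,s)$ to get
\begin{equation*}
V(\psi(y,s),s) = V(y,s_0)e^{-\int_{s_0}^s D^V\circ\psi\,ds'} + \int_{s_0}^s e^{-\int_{s'}^s D^V\circ\psi\,ds''}\Bigl(F^V + \textstyle\int_{\mathbb R}K^V V\,dy'\Bigr)\circ\psi\,ds',
\end{equation*}
and show the right-hand side is bounded by $1/20$.

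The crux is controlling the sign and size of the damping $D^V$. Here I would split into the near region $|y|\le l$ and the far region $|y|\ge l$ for a fixed small $l>0$. On the far region I would use the structural lower bound \eqref{WbLb}, namely $1+2\overline W' + \frac{2}{y(1+y^2)}(\tfrac32 y + \overline W) \ge \frac{y^2}{5(1+y^2)} + \frac{16y^2}{5(1+8y^2)}$, together with the smallness of the remaining pieces of $D^V$: the terms $\frac{\wt W_y}{1-\dot\tau}$, $\frac{e^{s/2}Z_y}{1-\dot\tau}$, $\frac{\dot\tau\overline W}{1-\dot\tau}$, $\sigma$ are all small by the bootstrap assumptions \eqref{Boot_1}, \eqref{Boot_3}, \eqref{tau} and the bound \eqref{temp_2} on $\sigma$ (together with $|\overline W|\le |y|$ from \eqref{Wbd1}), so $D^V$ is positive and bounded below on $|y|\ge l$; similarly $F^V$ is small there because $\overline W', \overline W''$ decay, $\Phi_{yy}$ is $O(e^{-3s})$ by \eqref{Phiy2}, $Z_y$ is $O(e^{-3s/2})$, and $\dot\tau$ is $O(e^{-s})$, while the nonlocal term is harmless since $|K^V|$ integrates to something controlled by $|\overline W''(y)|\frac{1+y^2}{y^2}$ times $\|V\|_{L^\infty}$ and can be absorbed. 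On the near region $|y|\le l$, $D^V$ can be negative (this is the degeneracy), but only mildly — of size $O(y^2)$ — so one bounds $e^{-\int D^V}$ by a constant close to $1$; here the key point is that characteristics starting near $0$ stay near $0$ only for bounded self-similar time or else leave into the good region, and one uses the Taylor expansion of $W_y - \overline W'$ at $y=0$ (using \eqref{constraint}, $\overline W''(0)=0$, and the closeness of $\partial_y^3 W(0,s)$ to $6=\overline W'''(0)$ from \eqref{EP2_1D2}/\eqref{4}) to show $|V|$ itself is small near $y=0$, namely $O(\veps^{1/3}) + $ (contribution from $\partial_y^4 W$), which is $\ll 1/40$ for small $\veps$.

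Assembling these: for a characteristic that remains in $|y|\le l$ one controls $V$ directly by the Taylor/pointwise argument near the origin; for a characteristic that spends time in $|y|\ge l$ one uses the positive lower bound on $D^V$ to damp the initial data contribution below $1/40\cdot(\text{const})\le 1/40 + o(1)$ and to absorb $F^V$ and the kernel term, each of which is $O(\veps^{\text{something}})$. Summing the two contributions gives $\|V(\cdot,s)\|_{L^\infty} \le 1/40 + C\veps^{1/3} \le 1/20$ for $\veps$ small, which is the claimed strict improvement of \eqref{Boot_1}. I expect the main obstacle to be the bookkeeping on the near region $|y|\le l$: one has to simultaneously handle the fact that $D^V<0$ there and the fact that the explicit lower bound \eqref{WbLb} degenerates like $y^2$, so that the only way to win is to show $V$ is quantitatively tiny near $y=0$ using the third- and fourth-derivative information on $W$ at the origin — matching $\overline W'''(0)=6$ — and to make sure the error from $\|\partial_y^4 W\|_{L^\infty}\le M$ (which is large) is still controlled after multiplication by $y^2$ on $|y|\le l$ with $l$ chosen small depending on $M$. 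The far-region estimate, by contrast, is a fairly routine Gr\"onwall argument once the lower bound \eqref{WbLb} and the smallness of the perturbative terms are in hand.
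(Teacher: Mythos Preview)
Your proposal is correct and follows essentially the same approach as the paper: split into $|y|\le l$ (direct Taylor bound using \eqref{constraint}, \eqref{str_U3}, \eqref{EP2_1D4} with $l=l(M)$ small) and $|y|\ge l$ (positive damping via \eqref{WbLb}, small forcing, kernel absorption). The only packaging difference is that the paper invokes the abstract maximum-principle Lemma~\ref{max_2} rather than tracking characteristics across the boundary $|y|=l$; this avoids the bookkeeping you flag as the ``main obstacle'' and makes the kernel absorption precise via the pointwise inequality $\int_{\mathbb R}|K^V(y,s;y')|\,dy'\le \lambda^{-1}D^V(y,s)$ for some $\lambda>1$ on $|y|\ge l$ (from \eqref{0605_m_4}), which is what actually lets the nonlocal term be dominated by the damping.
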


\begin{proof}
In view of the definition of $V(y,s)$, i.e., \eqref{Def-V}, it suffices to show \[\|V(\cdot, s)\|_{L^\infty(\mathbb{R})} \leq \frac{1}{20}.\] 
We first estimate $V(y,s)$ near $y=0$. By expanding $\wt{W}_y$, we have 
\begin{equation*}
	\wt{W}_y(y,s)= \frac{y^2}{2} \left( \partial_y^3 W(0,s) - 6 \right) +\frac{y^3}{6} \left(\partial_y^4 W(y',s) - \partial_y^4\overline{W}(y') \right) \quad \text{ for some } y'\in(-|y|, |y|)
\end{equation*}  
since $\wt{W}_y=\wt{W}_{yy} = 0$ at $y=0$ and $\partial_y^3\overline{W}(0) = 6$. Multiplying the above equation by  $(1+y^2)/y^2$ and using \eqref{EP2_1D4}, we get 
\[
|V(y,s)| \leq \frac{1+y^2}{2}|\partial_y^3 W(0,s) - 6| + \frac{1+y^2}{6}|y| \left( M  + \|\partial_y^4\overline{W}\|_{L^\infty} \right).
\]
Now, we first choose  small $l=l(M )>0$ such that 
\[\frac{1+l^2}{6}|l| \left( M  + \|\partial_y^4\overline{W}\|_{L^\infty} \right) < \frac{1 }{80}, \qquad |y|\leq l,\] 
and then choose $\ve_0>0$ such that 
\[  \frac{1+l^2}{2}|\partial_y^3 W(0,s) - 6| < \frac{1}{80}.\]
Here we have used Lemma~\ref{W30_lem}, i.e., $|\partial_y^3 W(0,s) - 6| \le C \ve^{1/3}$, with a choice of sufficiently small $\veps_0$.
Thus we have 
\begin{equation}\label{V-l}
|V(y,s)| 
< \frac{1}{40}, \qquad  |y|\leq l. 
\end{equation}

Next, we consider the region $|y| \geq l$.
%
Note that $D^V$ defined in \eqref{DV0} can be regrouped and 
 written as  
\begin{multline*}
D^V  = 	 \left(1+2\overline{W}'+\frac{2}{y(1+y^2)}\left(\frac{3y}{2}+\overline{W}\right)\right)
\\
+ \left( \frac{e^{s/2}Z_y}{1-\dot{\tau}}+\frac{\wt{W}_y+2\dot{\tau}\overline{W}'}{1-\dot{\tau}}+\frac{2}{y(1+y^2)}\left(\frac{\wt{W}}{1-\dot{\tau}}+\frac{\dot{\tau}\overline{W}}{1-\dot{\tau}} + \sigma \right)\right).
\end{multline*}
Each term on the right hand side can be estimated as follow. From \eqref{Boot_3}, we have 
\begin{equation}\label{V-l1}
\left|\frac{e^{s/2}Z_y}{1-\dot{\tau}}\right|\leq 2e^{-s}  \leq 2\veps.
\end{equation}
Using \eqref{tau}, \eqref{Boot_1} and the fact that $|\overline{W}'|\leq 1$, we see that
\begin{equation}\label{V-l2}
	\left|\frac{\wt{W}_y+2\dot{\tau}\overline{W}'}{1-\dot{\tau}}\right|\leq (1+4\veps)\left(\frac{y^2}{10(1+y^2)}+4\veps\right)  = \frac{y^2}{10(1+y^2)} + O(\veps).
\end{equation}
Using \eqref{tau} and \eqref{Wbd1}, we get
\begin{equation} \label{V-l3}
\left|\frac{2(\wt{W} + \dot{\tau} \overline{W})}{y(1+y^2)(1-\dot{\tau})}  \right|  \leq (1+4\veps)\left(\frac{y^2}{15(1+y^2)} + 8\veps\right) = \frac{y^2}{15(1+y^2)} + O(\veps).
\end{equation}
On the other hand, \eqref{temp_2} implies
\begin{equation}\label{2.86}
	\begin{split}
		\left|\frac{2\sigma}{y(1+y^2)}\right|&  \leq 4e^{-s}+\frac{2}{|y|}e^{-s/3} \leq \frac{4}{l}e^{-s/3}
	\end{split}
\end{equation}
for all $|y|\geq l$.  
Combining \eqref{V-l1}--\eqref{2.86} together with \eqref{WbLb}, for sufficiently small $\ve>0$,  we obtain 
	\begin{equation}\label{DV}
		\begin{split}
			D^V(y,s) &\geq \left( \frac{y^2}{5(1+y^2)}+\frac{16y^2}{5(1+8y^2)} \right) -\frac{y^2}{10(1+y^2)} - \frac{y^2}{15(1+y^2)} -  \frac{4}{l}\veps^{1/3} - O(\veps)
			\\
			&\geq \frac{3y^2}{1+8y^2} + \frac{y^2}{30(1+y^2)}  
		\end{split}
	\end{equation}
	for all $|y|\geq l$. Since $ \frac{3y^2}{1+8y^2}$ and $\frac{y^2}{30(1+y^2)}$ are increasing on $y>0$, we have
	\begin{equation}\label{DVL}
		D^V (y,s)  \ge \frac{3l^2}{1+8l^2}+\frac{l^2}{30(1+l^2)}=: \lambda_D^V, \qquad  |y|\ge l.
	\end{equation} 

\textit{Estimate of $K^V$}: Now we  estimate the term in \eqref{Eq_V} involving $K^V$. Using \eqref{tau} and \eqref{0605_m_4}, we see that  
	\begin{equation}\label{kernel}
		\begin{split}
			\int_{\mathbb{R}}{|K^V(y,s;y')|\,dy'} &\leq \frac{|\overline{W}''(y)|}{| 1-\dot{\tau} | } \frac{ 1+y^2}{y^2} \int^{|y|}_0{\frac{(y')^2}{1+(y')^2 }\,dy'}
			\\
			&\leq \frac{1}{\lambda} \left(\frac{3y^2}{1+8y^2} + \frac{y^2}{30(1+y^2)} \right)
		\end{split}
	\end{equation}
for some $\lambda>1$.
From \eqref{DV} and \eqref{kernel}, we deduce that 
\begin{equation}\label{K-DV}
	\int_{\mathbb{R}}|K^V(y,s;y')|\,dy'  \le \frac{1}{\lambda} D^V(y,s), \qquad |y|\ge l.  
\end{equation}

%
%

\textit{Estimate of $F^V$}: Now we shall estimate   the forcing term, $F^V$ defined in \eqref{FV-0}:
\[F^V(y,s)  =  - \frac{1+y^2}{y^2} \left( \left(  \sigma + \frac{\dot{\tau}\overline{W}}{1-\dot{\tau}}   \right) \overline{W}'' 
		 + \left(\frac{\dot{\tau}\overline{W}'}{1-\dot{\tau}}+\frac{e^{s/2}Z_y}{1-\dot{\tau}}\right)\overline{W}' + \frac{2e^s\Phi_{yy}}{1-\dot{\tau}} \right). \]
   We note that $\frac{1+y^2}{y^2}$ decreases on $y>0$.  
     By   \eqref{Boot_3}, \eqref{Phiy2}, \eqref{temp_2} and \eqref{dottau},   
\begin{equation}\label{FV}
\begin{split}
|F^V(y,s)| 
& \leq  \frac{1+l^2}{l^2} \left(\left| \sigma \overline{W}'' \right| +   \left| \frac{\dot{\tau}}{1-\dot{\tau}}\overline{W}\overline{W}''\right|  + \left|\frac{\dot{\tau}\overline{W}'^2}{1-\dot{\tau}} \right| + \left|\frac{e^{s/2}Z_y\overline{W}'}{1-\dot{\tau}} \right| + \left| \frac{2e^{s}\Phi_{yy}}{1-\dot{\tau}}\right| \right) \\
& \leq \frac{1+l^2}{l^2} \left(e^{-s/3}(|y| +1)|\overline{W}''| + O(\veps) \right) \le C \ve^{1/3}  =: F_0^V, \qquad |y| \geq l.
\end{split}
\end{equation} 
 In the above inequality, using \eqref{W<y} together with \eqref{dottau},   we estimated the second term in the right hand side as  
 \[ \left| \frac{\dot{\tau}}{1-\dot{\tau}}\overline{W}\overline{W}''\right| \le C e^{-s} |y| | \overline W''|. \] 
For the last inequality in \eqref{FV}, we used  \eqref{Wyybar_bdd} implying $| \overline W'' | \le C ( 1+ |y|^{-5/3})$.  By choosing sufficiently small $\ve>0$, one can make $\|F^V(\cdot,s)\|_{L^{\infty}(|y|\geq l)}$ relatively small compared to the lower bound of $D^V$ in \eqref{DVL}. 
On the other hand, from  \eqref{Utildey_M} and \eqref{W-y2},  
we have 
\begin{equation}\label{V-l-0} 
|V(y,s_0)|\leq \frac{1}{40}, \qquad \limsup_{|y|\rightarrow \infty}|V(y,s)|=0.
\end{equation} 
For sufficiently small $\ve_0>0$, we apply   Theorem~\ref{max_2} with \eqref{V-l}, \eqref{DV} and \eqref{K-DV}--\eqref{V-l-0}, 
to conclude that 
$$\|V(\cdot,s)\|_{L^\infty(\mathbb{R})}\leq \frac{1}{20}.$$
This completes the proof. 
\end{proof}

\subsubsection{The second derivative of $W$}

Lemma~\ref{Wy2_lem} gives a stronger estimate than \eqref{EP2_1D3}. The strategy is similar to Lemma~\ref{W30_lem}.
\begin{lemma}\label{Wy2_lem}
	If the same assumptions as in Proposition~\ref{mainprop} hold, then we have 
	\begin{equation*}
		|W_{yy}(y,s)|\leq \frac{14|y|}{(1+y^2)^{1/2}}, \qquad   y \in \mathbb{R}, \quad s\in[s_0,\sigma_1].  
	\end{equation*}
\end{lemma}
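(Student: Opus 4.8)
The plan is to split $\mathbb R$ into a short interval $\{|y|\le l\}$ around the origin, handled by Taylor expansion exactly as in Lemma~\ref{W30_lem}, and the exterior $\{|y|\ge l\}$, handled by transporting a suitably weighted version of $W_{yy}$, and then to match the two estimates at $|y|=l$. Fix $l=l(M)>0$ small, to be chosen. Near the origin, since $W_{yy}(0,s)=0$ by the constraint \eqref{constraint}, Taylor's theorem gives $W_{yy}(y,s)=y\,\partial_y^3W(0,s)+\tfrac{y^2}{2}\partial_y^4W(\eta,s)$ for some $\eta$ between $0$ and $y$, hence by Lemma~\ref{W30_lem} and the bootstrap bound \eqref{EP2_1D4},
\[
\frac{(1+y^2)^{1/2}}{|y|}\,\bigl|W_{yy}(y,s)\bigr|\le (1+l^2)^{1/2}\Bigl(6+C\veps^{1/3}+\tfrac{lM}{2}\Bigr)\le 12\qquad(|y|\le l),
\]
once $l$ is small enough that $\tfrac{lM}{2}\le1$ and $(1+l^2)^{1/2}\le\tfrac32$, and then $\veps_0=\veps_0(M)$ is small enough that $C\veps^{1/3}\le1$. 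This proves the claim on $\{|y|\le l\}$ and records the boundary bound $\tfrac{(1+l^2)^{1/2}}{l}|W_{yy}(\pm l,s)|\le12$.

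On $\{|y|\ge l\}$ set $\widetilde V(y,s):=\tfrac{(1+y^2)^{1/2}}{y}W_{yy}(y,s)$, so that the assertion becomes $\|\widetilde V(\cdot,s)\|_{L^\infty(|y|\ge l)}\le 14$. Since $\partial_y^3W=\partial_yW_{yy}$, equation \eqref{EP2_2D2} is a genuine transport equation for $W_{yy}$ (with no integral term, in contrast with \eqref{Eq_V}), and writing $W_{yy}=g\widetilde V$ with $g(y)=\tfrac{y}{(1+y^2)^{1/2}}$, $\tfrac{g'}{g}=\tfrac{1}{y(1+y^2)}$, turns it into
\[
\partial_s\widetilde V+D^{\widetilde V}\widetilde V+U^W\partial_y\widetilde V=F^{\widetilde V},\qquad D^{\widetilde V}=\frac52+\frac{3W_y}{1-\dot\tau}+\frac{2e^{s/2}Z_y}{1-\dot\tau}+\frac{U^W}{y(1+y^2)} .
\]
Splitting $W_y=\overline W'+\widetilde W_y$, $U^W=\tfrac{\overline W+\widetilde W}{1-\dot\tau}+\tfrac32 y+\sigma$ and using $\overline W'=-(1+3\overline W^2)^{-1}$, $\overline W/y=-(1+\overline W^2)^{-1}$ (both from $y+\overline W+\overline W^3=0$), the leading part of $D^{\widetilde V}$ is
\[
D^{\widetilde V}_{\mathrm{main}}(y)=\frac52-\frac{3}{1+3\overline W^2}-\frac{1}{(1+\overline W^2)(1+y^2)}+\frac{3}{2(1+y^2)},
\]
which vanishes at $y=0$ and, by the quantitative properties of $\overline W$ collected in Subsection~\ref{inequalities}, satisfies $D^{\widetilde V}_{\mathrm{main}}(y)\ge\tfrac{y^2}{1+y^2}$. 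The other terms of $D^{\widetilde V}$ are errors: $\bigl|\tfrac{3\widetilde W_y}{1-\dot\tau}\bigr|\le\tfrac{3y^2}{10(1+y^2)}(1+O(\veps))$ by \eqref{Boot_1}, $\bigl|\tfrac{\widetilde W}{y(1+y^2)(1-\dot\tau)}\bigr|\le\tfrac{y^2}{30(1+y^2)}(1+O(\veps))$ by \eqref{Wbd1}, and the $\dot\tau$-, $Z_y$- and $\sigma$-terms are $O(\veps^{1/3}/l)$ by \eqref{dottau}, \eqref{Boot_3}, \eqref{temp_2} and $|\overline W'|,|\overline W/y|\le1$; hence $D^{\widetilde V}(y,s)\ge\tfrac23\tfrac{y^2}{1+y^2}-O(\veps^{1/3}/l)\ge\tfrac{l^2}{3(1+l^2)}=:\lambda_{\widetilde V}>0$ on $\{|y|\ge l\}$ for $\veps$ small. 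For the forcing, $F^{\widetilde V}=\tfrac{(1+y^2)^{1/2}}{y}\bigl(-\tfrac{2e^s\partial_y^3\Phi}{1-\dot\tau}-\tfrac{e^{s/2}Z_{yy}W_y}{1-\dot\tau}\bigr)$, so \eqref{Phiy34}, \eqref{Zyy_56} and $|W_y|\le1$ give $|F^{\widetilde V}(y,s)|\le C(l)e^{-s/3}$ on $\{|y|\ge l\}$.

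To conclude I would integrate $\widetilde V$ along the characteristics $\partial_s\psi=U^W(\psi,s)$. From the proof of \eqref{UW_far} one has $U^W(y,s)\tfrac{y}{|y|}\ge\tfrac{l}{8}>0$ for $|y|\ge l$ and $\veps$ small, so the characteristics move outward; hence any characteristic lying in $\{|y|\ge l\}$ at time $s$ either has been there since $s_0$, where $|\widetilde V|\le7$ by \eqref{1D3'}, or entered through $|y|=l$ at a later time, where $|\widetilde V|\le12$ by the first step. Integrating $\tfrac{d}{ds}\widetilde V(\psi(s),s)=-D^{\widetilde V}\widetilde V+F^{\widetilde V}$ from the entry time and using $D^{\widetilde V}\ge\lambda_{\widetilde V}>0$ together with the forcing bound yields $|\widetilde V(\psi(s),s)|\le 12+C(l)\veps^{1/3}\le14$ for $\veps<\veps_0(M)$, which combined with the near-origin estimate closes the bootstrap assumption \eqref{EP2_1D3}. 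The step I expect to be the main obstacle is the lower bound for $D^{\widetilde V}$: one must verify the inequality for $D^{\widetilde V}_{\mathrm{main}}$ purely from the algebraic relation $y+\overline W+\overline W^3=0$, and then check that the error terms — above all the $\tfrac{3\widetilde W_y}{1-\dot\tau}$ contribution, which is only controlled through the crude bootstrap bound \eqref{Boot_1} — are strictly dominated by it on $\{|y|\ge l\}$, while carefully tracking the order in which $l=l(M)$ and then $\veps_0=\veps_0(M)$ are chosen.
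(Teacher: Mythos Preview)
Your proposal is correct and follows essentially the same strategy as the paper: the same Taylor expansion near the origin, the same weighted quantity $\widetilde V=\frac{(1+y^2)^{1/2}}{y}W_{yy}$ on the exterior with the same damping/forcing decomposition, and the same outflow of characteristics through the boundary (the paper packages the final step as an application of Lemma~\ref{rmk2} and Lemma~\ref{max_2}, and takes $l=1/M$ explicitly, but this is cosmetic). One small caveat: the lower bound the paper actually proves in Subsection~\ref{inequalities} is $D^{\widetilde V}_{\mathrm{main}}\ge\frac{2y^2}{3(1+y^2)}$ (inequality \eqref{0605_m_5}), not $\frac{y^2}{1+y^2}$; either constant suffices, since after subtracting your error terms $\frac{3}{10}\frac{y^2}{1+y^2}+\frac{1}{30}\frac{y^2}{1+y^2}$ you still retain a strictly positive multiple of $\frac{y^2}{1+y^2}$.
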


\begin{proof}
We first estimate $W_{yy}$ near $y=0$. Expanding $W_{yy}$ at $y=0$, and then using  \eqref{EP2_1D4} and \eqref{str_U3}, we get
\begin{equation}\label{W_yy1}
\begin{split}
|W_{yy}(y,s)|
& \leq |y||\partial_y^3W(0,s)|+\frac{y^2}{2}|\partial_y^4W(y',s)| \quad (\text{for some }y'\in(-|y|, |y|))
\\
& \leq |y|(6 + C\veps^{1/3} + \frac{M }{2}|y|)
\\
&
\le \frac{7|y|}{(1+y^2)^{1/2}}, \qquad |y| \leq \frac{1}{M}.
\end{split}
\end{equation}
 Here  the last inequality holds since $|y|\le 1/M$ and $M\ge 10$, and $0<\ve<\ve_0$, where $\ve_0$ is chosen 
sufficiently small.
%

In what follows, we consider the region $|y| > 1/M $. 
Define \[\wt{V}(y,s): =\frac{(1+y^2)^{1/2}}{y} W_{yy}(y,s).\]
From \eqref{EP2_2D2},  $\wt{V}(y,s)$ satisfies 
\begin{equation}\label{Eq_2nd}
	\partial_s\wt{V} +D_2^{\wt V} \wt{V}  + U^W \wt{V}_y = F_2^{\wt V},
\end{equation}
where 
\begin{subequations}
\begin{align*}
D_2^{\wt V}(y,s) &   := \frac{5}{2}+\frac{3W_y}{1-\dot{\tau}}+\frac{2e^{s/2}Z_y}{1-\dot{\tau}}+\frac{1}{y(1+y^2)}\left(\frac{W}{1-\dot{\tau}}+\frac{3}{2}y + \sigma \right),\\
F_2^{\wt V}(y,s) & :=-\frac{2e^s\partial_y^3\Phi}{1-\dot{\tau}}\frac{(1+y^2)^{1/2}}{y}-\frac{e^{s/2}Z_{yy}W_y}{1-\dot{\tau}}\frac{(1+y^2)^{1/2}}{y}.
\end{align*}
\end{subequations}

\textit{Lower bound of $D_2^{\wt V}$}:  Rearranging the terms in  $D_2^{\wt V}$ with a simple trick $1/(1-\dot\tau) = 1 + \dot\tau / (1-\dot\tau)$,  we have   
\begin{multline}\label{D2Vt} 
D_2^{\wt V} (y,s) =  \left( \frac{5}{2}+3\overline{W}'+\frac{1}{y(1+y^2)}\left(\frac{3y}{2}+\overline{W}\right)   \right)	+ \frac{3}{1-\dot{\tau}}(\wt{W}_y+\dot{\tau}\overline{W}')
\\
+\frac{2e^{s/2}Z_y}{1-\dot{\tau}}+\frac{1}{y(1+y^2)}\left( \frac{\wt{W}+\dot{\tau}\overline{W}}{1-\dot{\tau}}+\sigma\right).
\end{multline}
Note that 
 by \eqref{0605_m_5}, we see that the first grouped terms in \eqref{D2Vt} satisfy    
	\begin{equation*}
		\frac{5}{2}+3\overline{W}' +\frac{1}{y(1+y^2)}\left(\frac{3y}{2}+\overline{W}\right)\geq \frac{2y^2}{3(1+y^2)},  \qquad y\in\mathbb{R}.
	\end{equation*}
The remaining terms in \eqref{D2Vt},  
\[ \frac{3}{1-\dot{\tau}}(\wt{W}_y+\dot{\tau}\overline{W}')
+\frac{2e^{s/2}Z_y}{1-\dot{\tau}}+\frac{1}{y(1+y^2)}\left( \frac{\wt{W}+\dot{\tau}\overline{W}}{1-\dot{\tau}}+\sigma\right)\]
are estimated as follow. 
From \eqref{Boot_1}, we have 
\[ \left|\frac{3}{1-\dot{\tau}}\wt{W}_y \right| \le \frac{ y^2}{3(1+ y^2)}.\] 
From \eqref{Boot_3} and \eqref{dottau}, 
\[ \left| \frac{3\dot{\tau}\overline{W}'}{1-\dot{\tau}}   +\frac{2e^{s/2}Z_y}{1-\dot{\tau}} \right| \le C e^{-s} \le C\ve.  \]
Noting from \eqref{Boot_1} that 
\[ | \wt{W} (y,s) | \le \left|  \int_0^{y}   \wt{W}_y (y',s)   \; ds' \right|  \le \int_0^{|y|} \frac{(y')^2}{10 ( 1+ (y')^2 )} \; dy' \le \frac{|y|^3}{30}, \]
we estimate using \eqref{tau}, 
\[ \left| \frac{1}{y(1+y^2)}  \frac{\wt{W} }{1-\dot{\tau}} \right| \le \left( \frac{1}{30} + \ve \right)  \frac{y^2}{ ( 1+y^2)} \le   \frac{y^2}{ 30 ( 1+y^2)} + \ve. \]
By \eqref{tau}, \eqref{Wbd1} and \eqref{temp_2},   we have for $|y| \ge 1/M$, 
\begin{equation*}
    \begin{split}
 \left| \frac{ 1 }{ y ( 1+y^2) }  \left( \sigma + \frac{\dot\tau}{1-\dot\tau } \overline{W} \right) \right|
 &\le
 \frac{ 2e^{-s/3} + 2 |y | e^{-s} + | \dot\tau| | \overline{W} | }{|y|(1+y^2)} 
      \le 2\ve^{1/3}M + C\ve. 
 \end{split}
 \end{equation*}
Combining all, we see that there is a sufficiently small $\veps_0$ such that if $\veps \in (0,\veps_0)$, then it holds that 
 \begin{equation}\label{DWyy}
	\begin{split}
		D_2^{\wt V} (y,s) 
		& \geq \frac{2y^2}{3(1+y^2)} -\frac{1}{3} \frac{y^2}{1+y^2} - \frac{y^2}{30(1+y^2)} - 2\veps^{1/3} M   - O(\veps)  
  \\
  & \geq  \frac{y^2}{4(1+y^2)}   \geq \frac{1}{4(M ^2+1)}
	\end{split}
\end{equation}
for all  $|y|\geq 1/M$. 

\textit{Estimate of $F_2^{\wt V}$}: By \eqref{tau}, \eqref{Uy1}, \eqref{Phiy34} and \eqref{Zyy_56},  it is straightforward to see that for sufficiently small $\ve_0>0$ if $\ve\in(0,\ve_0)$, then  there exists a positive constant $C=C(M,\|(w_0,z_0)\|_{L^2},P_{\pm})$ such that
\begin{equation*}
    \begin{split}
       | F_2^{\wt V}(y,s) |  &  = \left| \frac{(1+y^2)^{1/2}}{y} \left(  \frac{2e^s\partial_y^3\Phi}{1-\dot{\tau}} +
       \frac{e^{s/2}Z_{yy}W_y}{1-\dot{\tau}}
       \right) \right| 
       \\ 
       & \le C (  e^{ -5s/2} + e^{-s/3} )
        \le Ce^{-s/3}  
    \end{split}
\end{equation*}
for all $|y|\ge 1/M$.
 Thus we have
\begin{equation}\label{FWyy} 
		\|F_2^{\wt V} (\cdot,s)\|_{L^{\infty}(|y|\geq 1/M )} \leq Ce^{-s/3}. 
\end{equation} 
 
\textit{Asymptotic behavior of $\wt{V}$ at $|y|=\infty$}: We note that \eqref{1D3'} implies 
\begin{equation}\label{wtV_init}
	\|\wt{V}(\cdot,s_0)\|_{L^{\infty}(\mathbb{R})}\leq 7.
\end{equation}   
We claim that 
\begin{equation}\label{wtV_init1}
\limsup_{|y|\rightarrow \infty}|\wt{V} (y, s) |< 14.
\end{equation}
 Thanks to \eqref{DWyy} and \eqref{FWyy}, 
one has   
\begin{equation*}
	\inf_{|y|\geq 1/M }D_2^{\wt V} (y,s)\geq \lambda^{\wt{V}}_D(M ), \qquad  \|F_2^{\wt V} (\cdot,s)\|_{L^{\infty}(|y|\geq 1/M )}\leq F^{\wt{V}}_0(M ) e^{-\lambda_F^{ \wt{V}} s},
\end{equation*}
with   
\[   \lambda^{\wt{V}}_D:= \frac{1}{4(M ^2+1)}, \quad \lambda_F^{\wt{V}}:= \frac13 \quad  \text{ and}\quad F_0^{\wt{V} }  := C. \] 
We see that $ \lambda^{\wt{V}}_D < \lambda_F^{\wt{V}}$ for any $M\ge1$.  Then, together with \eqref{UW_far}, we apply Lemma~\ref{rmk2} to obtain that 
\begin{equation*}
\begin{split}
	\limsup_{|y|\rightarrow \infty}|\wt{V}(y,s)|
 &	\le  \limsup_{|y|\rightarrow \infty}|\wt{V}(y,s_0)| + C e^{-s_0/3}
   \le 7 + C \ve^{1/3} <14.
\end{split}
\end{equation*}
The second inequality holds thanks to  \eqref{wtV_init} and the last one holds by choosing $\veps_0$ sufficiently small. Thus  \eqref{wtV_init1} is proved.

Then we apply Lemma~\ref{max_2} with \eqref{W_yy1}, \eqref{DWyy}, \eqref{FWyy}, \eqref{wtV_init} and \eqref{wtV_init1}. This  completes the proof of Lemma~\ref{Wy2_lem}. 
\end{proof}

\subsubsection{The third derivative of $W$}

Now we close \eqref{EP2_1D5}.
\begin{lemma}\label{Wy3_lem}
	If the same assumptions as in Proposition~\ref{mainprop} hold, then we have 
	\begin{equation*}
		\|\partial_y^3W(\cdot,s)\|_{L^{\infty}} \leq \frac{M^{5/6}}{2}
	\end{equation*}
	 for $s\in[s_0,\sigma_1]$.
\end{lemma}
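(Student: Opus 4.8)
The plan is to work with the evolution equation \eqref{EP2_2D3} for $\partial_y^3 W$, whose damping coefficient
\[
D^{(3)} := 4 + \frac{4W_y}{1-\dot{\tau}} + \frac{3e^{s/2}Z_y}{1-\dot{\tau}}
\]
is \emph{degenerate} at $y=0$: since $W_y(0,s)=-1$ by \eqref{constraint} and $|\dot\tau|=O(e^{-s})$ by \eqref{dottau}, one has $D^{(3)}(0,s)=O(e^{-s})$, so no useful damping is available near the origin. Away from it the picture improves: differentiating \eqref{Burgers_SS'} with $c=1$ gives $\overline{W}'=-(1+3\overline{W}^2)^{-1}$, hence
\[
4+4\overline{W}'(y)=\frac{12\,\overline{W}(y)^2}{1+3\overline{W}(y)^2},
\]
which vanishes only at $y=0$ and is strictly increasing in $|y|$. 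Accordingly I would split $\mathbb{R}=\{|y|\le l\}\cup\{|y|\ge l\}$ for a small radius $l=l(M)>0$ to be fixed, bound $\partial_y^3 W$ near the origin by Taylor expansion, and on $\{|y|\ge l\}$ integrate \eqref{EP2_2D3} along the characteristics of $U^W$ using the now uniformly positive damping.

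\textit{Near the origin.} For every $y$, the fundamental theorem of calculus together with \eqref{str_U3} (so $|\partial_y^3 W(0,s)|\le 6+C\veps^{1/3}\le 7$ for $\veps$ small) and \eqref{EP2_1D4} ($\|\partial_y^4 W(\cdot,s)\|_{L^\infty}\le M$) gives
\[
|\partial_y^3 W(y,s)|\le|\partial_y^3 W(0,s)|+|y|\,\|\partial_y^4 W(\cdot,s)\|_{L^\infty}\le 7+M|y|.
\]
Taking $l=M^{-1/4}$, this is $\le 7+M^{3/4}\le\tfrac12 M^{5/6}$ on $\{|y|\le l\}$ once $M$ is large.

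\textit{Away from the origin.} Writing $\frac{4W_y}{1-\dot\tau}=4\overline{W}'+4\wt{W}_y+\frac{4\dot\tau W_y}{1-\dot\tau}$ and combining the inequality above with $|\wt{W}_y|\le\frac{y^2}{10(1+y^2)}$ from \eqref{Boot_1}, $|\dot\tau|\le 2\veps$ from \eqref{tau}, $|W_y|\le1$ from \eqref{Uy1} and $\|Z_y(\cdot,s)\|_{L^\infty}\le e^{-3s/2}$ from \eqref{Boot_3}, one finds (for $\veps$ small) a constant $\lambda_D=\lambda_D(l)\gtrsim l^2>0$ with $D^{(3)}(y,s)\ge\lambda_D$ on $\{|y|\ge l\}$. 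In the forcing
\[
F^{(3)}:=-\frac{2e^s}{1-\dot\tau}\partial_y^4\Phi-\frac{e^{s/2}}{1-\dot\tau}\bigl(\partial_y^3 Z\,W_y+3Z_{yy}W_{yy}\bigr)-\frac{3W_{yy}^2}{1-\dot\tau},
\]
all terms except the last decay: by \eqref{Phi4y}, \eqref{difZn_1}, \eqref{Zyy_56} (Lemma~\ref{difZn}), \eqref{Uy1} and \eqref{EP2_1D3} they are $O(e^{-s/3})$; the last term satisfies $\bigl|\frac{3W_{yy}^2}{1-\dot\tau}\bigr|\le C_0$ with $C_0$ \emph{independent of $M$} by \eqref{EP2_1D3}, and it is this non-decaying piece that forces a bound of size $\lambda_D^{-1}$ rather than a universal constant. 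Integrating \eqref{EP2_2D3} along a characteristic $\psi$ of $U^W$ arriving at a point with $|y|\ge l$: either the backward trajectory stays in $\{|y|\ge l\}$ down to $s_0$, where we invoke \eqref{1D5} ($\|\partial_y^3 W(\cdot,s_0)\|_{L^\infty}\le7$), or it last meets $\{|y|=l\}$ at some $s_\ast\in(s_0,s)$, where the near-origin bound gives $|\partial_y^3 W|\le 7+Ml$. In either case $D^{(3)}\circ\psi\ge\lambda_D$ on $[s_\ast,s]$, so
\[
|\partial_y^3 W(y,s)|\le(7+Ml)e^{-\lambda_D(s-s_\ast)}+\int_{s_\ast}^s e^{-\lambda_D(s-s')}|F^{(3)}\circ\psi|(s')\,ds'\le 7+Ml+\frac{C_0}{\lambda_D}+C\veps^{1/3}.
\]
With $l=M^{-1/4}$ we have $Ml=M^{3/4}$ and $\lambda_D^{-1}\lesssim M^{1/2}$, and since $\tfrac34<\tfrac56$ and $\tfrac12<\tfrac56$ the right-hand side is $\le\tfrac12 M^{5/6}$ for $M$ large and $\veps$ small. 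Together with the near-origin estimate this proves the lemma and closes the bootstrap assumption \eqref{EP2_1D5}. One could equally obtain the $\{|y|\ge l\}$ bound from the comparison principle, Lemma~\ref{max_2}, with the near-origin estimate as the boundary datum at $|y|=l$ and \eqref{UW_far} handling $|y|\to\infty$, exactly as in the proof of Lemma~\ref{Wy2_lem}.

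The crux is the choice of $l$. The damping recovered off the origin is only of order $l^2$, so absorbing the non-decaying forcing $W_{yy}^2$ demands $l^{-2}\ll M^{5/6}$, i.e.\ $l\gg M^{-5/12}$; yet the crude near-origin bound $7+M|y|$ is useful only when $Ml\ll M^{5/6}$, i.e.\ $l\ll M^{-1/6}$. Any $l$ in the window $M^{-5/12}\ll l\ll M^{-1/6}$ — e.g.\ $l=M^{-1/4}$ — makes both halves fall comfortably below $\tfrac12 M^{5/6}$, which is precisely why the bootstrap constant in \eqref{EP2_1D5} is taken to be $M^{5/6}$.
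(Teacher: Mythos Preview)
Your proof is correct and follows essentially the same near/far splitting as the paper: Taylor expansion close to the origin, positive damping plus bounded forcing away from it. The paper chooses the cutoff at the upper edge of your window, $l=\tfrac{M_3}{8M}=\tfrac18 M^{-1/6}$, so the near-origin bound comes out as $7+\tfrac{M_3}{8}<\tfrac{M_3}{4}$, and then applies the maximum principle Lemma~\ref{max_2} (after verifying the $|y|\to\infty$ behavior via Lemma~\ref{rmk2}) rather than integrating along characteristics directly; your characteristic argument with the ``last exit time'' $s_\ast$ is a legitimate and slightly more self-contained alternative that avoids the separate check at infinity. Your discussion of the admissible range $M^{-5/12}\ll l\ll M^{-1/6}$ nicely explains why the bootstrap exponent $5/6$ is natural, a point the paper leaves implicit.
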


\begin{proof}
Let us first consider  the region near $y=0$. Expanding $\partial_y^3W$ at $y=0$, and then using \eqref{EP2_1D2} and \eqref{EP2_1D4}, we have  
\[
|\partial_y^3W(y,s)| \leq |\partial_y^3W(0,s)| +  |y|M \le 7+\frac{M_3}{8}  <  \frac{M_3}{4}
\]
for all $|y|\leq  \frac{M_3}{8M } = \frac18{M^{-1/6}}$ with sufficiently large $M_3 = M^{5/6} >56$.
 
We recall the equation for $\partial_y^3W$ \eqref{EP2_2D3}: 
\[ 
\partial_s \partial_y^3W + D^W_3 \partial_y^3W+U^W \partial_y^4W=F^W_3,
\]
where $U^W$ is defined in \eqref{UW} and
	\begin{subequations}
		\begin{align*}
			D^W_3&:=4 +\frac{4W_y}{1-\dot{\tau}}+\frac{3e^{s/2}Z_y}{1-\dot{\tau}},
			\\
			F^W_3&:=-\frac{2e^s\partial_y^4\Phi}{1-\dot{\tau}}-\frac{e^{s/2}}{1-\dot{\tau}}(\partial_y^3ZW_y+3Z_{yy}W_{yy})-\frac{3W_{yy}^2}{1-\dot{\tau}}.
		\end{align*}
	\end{subequations}
	
\textit{Estimate of $D^W_3$}: We  obtain a lower bound of $D^W_3$. Note that by \eqref{y-w-y2}, we have
\begin{equation*}
		1+\overline{W}'\geq 1-\frac{1}{1+\frac{3y^2}{(3y^2+1)^{2/3}}}
		\geq \frac{y^2}{3(1+y^2)}.
\end{equation*}
Using this and \eqref{Boot_3}, \eqref{Boot_1}, \eqref{Uy1},  \eqref{dottau},   we see that there is a constant $C>0$ such that 
	\begin{equation*}
		\begin{split}
			D^W_3&=4(1+\overline{W}')+4\wt{W}_y+\frac{4\dot{\tau}W_y}{1-\dot{\tau}}+\frac{3e^{s/2}Z_y}{1-\dot{\tau}}
			\\
			&\geq 4(1+\overline{W}')-4|\wt{W}_y| - C\veps
			\\
			&\geq \frac{4y^2}{3(1+y^2) }-\frac{4y^2}{10(1+y^2)} - C\veps .
		\end{split}
	\end{equation*} 
	Hence, there is a small number $\veps_0(M)>0$ such that for all $\veps\in(0,\veps_0)$,  it holds that
	\begin{equation}\label{Wy3D}
		\begin{split}
			D^W_3&\geq \frac{y^2}{2(1+y^2)}
    \ge   \frac{M_3^2}{2(64M ^2+M_3^2)} 
		\end{split}
	\end{equation}
	as long as $|y|> \frac{M_3}{8M}  >0$. 

\textit{Estimate of $F^W_3$}: 	
	By \eqref{tau}, \eqref{EP2_1D3}, \eqref{Uy1},  \eqref{Phiy34} and  Lemma~\ref{difZn}, we obtain that for all $\ve\in(0,\ve_0)$  with  sufficiently small $\veps_0>0$,
	\begin{equation}\label{Wy3F}
		\begin{split}
			|F^W_3|&=\left|-\frac{2e^s\partial_y^4\Phi}{1-\dot{\tau}}-\frac{e^{s/2}}{1-\dot{\tau}}(\partial_y^3ZW_y+3Z_{yy}W_{yy})-\frac{3W_{yy}^2}{1-\dot{\tau}}\right|
			\\
			&\leq (1+4\veps ) ( 2\|e^s\partial_y^4\Phi\|_{L^{\infty}}+\|e^{s/2}\partial_y^3Z\|_{L^{\infty}}\|W_y\|_{L^{\infty}}
  \\
  & \qquad \qquad \qquad \qquad \qquad 
+3\|e^{s/2}Z_{yy}\|_{L^{\infty}}\|W_{yy}\|_{L^{\infty}}+3\|W_{yy}\|_{L^{\infty}}^2 )
			\\
			&\leq (1+4\veps )
   ( 2e^{-5s/2}+e^{-s/3}+45e^{-s/3}+3\cdot 15^2 ) 
   \\
			&\leq 4\cdot 15^2.
		\end{split}
	\end{equation}

\textit{Asymptotic behavior of $\partial_y^3W$ at $|y|=\infty$}: We claim that 
\begin{equation*}
\limsup_{|y|\rightarrow \infty}|\partial_y^3W|\leq \frac{M_3}{4}.
\end{equation*}
From \eqref{Wy3D}, we see that $D^W_3$ satisfies 
	\begin{subequations}
		\begin{align*}
			&\inf_{|y|\geq 1}D^W_3(y,s) \geq \frac{1}{2}.
		\end{align*}
	\end{subequations}
	Therefore, with $\lambda_D=1/2$, $\lambda_F=0$ and $F_0=4\cdot 15^2$, Lemma~\ref{rmk2}  implies that
	\begin{equation*}
	\begin{split}
		\limsup_{|y| \rightarrow \infty}|\partial_y^3W(y,s)| 
		& \leq \limsup_{|y|\rightarrow \infty}|\partial_y^3W(y,s_0)|e^{-0.5(s-s_0)}+ 8\cdot 15^2 \\
		&  \leq  \frac{M_3}{4}
		\end{split}
	\end{equation*}
for sufficiently large $M_3>0$. Here, we have used \eqref{init_24} in the second inequality.

To finish the proof, we apply Lemma~\ref{max_2} with $\Omega =\{y: |y| \leq \frac{M_3}{8M} \}$,  $m_0=\frac{M_3}{4}$, $\lambda_D = \frac{M_3^2}{2(64M ^2+M_3^2)}$, $F_0 = 4\cdot 15^2$, $\delta=0$. Recalling  $M_3:= M^{5/6}$, we see that  for sufficiently large number $M>0$, 
	\begin{equation*}
m_0\lambda_D=	\frac{M_3}{4}\frac{M_3^2}{2(64M ^2+M_3^2)}>  2\cdot 15^2 = F_0/2.
	\end{equation*} 
Combining the above estimates, we finish the proof.
\end{proof}

\subsubsection{The fourth derivative of $W$}
The following lemma closes the bootstrap assumption \eqref{EP2_1D4}.
\begin{lemma}\label{Wy4_lem}
	If the same assumptions as in Proposition~\ref{mainprop} hold, then we have 
	\begin{equation*}
		\|\partial_y^4W(\cdot,s)\|_{L^{\infty}}\leq \frac{M }{2},
	\end{equation*}
	for $s\in[s_0,\sigma_1]$.
\end{lemma}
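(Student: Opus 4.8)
The plan is to mirror the argument used for the third derivative in Lemma~\ref{Wy3_lem} (and the $Z$-estimates in Lemma~\ref{difZn}), working with the transport equation \eqref{EP2_2D4} for $\partial_y^4W$, i.e. $\partial_s\partial_y^4W + D^W_4\,\partial_y^4W + U^W\partial_y^5W = F^W_4$ with $D^W_4 := \frac{11}{2} + \frac{5W_y}{1-\dot{\tau}} + \frac{4e^{s/2}Z_y}{1-\dot{\tau}}$ and $F^W_4 := -\frac{2e^s}{1-\dot{\tau}}\partial_y^5\Phi - \frac{10W_{yy}\partial_y^3W}{1-\dot{\tau}} - \frac{e^{s/2}}{1-\dot{\tau}}(\partial_y^4ZW_y + 4\partial_y^3ZW_{yy} + 6Z_{yy}\partial_y^3W)$, where $U^W$ is as in \eqref{UW}. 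The key structural point, which makes this the simplest of all the bootstrap closures, is that the damping $D^W_4$ is \emph{non-degenerate}: since $\frac{11}{2}-5 = \frac12>0$, and $|W_y|\le 1$ by \eqref{Uy1}, $|e^{s/2}Z_y|\le e^{-s}\le\veps$ by \eqref{Boot_3}, and $|\dot{\tau}|\le 2\veps$ by \eqref{tau}, one obtains $D^W_4(y,s)\ge\frac13$ for all $y\in\mathbb{R}$ and $s\in[s_0,\sigma_1]$ once $\veps$ is small. In particular, in contrast to $W_y$ and $W_{yy}$, there is no degeneracy near $y=0$, hence no need for a weighted unknown or a Taylor expansion at the origin.

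Next I would estimate the forcing $F^W_4$. Using \eqref{Phiy5}, the $\partial_y^5\Phi$ term is $O(e^{-5s/2})$; using the bootstrap bounds \eqref{EP2_1D3} and \eqref{EP2_1D5}, the term $\frac{10W_{yy}\partial_y^3W}{1-\dot{\tau}}$ is bounded by $CM^{5/6}$; and using \eqref{difZn_2}, \eqref{difZn_1}, \eqref{Zyy_56} together with \eqref{Uy1}, \eqref{EP2_1D3}, \eqref{EP2_1D5}, each of the remaining terms carries an extra factor $e^{s/2}$ against a quantity decaying like $e^{-5s/6}$, hence is $O(e^{-s/3})$ with a constant that may depend on $M$. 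Altogether $\|F^W_4(\cdot,s)\|_{L^{\infty}}\le CM^{5/6} + C(M)e^{-s/3}\le CM^{5/6}$ after fixing $\veps_0$ small (depending on $M$), the dominant contribution being $\frac{10W_{yy}\partial_y^3W}{1-\dot{\tau}}$. The crucial feature is that this bound is \emph{sublinear} in $M$, precisely because $\partial_y^3W$ is controlled only at the level $M^{5/6}=M_3$.

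Finally I would integrate \eqref{EP2_2D4} along the characteristic $\partial_s\psi = U^W(\psi,s)$, $\psi(y,s_0)=y$, which is well-defined and a bijection of $\mathbb{R}$ exactly as in the proof of Lemma~\ref{difZn}, to obtain
\[
\partial_y^4W(\psi(y,s),s) = \partial_y^4W(y,s_0)\,e^{-\int_{s_0}^s D^W_4\circ\psi\,ds'} + \int_{s_0}^s e^{-\int_{s'}^s D^W_4\circ\psi\,ds''}(F^W_4\circ\psi)(s')\,ds'.
\]
With $D^W_4\ge\frac13$, $\|\partial_y^4W(\cdot,s_0)\|_{L^{\infty}}\le 1$ by \eqref{1D4}, and $\|F^W_4\|_{L^{\infty}}\le CM^{5/6}$, this yields $\|\partial_y^4W(\cdot,s)\|_{L^{\infty}}\le 1 + 3CM^{5/6}$; alternatively one may invoke Lemma~\ref{rmk2} and Lemma~\ref{max_2} exactly as in the proof of Lemma~\ref{Wy3_lem}, using \eqref{UW_far}. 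Since $M^{5/6}=o(M)$ as $M\to\infty$, fixing $M$ sufficiently large gives $1 + 3CM^{5/6}\le M/2$, which closes the bootstrap assumption \eqref{EP2_1D4}.

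I do not expect a genuine obstacle here: this is the most routine of the closures, thanks to the non-degenerate damping. The only points requiring care are bookkeeping — verifying that the forcing grows at most like $M^{5/6}$ so that the target $M/2$ is attainable for large $M$, and checking that each $M$-dependent constant entering through \eqref{Phiy5}, \eqref{difZn_1}, \eqref{difZn_2} multiplies a genuinely decaying exponential, so that after $M$ is fixed a small choice of $\veps_0=\veps_0(M)$ renders those contributions negligible.
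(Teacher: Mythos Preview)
Your proposal is correct and follows essentially the same approach as the paper: bound the non-degenerate damping from below (the paper gets $D^W_4\ge\tfrac14$, you get $\tfrac13$), bound the forcing by a constant times $M_3=M^{5/6}$ with the dominant term $\tfrac{10W_{yy}\partial_y^3W}{1-\dot\tau}$, then integrate directly along the $U^W$-characteristics using the initial bound \eqref{1D4} to obtain $1+CM^{5/6}\le M/2$ for $M$ large. The paper does not invoke Lemma~\ref{rmk2} or Lemma~\ref{max_2} here, just the direct integration you describe first.
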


\begin{proof}	
	Recalling \eqref{EP2_2D4}, we see that $\partial_y^4W$ satisfies
	\begin{equation*}
		\partial_s \partial_y^4W + D^W_4\partial_y^4W +U^W \partial_y^5W = F^W_4,
	\end{equation*}
	where 
	\begin{subequations}
		\begin{align*}
			D^W_4&:= \frac{11}{2} +\frac{5W_y}{1-\dot{\tau}}+\frac{4e^{s/2}Z_y}{1-\dot{\tau}},
			\\
			F^W_4&:= -\frac{10W_{yy}\partial_y^3W}{1-\dot{\tau}} -\frac{2e^s\partial_y^5\Phi}{1-\dot{\tau}} -\frac{e^{s/2}}{1-\dot{\tau}}(\partial_y^4ZW_y+4\partial_y^3ZW_{yy}+6Z_{yy}\partial_y^3W).
		\end{align*}
	\end{subequations} 
	By \eqref{tau}, \eqref{Boot_3} and \eqref{Uy1}, we have for sufficiently small $\veps>0$,
	\begin{equation}\label{DW4}
		\begin{split}
			D^W_4&\geq \frac{11}{2}-\frac{5|W_y|}{1-\dot{\tau}}-\frac{4e^{s/2}|Z_y|}{1-\dot{\tau}} 
			\geq   \frac{11}{2} - 5 + O(\veps)
			 \geq \frac{1}{4}.
		\end{split}
	\end{equation} 
Using Lemma~\ref{difZn}, \eqref{tau}, \eqref{EP2_1D3}, \eqref{EP2_1D5} and \eqref{Uy1},  it is straightforward to check that for all sufficiently small $\veps$,  
	\begin{equation}\label{FW4} 
			 | F^W_4 |  \leq 200 M_3 + 1.
	\end{equation} 

Integrating the above equation for $\partial_y^4W$ along the flow associated with $U^W$, and then using  \eqref{DW4} and \eqref{FW4}, we get 
	\begin{equation*}
 \begin{split} 
\|\partial_y^4W\|_{L^{\infty}}
& \leq 	\|\partial_y^4W(\cdot,s_0)\|_{L^{\infty}}e^{-\frac{1}{4}(s- s_0)} +(200M_3 + 1) \int^s_{s_0}e^{-\frac{1}{4}(s-s')}
\\
& \leq 1 + 4(200M_3 + 1) \leq \frac{M }{2}
\end{split} 
	\end{equation*} 
for sufficiently large $M$ (recall $M_3 = M^{5/6}$). 
Here, we have used \eqref{1D4}. This proves Lemma~\ref{Wy4_lem}.
\end{proof}

\section{Closure of Bootstrap II}\label{sec 4}
In this section, we prove Proposition~\ref{decay_lem}.  We first remark that the assumption \eqref{Boot_L} implies that 
			\begin{equation}
				\|(y^{2/3}+8)\Phi_{yy}\|_{L^{\infty}}\leq Ce^{-2s}\label{Phiyy_decay}
			\end{equation}
			for some positive constant $C$. To see this, we consider the change of variable  $\tilde{y} = e^{-3s/2}y$. Then,  we have from \eqref{EP2_3} that
			\[
			-\wt{\Phi}_{\tilde{y}\tilde{y}} = \tilde{f}(\tilde{y},s) + 1- e^{\wt{\Phi}},
			\]
			where $\wt{\Phi}(\tilde{y},s) = \Phi(y,s)$ and $\tilde{f}(\tilde{y},s) = e^{\frac{e^{-s/2}W + \kappa - Z}{2\sqrt{K}}} -  1$. 
Thanks to \eqref{Boot_L}, we also see that  
  	\begin{equation*}
			|(1+\tilde{y}^{2/3}) \tilde{f}(\tilde{y},s)| \le 	\|(1+e^{-s}y^{2/3})f(y,s)\|_{L^{\infty}_y(\mathbb{R})} \le   C_{\tilde{f}}:= \frac{1}{14}.
			\end{equation*}
Let $I(y)$ be a continuous function defined by 
\begin{equation}
    \label{B_I_0}
  I(y):=  (y^{2/3} +1) \int_{-\infty}^\infty \frac{e^{-|y-y'|}}{ 1+|y'|^{2/3}}
    dy', \quad y\in\mathbb{R}.
    \end{equation}
    Then from the fact that $\sup I \le 3.5$(see below), we have   
    $$C_{\tilde f} \leq \min\left\{\frac{1}{4\sup_{y\in\mathbb{R}} I}, \frac{ 2e^{-1/4}}{(\sup_{y\in\mathbb{R}} I)^2}\right\}.$$
%
Applying Lemma~\ref{Lem_Pois}, we obtain \eqref{Phiyy_decay}, i.e., 
			\begin{equation*}
				1 \gtrsim |(\tilde{y}^{2/3}+1)\wt{\Phi}_{\tilde{y}\tilde{y}}| = |(y^{2/3}+e^s)e^{2s}\Phi_{yy}|\geq |(y^{2/3}+8)e^{2s}\Phi_{yy}|.
			\end{equation*} 
			We can check that 
			\begin{equation}\label{supI}
				\sup_{y\in\mathbb{R}} I \leq 3.5,
			\end{equation}
			in the following way.
			Since $I(y)$ is an even function, without loss of generality, we assume $y\geq 0$. First, we split $I(y)$ into
			\begin{equation*}
				\begin{split}
					I(y)&=(y^{2/3}+1)\int^{y/2}_{-\infty} \frac{e^{-|y-y'|}}{(y')^{2/3}+1}\,dy' + (y^{2/3}+1)\int^{\infty}_{y/2}\frac{e^{-|y-y'|}}{(y')^{2/3}+1}\,dy'
					\\
					&=:I_1(y)+I_2(y).
				\end{split}
			\end{equation*} 
			By the facts 
			\begin{equation*}
				I_1(y)\leq (y^{2/3}+1)e^{-y}\int^{y/2}_{-\infty}e^{y'}\,dy' = (y^{2/3}+1)e^{-y/2}
			\end{equation*}
			and
			\begin{equation*}
				I_2(y)\leq \frac{y^{2/3}+1}{(y/2)^{2/3}+1}\int^{\infty}_{y/2}e^{-|y-y'|}\,dy' \leq \frac{y^{2/3}+1}{(y/2)^{2/3}+1}(2-e^{-y/2}),
			\end{equation*}
			we see that $I(y)$ has an upper bound as 
			\begin{equation*}
				I(y)\leq (y^{2/3}+1)e^{-y/2}+\frac{y^{2/3}+1}{(y/2)^{2/3}+1}(2-e^{-y/2}) \leq 3.5.
			\end{equation*}
			Here, we can check the last inequality by a straightforward calculation. This proves \eqref{supI}.

The inequality \eqref{Phiyy_decay} will be used to prove  the estimates for $(y^{2/3} + 8)\wt{W}_y$ in Lemma~\ref{mainprop_2}--\ref{mainprop_1}.
	
\subsection{Weighted estimate of $\rho-1$}	
		The following lemma proves \eqref{pi}.
		\begin{lemma}\label{Ptilde_lem}
			If the same assumptions as in Proposition~\ref{decay_lem} hold, then we have  
			\begin{equation*}
			\sup_{s\in[s_0,\sigma_1], \; y\in \mathbb{R} } (1+e^{-s}y^{2/3})\left|e^{\frac{e^{-s/2}W+\kappa-Z}{2\sqrt{K}}}-1\right|\le \frac{3}{56}.
			\end{equation*} 
		\end{lemma}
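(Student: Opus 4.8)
The quantity to control is $\rho - 1 = e^{P} - 1$ with $P = \frac{e^{-s/2}W + \kappa - Z}{2\sqrt{K}}$, and the statement to prove is the weighted decay bound $\sup_{y}(1+e^{-s}y^{2/3})|e^P - 1| \le \frac{3}{56}$, improving the bootstrap constant $\frac{1}{14}$ in \eqref{Boot_L}. The natural approach is to transport the weighted quantity along the $U^Z$-characteristics, exactly as in the proofs of Lemma~\ref{difZn} and Lemma~\ref{Zder_lem}, since $P$ satisfies the clean transport equation \eqref{Peq}, namely $\partial_s P + U^Z P_y = -\frac{2W_y}{1-\dot\tau}$. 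The plan is: first establish an unweighted bound $|P| \le 3/5$ (already available from Lemma~\ref{rho-1_prop}, i.e. \eqref{wz3M1}), and then run a weighted estimate for $g(y,s) := (1 + e^{-s}y^{2/3})(e^P - 1)$, or perhaps more conveniently for $(1+e^{-s}y^{2/3})P$ and then convert using $|e^P - 1| \le |P| e^{|P|} \le |P| e^{3/5}$ together with $|P| \le |e^P-1|\,\sup|P|^{-1}(\dots)$-type comparisons valid in the small-$|P|$ regime.

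The key steps, in order: (1) Write the equation for $g$ (or for the weighted $P$): applying the weight $m(y,s) = 1 + e^{-s}y^{2/3}$ to \eqref{Peq} produces a damping-type term $\partial_s(\log m)$ plus a transport term $U^Z \partial_y(\log m)$, i.e. $\partial_s g + U^Z g_y = -\frac{2W_y}{1-\dot\tau} m - g\big(\partial_s \log m + U^Z \partial_y \log m\big) + (\text{lower order})$. The factor $\partial_s\log m = -e^{-s}y^{2/3}/m \le 0$ contributes favorable damping, while the transport piece $U^Z\partial_y\log m$ involves $U^Z = U^W - 4\sqrt{K}e^{s/2}/(1-\dot\tau)$, whose large $-4\sqrt K e^{s/2}$ part, combined with $\partial_y y^{2/3} = \frac23 y^{-1/3}$, must be handled carefully near $y = 0$. (2) Estimate the forcing $\frac{2W_y}{1-\dot\tau} m$: using the decay $|W_y| \le C(1+y^2)^{-1/3}$ from \eqref{Wydec_fin_2} (valid since \eqref{Utildey_M} is assumed), we get $|W_y| \cdot (1+e^{-s}y^{2/3}) \le C(1+y^2)^{-1/3}(1+e^{-s}y^{2/3}) \le C$, so the forcing is $O(1)$ pointwise — but then integrated in $s$ against the damping $e^{-\lambda(s-s')}$ this would only give $O(1)$, not $O(\ve^{\text{something}})$; the gain must instead come from the $e^{-s/2}$ factor multiplying $W$ inside $P$ and from the initial smallness $\frac{1}{28}$ in \eqref{pi_init}. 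So more precisely one should track $P$ itself: $P(-\ve)$ is small (size $\le \frac{1}{28}$ after weighting, from \eqref{pi_init}), and along characteristics $P$ changes by $\int_{s_0}^s \frac{2W_y}{1-\dot\tau}\,ds'$, which is bounded by $\frac{3}{5}$ in unweighted norm (as shown in \eqref{DZt_s}) — but for the \emph{weighted} version one needs $\int_{s_0}^s (1+e^{-s'}\psi(y,s')^{2/3})|W_y(\psi,s')|\,ds'$ to be small, which requires the characteristics $\psi$ emanating from large $y$ to not spend too much "weighted time" where $W_y$ is not small. (3) Use the comparison principle Lemma~\ref{max_2} (or a direct Grönwall/characteristic argument as in Lemma~\ref{difZn}) with the initial bound $\frac{1}{28}$, the favorable damping from the weight's $s$-derivative, and the smallness of $\ve$, to close at the improved constant $\frac{3}{56}$ (note $\frac{1}{28} = \frac{2}{56} < \frac{3}{56} < \frac{1}{14} = \frac{4}{56}$, so there is genuine room).

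\textbf{Main obstacle.} The delicate point is the interaction between the weight $e^{-s}y^{2/3}$ and the transport speed $U^Z$, whose dominant term $-4\sqrt K e^{s/2}/(1-\dot\tau)$ is large and negative; along a characteristic started at some $y_0$, this drives $y$ toward $-\infty$ rapidly, so the weight $e^{-s}\psi^{2/3}$ along the flow is not monotone and one must show the accumulated weighted forcing $\int (1+e^{-s'}\psi^{2/3})|W_y|\,ds'$ stays below roughly $\frac{1}{56}$. I expect the resolution is that $|W_y(\psi,s')| \le C(1+\psi^2)^{-1/3}$ kills the $\psi^{2/3}$ growth (their product is bounded), leaving $\int_{s_0}^s e^{-s'}\cdot C\,ds' \le C\ve$, which is the needed smallness; but making this rigorous requires careful bookkeeping of where the characteristic is as a function of $s'$ and using \eqref{Wydec_fin_2} uniformly, plus handling the region near $y=0$ separately (where $y^{2/3}$ is small so the weight is $\approx 1$ and one just reuses the unweighted argument of \eqref{DZt_s}). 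A secondary technical nuisance is converting between the bound on $P$ and the bound on $e^P - 1$: since $|P| \le 3/5$, one has $\frac{e^{-3/5}-1}{-3/5}|P| \le |e^P-1|$ and $|e^P-1| \le \frac{e^{3/5}-1}{3/5}|P|$ — wait, more simply $e^{-3/5}|P| \le |e^P - 1| \le e^{3/5}|P|$ is false in general; rather $|e^P-1|\le |P|e^{|P|}$ and $|e^P-1|\ge |P|e^{-|P|}$ hold, giving two-sided control with constants $e^{\pm 3/5}$, and one must check $e^{3/5}\cdot\frac{1}{28}\cdot(\text{characteristic growth factor})$ still lands under $\frac{3}{56}$, which is why the smallness of $\ve$ is essential and why I would carry the constant chase explicitly only at the end.
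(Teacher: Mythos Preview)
Your framework matches the paper's --- set $\wt P := (1+e^{-s}y^{2/3})(e^P-1)$ and transport it along the $U^Z$-flow $\psi$ --- but your mechanism for closing has a real gap, and your picture of the characteristics is backwards. The term $-4\sqrt K\, e^{s/2}/(1-\dot\tau)$ does \emph{not} dominate: in the regime $|y|\ge e^s$ the $\tfrac{3}{2}y$ part of $U^Z$ is $\ge\tfrac{3}{2}e^s \gg e^{s/2}$, and the paper proves that characteristics with $|y_0|\ge e^{s_0}$ satisfy $|\psi(y_0,s)|\ge e^s$ for all $s\ge s_0$ (they escape outward, not drift toward $-\infty$). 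The correct region split is therefore $|y|\gtrless e^s$, not ``near $y=0$''. Your forcing argument also fails as written: $(1+e^{-s'}\psi^{2/3})|W_y| = |W_y| + e^{-s'}\,\psi^{2/3}|W_y|$, and only the second summand carries an $e^{-s'}$; the first integrates to $O(1)$ via \eqref{DZt_s}, not $O(\ve)$. The paper's resolution is that on the exterior $|\psi|\ge e^s$ one has $|W_y|\le C|\psi|^{-2/3}\le Ce^{-2s/3}$ and $|W/\psi|\le Ce^{-2s/3}$ pointwise, which makes \emph{every} piece of $F_P$ --- the pure forcing $\tfrac{2W_y}{1-\dot\tau}(1+e^{-s}y^{2/3})$, the $\sigma$-term, and the dangerous $4\sqrt K\, e^{s/2}$ contribution after division by $|y|\ge e^s$ --- bounded by $Ce^{-s/3}$. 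Integrating along the escaping characteristic then gives $|\wt P|\le\tfrac{1}{28}+C\ve^{1/3}\le\tfrac{3}{56}$.

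For the interior $|y|<e^s$ the weight is $\le 1+e^{-s/3}$, so the task reduces to an \emph{unweighted} bound $|\rho-1|\lesssim \tfrac{1}{28}$. Your proposal to ``reuse \eqref{DZt_s}'' delivers only $|\Delta P|\le\tfrac{3}{5}$ along characteristics, which is orders of magnitude too crude. What actually closes this is rerunning the argument of Lemma~\ref{rho-1_prop} with the much sharper initial input $|\rho_0-1|\le\tfrac{1}{28}$ contained in \eqref{tildeP_init}: this gives $\|w_0-z_0\|_{L^\infty}\le 2\sqrt K\,|\log(1-\tfrac{1}{28})|\le \tfrac{2\sqrt K}{27}$, hence $\|w-z\|_{L^\infty}\le \tfrac{2\sqrt K}{27}+O(\ve)$ and $|\rho-1|\le e^{1/27+O(\ve)}-1<\tfrac{3}{56}$. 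Characteristics that start inside $|y_0|<e^{s_0}$ and later exit at some $s_{**}$ with $|\psi(y_0,s_{**})|=e^{s_{**}}$ are handled by concatenating the interior bound up to $s_{**}$ with the exterior argument thereafter.
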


		\begin{proof}
	Let $\wt{P}:=(1+e^{-s}y^{2/3})(e^{P}-1)$. From \eqref{Peq}, we have
		\begin{multline}\label{weightP}
			\partial_s\wt{P} + U^Z\wt{P}_y  
			=
			-\frac{2W_y}{1-\dot{\tau}}(1+e^{-s}y^{2/3}) 
			\\
			-  \left( \frac{2W_y}{1-\dot{\tau}}-\frac{2}{3}\frac{e^{-s}}{y^{1/3}(1+e^{-s}y^{2/3})}\left( \frac{W}{1-\dot{\tau}} + \sigma - \frac{4\sqrt{K}e^{s/2}}{1-\dot{\tau}} \right)\right)\wt{P} =: F_P.
		\end{multline}
		Let us first claim that
		\begin{equation}\label{tP_F_fin}
				|F_P(y,s)|\leq Ce^{-s/3} \quad \text{as long as } |y|\geq e^s.
		\end{equation}
  From \eqref{Utildey_M} and \eqref{y-w-y}, we have 
  \begin{equation}\label{w-y-y23} |W_y(y,s) | \le \frac{1}{y^{2/3} +8} + | \overline W'(y) | \le Cy^{-2/3}.
  \end{equation}
  This together with $W(0,s) =0$ yields  
  \[ | W(y,s) | \le C |y |^{-1/3}. \] 
  Then, 
  it holds that 
		\begin{equation}\label{temp_P_1}
			|W_y(y,s)| \leq  C e^{-2s/3}, \quad \left|\frac{W(y,s)}{y}\right|\leq C e^{-2s/3}, \qquad |y|\geq e^s.
		\end{equation}		
By \eqref{w-y-y23} and \eqref{temp_P_1},   we see that there is a constant $C>0$ such that 
		\begin{equation*}
			\begin{split}
				\left|\frac{2W_y}{1-\dot{\tau}}(1+e^{-s}y^{2/3}) \right|
    & \le \left|\frac{2W_y}{1-\dot{\tau}}  \right| + \left|\frac{2 y^{2/3} W_y}{1-\dot{\tau}}  e^{-s} \right|
				\\
				&\leq Ce^{-2s/3}, \qquad |y|\geq e^s.
			\end{split}
		\end{equation*}
		Next, using \eqref{Boot_L} and \eqref{temp_P_1}, we get  as long as  $|y| \geq e^s$, 
\begin{equation*}
\left|\frac{2W_y}{1-\dot{\tau}} - \frac{2}{3}\frac{e^{-s}}{y^{1/3}(1+e^{-s}y^{2/3})}\frac{W}{1-\dot{\tau}} \right| |\wt{P}| \leq  \left| \frac{2W_y}{1-\dot{\tau}}\right| |\wt{P}| +\left| \frac{4}{3}\frac{W}{y}\right|  |\wt{P}| \leq C e^{-2s/3}.
\end{equation*}
Using \eqref{Boot_L} and \eqref{temp_2}, we obtain
\begin{equation*}
\begin{split}
\left| \frac{2}{3}\frac{e^{-s}}{y^{1/3}(1+e^{-s}y^{2/3})}\left( \sigma  - \frac{4\sqrt{K}e^{s/2}}{1-\dot{\tau}} \right) \wt{P} \right| &\leq \left| \frac{2}{3y}\left( \sigma  - \frac{4\sqrt{K}e^{s/2}}{1-\dot{\tau}} \right) \wt{P} \right|
\\
&  \leq C \left(  \frac{1+|y|}{|y|} e^{-s/3} + \frac{e^{s/2}}{|y|} \right)  \\
& \leq Ce^{-s/3},
\end{split}
\end{equation*}
where we have used $|y|\geq e^s$ for the last inequality. Combining all together, we obtain \eqref{tP_F_fin} for all $\ve<\ve_0$, where $\veps_0$ is sufficiently small.  

Let  $\psi(y,s)$  be the solution to  $ {\partial_s}\psi(y,s)=U^Z(\psi,s)$ with $\psi(y,s_0)=y$. Recalling the definition of $U^Z$ in \eqref{UW}, and using   \eqref{temp_2} and \eqref{temp_P_1}, we deduce   that as long as $\psi(y,s) \geq e^s$,
		\begin{equation}\label{uz-rest}
			\begin{split} 
				\partial_s \psi(y,s) = U^Z(\psi(y,s),s)
				& = \frac{3}{2}\psi   + \frac{W(\psi,s)}{1-\dot{\tau}} +\sigma(\psi,s) - \frac{4\sqrt{K}e^{s/2}}{1-\dot{\tau}} \\
				&\geq \left(\frac{3}{2}- Ce^{-2s/3}\right)\psi - C (1+\psi)e^{-s/3} - C\sqrt{K}e^{s/2}
				\\
				& > e^{s}.
			\end{split}
		\end{equation} 
Using \eqref{uz-rest}, we claim that  if 	 $\psi(y,s_0)\geq e^{s_0}$,  then $\psi(y,s) \geq e^s$ for all $s \geq s_0$. To see this, it is enough to show that $\partial_s\psi(y,s) \geq e^s$ for $s  \geq s_0$. If not, then by the \textit{strict} inequality \eqref{uz-rest} evaluated at $s=s_0$ and continuity, there exist $s_\ast$ and $s_\ast'$  such that
\begin{equation}\label{A3}
\partial_s\psi(y,s) \geq e^s \quad \text{for } s \in [s_0,s_\ast],  \quad \partial_s\psi(y,s) < e^s \quad \text{for } s \in (s_\ast,s_\ast'].
\end{equation}
 On the other hand, by integrating $\partial_s\psi(y,s)$ from $s_0$ to $s_\ast$, we have
\[
\psi(y,s_\ast) \geq \psi(y,s_0) + e^{s_\ast} - e^{s_0} \geq  e^{s_\ast},
\]
which implies $\partial_s \psi(y,s_\ast)> e^{s_\ast}$ by \eqref{uz-rest}. This  contradicts to \eqref{A3} by continuity of $\partial_s \psi(y,s)$ in $s$. Similarly,  we see that $U^Z(\psi(y,s),s) <-e^s$ 		as long as $\psi(y,s) \leq -e^s$, and using the above argument,   we deduce that  $|\psi(y,s)|\geq e^s$ for all $s\geq s_0$ if $|\psi(y,s_0)|\geq e^{s_0}$. 

Now we integrate \eqref{weightP} along $\psi$  and  use \eqref{tP_F_fin} to obtain that  for $|y|\geq e^{s_0}$,
		\begin{equation}\label{tildeP_Meq}
			\begin{split}
				|\wt{P}(\psi(y,s),s)| 
				&  \leq |\wt{P}(\psi(y,s_0),s_0)|+C\int^s_{s_0}e^{-s'/3}\,ds' \leq \frac{1}{28}+3Ce^{-s_0/3} \leq \frac{3}{56}
			\end{split}
		\end{equation}  
		for all  $\veps\le \ve_0$ sufficiently small. Here, we have used  \eqref{tildeP_init}. 

It is remained to consider  the case $|y|< e^{s_0}$. By continuity, we consider two cases: (i) $|\psi(y,s)|< e^s$ for all $s\geq s_0$, or  (ii) there is $s_{\ast\ast}$ such that  $|\psi(y,s)|< e^s$ for $s\in[s_0,s_{\ast\ast})$  and $|\psi(y,s)| = e^s$ at $s=s_{\ast\ast}$. For the  case (i),  since $\left|e^{P}-1\right|\leq 2/5$ by \eqref{wz3M1}, we have
		\begin{equation}\label{Psmall_1}
		\begin{split}
			|\wt{P}(\psi(y,s),s)|
			&  =|\psi(y,s)|^{2/3}\left|e^{P}-1\right|e^{-s} \\
			& \leq \frac{2}{5}e^{-s/3} \leq \frac{1}{28}
			\end{split}
		\end{equation}
for all  $\veps\le \ve_0$ sufficiently small. For the case (ii),  we can check that \eqref{Psmall_1} holds for $s\in[s_0,s_{\ast\ast})$ by the same analysis for the case (i). Applying the same argument to the case $|y=\psi(y,s_0)| \geq e^{s_0}$,  we see that $|\psi(y,s)| \geq e^s$ for $s \geq s_{\ast\ast}$. Now, using  \eqref{Psmall_1} at $s=s_{\ast\ast}$, one can obtain the same inequality as \eqref{tildeP_Meq}. This finishes the proof of Lemma~\ref{Ptilde_lem}.
\end{proof}

		\subsection{Weighted estimates of $W_y$}
Our main goal is to prove Lemma~\ref{mainprop_1}. We first prove the following auxiliary lemma.

	\begin{lemma}\label{mainprop_2}
		If the same assumptions as in Proposition~\ref{decay_lem} hold, then we have 
		\begin{equation}\label{claim2_nudec}
			\limsup_{|y|\rightarrow \infty}\left| (y^{2/3}+8) ( W_y(y,s) -\overline{W}'(y)) \right| \leq  \frac{3}{4}.
		\end{equation}   
	\end{lemma}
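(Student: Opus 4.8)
The plan is to propagate equation \eqref{Eq_diff} for $\wt{W}_y:=W_y-\overline{W}'$ along the characteristics of $U^W$ while carrying the weight $g(y):=y^{2/3}+8$, exploiting the fact that, near $y=\pm\infty$, the exponential growth of $g$ along a characteristic is compensated \emph{to leading order} by the damping $D^{\widetilde W}=1+\frac{\wt{W}_y+2\overline{W}'}{1-\dot{\tau}}+\frac{e^{s/2}Z_y}{1-\dot{\tau}}\to 1$. Writing the right-hand side of \eqref{Eq_diff} as $\mathcal{F}$ and letting $\psi=\psi(y,s)$ solve $\partial_s\psi=U^W(\psi,s)$ with $\psi(y,s_0)=y$, we get
\[
\wt{W}_y(\psi(y,s),s)=\wt{W}_y(y,s_0)\,e^{-\int_{s_0}^{s}D^{\widetilde W}\circ\psi}+\int_{s_0}^{s}e^{-\int_{s'}^{s}D^{\widetilde W}\circ\psi}\,(\mathcal{F}\circ\psi)(s')\,ds'.
\]
Given $\tilde y$ with $|\tilde y|$ large, the characteristic through $(\tilde y,s)$ has label $y=y(\tilde y)$ at time $s_0$; since \eqref{UW_far} together with \eqref{temp_2} and the bound $|\wt{W}|\le 3|y|^{1/3}$ (obtained by integrating the bootstrap estimate \eqref{Utildey_M}, $|\wt{W}(y,s)|\le\int_0^{|y|}((y')^{2/3}+8)^{-1}dy'\le 3|y|^{1/3}$) give $\mathrm{sgn}(\psi)\,U^W(\psi,s)\ge|\psi|$ once $|\psi|$ is large, we have $|y|\,e^{s'-s_0}\le|\psi(s')|\le|y|\,e^{2(s'-s_0)}$ on $[s_0,s]$. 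In particular $|y(\tilde y)|\to\infty$ and $|\psi(s')|$ stays large on $[s_0,s]$ as $|\tilde y|\to\infty$, so it suffices to bound $g(\psi(s))\,|\wt{W}_y(\psi(s),s)|$ for characteristics whose label $|y|$ is arbitrarily large.

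The first key step is the amplification estimate
\[
\frac{g(\psi(s))}{g(\psi(s'))}\,e^{-\int_{s'}^{s}D^{\widetilde W}\circ\psi}\le e^{C\ve^{1/3}},\qquad s_0\le s'\le s,
\]
which follows by integrating the pointwise differential inequality
\[
\frac{d}{ds}\log g(\psi(s))-(D^{\widetilde W}\circ\psi)(s)\le-\frac{c}{g(\psi(s))}+Ce^{-s/3},\qquad c>0,
\]
valid for $|\psi(s)|$ large. To obtain the latter one expands $\frac{d}{ds}\log g(\psi)=g'(\psi)\,U^W(\psi,s)/g(\psi)$ using $U^W=\tfrac32\psi+\tfrac{W}{1-\dot{\tau}}+\sigma$, $g'(\psi)=\tfrac23|\psi|^{-1/3}\mathrm{sgn}(\psi)$, the asymptotics $\overline{W}(y)=-\mathrm{sgn}(y)|y|^{1/3}+O(|y|^{-1/3})$, $\overline{W}'(y)=O(|y|^{-2/3})$ from \eqref{Burgers_SS'}, the bound $|\wt{W}|\le 3|y|^{1/3}$, and \eqref{temp_2}, \eqref{Boot_3}, \eqref{dottau}; the $|\psi|^{2/3}$ part of the numerator of $\frac{d}{ds}\log g(\psi)$ cancels the $1$ in $D^{\widetilde W}$, and the surviving $O(1/g)$ terms add up with a definite negative sign.

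The second key step is smallness of the forcing contribution. Using \eqref{temp_2}, \eqref{Boot_3}, \eqref{dottau}, the decay bounds $|\overline{W}'(y)|\lesssim(1+|y|)^{-2/3}$, $|\overline{W}''(y)|\lesssim(1+|y|)^{-5/3}$ and $|\overline{W}(y)|\le|y|$, the bound $|\wt{W}|\le 3|y|^{1/3}$, and crucially \eqref{Phiyy_decay}, one checks that on the region where $|\psi(s')|$ is large,
\[
g(\psi(s'))\,|(\mathcal{F}\circ\psi)(s')|\le C\big(|\psi(s')|^{-2/3}+e^{-s'/3}\big).
\]
Combining this with the amplification estimate and $|\psi(s')|\ge|y|\,e^{s'-s_0}$, the forcing term is at most $e^{C\ve^{1/3}}\int_{s_0}^{\infty}C\big(|y|^{-2/3}e^{-2(s'-s_0)/3}+e^{-s'/3}\big)\,ds'\le C\big(|y|^{-2/3}+\ve^{1/3}\big)$, which tends to $C\ve^{1/3}$ as $|y|\to\infty$. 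For the initial term, the amplification estimate with $s'=s_0$ and the initial bound \eqref{Utildey_init}, $g(y)\,|\wt{W}_y(y,s_0)|\le\tfrac1{24}$, give $g(\psi(s))\,|\wt{W}_y(y,s_0)|\,e^{-\int_{s_0}^{s}D^{\widetilde W}\circ\psi}\le\frac1{24}e^{C\ve^{1/3}}$. Letting $|\tilde y|\to\infty$ thus yields $\limsup_{|\tilde y|\to\infty}g(\tilde y)\,|W_y(\tilde y,s)-\overline{W}'(\tilde y)|\le\frac1{24}e^{C\ve^{1/3}}+C\ve^{1/3}<\frac34$ for $\ve$ sufficiently small.

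The main obstacle is the first key step. Unlike in Lemma~\ref{Wy2_lem}, the damping $D^{\widetilde W}$ has no positive lower bound as $|y|\to\infty$ (it degenerates to $1$ there, while the weight $g(\psi(s))$ grows along characteristics at essentially the same exponential rate), so one cannot simply invoke Lemma~\ref{rmk2}; the argument instead hinges on making the near-cancellation between $\frac{d}{ds}\log g(\psi)$ and $D^{\widetilde W}$ quantitative, carefully controlling the $O(1/g)$ remainders and coping with the fact that $\wt{W}$ only obeys the weak growth bound $|\wt{W}|\lesssim|y|^{1/3}$ rather than a uniform one.
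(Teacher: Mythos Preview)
Your approach is correct and reaches the same conclusion, but it takes a genuinely different route from the paper's proof. The paper does \emph{not} work with $g\wt{W}_y$; instead it passes to $\mu:=(y^{2/3}+8)W_y$ and only at the end uses the triangle inequality together with the exact limit $\lim_{|y|\to\infty}(y^{2/3}+8)|\overline{W}'(y)|=1/3$. The point of this switch is structural: the $W_y$-equation \eqref{EP2_2D1} carries damping $1+\tfrac{W_y}{1-\dot\tau}+\ldots$ (one copy of $\overline{W}'$) rather than $1+\tfrac{\wt{W}_y+2\overline{W}'}{1-\dot\tau}+\ldots$ (two copies) as in \eqref{Eq_diff}. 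After absorbing the weight, the resulting damping $D^\mu$ is shown to be \emph{nonnegative} on $\{|y|\ge 3\}$ via the purely profile inequality \eqref{0605_m_7'}, and then Lemma~\ref{rmk2} applies directly with $\lambda_D=0$, $\lambda_F=1/3$, yielding $\limsup|\mu|\le\limsup|\mu(\cdot,s_0)|+C\ve^{1/3}$.

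Your route keeps $\wt{W}_y$ and compensates for the degenerating damping by the amplification estimate $\tfrac{d}{ds}\log g(\psi)-D^{\widetilde W}\le -c/g(\psi)+Ce^{-s/3}$. This is exactly the statement that the weighted damping $D^\nu=D^{\widetilde W}-g'U^W/g$ for $\nu=g\wt{W}_y$ satisfies $D^\nu\ge c/g-Ce^{-s/3}$; the computation does go through (your constant comes out roughly $c\approx 5$ once one tracks the contributions from $-8/g$, $\overline{W}/y$, $\wt{W}/y$, $\wt{W}_y$ and $-2\overline{W}'$), and the bootstrap bound $|\wt W|\le 3|y|^{1/3}$ from \eqref{Utildey_M} is precisely what keeps the $\wt W$-contribution from spoiling the sign. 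What you gain is that you avoid the auxiliary profile inequality \eqref{0605_m_7'} and the final triangle-inequality step; what you pay is that you must verify the sign of the $O(1/g)$ balance by hand and cannot simply invoke Lemma~\ref{rmk2}. Both proofs use \eqref{Phiyy_decay} for the potential term and exploit that $|\psi(s')|^{-2/3}$ is integrable along outgoing characteristics.
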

	\begin{proof}
	Since
	\begin{equation}
	\begin{split}
	& \limsup_{|y|\rightarrow \infty}|(y^{2/3}+8) ( W_y(y,s) -\overline{W}'(y))|
	\\
 & \leq \limsup_{|y|\rightarrow \infty}|(y^{2/3}+8)W_y(y,s)| + \limsup_{|y|\rightarrow \infty} |(y^{2/3}+8)\overline{W}'(y)| \\
	& = \limsup_{|y|\rightarrow \infty}|(y^{2/3}+8)W_y(y,s)| + \frac{1}{3},
	\end{split}
	\end{equation}
	 it is enough to show that 
	 \begin{equation}\label{3.00}
	 \limsup_{|y|\rightarrow \infty}\left|  (y^{2/3}+8)  W_y(y,s) \right| \leq \frac{5}{12}.
	 \end{equation}
	 Let $\mu:=(y^{2/3}+8)W_y$. From \eqref{EP2_2D1}, we obtain  
		\begin{equation}\label{decay_mu}
			\partial_s \mu+D^\mu \mu+U^W \mu_y=F^\mu,
		\end{equation}
		where $U^W$ is defined in \eqref{UW}, and 
		\begin{subequations}
			\begin{align*}
				D^\mu(y,s)&:=1+\wt{W}_y+\overline{W}'-\frac{2y^{2/3}}{3(y^{2/3}+8)}\left(\frac{3}{2}+\frac{\wt{W}+\overline{W}}{y}\right),
				\\
				F^\mu(y,s)&:=-\frac{2e^{s}\Phi_{yy}(y^{2/3}+8)}{1-\dot{\tau}}+\left(-\frac{\dot{\tau}W_y}{1-\dot{\tau}}-\frac{e^{s/2}Z_y}{1-\dot{\tau}}+\frac{2}{3}\frac{y^{2/3}}{y^{2/3}+8}\left(\frac{\dot{\tau}}{1-\dot{\tau}}\frac{W}{y}+\frac{\sigma}{y} \right)\right)\mu.
			\end{align*}
		\end{subequations}
		Using \eqref{Utildey_M} and the fundamental theorem of calculus with $\wt{W}(0,s)=0$, we obtain that 
		\begin{equation}\label{Dmu}
			\begin{split}
				D^\mu(y,s)&\geq 1-\frac{1}{y^{2/3}+8}+\overline{W}'-\frac{2y^{2/3}}{3(y^{2/3}+8)}\left(\frac{3}{2}+\frac{\overline W}{y}+\frac{1}{y}\int_{0}^{y}{\frac{dy'}{{y'}^{2/3}+8}}\right).
			\end{split}
		\end{equation}
From \eqref{0605_m_7'}, the lower bound of $D^\mu$ in \eqref{Dmu} is strictly positive for $|y| \geq 3$.   Also, we have
		\begin{equation}\label{mu9}
			|\mu(y,s)|\leq |(y^{2/3}+8)\wt{W}_y|+|(y^{2/3}+8)\overline{W}'| \leq   C
		\end{equation}
		from \eqref{Utildey_M} and \eqref{y-w-y}.  Using \eqref{Boot_3}, \eqref{Uy1}, \eqref{Wbd1}, \eqref{temp_2}, \eqref{dottau} and \eqref{Phiyy_decay},  it is easy to check that 
\begin{equation}\label{Fmu}
\begin{split}
\|F^\mu(y,s)\|_{L^{\infty}(|y|\geq 3)} 
& \leq C (e^{-s} +  e^{-s/3}\|\mu(\cdot, s)\|_{L^\infty(\mathbb{R}) })   \leq C e^{-s/3},
\end{split}
\end{equation}		
where we have used \eqref{mu9} for the last inequality. Applying Lemma~\ref{rmk2}, \eqref{UW_far}, \eqref{Dmu} and \eqref{Fmu}, we obtain 
		\begin{equation}\label{3.108}
			\limsup_{|y|\rightarrow \infty}|\mu(y,s)|\leq \limsup_{|y|\rightarrow \infty}|\mu(y,s_0)|+ C\veps^{1/3}.
		\end{equation}
On the other hand, from \eqref{wx-exp-inf} and  \eqref{4.3a}, we see that 
		\begin{equation}\label{3.1081}
  \begin{split} 
			\limsup_{|y|\rightarrow \infty}|\mu(y,s_0)|
   &\leq \limsup_{|y|\rightarrow \infty}|(y^{2/3}+8)\wt{W}_y(y,s_0)|+\lim_{|y|\rightarrow \infty}|(y^{2/3}+8)\overline{W}'(y)|
   \\
   & \leq \frac{1}{24}+\frac{1}{3}=\frac{9}{24}.
   \end{split} 
		\end{equation} 
Hence, combining \eqref{3.108} and \eqref{3.1081}, we conclude that   \eqref{3.00} holds for all $\ve<\ve_0$ with $\veps_0>0$ sufficiently small.
	\end{proof}

We finally close the bootstrap assumption  \eqref{Utildey_M} in the following lemma. 
\begin{lemma}\label{mainprop_1}
	If the same assumptions as in Proposition~\ref{decay_lem} hold, then we have 
\begin{equation*}
| (y^{2/3}+8)  (W_y(y,s) -\overline{W}'(y) ) | \le \frac{24}{25},
\end{equation*}
for $s\in[s_0,\sigma_1]$.
\end{lemma}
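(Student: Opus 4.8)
The plan is to run a comparison/maximum–principle argument, in the spirit of Lemma~\ref{Wy1_lem} and Lemma~\ref{mainprop_2}, for the weighted quantity
\[
\nu(y,s):=(y^{2/3}+8)\bigl(W_y(y,s)-\overline{W}'(y)\bigr)=(y^{2/3}+8)\,\widetilde{W}_y(y,s).
\]
Starting from the equation \eqref{Eq_diff} for $\widetilde{W}_y$ and substituting $\widetilde{W}_y=(y^{2/3}+8)^{-1}\nu$, the transport term $U^W\partial_y\bigl((y^{2/3}+8)^{-1}\nu\bigr)$ generates, after multiplying through by $(y^{2/3}+8)$, an equation of the familiar form
\[
\partial_s\nu+D^\nu\nu+U^W\nu_y=F^\nu+\int_{\mathbb{R}}K^\nu(y,s;y')\,\nu(y',s)\,dy',
\]
where $D^\nu$ is the damping of $\widetilde{W}_y$ together with the weight correction $-\tfrac23 y^{-1/3}(y^{2/3}+8)^{-1}U^W$, where $F^\nu$ collects all the $\overline{W}',\overline{W}'',Z_y,\Phi_{yy},\sigma,\dot\tau$ contributions \emph{except} the $\widetilde{W}\,\overline{W}''$ piece, and where the nonlocal kernel
\[
K^\nu(y,s;y')=-\mathbb{I}_{[0,y]}(y')\,\frac{(y^{2/3}+8)\,\overline{W}''(y)}{(1-\dot\tau)\,((y')^{2/3}+8)}
\]
encodes the $\widetilde{W}\,\overline{W}''$ term through $\widetilde{W}(y,s)=\int_0^y((y')^{2/3}+8)^{-1}\nu(y',s)\,dy'$. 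This is the exact analogue of $K^V$ in \eqref{Eq_V} and of \eqref{decay_mu}, so much of the bookkeeping parallels the proof of Lemma~\ref{mainprop_2}.

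For $|y|\le 3$ I would simply quote Lemma~\ref{Wy1_lem}: there $|\widetilde{W}_y|\le y^2/(20(1+y^2))$, so $|\nu(y,s)|\le (y^{2/3}+8)\,y^2/(20(1+y^2))$, which is increasing on $[0,3]$ and hence bounded by $(3^{2/3}+8)\cdot 9/200<24/25$; in particular this also controls the boundary values $|\nu(\pm 3,s)|<24/25$. On $|y|\ge 3$ I would establish a strictly positive lower bound for $D^\nu$, arguing as in \eqref{Dmu}--\eqref{0605_m_7'}: the profile part of $D^\nu$ equals $\tfrac{8}{y^{2/3}+8}+2\overline{W}'(y)-\tfrac23 y^{-1/3}(y^{2/3}+8)^{-1}\overline{W}(y)$, which is positive for $|y|\ge 3$ by (the argument behind) \eqref{0605_m_7'} and behaves like a positive multiple of $|y|^{-2/3}$ at infinity, while the remaining pieces — $\widetilde{W}_y/(1-\dot\tau)$, $e^{s/2}Z_y/(1-\dot\tau)$, and the $\widetilde{W},\sigma,\dot\tau$ contributions inside $-\tfrac23 y^{-1/3}(y^{2/3}+8)^{-1}U^W$ — are absorbed using the bootstrap bound \eqref{Utildey_M} (which gives $|\widetilde{W}_y|\le(y^{2/3}+8)^{-1}$ and, after integration, $|\widetilde{W}(y,s)|\lesssim|y|^{1/3}$), together with \eqref{Boot_3}, \eqref{temp_2}, \eqref{dottau} and \eqref{Wbd1}. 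For the kernel I would estimate, as in \eqref{kernel}--\eqref{K-DV},
\[
\int_{\mathbb{R}}|K^\nu(y,s;y')|\,dy'=\frac{(y^{2/3}+8)|\overline{W}''(y)|}{|1-\dot\tau|}\int_0^{|y|}\frac{dy'}{(y')^{2/3}+8}\le\frac1\lambda\,D^\nu(y,s),\qquad |y|\ge 3,
\]
for some fixed $\lambda>1$, so that the net damping $D^\nu-\int|K^\nu|\ge(1-\lambda^{-1})D^\nu$ stays (strictly) positive; and, using \eqref{Phiyy_decay}, \eqref{Boot_3}, \eqref{temp_2}, \eqref{dottau} and the known decay $|\overline{W}''|\le C(1+|y|^{-5/3})$ from \eqref{Wyybar_bdd}, one gets $\|F^\nu(\cdot,s)\|_{L^\infty(|y|\ge 3)}\le Ce^{-s/3}$.

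With these ingredients I would apply the comparison principle, Lemma~\ref{max_2}, on $\Omega=\{|y|\le 3\}$: the bound on $\Omega$ and on $\partial\Omega$ is $<24/25$ (Lemma~\ref{Wy1_lem}), the initial bound is $|\nu(\cdot,s_0)|\le 1/24$ (the bootstrap initial condition \eqref{Utildey_init}), the asymptotic bound is $\limsup_{|y|\to\infty}|\nu(y,s)|\le 3/4<24/25$ (Lemma~\ref{mainprop_2}, i.e.\ \eqref{claim2_nudec}), on $\Omega^c$ the transport $U^W$ is outward–pointing by \eqref{UW_far} and the net damping is nonnegative, and the forcing contributes at most $\int_{s_0}^{\infty}Ce^{-s'/3}\,ds'\le C\varepsilon^{1/3}$, which is negligible once $\varepsilon_0$ is chosen small. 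This yields $\|\nu(\cdot,s)\|_{L^\infty(\mathbb{R})}\le 24/25$ for all $s\in[s_0,\sigma_1]$, which is exactly the claim.

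The main obstacle is the analysis on $|y|\ge 3$: both the leading part of $D^\nu$ and the dangerous kernel $\widetilde{W}\,\overline{W}''$ decay like $|y|^{-2/3}$ as $|y|\to\infty$, so there is \emph{no} slack from temporal decay there, and one must verify the genuinely quantitative inequality $\int|K^\nu|\le\lambda^{-1}D^\nu$ with a fixed $\lambda>1$ while simultaneously checking that the perturbative terms (also $O(|y|^{-2/3})$, but with small coefficients thanks to the weighted bootstrap \eqref{Utildey_M}) do not consume the positive lower bound. This is precisely why Lemma~\ref{mainprop_2}, which pins down the size of $\widetilde{W}$ — hence of the kernel — near infinity, must be established before this lemma.
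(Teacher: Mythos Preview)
Your strategy matches the paper's: define $\nu=(y^{2/3}+8)\widetilde W_y$, control $|y|\le 3$ via Lemma~\ref{Wy1_lem}, and on $|y|\ge 3$ balance the damping $D^\nu$ against the kernel $K^\nu$ using the quantitative profile inequality \eqref{0605_m_7}, with $F^\nu=O(e^{-s/3})$ thanks to \eqref{Phiyy_decay}. The derivation of the $\nu$-equation, the kernel bound, and the forcing estimate are all correct and essentially identical to the paper.

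The gap is in the final step: you cannot invoke Lemma~\ref{max_2}. That lemma requires a uniform lower bound $\inf_{\Omega^c}D^\nu\ge\lambda_D>0$ together with $m_0\lambda_D>F_0/(2-2\delta)$, but here, as you yourself note, $D^\nu(y,s)\sim c\,|y|^{-2/3}\to 0$ as $|y|\to\infty$, so no such $\lambda_D$ exists. Your informal sentence ``the forcing contributes at most $\int_{s_0}^\infty Ce^{-s'/3}\,ds'$'' is the right end computation, but it is not what Lemma~\ref{max_2} gives. The paper replaces this step by a direct contradiction argument at the maximum: assuming $\|\nu(\cdot,s)\|_{L^\infty}$ first reaches $24/25$ on some interval $[s_1,s_2]$, the decay \eqref{claim2_nudec} from Lemma~\ref{mainprop_2} forces the supremum to be attained at a finite $y_*(s)$, and \eqref{A1} forces $|y_*(s)|\ge 3$. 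At that point $|\nu(y',s)|\le|\nu(y_*(s),s)|$ turns the kernel bound $\int|K^\nu|\le D^\nu$ into $D^\nu\nu-\int K^\nu\nu\ge 0$ (pointwise at $y_*$, not via a uniform $\lambda_D$), so $\partial_s|\nu|\big|_{y=y_*}\le Ce^{-s/3}$; integrating from $s_1$ to $s_2$ gives the contradiction. In short, the missing idea is that the degeneracy $D^\nu\to 0$ is harmless \emph{only} because at the maximum point the kernel is controlled by $\|\nu\|_{L^\infty}\int|K^\nu|\le D^\nu\|\nu\|_{L^\infty}$, which is a statement about the equation evaluated at $y_*$, not an application of Lemma~\ref{max_2}.
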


\begin{proof}
Let  $\nu (y,s):=(y^{2/3}+8)\wt{W}_y(y,s)$ where $\wt{W}:=W-\overline{W}$. From \eqref{Eq_diff}, we find the equation for $\nu$:
\begin{equation}\label{nu_eq1}
	\partial_s \nu + U^W(y,s)\partial_y \nu + D^\nu (y,s)\nu= F^\nu_1(y,s) +F^\nu_2(y,s) +\int_{\mathbb{R}}{\nu(y',s) K^\nu(y,s;y') \,dy'},
\end{equation}
where $U^W$ is defined in \eqref{UW}, 
\begin{subequations}
	\begin{align*}
		D^\nu &:=1+\wt{W_y}+2\overline{W}'-\frac{2y^{2/3}}{3(y^{2/3}+8)}\left(\frac{3}{2}+\frac{\wt{W}+\overline{W}}{y}\right),
		\\
		F^\nu_1&:= -\left(\frac{\dot{\tau}\overline{W}}{1-\dot{\tau}}+ \sigma\right)(y^{2/3}+8)\overline{W}'' -\left(\frac{\dot{\tau}\overline{W}'}{1-\dot{\tau}}+\frac{e^{s/2}Z_y}{1-\dot{\tau}}\right)(y^{2/3}+8)\overline{W}', \\
		F^\nu_2&:= -\frac{2e^s}{1-\dot{\tau}}(y^{2/3}+8)\Phi_{yy} -\left(\frac{\dot{\tau}(\wt{W}_y+2\overline{W}')}{1-\dot{\tau}}+\frac{e^{s/2}Z_y}{1-\dot{\tau}}-\frac{2}{3}\frac{y^{2/3}}{y^{2/3}+8}\left(\frac{\dot{\tau}W}{(1-\dot{\tau})y}+\frac{\sigma}{y}  \right)\right)\nu,
	\end{align*}
\end{subequations}
 and 
 \begin{equation*}
 	K^\nu(y,s;y'):= -\frac{1}{1-\dot{\tau}}(y^{2/3}+8)\overline{W}''(y)\mathbb{I}_{[0,y]}(y')\frac{1}{(y')^{2/3}+8}.
 \end{equation*} 
%
From \eqref{Utildey_M}, we have 
\begin{equation}\label{nu_D}
	\begin{split}
		D^\nu(y,s)&=1+\wt{W}_y+2\overline{W}'-\frac{2y^{2/3}}{3(y^{2/3}+8)}\left(\frac{3}{2}+\frac{\wt{W}+\overline{W}}{y}\right)
		\\
		&\geq 1-\frac{1}{y^{2/3}+8}+2\overline{W}'-\frac{2y^{2/3}}{3(y^{2/3}+8)}\left(\frac{3}{2}+\frac{\overline W}{y}+\frac{1}{y}\int_{0}^{y}{\frac{dy'}{{y'}^{2/3}+8}}\right) 
		\\
  & =:   D_{-}(y),
	\end{split}
\end{equation}
and by \eqref{tau},   
\begin{equation}\label{nu_K}
	\int_{\mathbb{R}}{|K^\nu(y,s;y')|\,dy'}\leq(1+4\veps)(y^{2/3}+8)|\overline{W}''|\int^{|y|}_{0}{\frac{dy'}{{y'}^{2/3}+8}} =: K_{+}(y).
\end{equation}
Thanks to \eqref{0605_m_7}, we have 
 \begin{equation}\label{A2}
 K_{+}(y)\leq D_{-}(y),  \qquad  |y|\geq 3
 \end{equation} 
for all $\ve<\ve_0$  sufficiently small.
Combining \eqref{nu_D}--\eqref{A2}, we get
 \begin{equation}\label{D-K}  
 D^\nu (y,s)\geq   \int_{\mathbb{R}}|K^\nu(y,s; y')|\,dy', \qquad |y|\ge 3.
 \end{equation}
 Next, we estimate $F^\nu_1$ and $F^\nu_2$. 
 It is straightforward to check that 
\begin{equation}\label{F1}
	\|F^\nu_1(\cdot,s)\|_{L^{\infty}(|y|\geq 3)}\leq Ce^{-s/3}.
\end{equation}
Here we have by \eqref{dottau}, \eqref{temp_2} and \eqref{Wyybar_bdd} that 
\begin{equation*}
\begin{split}
  &\left| \left(\frac{\dot{\tau}\overline{W}}{1-\dot{\tau}}+ \sigma\right)(y^{2/3}+8)\overline{W}'' \right| \le Ce^{-s/3},
  \end{split}
\end{equation*}
and by \eqref{Boot_3}, \eqref{dottau} and \eqref{y-w-y} that
\begin{equation*}
\begin{split}
  &\left| \left(\frac{\dot{\tau}\overline{W}'}{1-\dot{\tau}}+\frac{e^{s/2}Z_y}{1-\dot{\tau}}\right)(y^{2/3}+8)\overline{W}' \right| \le C e^{-s}.   
\end{split}
\end{equation*}
Similar to \eqref{Fmu}, 
we have 
\begin{equation}\label{F2}
\begin{split}
\|F^\nu_2(\cdot,s)\|_{L^{\infty}(|y|\geq 3)} 
&  \leq C ( e^{-s} + e^{-s/3}\|\nu(\cdot, s) \|_{L^\infty})   \leq C e^{-s/3},
\end{split}
\end{equation}
where we have 
used \eqref{Utildey_M} for  the last inequality.   
To see this,  each term in $F_2^\nu$ can be examined as follows. Thanks to \eqref{Phiyy_decay}, we have
\[ \left| \frac{2 e^s }{1-\dot\tau} (y^{2/3} +8) \Phi_{yy} \right| \le C e^{-s}, \]
and 
by  \eqref{Boot_1}, \eqref{Wbd1}, \eqref{temp_2}, \eqref{dottau} and $|\overline{W}'|\leq 1$ from \eqref{Wbar_2}, 
we have for $|y|\ge3$, 
\begin{equation*}
\begin{split}
\left| \frac{\dot{\tau}(\wt{W}_y+2\overline{W}')}{1-\dot{\tau}}+\frac{e^{s/2}Z_y}{1-\dot{\tau}}-\frac{2}{3}\frac{y^{2/3}}{y^{2/3}+8}\left(\frac{\dot{\tau}W}{(1-\dot{\tau})y}+\frac{\sigma}{y}  \right)\right|
\le C e^{-s/3}. 
\end{split} 
\end{equation*}
To finish the proof, we claim that
\begin{equation}\label{claim_reg}
	\|\nu(\cdot,s)\|_{L^{\infty}} < \frac{24}{25}, \qquad s\in[s_0, \sigma_1].
\end{equation}
Suppose to the contrary that \eqref{claim_reg} fails. 
Since $\nu \in C([s_0, \sigma_1]; L^\infty(\mathbb{R}))$,  $s_2:= \min \{ s \in [s_0, \sigma_1]: \|\nu(\cdot,s)\|_{L^{\infty}} = 24/25 \}$ is well-defined, and there exists $s_1\in (s_0, s_2)$  such that
for all $s\in(s_1,s_2)$,
\begin{equation}\label{34}
\frac{23}{25} = \|\nu(\cdot,s_1)\|_{L^{\infty}}  \le \|\nu(\cdot,s)\|_{L^{\infty}} < \frac{24}{25} = \|\nu(\cdot,s_2)\|_{L^{\infty}}. 
\end{equation}
Then, for each $s\in[s_1,s_2]$, thanks to the smoothness of $\nu$ and the decay property \eqref{claim2_nudec},   we can choose a point $y_*(s)$ such that  
  \begin{equation}\label{claim2_nudec0}
  \|\nu(\cdot, s) \|_{L^{\infty}} = |\nu(y_*(s), s)|.
  \end{equation} 
In view of \eqref{34},  we see that $\nu(y_*(s), s) \neq 0$ for all $s \in [s_1,s_2]$. Moreover, $ \partial_y \nu(y_\ast(s),s) =0 $ due to \eqref{claim2_nudec0}.

Thanks to \eqref{Boot_1}, we see that
\begin{equation}\label{A1}
\|\nu(\cdot,s)\|_{L^{\infty}(|y|\leq 3)} \leq \frac{3^2(3^{2/3} + 8)}{10(1+3^2)}  < \frac{23}{25}.
\end{equation}   
Combining \eqref{34} and \eqref{A1}, it follows that  $|y_*(s)|\geq 3.$ Hence, by \eqref{nu_D}--\eqref{A2} and \eqref{claim2_nudec0}, if $\nu(y_*(s),s)> 0$, then  we have
\begin{equation}
	\begin{split}\label{nu_DK}
		D^\nu(y_*(s),s)\nu(y_*(s),s)&\geq D_{-}(y_*(s))\|\nu(\cdot,s)\|_{L^{\infty}}
		\\
		&\geq K_{+}(y_*(s))\|\nu(\cdot,s)\|_{L^{\infty}}\geq \left|\int_{\mathbb{R}}K^\nu(y_*(s),s;y')\nu(y',s)\,dy'\right|.
	\end{split}
\end{equation}
Similarly, if  $\nu(y_*(s),s) < 0$, we have 
\begin{equation}
	\begin{split}\label{nu_DK2}
		D^\nu(y_*(s),s)\nu(y_*(s),s)  \le - \left|\int_{\mathbb{R}}K^\nu(y_*(s),s;y')\nu(y',s)\,dy'\right|.
	\end{split}
\end{equation}

We evaluate  \eqref{nu_eq1} at $y=y_\ast(s)$. Then, using   \eqref{F1}, \eqref{F2}, \eqref{nu_DK}, \eqref{nu_DK2}  and the fact that $\partial_y \nu(y_*(s), s) =0$,  we deduce that  
\begin{equation}\label{nu_temp}
	\begin{split}
  \partial_s \nu(y, s)|_{y=y_*(s)} & \leq  Ce^{-s/3}+\int_{\mathbb{R}} \nu(y',s)K^\nu(y_*(s),s;y')\,dy'-D^\nu(y_*(s),s)\nu(y_*(s),s)
  \\
  &
		\leq Ce^{-s/3}
	\end{split}
\end{equation}
if   $\nu(y_*(s),s)> 0$, 
and similarly that  
\begin{equation}\label{nu_temp-}
	\begin{split}
		\partial_s \nu(y, s)|_{y=y_*(s)} & \ge - C e^{-s/3}
	\end{split}
\end{equation} 
if $\nu(y_*(s),s)< 0$. 

For fixed $s$, by the definition of $y_*$, it holds  that \[ \|\nu(\cdot,s-h)\|_{L^\infty} \geq |\nu(y_*(s) - h U^W(y_*(s),s), s - h)| 
\] 
for any small $h>0$. 
Then, it is straightforward to check that 
\begin{equation}\label{AP_R1} 
\begin{split}
\lim_{h \to 0^+} \frac{\|\nu(\cdot,s-h)\|_{L^\infty} - \|\nu(\cdot,s)\|_{L^\infty}}{-h} 
& \leq (\partial_s + U^W(y_*(s),s)\partial_y)|\nu|(y,s)|_{y=y_*(s)}, 
\end{split}
\end{equation} 
provided that the limit on the LHS of \eqref{AP_R1} exists. Note that by Rademacher's theorem, 
$ \| \nu(\cdot, s)\|_{L^{\infty}}$, being Lipschitz continuous in $s$, is differentiable at almost all $s\in[s_1, s_2]$. Thus, since  $\partial_y \nu(y_*(s), s) =0$, we deduce from \eqref{AP_R1} that 
\begin{equation*} \label{L-thm}
\begin{split}
\frac{d}{ds} \| \nu(\cdot, s)\|_{L^{\infty}} 
& \leq  \left\{ \begin{array}{l l}
\partial_s \nu(y, s)|_{y=y_*(s)} & \text{if } \nu(y_*(s),s)>0, \\
-\partial_s \nu(y, s)|_{y=y_*(s)} & \text{if } \nu(y_*(s),s)<0
\end{array}
\right.  
\end{split}
\end{equation*} 
for almost all $s\in[s_1, s_2]$.  Combining  with  \eqref{nu_temp} and \eqref{nu_temp-}, we have 
\begin{equation*}
\begin{split} 
 \| \nu(\cdot, s_2) \|_{L^{\infty}} 
&=  \| \nu(\cdot, s_1) \|_{L^{\infty}} +  \int_{s_1}^{s_2} \frac{d}{ds} \| \nu(\cdot, s)\|_{L^{\infty}} \; ds  \\ 
& \le \| \nu(\cdot, s_1) \|_{L^{\infty}} + \int_{s_1}^{s_2} C e^{-s/3} \; ds \\
& \le  \| \nu(\cdot, s_1) \|_{L^{\infty}} +  Ce^{-s_1/3}\\
&  \leq  \| \nu(\cdot, s_1) \|_{L^{\infty}} + C \veps^{1/3}.
\end{split} 
\end{equation*}
This together with \eqref{34} leads to a contradiction  for sufficiently small $\ve$,  which proves \eqref{claim_reg}, in turn Lemma~\ref{mainprop_1}.   
\end{proof}

\section{Blow-up for the Isentropic Euler-Poisson system}\label{Isen}
In this section, we present a similar blow-up result as that of Theorem~\ref{mainthm1} for
  the isentropic Euler-Poisson system: 
		\begin{subequations}\label{EP-Isen}
		\begin{align}
			& \rho_t +  (\rho u)_x = 0,\label{EP-Isen1}\\ 
			&\rho( u_t  + u u_x ) + P_\gamma(\rho)_x 
   = -  \phi_x, \label{EP-Isen2}\\
			& - \phi_{xx} = \rho - e^\phi,\label{EP-Isen3}
		\end{align}
	\end{subequations} 
	where $\rho>0$, $u$ and $\phi$ represent the ion density, the fluid velocity for ions, and  the electric potential, respectively, and the pressure $P_\gamma(\rho)$ is given by $P_\gamma(\rho) := {\rho^\gamma}/{\gamma}$, $\gamma>1$.  
 %
 %
 %
 %

 We  introduce the Riemann functions associated with \eqref{EP-Isen} as 
	\begin{equation}\label{RI-isen}
		w=u+\frac{1}{\alpha}\rho^{\alpha},\quad z=u-\frac{1}{\alpha}\rho^{\alpha},
	\end{equation} 
 	corresponding to the eigenvalues 
	\begin{equation*} 
\lambda_{+}=u+\rho^{\alpha}=\frac{1+\alpha}{2}w+\frac{1-\alpha}{2}z, \quad \lambda_{-}=u-\rho^\alpha=\frac{1-\alpha}{2}w+\frac{1+\alpha}{2}z, 
	\end{equation*} 
	where $\alpha := {(\gamma-1)}/{2}.$
	In terms of   $w$ and $z$, \eqref{EP-Isen} can be rewritten as 	\begin{subequations}\label{EP-Isen'}
		\begin{align}
			&w_t+\left(w+\frac{1-\alpha}{1+\alpha}z\right)w_x=-\frac{2}{1+\alpha}\phi_x,\label{EP-Isen'1}\\
			&z_t+\left(z+\frac{1-\alpha}{1+\alpha}w\right)z_x=-\frac{2}{1+\alpha}\phi_x,\label{EP-Isen'2}\\
			&-\phi_{xx}=\left(\frac{\alpha}{2}(w-z)\right)^{1/\alpha}-e^{\phi}.\label{EP-Isen'3}
		\end{align} 
	\end{subequations} 
 We give a list of the initial conditions for the isentropic case, which are similar to those for the isothermal case, i.e.,  \eqref{init_w_3}--\eqref{tildeP_init}  but  the last one. 
	\begin{subequations}\label{init_gen_0}
		\begin{align}
			& w_0(0)=\kappa_0 > 1, \quad \partial_x w_0 (0)  =-\veps^{-1}, \quad \partial_x^2 w_0 (0)=0, \quad \partial_x^3w_0(0)=6\veps^{-4},\label{init_gen_1}
			\\
			&\| \partial_x w_0  \|_{L^{\infty}}\leq \veps^{-1},\quad \|\partial_x^2w_0 \|_{L^{\infty}}\leq \veps^{-5/2},\quad \|\partial_x^3w_0 \|_{L^{\infty}}\leq 7\veps^{-4},\label{init_gen_2}
   \\
   &  \|\partial_x^4  w_0  \|_{L^{\infty}}\leq \veps^{-11/2}, \nonumber 
			\\
			&\|z_0\|_{C^4}\leq 1/2,\label{init_gen_3}
			\\
			&	\sup_{x\in \mathbb{R} }\left((x^{2/3}+8\veps)\left|\rho_0(x)-1\right|\right) \label{init_gen_4}
   \\
   & \qquad = \sup_{x\in 	\mathbb{R} }\left((x^{2/3}+8\veps)\left|\left(\frac{\alpha}{2}(w_0(x)-z_0(x))\right)^{1/\alpha}-1\right|\right)\leq 1/16, \nonumber
			\\
			&		\left|\varepsilon(\partial_x w_0)\left( x\right)-\overline{W}'\left( \frac{x}{\varepsilon^{3/2}}\right)\right|\leq \min\left\{\frac{(\frac{x}{\varepsilon^{3/2 }})^2}{40(1+(\frac{x}{\varepsilon^{3/2}})^2)}, \frac{1}{24(8+(\frac{x}{\veps^{3/2 }})^{2/3 })} \right\},\label{init_gen_5} 
			\\
			& \inf_{ x\in \mathbb{R}} ( w_0(x)-z_0(x) ) =:P_- >0. \label{init_gen_6}
		\end{align}
	\end{subequations}
%
%
%
Then we state our result. 
	\begin{theorem}\label{main-isen}
There is a constant $\veps_0=\veps_0(\alpha, \|(\rho_0-1,u_0)\|_{L^2},\inf_{x\in\mathbb{R}}\rho_0, \sup_{x\in\mathbb{R}}\rho_0,\kappa_0)>0$ such that for each $\veps\in(0,\veps_0)$, if the initial data   $(\rho_0,u_0)$ satisfies $\rho_0>0$, $(\rho_0-1,u_0)\in H^k(\mathbb{R})$, where $k\geq 5$, and  satisfies \eqref{init_gen_0}, then there is a unique smooth solution $(\rho,u)\in C\left([-\veps,T_\ast); C^4(\mathbb{R})\right)$ to \eqref{EP-Isen},  where the maximal existence time $T_\ast>-\ve$ is finite and  $T_*=O(\veps)$. Furthermore, it holds that 
	\begin{enumerate}[(i)]
	\item
	$ \sup_{t<T_*} \left[ (\rho, u) (\cdot, t) \right]_{C^\beta}<\infty \; \text{ for } \beta \leq 1/3$;
	\item 
	$\lim_{t\nearrow T_*} \left[ \rho (\cdot, t) \right]_{C^\beta} =\infty$ and $\lim_{t\nearrow T_*} \left[ u (\cdot, t) \right]_{C^\beta} = \infty$ for $\beta>1/3$;
	\item   for $\beta>1/3$, 
	the temporal blow-up rate is obtained as
	\ \begin{equation*} \left[ \rho (\cdot, t) \right]_{C^\beta},  \left[ u(\cdot, t) \right]_{C^\beta} \sim (T_*-t)^{-\frac{3\beta-1}{2}} \; \text{ for } \beta > 1/3
	\end{equation*} 
		for all $t$ sufficiently close to $T_*$; 
	 \item $\inf_{x\in \mathbb{R}, t<T_*} \rho(x,t) \ge \rho_*$ and $\sup_{t<T_\ast} \| ( \rho, u)(\cdot, t) \|_{L^\infty} \le M_*$ for some $\rho_*, M_*>0$. 
	\end{enumerate}
	\end{theorem}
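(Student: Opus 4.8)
The plan is to transplant the self-similar/modulation framework of Sections~\ref{C13_subsec}--\ref{sec 4} to the isentropic system \eqref{EP-Isen'}. First I would introduce the self-similar variables with dynamic modulation, $y = (x-\xi(t))/(\tau(t)-t)^{3/2}$, $s = -\log(\tau(t)-t)$, and $w(x,t) = e^{-s/2}W(y,s)+\kappa(t)$, $z(x,t)=Z(y,s)$, $\phi(x,t)=\Phi(y,s)$, with $(\tau,\kappa,\xi)$ solving modulation ODEs designed to enforce the constraints $W(0,s)=0$, $W_y(0,s)=-1$, $W_{yy}(0,s)=0$ as in \eqref{constraint}. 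The wave speeds of \eqref{EP-Isen'} differ from those of \eqref{iso_EP} only through the constants $\tfrac{1-\alpha}{1+\alpha}$ and $\tfrac{2}{1+\alpha}$, so the transport coefficients $U^W,U^Z$ and the damping terms in the resulting equations for $W,Z,\Phi$ and their $y$-derivatives retain the same qualitative form as \eqref{EP2}--\eqref{EP2_3D}. In particular, the degenerate-damping analysis near $y=0$ rests only on the Burgers-profile identities \eqref{Wbar-zero}--\eqref{WbLb} and on the algebra of the ansatz, not on the pressure law; \eqref{WbLb} itself is unchanged, and the coercive damping bounds such as \eqref{DV} persist once one verifies that the $\alpha$-dependent perturbation terms stay small, which is permissible since $\veps_0$ is allowed to depend on $\alpha$. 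I would then impose the bootstrap package \eqref{tau}--\eqref{boot1}, with the weight for $\rho-1$ taken as in \eqref{init_gen_4}, and prove the analogues of Proposition~\ref{mainprop} and Proposition~\ref{decay_lem}.

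The control of the electric potential transfers verbatim: \eqref{EP-Isen} conserves the Hamiltonian $H(t)=\int_{\mathbb R}\tfrac12\rho u^2 + \mathcal P_\gamma(\rho) + \tfrac12|\phi_x|^2 + (\phi-1)e^\phi + 1\,dx$ with $\mathcal P_\gamma(\rho) = \tfrac{\rho^\gamma-\gamma\rho+\gamma-1}{\gamma(\gamma-1)}\ge 0$ and $\mathcal P_\gamma(\rho)\gtrsim|\rho-1|^2$ on compact subsets of $(0,\infty)$ containing $1$. As in Lemma~\ref{LemmaAppen}--Lemma~\ref{phi_lem}, $H$ is controlled by $\|(\rho_0-1,u_0)\|_{L^2}^2$, which yields $\|\phi\|_{L^\infty}+\|\phi_x\|_{L^\infty}\le C$ with $C$ depending only on $\|(\rho_0-1,u_0)\|_{L^2}$, $\inf\rho_0$, $\sup\rho_0$; the decay estimates for $\partial_y^k\Phi$ of Lemma~\ref{phi_y1y2_lem}--Lemma~\ref{difZn} and the closure lemmas Lemma~\ref{lem-3.7}--Lemma~\ref{Wy4_lem} and Lemma~\ref{Ptilde_lem}--Lemma~\ref{mainprop_1} then follow up to harmless changes of constants.

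The genuinely new ingredient, and the step I expect to be the main obstacle, is that the density is recovered by $\rho=\bigl(\tfrac{\alpha}{2}(w-z)\bigr)^{1/\alpha}$, which is only defined and smooth where $w-z>0$, and its $y$-derivatives feeding the Poisson source \eqref{EP-Isen'3} carry powers $(w-z)^{1/\alpha-k}$, with negative exponents when $\gamma>3$. The remedy is an analogue of Lemma~\ref{rho-1_prop}: subtracting \eqref{EP-Isen'2} from \eqref{EP-Isen'1} gives
\[
(w-z)_t + \Bigl(w+\tfrac{1-\alpha}{1+\alpha}z\Bigr)(w-z)_x + \tfrac{2\alpha}{1+\alpha}\,(w-z)\,z_x = 0,
\]
so along characteristics $w-z$ is multiplied by $\exp\bigl(-\tfrac{2\alpha}{1+\alpha}\int z_x\,dt\bigr)$; since the bootstrap yields $\|z_x(\cdot,t)\|_{L^\infty}$ bounded and the life span is $O(\veps)$, this factor is $1+O(\veps)$, so $w-z$ remains within $O(\veps)$ of its initial range. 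Combined with \eqref{init_gen_6}, i.e.\ $\inf(w_0-z_0)=P_->0$, and $\rho_0-1\in H^k$, this traps $\rho$ in a compact subinterval of $(0,\infty)$ for all $t<T_\ast$; on that interval $q\mapsto q^{1/\alpha}$ and all its derivatives are bounded, which is exactly the replacement for the role played in the isothermal proof by $|\rho-1|<1$ together with $w-z=2\sqrt K\log\rho$, and it closes the remaining estimates for the higher $y$-derivatives of $\rho$ and of $\Phi$.

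Finally I would run the global continuation argument of Subsection~\ref{global-conti}: define the solution set $\XX_\beta$ from the controlled quantities, invoke the $C^4$ local theory ($H^k\hookrightarrow C^4$ for $k\ge 5$) together with the uniform bounds on $w_x$ and $z_x$ from the closed bootstrap, and conclude $(W,Z)\in\XX_1(\infty)$, hence $(w,z)\in C^4([-\veps,T_\ast)\times\mathbb R)$ with $T_\ast$ finite and $T_\ast=O(\veps)$. At $t=T_\ast$, the globally valid weighted bound $(y^{2/3}+8)|W_y-\overline{W}'|\lesssim 1$ forces $|W_y|\lesssim(1+y^2)^{-1/3}$, so $w\in L^\infty([-\veps,T_\ast];C^{1/3})$ exactly as in Steps~1--3 of the proof of Theorem~\ref{mainthm1}; since $\rho^\alpha=\tfrac{\alpha}{2}(w-z)$ with $\rho$ bounded away from $0$, composing with the smooth map $q\mapsto q^{1/\alpha}$ gives $\rho\in C^{1/3}$, and $u=\tfrac12(w+z)\in C^{1/3}$. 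The divergence of $[w(\cdot,t)]_{C^\beta}$ for $\beta>1/3$ at the rate $(T_\ast-t)^{-(3\beta-1)/2}$ transfers to $\rho$ and $u$ through the same relations, and the non-vacuum lower bound together with the $L^\infty$ bounds of assertion~(iv) follow as in Step~4, completing the proof of Theorem~\ref{main-isen}. The one place needing care beyond the isothermal case is the bookkeeping of $\alpha$ and $\gamma$ in the degenerate-damping inequalities, to ensure the crucial lower bounds remain strictly positive for each admissible $\gamma>1$.
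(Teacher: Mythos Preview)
Your outline correctly identifies the non-vacuum issue and handles it the same way the paper does (Lemma~\ref{vacuum_isen}), and your treatment of the Hamiltonian/$\phi$ bounds matches Lemma~\ref{phi_lem_gen}. The genuine gap is in your claim that the closure of the strong bootstrap $\|Z_y\|_{L^\infty}\le e^{-3s/2}$ (Lemma~\ref{Zder_lem}) ``transfers up to harmless changes of constants.'' It does not under the stated hypotheses \eqref{init_gen_0}. In the isothermal proof, the key step \eqref{DZt_s} bounds $\int (W_y/(1-\dot\tau))\circ\psi\,ds'$ via the identity $\partial_s P + U^Z P_y = -2W_y/(1-\dot\tau)$ with $P=\log\rho$, using only $|P|\le 3/5$. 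In the isentropic case the analogue along $U^Z_\alpha$ is $[\partial_s+U^Z_\alpha\partial_y]\log(w-z)=-\tfrac{2\alpha}{1+\alpha}\tfrac{W_y}{1-\dot\tau}$, so the same argument yields $\bigl|\int D_{\tilde Z}\circ\psi\,ds'\bigr|\le \tfrac{|1-\alpha|}{2\alpha}\,\mathrm{osc}\,\log(w-z)$, and the resulting exponential factor in \eqref{Zt} is $\exp\bigl(\tfrac{|1-\alpha|}{2\alpha}\log(P_+/P_-)\bigr)$. With only \eqref{init_gen_6} this is not controlled, so the strong bootstrap cannot be closed for general data in \eqref{init_gen_0}; the paper's Remark in \S\ref{subsec51} makes exactly this point, noting the extra hypothesis $\inf(w_0-z_0)\ge\|w_0-z_0\|_{L^\infty}e^{-\alpha/|1-\alpha|}$ that would be needed.

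The paper's actual route is to \emph{weaken} the $Z_y$ bootstrap to $\|Z_y\|_{L^\infty}\le e^{-(1/2+\delta)s}$ (see \eqref{difz32-isen}), which is closed directly from the damping $D_\alpha=1+\tfrac{1-\alpha}{1+\alpha}W_y\ge 1-\bigl|\tfrac{1-\alpha}{1+\alpha}\bigr|>4\delta$ (Lemma~\ref{pos_damp_isen}), with no appeal to the $P$-trick. The price is that now $\|z_x\|_{L^\infty}\le e^{(1-\delta)s}$ is \emph{not} uniformly bounded, so one cannot simply say ``$z\in C^1$ uniformly, hence blow-up of $w$ transfers to $\rho,u$.'' The paper fills this with a separate lemma (end of \S\ref{subsec51}) proving $|z_x(x,t)|<\infty$ for each $(x,t)$ up to $T_\ast$: one writes $z_x$ along its own characteristic $\eta$ and must bound $\int_t^{T_\ast}|\partial_x w(\eta,t')|\,dt'$, which in turn requires $|\eta(t)-\xi(t)|\gtrsim T_\ast-t$. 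This last separation estimate is where the otherwise unexplained hypothesis $\kappa_0>1$ in \eqref{init_gen_1} enters (see \eqref{I1}--\eqref{I4}). Your proposal neither weakens the $Z_y$ bootstrap nor supplies this characteristic-separation argument, so as written it does not prove Theorem~\ref{main-isen} under its stated assumptions.
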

	The proof of Theorem~\ref{main-isen} is quite parallel to that  of Theorem~\ref{mainthm1}, but there are several noteworthy  differences in the analysis. 
	In what follows, we outline the proof with   emphasis  on such  differences. 
	%
	%
	%
	%
	%
	%
	%
	%
	%
	%

	 Similarly as \eqref{modulation-new}, 
  we define three dynamic modulation functions $\tau, \kappa, \xi:[-\ve, \infty) \to \mathbb{R}$ satisfying  {
	 \begin{subequations}\label{modulation_gen}
	 	\begin{align}
	 		\dot \tau & = \frac{1-\alpha}{1+\alpha}( \tau(t) - t) \partial_x z(\xi(t) , t) - \frac{2}{1+\alpha} ( \tau(t) - t)^2 \partial_x^2 \phi (\xi(t) , t), \label{modulation_gen_1}
	 		\\
	 		\dot{\kappa}&= \frac{-2( \tau(t) - t)^{-1} \partial_x^3 \phi  (\xi(t) , t)+ (1-\alpha) ( \tau(t) - t)^{-2} \partial_x^2 z (\xi(t) , t) }{ (1+\alpha) \partial_x^3 w (\xi(t) , t)} \label{modulation_gen_2}
    - \frac{2 \partial_x \phi (\xi(t) , t) }{1+\alpha},   
	 		\\
	 		\dot{\xi}&= \frac{1-\alpha}{1+\alpha} z(\xi(t) , t)  + \frac{ 2 \partial_x^3 \phi(\xi(t) , t) - (1-\alpha) (\tau(t) - t)^{-1} \partial_x^2 z(\xi(t) , t) }{ (1+\alpha) \partial_x^3 w(\xi(t) , t) } + \kappa(t),\label{modulation_gen_3}
	 	\end{align}
	 \end{subequations}
 }
with the same initial values as  \eqref{Modul_init}. 
	Defining new functions $W, Z$ and $\Phi$ in self-similar variables  (see \eqref{change_var}--\eqref{WZPhi}),
	  we obtain the equations  for $W,Z$ and $\Phi$ as  
\begin{subequations}\label{EP2_gen}
		\begin{align}
			& \partial_s W-\frac{1}{2}W+U^W_\alpha W_y=-\frac{\dot{\kappa}e^{-s/2}}{1-\dot{\tau}}-\frac{1}{1-\dot{\tau}}\frac{2e^s\Phi_y }{1+\alpha},\label{EP2_1_gen} \\ 
			& \partial_s Z+U^Z_\alpha Z_y=-\frac{1}{1-\dot{\tau}}\frac{2e^{s/2}\Phi_y}{1+\alpha}, \label{EP2_2_gen} \\
			& -\Phi_{yy}e^{3s} = \left(\frac{\alpha}{2}\left(e^{-s/2}W+\kappa-Z\right)\right)^{1/\alpha} - e^{\Phi}, \label{EP2_3_gen} 
		\end{align}
	\end{subequations} 
	where 
	\begin{equation}\label{UW_gen}
		\begin{split}
			&U^W_\alpha:= \frac{e^{s/2}}{1-\dot{\tau}}\left(\kappa-\dot{\xi}+\frac{1-\alpha}{1+\alpha}Z\right) +\frac{3}{2}y+\frac{W}{1-\dot{\tau}}, 
   \\
   & 
   U^Z_\alpha:= \frac{e^{s/2}}{1-\dot{\tau}}\left(\frac{1-\alpha}{1+\alpha}\kappa-\dot{\xi}\right)+\frac{e^{s/2}Z}{1-\dot{\tau}} +\frac{3}{2}y+\frac{1-\alpha}{1+\alpha}\frac{W}{1-\dot{\tau}}.
			\end{split}
	\end{equation}
%
%
%
%
%
%
	
	Now, similarly as in the isothermal case, we impose  the bootstrap assumptions as follows:
	\begin{subequations}\label{boots_isen}
		\begin{align}
			&|W_y(y,s)-\overline W'(y)| \leq \frac{y^2}{10(1+y^2)},\label{Uy1_gen}
			\\
			&|W_y(y,s)-\overline{W}'(y)| \le \frac{1}{y^{2/3}+8}, \label{W-y-decay-isen}
			\\
			&|W_{yy}(y,s)| \leq \frac{15|y|}{(1+y^2)^{1/2}},\label{EP2_1D3-isen}
			\\
			&|\partial_y ^3 W(0,s)-6| \leq 1,\label{Wy30_isen} 
			\\
			&\|\partial_y ^3 W(\cdot, s) \|_{L^{\infty}} \leq  M^{5/6} , \label{Wy3_isen}
			\\
			&\|\partial_y ^4 W(\cdot, s) \|_{L^{\infty}} \leq M, 
			\\
			&\|Z_y\|_{L^{\infty}}\leq e^{-(1/2+\delta) s}, \label{difz32-isen} \\
			& | \dot{\tau}| \leq 2e^{-s}, \label{dottau_isen}
		\end{align}
	\end{subequations}
	where $M>0$ is a sufficiently large constant and $\delta>0$ is a   number such that $\delta\in(0,  (1- |1-\alpha|(1+\alpha )^{-1})/4)$. We notice that the bootstrap assumption \eqref{difz32-isen} for $Z_y$ is slightly different from the isothermal case (see \eqref{Boot_3}). This is due to the fact that  the isentropic model \eqref{EP-Isen} has a different form of the Riemann functions from those of the isothermal case. 
	
	In addition, 
	we impose the decaying condition for $\rho-1=     ( {\alpha}  (e^{-s/2}W+\kappa-Z ) /2 )^{1/\alpha}-1 $ as 
	\begin{equation}\label{isen_rho-1}
		e^{-s}(y^{2/3}+8)    \left|     \left(\frac{\alpha}{2}\left(e^{-s/2}W+\kappa-Z\right)\right)^{1/\alpha}-1\right|      \leq \frac{1}{8}.
	\end{equation}
	We remark that under the initial conditions \eqref{init_gen_0}, one can show that a local-in-time solutions $(\rho, u, \phi)$ exist and the corresponding solutions $W, Z$ and $\Phi$  satisfy    the bootstrap assumptions  \eqref{boots_isen}--\eqref{isen_rho-1} at least for a local time interval. 
	
	%
	%
	%
	%
	%
	%
	%
	%
	%
	%
	%
	%
	%
	%

	In the course of our analysis,  $L^\infty$ norms of $\phi$ and $\phi_x$, which are similar to Lemma~\ref{phi_lem}, will be crucially used. 
	We introduce the conserved energy for \eqref{EP-Isen}  as 
	\begin{equation*}
		H_\gamma (t):=\int_{\mathbb{R}} 
		\frac{1}{2}\rho u^2
		+\frac{1}{\gamma-1} \mathcal{P}_\gamma(\rho)
		+\frac{1}{2}|\partial_x\phi|^2+(\phi-1)e^\phi+1 \,dx 
	\end{equation*}
	where $\mathcal{P}_\gamma(\rho) := {\gamma}^{-1}(\rho^\gamma -1)-(\rho-1)$. Similarly as \eqref{H-L2} for the isentropic case, one can show that
	\begin{equation}\label{H-L2-gen}
		|H_{\gamma}(t)|\leq C\|(\rho-1, u)\|_{L^2}^2\leq C\|(w,z)\|_{L^2}^2,
	\end{equation}
	where $C>0$ is a constant depending only on $\inf_{x\in\mathbb{R}}\rho$ and $\sup_{x\in\mathbb{R}}\rho$. 
%

\subsection{Estimates of $\Phi$}
The following lemma corresponds to Lemma~\ref{phi_lem}. We remark that, contrast to the isothermal case, the condition concerning the amplitude of $\rho$ is not required for the uniform bound of $\phi_x$ for the isentropic case.
	\begin{lemma}
		\label{phi_lem_gen}
		Let $(\rho, u)$ be a smooth solution to \eqref{EP-Isen}   satisfying $(\rho, u) \to (1,0)$ as $|x| \to \infty$. Then, it holds that 
		\begin{subequations}
			\begin{align}
				&\|\phi\|_{L^{\infty}}=\|\Phi\|_{L^{\infty}}\leq M_1
				,\label{Phi_0_M_gen}\\
				&\|\phi_x\|_{L^{\infty}}=e^{3s/2}\|\Phi_y\|_{L^{\infty}}<M_2 \label{Phi_1_M_gen}
			\end{align}
		\end{subequations}
		for some 
		$M_1>0$ and $M_2>0$ depending only on  $\|(\rho_0-1, u_0)\|_{L^2(\mathbb{R})}$, $\inf_{x\in\mathbb{R}}\rho_0$ and $\sup_{x\in\mathbb{R}}\rho_0$.
		Here,  $M_1$ and $M_2$  tend to zero as $\|(w_0 , z_0)\|_{L^2(\mathbb{R})}$ tends to zero.
	\end{lemma}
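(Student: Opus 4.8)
The plan is to mimic the proof of Lemma~\ref{phi_lem} with the two substitutions dictated by the isentropic structure: the pressure potential $\mathcal{P}(\rho)=K(\rho\log\rho-\rho+1)$ is replaced by $\mathcal{P}_\gamma(\rho)=\gamma^{-1}(\rho^\gamma-1)-(\rho-1)$, and the nonlinearity $\rho=e^P$ in the Poisson equation is replaced by $\rho=(\tfrac{\alpha}{2}(w-z))^{1/\alpha}$. First I would record that the energy $H_\gamma(t)$ is conserved along smooth solutions to \eqref{EP-Isen} — this is the analogue of \eqref{H} and follows from the same computation (multiply \eqref{EP-Isen2} by $u$, add the continuity equation times the enthalpy, and use the Poisson equation to rewrite the electric-force work as a time derivative of $\int \tfrac12|\phi_x|^2+(\phi-1)e^\phi+1$). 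Combined with \eqref{H-L2-gen}, this gives $|H_\gamma(t)|\le C\|(w_0,z_0)\|_{L^2}^2$, with $C$ depending only on $\inf\rho_0,\sup\rho_0$, and the right-hand side tending to $0$ as $\|(w_0,z_0)\|_{L^2}\to 0$.

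For the $L^\infty$ bound on $\phi$ in \eqref{Phi_0_M_gen}, I would reproduce the ODE-comparison argument from \cite{BCK} that underlies \eqref{V-phi}: the Poisson equation $-\phi_{xx}=\rho-e^\phi$ together with the sign of $\rho-e^\phi$ forces $\phi$ to be trapped between the turning points of the "potential" $\mathcal U(\tau):=(\tau-1)e^\tau+1\ge 0$, so that $V_-^{-1}(H_\gamma)\le\phi\le V_+^{-1}(H_\gamma)$ with the same $V_\pm$ as in Lemma~\ref{phi_lem}. (The only place $\rho$ enters is through the energy bound, and here $\rho-1$ is still $L^2$, so the argument is verbatim.) This yields $\|\phi\|_{L^\infty}\le M_1$ with $M_1$ a function of $H_\gamma(-\varepsilon)$, hence of $\|(\rho_0-1,u_0)\|_{L^2},\inf\rho_0,\sup\rho_0$, and $M_1\to 0$ as $\|(w_0,z_0)\|_{L^2}\to 0$. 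Since $\phi=\Phi$, this is exactly \eqref{Phi_0_M_gen}.

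For $\phi_x$ in \eqref{Phi_1_M_gen}, multiply \eqref{EP-Isen3} by $-\phi_x$ and integrate from $-\infty$ to $x$, as in \eqref{phi-x-2}:
\begin{equation*}
\frac{\phi_x^2}{2}=\int_{-\infty}^x(\rho-1)(-\phi_x)\,dx+\int_{-\infty}^x(e^\phi-1)\phi_x\,dx
\le \frac12\int_{\mathbb{R}}|\rho-1|^2\,dx+\frac12\int_{\mathbb{R}}|\phi_x|^2\,dx+(e^\phi-\phi-1).
\end{equation*}
The key simplification in the isentropic case is that the pressure potential controls $|\rho-1|^2$ \emph{without any smallness or amplitude restriction on $\rho$}: indeed $\mathcal{P}_\gamma(\rho)=\gamma^{-1}(\rho^\gamma-1)-(\rho-1)\gtrsim_\gamma |\rho-1|^2$ holds on all of $\rho>0$ up to a possible unboundedness as $\rho\to\infty$, but since $\rho\ge\rho_\ast>0$ and $\rho$ is a priori bounded above on any compact time interval one still gets $\int|\rho-1|^2\lesssim H_\gamma(t)$; more carefully, one uses that $\mathcal{P}_\gamma$ is convex with $\mathcal{P}_\gamma(1)=\mathcal{P}_\gamma'(1)=0$ so $\mathcal{P}_\gamma(\rho)\ge c_\gamma|\rho-1|^2$ for $\rho$ in a bounded set, and for large $\rho$ the growth $\rho^\gamma$ dominates $|\rho-1|^2$. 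Plugging $\int|\rho-1|^2\lesssim H_\gamma$, the Taylor bound $0\le e^\phi-\phi-1\lesssim M_1^2$ from \eqref{Phi_0_M_gen}, and the energy bound $\int|\phi_x|^2\le H_\gamma$ into the inequality gives $|\phi_x|^2\lesssim H_\gamma+M_1^2\lesssim\|(w_0,z_0)\|_{L^2}^2$, i.e. $\|\phi_x\|_{L^\infty}\le M_2$ with $M_2$ depending only on the stated quantities and $M_2\to 0$ as $\|(w_0,z_0)\|_{L^2}\to 0$. Finally $\phi_x=\Phi_y e^{3s/2}$ from \eqref{WZPhi}--\eqref{dx-dy} gives the self-similar form $e^{3s/2}\|\Phi_y\|_{L^\infty}<M_2$.

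The main obstacle — and it is a mild one — is making the comparison $\mathcal{P}_\gamma(\rho)\gtrsim|\rho-1|^2$ uniform: for $\gamma>1$ the lower bound constant $c_\gamma$ degenerates only near $\rho=0$, so one must invoke the a priori positivity $\rho\ge\rho_\ast$ (which in the bootstrap scheme comes from the analogue of Step 4 in the proof of Theorem~\ref{mainthm1}, i.e. from $\|w_0-z_0\|_{L^\infty}$ being controlled and $\|z_x\|_{L^\infty}$ staying bounded). Since Lemma~\ref{phi_lem_gen} is stated for a smooth solution with $(\rho,u)\to(1,0)$ at infinity and the constants are allowed to depend on $\inf\rho_0,\sup\rho_0$, this is not circular: one can either phrase the lemma as conditional on $\inf_x\rho(\cdot,t)>0$ (as Lemma~\ref{phi_lem} is conditional on $\|\rho-1\|_{L^\infty}<1$) or absorb the dependence into the constants, exactly as in the isothermal statement. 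Everything else is a line-by-line transcription of the proof of Lemma~\ref{phi_lem}.
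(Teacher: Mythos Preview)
Your approach is essentially the paper's: energy conservation for $H_\gamma$, the $V_\pm$ comparison from \cite{BCK} for $\|\phi\|_{L^\infty}$, and multiplication of the Poisson equation by $-\phi_x$ for $\|\phi_x\|_{L^\infty}$, with the isentropic-specific step $\int|\rho-1|^2\lesssim\int\mathcal{P}_\gamma(\rho)\lesssim H_\gamma$. The paper's proof differs only cosmetically (it uses Cauchy--Schwarz rather than Young's inequality on the cross term, and bounds $e^\phi-\phi-1$ by $e^{M_1}-M_1-1$ rather than $CM_1^2$).

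One slip in your discussion of the ``main obstacle'': you write that the constant in $\mathcal{P}_\gamma(\rho)\gtrsim|\rho-1|^2$ ``degenerates only near $\rho=0$'' and that ``for large $\rho$ the growth $\rho^\gamma$ dominates $|\rho-1|^2$''. Both are off. At $\rho=0$ one has $\mathcal{P}_\gamma(0)=(\gamma-1)/\gamma>0$ and $(\rho-1)^2=1$, so the ratio is bounded below there for every $\gamma>1$; the problematic regime is $\rho\to\infty$ when $1<\gamma<2$, since then $\rho^\gamma=o(\rho^2)$ and the ratio tends to zero. The paper in fact asserts the pointwise inequality $\mathcal{P}_\gamma(\rho)\ge c(\rho-1)^2$ on all of $(0,\infty)$ for $c<\min\{(\gamma-1)/2,(\gamma-1)/\gamma\}$, which as written is only correct for $\gamma\ge 2$; your instinct that a boundedness hypothesis on $\rho$ is needed here is therefore well placed---but the relevant hypothesis is an \emph{upper} bound on $\rho$ (available in the bootstrap from the $L^\infty$ control of $w-z$), not the lower bound $\rho\ge\rho_\ast$.
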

	\begin{proof}
	We loosely follow the proofs of Lemma 2.1 and Lemma 2.2 in \cite{BCK}. Similarly as in the proof of Lemma 2.1 in \cite{BCK} (see also the proof of \eqref{Phi_0_M-1}), one can obtain the inequality \eqref{V-phi} with $H$ replaced by $H_\gamma$. Then using  \eqref{H-L2-gen}, we have  \eqref{Phi_0_M_gen}.

		Next, we prove \eqref{Phi_1_M_gen}. 
		Multiplying \eqref{EP-Isen3} by $-\phi_x,$ and then integrating in $x$,
		\begin{equation}\label{phix2}
			\begin{split}
				\frac{\phi_x^2}{2}&=\int^{x}_{-\infty}-\phi_x(\rho-1)\,dx'+\int^x_{-\infty}e^{\phi}\phi_x-\phi_x \,dx'\\
				&\leq \left(\int^{x}_{-\infty}\phi_x^2\, dx'\right)^{1/2}\left(\int^{x}_{-\infty}(\rho-1)^2dx'\right)^{1/2}+e^{\phi}-\phi-1\\
				&\leq \sqrt{2H_\gamma(-\ve)}\left(\int^{x}_{-\infty}(\rho-1)^2 \,dx'\right)^{1/2}+e^{M_1}-M_1-1.
			\end{split}
		\end{equation}
		Here we have used the fact 	 from the definition of $H_\gamma(t)$ that $$\frac{1}{2}\int_{\mathbb{R}}|\phi_x|^2dx'\leq   H_\gamma(t)= H_\gamma(-\ve)$$
		and the fact  from \eqref{Phi_0_M_gen} that 
		$$e^{\phi}-\phi-1 \leq e^{M_1}-M_1-1.$$
		Here, we notice that 
		\begin{equation}\label{rho-c}
			\int_{\mathbb{R}} |\rho-1|^2\,dx \lesssim \int_{\mathbb{R}} \mathcal{P}_{\gamma}(\rho)\lesssim H_{\gamma}(t),
		\end{equation} 
which follows from the fact that  $\mathcal{F}(\rho):=\frac{1}{\gamma}(\rho^\gamma-1)-(\rho-1)-c(\rho-1)^2 \geq 0 $ on $\rho \in(0,+\infty)$, for any fixed $0<c<\min\{(\gamma-1)/2,(\gamma-1)/\gamma\}$.  Hence, combining \eqref{H-L2-gen}, \eqref{phix2} and \eqref{rho-c}, we obtain
		\[ |\phi_x | \le M_2 (  \| (w_0, z_0 ) \|_{L^2(\mathbb{R})} )  =: M_2.\] We finish the proof.
		
	\end{proof}

Using Lemma~\ref{phi_lem_gen}, one can obtain from \eqref{EP-Isen'} and \eqref{EP2_3_gen} that 
		\begin{equation}\label{phi-yy-e}
			\begin{split}
			|\Phi_{yy}e^{3s}|  \lesssim 1.
			\end{split}
		\end{equation}

\subsection{Uniform lower bound of $\rho$}
For the isothermal case,  from the relation $\rho=e^{(w-z)/(2\sqrt{K})}$, Proposition~\ref{rho-1_prop}  implies that 
	$\rho$ has a uniform positive lower bound, provided that $\|w_0-z_0\|_{L^{\infty}}$ is small enough. 
	%
	%
	 However, for the isentropic case, the situation is more delicate. More precisely,  since $\rho=\left(\tfrac{\alpha}{2}(w-z)\right)^{1/\alpha}$, the smallness of $w-z$ does not ensure the lower bound of $\rho$.  
	 %
	 %
	 To resolve this issue, from \eqref{init_gen_6} and the bootstrap assumptions, we  show that   $w-z$  has a uniform positive lower bound.  
	\begin{lemma}\label{vacuum_isen}
		Suppose that the bootstrap assumptions \eqref{boots_isen}--\eqref{isen_rho-1} and the initial conditions \eqref{init_gen_0} hold. 
		Then we have 
		\begin{equation*}
			\inf_{x\in\mathbb{R}}\rho(x,t) \geq P_- e^{-\veps^{1/2}}>0.
		\end{equation*} 
	\end{lemma}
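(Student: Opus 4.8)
The plan is to propagate the positive lower bound of $w_0-z_0$ (equivalently of $\rho_0$) along the $w$-characteristics, exploiting the fact that $w-z$ solves a \emph{linear} transport ODE along those curves whose coefficient is exactly $z_x$ — which the bootstrap assumption \eqref{difz32-isen} makes small in the time-integrated sense.

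\textbf{Step 1: a linear ODE along characteristics.} Fix $(x_0,t)$ with $t$ in the bootstrap interval and let $X(\cdot)$ solve $\dot X(t') = \bigl(w+\tfrac{1-\alpha}{1+\alpha}z\bigr)(X(t'),t')$ with $X(t)=x_0$; since $(w,z)\in C^4$ the flow is well defined and $C^1$ on $[-\ve,t]$, with foot $X(-\ve)$. Differentiating $w$ and $z$ along $X$ via \eqref{EP-Isen'1}--\eqref{EP-Isen'2}, the common forcing $-\tfrac{2}{1+\alpha}\phi_x$ cancels in the difference, and using $\bigl(w+\tfrac{1-\alpha}{1+\alpha}z\bigr)-\bigl(z+\tfrac{1-\alpha}{1+\alpha}w\bigr)=\tfrac{2\alpha}{1+\alpha}(w-z)$ one gets
\[
\frac{d}{dt'}(w-z)(X(t'),t')=-\frac{2\alpha}{1+\alpha}\,(w-z)(X(t'),t')\,z_x(X(t'),t').
\]
Since $\rho=\bigl(\tfrac\alpha2(w-z)\bigr)^{1/\alpha}$ by \eqref{RI-isen}, this is the same as $\frac{d}{dt'}\log\rho(X(t'),t')=-\tfrac{2}{1+\alpha}z_x(X(t'),t')$, hence
\[
\rho(x_0,t)=\rho_0(X(-\ve))\,\exp\!\Bigl(-\tfrac{2}{1+\alpha}\int_{-\ve}^{t}z_x(X(t'),t')\,dt'\Bigr).
\]

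\textbf{Step 2: smallness of $\int z_x$, and conclusion.} Although $\|z_x(\cdot,t)\|_{L^\infty}=e^{3s/2}\|Z_y(\cdot,s)\|_{L^\infty}$ need not stay bounded as $s\to\infty$, its time integral is small because one time-unit corresponds to $O(e^{-s})$ units of $s$. Changing variables $t'\mapsto s'$ with $\tfrac{\partial s}{\partial t}=(1-\dot\tau)e^{s}$ from \eqref{dx-dy}, using $(1-\dot\tau)^{-1}\le2$ (from \eqref{dottau_isen} and $\ve$ small) and $\|Z_y(\cdot,s')\|_{L^\infty}\le e^{-(1/2+\delta)s'}$ from \eqref{difz32-isen},
\[
\Bigl|\int_{-\ve}^{t}z_x(X(t'),t')\,dt'\Bigr|\le\int_{s_0}^{s}e^{3s'/2}e^{-(1/2+\delta)s'}\,\frac{2}{e^{s'}}\,ds'=2\int_{s_0}^{s}e^{-\delta s'}\,ds'\le\frac{2}{\delta}\,\ve^{\delta},
\]
since $s_0=-\log\ve$. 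Each point of the slab $\mathbb R\times[-\ve,t]$ lies on a $w$-characteristic issuing from $t'=-\ve$, so combining this with Step 1, with \eqref{init_gen_6} (i.e. $\inf_x\rho_0=(\tfrac\alpha2 P_-)^{1/\alpha}$), and with the monotonicity of $r\mapsto(\tfrac\alpha2 r)^{1/\alpha}$ gives
\[
\inf_{x\in\mathbb R}\rho(x,t)\ \ge\ \Bigl(\inf_{x}\rho_0\Bigr)\exp\!\Bigl(-\tfrac{4}{(1+\alpha)\delta}\,\ve^{\delta}\Bigr),
\]
which for $\ve$ sufficiently small is a uniform positive lower bound, proving the lemma.

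\textbf{Main obstacle.} The only genuinely delicate point, and where the isentropic case differs from the isothermal one, is that $\rho=(\tfrac\alpha2(w-z))^{1/\alpha}$ degenerates as $w-z\to0$, so an $L^\infty$ bound on $w-z$ of the type used in Lemma~\ref{rho-1_prop} does \emph{not} rule out vacuum: one must show $w-z$ cannot decay, not merely that it cannot grow. The linear-ODE structure of Step 1 is exactly what delivers this (a nonlinear self-interaction would be fatal), and it works precisely because the coefficient is $z_x$, whose integrated smallness rests on the decay rate $e^{-(1/2+\delta)s}$ with $\delta>0$ built into \eqref{difz32-isen}.
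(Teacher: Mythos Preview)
Your proof is correct and follows essentially the same route as the paper: derive the linear transport ODE for $w-z$ along the $w$-characteristics (the paper writes this as \eqref{w-z-isen}--\eqref{w-z''}), then bound $\int z_x\,dt'$ by passing to the $s$-variable and using \eqref{difz32-isen} together with \eqref{dottau_isen}. Your final constant $(\tfrac{\alpha}{2}P_-)^{1/\alpha}\exp\bigl(-\tfrac{4}{(1+\alpha)\delta}\ve^{\delta}\bigr)$ does not literally match the $P_-e^{-\ve^{1/2}}$ in the lemma statement, but both are uniform positive lower bounds for small $\ve$, which is all that is used downstream; the paper's own proof ends at the bound for $w-z$ and is equally loose about the exact constant for $\rho$.
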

	\begin{proof}
		Subtracting \eqref{EP-Isen'1} from \eqref{EP-Isen'2}, we have
		\begin{equation}\label{w-z-isen}
			(w-z)_t+\left(\frac{1-\alpha}{1+\alpha}z+w\right)(w-z)_x+\frac{2\alpha}{1+\alpha}z_x(w-z)=0.
		\end{equation}
		Let  $\zeta(t;x)$ be the characteristic curve defined by the initial value problem: 
		$$\frac{d}{dt} \zeta=\frac{1-\alpha}{1+\alpha}z(\zeta,t)+w(\zeta,t)$$ with $\zeta(-\veps;x)=x$. Integrating \eqref{w-z-isen} along $\zeta$ over $[-\ve, t]$, we get
		\begin{equation}\label{w-z''}
			(w-z)(\zeta(t;x),t)=(w_0(x)-z_0(x))e^{-\int^t_{-\veps}\frac{2\alpha}{1+\alpha}z_x(\zeta(t';x),t')\,dt'}.
		\end{equation}
		On the other hand, from \eqref{difz32-isen} and \eqref{dottau_isen}, we see that 
		\begin{equation}\label{w-z_damp'}
  \begin{split}
			-\int^t_{-\veps}\frac{2\alpha}{1+\alpha}z_x ( \zeta(t';x),t') \,dt' 
   & \ge -  \frac{2\alpha}{1+\alpha} \int^s_{s_0}\frac{e^{s'/2} \| Z_y (\cdot, s') \|_{L^\infty} }{1-\dot{\tau}}\,ds' 
   \\ 
   & \geq -\frac{4\alpha}{1+\alpha}\int^s_{s_0}  e^{-s'}\,ds'\geq -\veps^{1/2}.
   \end{split}
		\end{equation}
		Using \eqref{init_gen_6} and \eqref{w-z_damp'} for \eqref{w-z''}, we get
		\begin{equation}\label{rho-low}
			(w-z)(\zeta(t;x),t)\geq P_- e^{-\veps^{1/2}}.
		\end{equation} 
		Here note that  $\zeta(t;\cdot): \mathbb{R} \to \mathbb{R}$ is a bijective function of $x$  for each $t$ as long as the solution exists. 
		%
		%
		 Thus, together with the relation $\rho=\left(\alpha(w-z)/2\right)^{1/\alpha}$, \eqref{rho-low} gives the desired bound. We finish the proof.
	\end{proof}
	Using Lemma~\ref{phi_lem_gen} and Lemma~\ref{vacuum_isen}, we can  obtain  bounds for higher derivatives of $\Phi$, which is corresponding to Lemma~\ref{phi_y1y2_lem} and Lemma~\ref{difZn} for the isothermal case. In fact we have the following result.
\begin{lemma}\label{high-order-phi-y}
Under the same assumptions as in Lemma~\ref{vacuum_isen}, as long as the smooth solution exists, it holds that 
	\begin{equation}\label{high-phi-isen}
		\| \partial_y^n \Phi(\cdot,s) \|_{L^{\infty}} \le C(n)e^{-3s}, \qquad n=2,3,4,5, 
	\end{equation}
	for some  positive constant $C(n)$ depending only on $n$.
\end{lemma}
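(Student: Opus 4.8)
The plan is to follow the scheme already used for the isothermal system in Lemma~\ref{phi_y1y2_lem} and Lemma~\ref{difZn}: differentiate the Poisson equation \eqref{EP2_3_gen} in $y$ and read off a bound for $\partial_y^n\Phi$ from bounds on the right-hand side, inducting on $n$ and interleaving, where necessary, with transport estimates for the $y$-derivatives of $Z$.

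For $n=2$ the claim is exactly \eqref{phi-yy-e}: from \eqref{EP2_3_gen} one has $|\Phi_{yy}|e^{3s}\le \|\rho(\cdot,t)\|_{L^\infty}+e^{\|\Phi(\cdot,s)\|_{L^\infty}}$, and $\|\Phi\|_{L^\infty}\le M_1$ by Lemma~\ref{phi_lem_gen}, while $\rho=\big(\tfrac{\alpha}{2}(e^{-s/2}W+\kappa-Z)\big)^{1/\alpha}$ is uniformly bounded below by Lemma~\ref{vacuum_isen} and uniformly bounded above by the same characteristic computation as in its proof (from \eqref{w-z''} and \eqref{w-z_damp'} one also obtains $w-z\le\big(\sup_x(w_0-z_0)\big)e^{\veps^{1/2}}$, whence $\rho^\alpha=\tfrac{\alpha}{2}(w-z)$ is bounded). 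For $n=3,4,5$ I would differentiate \eqref{EP2_3_gen} exactly $n-2$ times. The key observation is that, writing $\rho=g^{1/\alpha}$ with $g:=\tfrac{\alpha}{2}(e^{-s/2}W+\kappa-Z)$, one has $g=\rho^\alpha$ and $\partial_y\rho=\tfrac12\rho^{1-\alpha}(e^{-s/2}W_y-Z_y)$; iterating the product and chain rule then expresses $\partial_y^k\rho$ as a finite sum of terms $\rho^{p}\prod_i\partial_y^{j_i}(e^{-s/2}W-Z)$ with $j_i\ge1$, $\sum_i j_i=k$, and exponents $p$ (generically non-integer, possibly negative) whose exact values are irrelevant. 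Since $\rho$ is bounded above and bounded below away from $0$, every factor $\rho^{p}$ is bounded; the factors $e^{-s/2}\partial_y^{j}W$ are bounded by the bootstrap assumptions \eqref{Uy1_gen}, \eqref{EP2_1D3-isen}, \eqref{Wy3_isen} (together with $|\overline W'|\le1$); and $Z_y$ is small by \eqref{difz32-isen}, while $Z_{yy}$ and $\partial_y^3Z$ are bounded by estimates obtained exactly as in Lemma~\ref{difZn}, i.e. by integrating the transport equations for $Z_{yy},\partial_y^3Z$ derived from \eqref{EP2_2_gen} along the flow of $U^Z_\alpha$ in \eqref{UW_gen}, using that the associated damping is positive by the bootstrap and that the forcing involves only $\partial_y^3\Phi,\partial_y^4\Phi$, which are controlled at the preceding induction step. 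Hence $\|\partial_y^k\rho\|_{L^\infty}\le C(k)$ for $k=1,2,3$.

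I would then combine the pieces: differentiating \eqref{EP2_3_gen} $n-2$ times gives $-\partial_y^n\Phi\,e^{3s}=\partial_y^{n-2}\rho-e^{\Phi}Q_{n-2}(\Phi_y,\dots,\partial_y^{n-2}\Phi)$ for a universal polynomial $Q_{n-2}$. Using Lemma~\ref{phi_lem_gen} (so $e^{\Phi}\le e^{M_1}$ and $\|\Phi_y\|_{L^\infty}\le M_2e^{-3s/2}$), \eqref{phi-yy-e}, and the already established bounds $\|\partial_y^j\Phi\|_{L^\infty}\le C e^{-3s}$ for $2\le j\le n-1$, the term $e^{\Phi}Q_{n-2}$ is bounded (in fact decaying). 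Combined with $\|\partial_y^{n-2}\rho\|_{L^\infty}\le C$ from the previous paragraph, the right-hand side is $O(1)$, so $\|\partial_y^n\Phi\|_{L^\infty}\le C(n)e^{-3s}$. Running this successively for $n=3$, then $n=4$, then $n=5$ — alternating, when necessary, with the $Z_{yy}$ and $\partial_y^3Z$ transport estimates — would close the claim.

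The step I expect to be the main obstacle is precisely the point where the isentropic case genuinely departs from the isothermal one: keeping the powers $\rho^{p}$ under control. For \eqref{iso_EP} one has $\log\rho=(w-z)/(2\sqrt{K})$, affine in $(w,z)$, so every $y$-derivative of $\rho=e^{(\cdots)}$ is immediately bounded; here the factors $\rho^{1-k\alpha}$ degenerate as $\rho\to0$, and it is exactly the uniform positive lower bound of Lemma~\ref{vacuum_isen} that prevents this. The only remaining point needing care is bookkeeping the order of the induction so that the $Z$-derivative transport estimates and the Poisson-derivative estimates feed one another without circularity, which is routine, just as in Lemma~\ref{difZn}.
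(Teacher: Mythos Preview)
Your proposal is correct and follows essentially the same approach as the paper, which in fact omits the proof entirely, saying only ``We can readily prove this by following the proof of Lemma~\ref{phi_y1y2_lem}, so we omit the details here.'' You have correctly spelled out that plan---differentiate the Poisson equation and interleave with the transport estimates for $\partial_y^kZ$ as in Lemma~\ref{difZn}---and you have identified the one genuinely new ingredient relative to the isothermal case: the factors $\rho^{p}$ with possibly negative $p$ coming from the chain rule applied to $\rho=(\tfrac{\alpha}{2}(w-z))^{1/\alpha}$, which are controlled precisely by the uniform lower bound on $\rho$ from Lemma~\ref{vacuum_isen}.
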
 
We can readily prove this by following the proof of Lemma~\ref{phi_y1y2_lem}, so we omit the details here. 
In fact, the bounds \eqref{Phi_0_M_gen}--\eqref{Phi_1_M_gen} in Lemma~\ref{phi_lem_gen} and \eqref{high-phi-isen} are crucially used to  close bootstrap assumptions, especially to close \eqref{W-y-decay-isen}.
So far, we have discussed some differences in the analysis between the isothermal and isentropic cases. 
	In summary, under our bootstrap assumptions \eqref{boots_isen}--\eqref{isen_rho-1} and the initial conditions \eqref{init_gen_0}, one can show  that the following  bounds hold and close the bootstrap. 
	\begin{subequations}\label{boots_isen'}
		\begin{align}
			&|W_y(y,s)-\overline W'(y)| \leq \frac{y^2}{20(1+y^2)},
			\\
			&|W_y(y,s)-\overline{W}'(y)| \le \frac{24}{25}\frac{1}{y^{2/3}+8}, \label{decay_Wy_isen}
			\\
			&|W_{yy}(y,s)| \leq \frac{14|y|}{(1+y^2)^{1/2}},
			\\
			&|\partial_y ^3 W(0,s)-6| \leq \veps^{1/4}, 
			\\
			&\|\partial_y ^3 W(\cdot, s) \|_{L^{\infty}} \leq  \frac{M^{5/6}}{2} , 
			\\
			&\|\partial_y ^4 W(\cdot, s) \|_{L^{\infty}} \leq \frac{M}{2}, 
			\\
			&\|Z_y\|_{L^{\infty}}\leq \frac{1}{2} e^{-(1/2+\delta) s}, \label{111}
			\\
			& |\dot{\tau}| \leq e^{-s} \\
			&e^{-s}(y^{2/3}+8)\left|\left(\frac{\alpha}{2}\left(e^{-s/2}W+\kappa-Z\right)\right)^{1/\alpha}-1\right|\leq \frac{3}{32}.
		\end{align}
	\end{subequations}
	Since the    analysis to close the bootstrap assumptions is fairly similar  to that  of the isothermal case except \eqref{111}, we omit the details here and refer to the counterparts for the isothermal case. 
 	Here, we introduce the proof of \eqref{111}.
  \begin{lemma}\label{pos_damp_isen}
 	Under the assumptions \eqref{boots_isen}, it holds that $ \|e^{s/2}Z_y\|_{L^{\infty}} \leq  e^{-\delta s}/2$. 
 \end{lemma}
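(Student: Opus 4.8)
The plan is to follow the proof of Lemma~\ref{Zder_lem} from the isothermal case, but with a weight adapted to the slower target decay rate $e^{-(1/2+\delta)s}$. Set
\begin{equation*}
V(y,s):=e^{(1/2+\delta)s}Z_y(y,s),
\end{equation*}
so that the assertion $\|e^{s/2}Z_y(\cdot,s)\|_{L^\infty}\le e^{-\delta s}/2$ is equivalent to $\|V(\cdot,s)\|_{L^\infty}\le 1/2$. Differentiating \eqref{EP2_2_gen} in $y$ and multiplying by $e^{(1/2+\delta)s}$, and using $\partial_y U^Z_\alpha=\tfrac32+\tfrac{e^{s/2}Z_y}{1-\dot\tau}+\tfrac{1-\alpha}{1+\alpha}\tfrac{W_y}{1-\dot\tau}$, one obtains the transport equation
\begin{equation*}
\partial_s V + U^Z_\alpha\,\partial_y V + D^V V = F^V,
\end{equation*}
where
\begin{equation*}
D^V := 1-\delta + \frac{e^{s/2}Z_y}{1-\dot\tau} + \frac{1-\alpha}{1+\alpha}\frac{W_y}{1-\dot\tau}, \qquad F^V := -\frac{2e^{(1+\delta)s}\Phi_{yy}}{(1+\alpha)(1-\dot\tau)}.
\end{equation*}

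First I would establish a strictly positive lower bound for the damping $D^V$. From the bootstrap assumptions \eqref{boots_isen} one has $|W_y|\le 1$ (exactly as in \eqref{Uy1}), $|e^{s/2}Z_y|\le e^{-\delta s}$ by \eqref{difz32-isen}, and $|\dot\tau|\le 2e^{-s}$ by \eqref{dottau_isen}, whence
\begin{equation*}
D^V \ge 1-\delta-\frac{|1-\alpha|}{1+\alpha}-O(\veps)\ge \lambda_0>0
\end{equation*}
for all sufficiently small $\veps$, where $\lambda_0=\lambda_0(\alpha,\delta)$ is positive precisely because the constraint $\delta<\tfrac14\bigl(1-\tfrac{|1-\alpha|}{1+\alpha}\bigr)$ imposed in \eqref{boots_isen} forces $1-\delta-\tfrac{|1-\alpha|}{1+\alpha}\ge\tfrac34\bigl(1-\tfrac{|1-\alpha|}{1+\alpha}\bigr)>0$. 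This is the one genuinely delicate point and the main difference from the isothermal case: there the corresponding damping $W_y/(1-\dot\tau)$ is negative near $y=0$ and must be absorbed using the $\log\rho$-equation, whereas here the surplus $1-\delta$ gained from the slower decay rate dominates the worst case $|W_y|\le1$. For the forcing term, \eqref{phi-yy-e} gives $|\Phi_{yy}|\lesssim e^{-3s}$, hence $|F^V|\le Ce^{-(2-\delta)s}$ with $2-\delta>0$.

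Finally I would integrate along the characteristic $\psi=\psi(y,s)$ solving $\partial_s\psi=U^Z_\alpha(\psi,s)$, $\psi(y,s_0)=y$, to get
\begin{equation*}
V(\psi(y,s),s)=V(y,s_0)\,e^{-\int_{s_0}^s D^V\circ\psi\,ds'}+\int_{s_0}^s e^{-\int_{s'}^s D^V\circ\psi\,ds''}(F^V\circ\psi)(s')\,ds'.
\end{equation*}
The initial term is small: from $Z_y(\cdot,s_0)=e^{-3s_0/2}z_x(\cdot,-\veps)$ and $\|z_0\|_{C^4}\le 1/2$ in \eqref{init_gen_3} one obtains $\|V(\cdot,s_0)\|_{L^\infty}\le\tfrac12\veps^{1-\delta}$. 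Using $D^V\ge\lambda_0>0$, so that $e^{-\int_{s'}^s D^V\circ\psi}\le e^{-\lambda_0(s-s')}\le 1$, together with the bound on $F^V$, we get
\begin{equation*}
\|V(\cdot,s)\|_{L^\infty}\le \tfrac12\veps^{1-\delta}+C\int_{s_0}^s e^{-\lambda_0(s-s')}e^{-(2-\delta)s'}\,ds'\le \tfrac12\veps^{1-\delta}+\tfrac{C}{\lambda_0}\veps^{2-\delta}\le\tfrac12
\end{equation*}
for every $\veps\in(0,\veps_0)$ with $\veps_0$ sufficiently small; here, as in Lemma~\ref{Zder_lem}, we use that $\psi(\cdot,s):\mathbb{R}\to\mathbb{R}$ is a bijection so the supremum on the left is exactly $\|V(\cdot,s)\|_{L^\infty}$. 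Since $1-\delta>0$, both error terms vanish as $\veps\to0$, which yields $\|V(\cdot,s)\|_{L^\infty}\le1/2$ and hence the lemma. Apart from the lower bound on $D^V$, every step is a routine Gr\"onwall-along-characteristics estimate of the kind already used several times in the paper.
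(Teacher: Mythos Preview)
Your proof is correct and follows essentially the same approach as the paper: both derive a transport equation for a rescaled $Z_y$ (the paper uses $e^{s/2}Z_y$, you use $e^{(1/2+\delta)s}Z_y$, which are equivalent reformulations of the same claim), observe that the damping is bounded below by $1-\delta-\tfrac{|1-\alpha|}{1+\alpha}-o(1)>0$ thanks to $|W_y|\le1$ and the choice of $\delta$, and then integrate along characteristics using $|\Phi_{yy}|\lesssim e^{-3s}$. The only cosmetic difference is that the paper moves the quadratic $Z_y$-term and the $\dot\tau$-correction into the forcing $F_\alpha$ while you keep them in $D^V$; both organizations work, and your remark about why the isentropic damping is easier than the isothermal one (no need for the $\log\rho$ trick) is exactly the point.
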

 \begin{proof}
 	By \eqref{EP2_2_gen}, we have the equation of $e^{s/2}Z_y$ that
 	\begin{equation}\label{eZy}
 		\begin{split}
 			&	\left( \partial_s + 1 +\frac{1-\alpha}{1+\alpha}W_y \right)(e^{s/2}Z_y) + U^Z_\alpha (e^{s/2}Z_y)_{y}
 			\\
 			& =-\frac{2}{1+\alpha}\frac{e^{s}\Phi_{yy}}{1-\dot{\tau}}-\frac{1-\alpha}{1+\alpha}\frac{\dot{\tau}}{1-\dot{\tau}}(e^{s/2}Z_y)W_y-\frac{(e^{s/2}Z_y)^2}{1-\dot{\tau}} =: F_\alpha. 
 		\end{split}
 	\end{equation} 
 	For the damping $D_\alpha : =1 +\frac{1-\alpha}{1+\alpha}W_y$, we have from $\alpha>0$ and $|W_y|\leq 1$ thanks to \eqref{Uy1_gen} that 
 	\begin{equation}\label{alpha-damping} 
 		D_\alpha \geq 1-\left|\frac{1-\alpha}{1+\alpha}\right|> \delta.
 	\end{equation} 
 	(Recall  $\delta\in(0,  (1- |1-\alpha|(1+\alpha )^{-1})/4)$.)    
 	By \eqref{difz32-isen}, \eqref{dottau_isen} and  \eqref{phi-yy-e}, we have  $\|F_\alpha\|_{L^{\infty}} \leq e^{-2\delta s}. $ 
 	By integrating the equation \eqref{eZy} along the characteristic $U^Z_\alpha$, we have the desired result.
 \end{proof}

 Now,   following the proof of Theorem~\ref{mainthm1} presented in Section~\ref{global-conti}, we can prove  the assertions of Theorem~\ref{main-isen}. 
\subsection{Proof of Theorem~\ref{main-isen}}\label{subsec51} 
To parallel subsection~\ref{global-conti}, we can show that $[w]_{C^{1/3}}<\infty$ by using \eqref{decay_Wy_isen}. To conclude that $\rho_x$ and $u_x$ blows up at the same time, we also need to prove that $z_x$ is finite for $t_0\leq t\leq T_*$, i.e.,
\begin{equation}\label{222}
	|z_x(x,t)|<\infty \quad \text{for all } (x,t)\in \mathbb{R}\times [t_0,T_*].
\end{equation} 
%
%
\begin{remark}
In fact, one can prove $\sup_{-\veps\leq t <T_\ast}\|z_x(\cdot,t)\|_{L^{\infty}}$ in a similar way as in the isothermal case, which is stronger result than \eqref{222}. More precisely, following the proof of  Lemma~\ref{Zder_lem}, one can close the bootstrap assumptions \eqref{boots_isen} with $\|Z_y\|_{L^{\infty}}\leq e^{-(1/2+\delta)s}$ replaced by $\|Z_y\|_{L^{\infty}}\leq Ce^{-3s/2}$ if we additionally assume that $\inf_{x\in \mathbb{R}}(w_0-z_0) \geq \|w_0-z_0\|_{L^\infty}e^{-\frac{\alpha}{|1-\alpha|}}$. We omit the details.
\end{remark}

\begin{lemma}
Under the same assumptions as in Lemma~\ref{vacuum_isen}, \eqref{222} holds.
\end{lemma}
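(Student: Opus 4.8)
The plan is to reduce everything to the blow-up time $t=T_\ast$: since the self-similar bootstrap together with the continuation argument (parallel to subsection~\ref{global-conti}) produces a solution in $C^4([-\ve,T_\ast)\times\mathbb{R})$, the quantity $z_x(x,t)$ is automatically finite for $t<T_\ast$, so only the endpoint $t=T_\ast$ requires work. Fix $x_0\in\mathbb{R}$ and let $X(t)$ be the $z$-characteristic through $(x_0,T_\ast)$, solving $\dot X=z(X,t)+\frac{1-\alpha}{1+\alpha}w(X,t)$; I would first check that $X(t)$ is defined on all of $[-\ve,T_\ast]$, which follows from boundedness of the characteristic speed: $z$ is bounded by integrating \eqref{EP-Isen'2} along $z$-characteristics and using \eqref{Phi_1_M_gen}, and then $w=(w-z)+z$ is bounded because $w-z$ enjoys two-sided bounds (the lower one from Lemma~\ref{vacuum_isen}, the upper one from the \emph{same} transport computation together with the bootstrap bound on $Z_y$, which gives $|\int_{-\ve}^{t}z_x\,dt'|\lesssim\ve^{\delta}$; the latter also shows $\rho$, hence $\phi_{xx}=\rho-e^{\phi}$, is bounded). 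Differentiating \eqref{EP-Isen'2} in $x$ and evaluating on $X(t)$, the function $\zeta(t):=z_x(X(t),t)$ solves the Riccati-type equation $\dot\zeta=-\zeta^2-\frac{1-\alpha}{1+\alpha}w_x\zeta-\frac{2}{1+\alpha}\phi_{xx}$.

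The main obstacle is the estimate $\int_{-\ve}^{T_\ast}|w_x(X(t),t)|\,dt\le C_\ast<\infty$. I would prove it in self-similar variables: writing $w_x=e^{s}W_y$ and $dt=\frac{e^{-s}}{1-\dot\tau}\,ds$, the integral is bounded by $2\int_{s_0}^{\infty}|W_y(y(s),s)|\,ds$, where $y(s)$ is the trace of $X(t)$ in the $y$-variable, $\frac{dy}{ds}=U^Z_\alpha(y(s),s)$. The closed decay bound \eqref{decay_Wy_isen} together with \eqref{wx-exp-inf} gives $|W_y(y,s)|\le C(1+y^2)^{-1/3}$, so it remains to show that $y(s)$ leaves every fixed bounded $y$-interval quickly and is thereafter pushed to infinity at rate $\gtrsim e^{s/2}$. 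This is exactly where the hypotheses $\kappa_0>1$ from \eqref{init_gen_1} and $\alpha>0$ enter: from $U^Z_\alpha-U^W_\alpha=\frac{2\alpha}{(1+\alpha)(1-\dot\tau)}\bigl(e^{s/2}(Z-\kappa)-W\bigr)$, the bound $\|z\|_{L^\infty}\le1$ and $\kappa(t)=\kappa_0+o(1)$ (so that $Z-\kappa\le-(\kappa_0-1)+o(1)<0$), together with the structure of $U^W_\alpha$ in \eqref{UW_gen}, one gets $U^Z_\alpha(y,s)\le\frac32 y-c_1e^{s/2}$ with $c_1=c_1(\alpha,\kappa_0)>0$ for all $y$ in a fixed bounded set (indeed for all $y\le c_2e^{s/2}$); hence $y(s)$ either runs off to $+\infty$ or is driven monotonically to $-\infty$, in either case with $|y(s)|\gtrsim e^{s/2}$ once $s$ exceeds some $s_1$ with $s_1-s_0=O(1)$. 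On the complementary bounded portion I would use the crude bound $|W_y|\lesssim1$ and estimate the transit $s$-time by $O(e^{-s_0/2})=O(\ve^{1/2})$, while on the tail $|W_y(y(s),s)|\lesssim e^{-s/3}$ is integrable; adding the contributions proves the claim.

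Finally I would close the argument with a short-time continuity/Gr\"onwall step. Since $|\zeta(-\ve)|\le\|\partial_x z_0\|_{L^\infty}\le\frac12$ by \eqref{init_gen_3} and the lifespan satisfies $T_\ast+\ve=O(\ve)$, the $-\zeta^2$ term in the Riccati equation cannot drive $\zeta$ to infinity before $T_\ast$: assuming $|\zeta|\le B$ on a maximal subinterval, Gr\"onwall applied to $\dot\zeta=-\zeta^2-\frac{1-\alpha}{1+\alpha}w_x\zeta-\frac{2}{1+\alpha}\phi_{xx}$ yields $|\zeta(t)|\le\bigl(\tfrac12+C\ve\bigr)\exp\!\bigl(B\,O(\ve)+\tfrac{|1-\alpha|}{1+\alpha}C_\ast\bigr)=:B_0$, and taking $B:=2B_0$ shows this bound is not saturated for $\ve$ small, hence propagates up to $t=T_\ast$. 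In particular $z_x(x_0,T_\ast)=\zeta(T_\ast)$ is finite, and since $x_0\in\mathbb{R}$ was arbitrary, \eqref{222} follows. I would add that this argument delivers only pointwise finiteness rather than a uniform bound on $\|z_x(\cdot,t)\|_{L^\infty}$; the latter requires two-sided control of $\log\rho$, which is why the stronger statement in the preceding Remark needs the extra hypothesis $\inf_{x}(w_0-z_0)\ge\|w_0-z_0\|_{L^\infty}e^{-\alpha/|1-\alpha|}$.
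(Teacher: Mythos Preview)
Your proposal is correct and follows the same overall strategy as the paper: both differentiate \eqref{EP-Isen'2}, track $z_x$ along the $z$-characteristic, and reduce everything to the key integrability estimate $\int|w_x|\,dt<\infty$ along that characteristic, which in turn rests on showing that the $z$-characteristic stays separated from the blow-up locus; both invoke $\kappa_0>1$ from \eqref{init_gen_1} at exactly this point. The difference is in how the separation is proved. The paper works in physical variables: it writes $\eta(t)-\xi(t)$ by integrating the characteristic and modulation ODEs backward from $T_\ast$, splits the result into four pieces $I_1,\dots,I_4$, and shows $I_1\ge\tfrac{\alpha}{1+\alpha}\kappa_0(T_\ast-t)$ dominates, giving the clean lower bound $\eta-\xi\ge c(T_\ast-t)$ and hence $\int_t^{T_\ast}|\eta-\xi|^{-2/3}\,dt'<\infty$. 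You instead run an ODE comparison for the self-similar trace $y(s)$ via the transport speed $U^Z_\alpha$, using $U^Z_\alpha-U^W_\alpha=\tfrac{2\alpha}{(1+\alpha)(1-\dot\tau)}(e^{s/2}(Z-\kappa)-W)$ to force $|y(s)|\gtrsim e^{s/2}$. Your formulation is perhaps more dynamical and the transit-time step is nice, though the ``runs off to $+\infty$'' branch deserves one more line (once $y(s)$ drops below $c_2e^{s/2}$ it stays there and is driven to $-\infty$; if it never drops below, you already have $|y(s)|>c_2e^{s/2}$). A second minor difference: the paper closes via the exact integral representation of $z_x$ and invokes the already-closed bootstrap bound $\int|z_x|\,dt'\lesssim 1$ directly, avoiding any circularity, whereas you run a Riccati--Gr\"onwall continuity argument on $\zeta$; both work, but the paper's route sidesteps the need to choose $B$ self-consistently.
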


\begin{proof}

By taking $\partial_x$ of \eqref{EP-Isen'2} and then integrating the resulting equation along the flow $\eta=\eta(t;x)$ defined as
\begin{equation}\label{eta}
\partial_t \eta(t) =  z(\eta,t)+ \frac{1-\alpha}{1+\alpha} w(\eta,t), \quad \eta(-\veps;x_0)=x_0,
\end{equation}
 we obtain 
\begin{equation*}
\partial_x z(\eta,t) = \partial_x z_0(x_0) e^{-\int_{-\veps}^t ( \partial_x z + \tfrac{1-\alpha}{1+\alpha} \partial_x w)\,dt'} - \frac{2}{1+\alpha} \int_{-\veps}^t e^{-\int_{t'}^t ( \partial_x z + \tfrac{1-\alpha}{1+\alpha} \partial_x w)\,dt''}  \partial_x^2 \phi \,dt'.
\end{equation*}
By \eqref{111} and \eqref{phi-yy-e}, we see that $\textstyle\int_{-\veps}^t  |\partial_x z (\eta,\cdot)| \,dt' \leq \int_{s_0}^s  e^{-\delta s'} \,ds' \lesssim 1$ and $\|\partial_x^2 \phi\|_{L^\infty} \lesssim 1$ uniformly in  $t \in [-\veps,  T_\ast)$. 

We claim that for any fixed $x_0\in\mathbb{R}$ and for all $t$ sufficiently close to $T_*$, there exists constant $C_{x_0}>0$ such that
\begin{equation}\label{Ax3}
\int_t^{T_\ast} |\partial_x w(\eta(t';x_0),t')|\,dt' <C_{x_0}.
\end{equation}
In view of \eqref{dx-dy} and \eqref{dottau_isen}, we use the change of variables $e^{-s}\,ds =  (1-\dot\tau(t) ) dt$ and the fact that $|W_y|\leq cy^{-2/3}$, which comes from \eqref{W-y-decay-isen} and \eqref{y-w-y}, to see that 
\begin{equation}\label{Ax2}
	\begin{split}
		\int^{T_*}_{t}|\partial_xw(\eta,t')|\,dt'
		& \le  c \int^{\infty}_{s}|W_y((\eta-\xi)e^{3s'/2}, s')|\,ds' \\
		& \leq \int^{\infty}_{s}\frac{c}{(\eta-\xi)^{2/3}}e^{-s'}\,ds' \le \int_{t}^{T_\ast}\frac{c}{(\eta-\xi)^{2/3}}\,dt',
	\end{split}
\end{equation}
where $\xi$ is defined in \eqref{modulation_gen}.

 We split into two cases: (i) $\lim_{t \to T_\ast}\eta(t) \neq  \lim_{t \to T_\ast} \xi(t)$ and (ii)  $\lim_{t \to T_\ast}\eta(t) =  \lim_{t \to T_\ast} \xi(t)$. 
 
 For the case (i),  by  continuity, there is $c>0$ such that $|\eta(t)-\xi(t)| >c$ for all $t$ sufficiently close to $T_\ast$, for which \eqref{Ax3} immediately follows. 

Let us consider the case (ii), i.e., $\lim_{t \to T_\ast}\eta(t) =  \lim_{t \to T_\ast} \xi(t)$. 
We estimate the lower bound of $|\eta-\xi|$. From \eqref{modulation_gen_3} and \eqref{eta}, we have
\begin{equation*}
\begin{split}
\eta(t) - \xi(t)  
& =  \frac{2\alpha}{1+\alpha} \int_t^{T_\ast} \kappa dt' - \int_t^{T_\ast}\left( z - \frac{1-\alpha}{1+\alpha}z(0,\cdot) \right)\,dt' \\
&\quad  + \int_s^\infty \frac{2e^{s'/2}\partial_y^3\Phi(0,\cdot)-(1-\alpha)\partial_y^2 Z(0,\cdot)}{(1+\alpha)\partial_y^3 W(0,\cdot)} e^{-s'} \,ds' \\
& \quad - \frac{1-\alpha}{1+\alpha} \int_s^\infty e^{-3s'/2}W\,ds'
\\
&=:I_1+I_2+I_3+I_4.
\end{split}
\end{equation*} 
We  notice that by \eqref{dottau_isen}, it holds that 
	\begin{equation*}
		T_*=\tau(T_*)=\int^{T_*}_{t_0}\dot{\tau}\,dt\leq 2\int^{\infty}_{s_0}e^{-2s}\,ds\leq \veps.
	\end{equation*}

\textit{Estimate of $I_1$}:  We take $\partial_y^2$ of \eqref{EP2_2_gen}, and we have
\begin{equation*}
	\partial_sZ_{yy}+D^Z_2Z_{yy}+U^Z_{\alpha}Z_{yy}=F^Z_2,
\end{equation*}
where $D^Z_2:=3+\frac{1-\alpha}{1+\alpha}\frac{2W_y}{1-\dot{\tau}}+\frac{3e^{s/2}Z_y}{1-\dot{\tau}}$ and $F^Z_2:=-\frac{2}{1+\alpha}\frac{e^{s/2}\partial_y^3\Phi}{1-\dot{\tau}}-\frac{1-\alpha}{1+\alpha}\frac{W_{yy}Z_y}{1-\dot{\tau}}$.
Since $D^Z_2\geq 1$ and $\|F^Z_2\|_{L^{\infty}}\leq 15e^{-(\frac{1}{2}+\delta)s}$ by \eqref{dottau_isen}, \eqref{high-phi-isen}, \eqref{EP2_1D3-isen} and \eqref{difz32-isen}, we get 
\begin{equation}\label{Zy2_isen}
	\|Z_{yy}\|_{L^{\infty}}\leq e^{-(\frac{1}{2}+\delta)s}.
\end{equation}

Integrating \eqref{modulation_gen_2} in $t$, and then using \eqref{Wy30_isen}, \eqref{Phi_1_M_gen}, \eqref{phi-yy-e}, and \eqref{Zy2_isen}, we obtain 
\begin{equation}\label{kappa_1}
|\kappa(t) - \kappa_0| \leq C\veps^{1/2+\delta}.
\end{equation}
Hence, by choosing sufficiently small $\veps(\kappa_0)>0$, we have
\begin{equation}\label{I1}
I_1 \geq \frac{\alpha}{1+\alpha} \kappa_0(T_\ast-t).
\end{equation}

\textit{Estimate of $I_2$}: Integrating \eqref{EP-Isen'1} and \eqref{EP-Isen'2} along each characteristics, and then using \eqref{Phi_1_M_gen}, we obtain the uniform bounds for $w$ and $z$: 
\begin{equation}\label{zw_upp}
\|z\|_{L^\infty} \leq \|z_0\|_{L^\infty} + \veps M_2, \quad \|w\|_{L^\infty} \leq \|w_0\|_{L^\infty} + \veps M_2.
\end{equation}
Using \eqref{init_gen_3} and \eqref{zw_upp}, we see that 
\begin{equation}\label{I2}
\begin{split}
|I_2|   \leq \left( \frac{\alpha}{1+\alpha} + \veps M_2\right)(T_\ast - t).
\end{split}
\end{equation} 

\textit{Estimate of $I_3$}: Using \eqref{Wy30_isen}, \eqref{high-phi-isen}, and \eqref{Zy2_isen}, we obtain 
\begin{equation}\label{I3}
	|I_3| \leq C\veps^{\frac{1}{2}+\delta}(T_\ast-t).
\end{equation}

\textit{Estimate of $I_4$}: First of all, using \eqref{W-y-decay-isen} and \eqref{y-w-y}, it is easy to obtain $|W(y,s)|\leq C|y|^{1/3}$. Applying this to $I_4$, we obtain
	\begin{equation}\label{I4'}
		|I_4| \lesssim \int^{\infty}_{s}e^{-3s'/2}|W((\eta-\xi)e^{3s'/2},s')|\,ds' \leq \int^{\infty}_s e^{-s'}|\eta-\xi|^{1/3}\,ds'.
	\end{equation}
	Now, we obtain a proper upper bound for $|\eta-\xi|$. Using \eqref{Wy30_isen},  \eqref{high-phi-isen}, \eqref{Zy2_isen}, \eqref{kappa_1}, and \eqref{zw_upp}, we obtain from \eqref{modulation_gen_3} that $|\dot{\xi}|\leq C$. Hence, using \eqref{zw_upp}, we obtain
\begin{equation}\label{Ax1}
	|\eta(t)-\xi(t)|=\left|\int^{T_*}_{t}(z+\frac{1-\alpha}{1+\alpha}w)(t')-\dot{\xi}(t')\,dt'\right| =  O(T_*-t) = O(\veps).
\end{equation} 
From \eqref{I4'} and \eqref{Ax1}, we get
\begin{equation}\label{I4}
	|I_4|\leq C\veps^{1/3}(T_*-t).
\end{equation}

Combining \eqref{I1}, \eqref{I2}, \eqref{I3} and \eqref{I4}, we conclude that
\begin{equation*}
	\eta(t)-\xi(t)\geq \left(\frac{\alpha}{1+\alpha}\kappa_0-\left(\frac{\alpha}{1+\alpha}+\veps M_2\right)-C\veps^{\frac{1}{2}+\delta}-C\veps^{1/3}\right)(T_*-t) \geq c(T_*-t)
\end{equation*}
for some positive constant $c$, provided that $\kappa_0>1$ and $\veps$ sufficiently small depending on $\kappa_0$. 
 Then, this together with \eqref{Ax2} implies \eqref{Ax3}.
This completes the proof.
\end{proof}

\section{Appendix}\label{appendix}

\subsection{Rescaling initial data}\label{scaling-IC} 
%

In this subsection, we discuss how the initial condition \eqref{init_w_3} can be relaxed. Suppose that at some time $t_\ast$ and point $x_\ast$, it holds that
\begin{equation}\label{init_w_3'}
	\partial_xw(t_\ast,x_*)= - \veps^{-1}, \quad \partial_x^2w(t_*,x_*)=0, \quad \partial_x^3w(t_*,x_*)=6\mu^{-2}\veps^{-4}
\end{equation}
for sufficiently small $\veps>0$ and some constant $\mu>0$ (independent of $\veps$). In other words, $-\partial_x w(t_\ast,x_\ast)$ is sufficiently large, $-\partial_x w(t_\ast,x)$ attains its local maximum at $x=x_\ast$, and   $\partial_x^3 w(t_\ast,x_\ast) \sim \veps^{-4}$. By time and space translation $t\mapsto t-t_*-\veps$ and $x\mapsto x-x_*$, we see that \eqref{init_w_3'} holds with $t_\ast=-\veps$ and $x_\ast=0$. 

We introduce new variables $t'= t$, $x'=\mu x$, and 
\begin{equation*}
	w(t,x)=\frac{1}{\mu}\wt{w}(t',x'), \quad z(t,x)=\frac{1}{\mu}\wt{z}(t',x'), \quad \phi(t,x)=\wt{\phi}(t',x'). 
\end{equation*}
Then, from \eqref{iso_EP},  we get
\begin{equation}\label{wt_eq}
	\begin{split}
		&\wt{w}_{t'}+(\wt{w}+\wt{z}+2\sqrt{K'})\wt{w}_{x'}=-2\mu^2\wt{\phi}_{x'},
		\\
		&\wt{z}_{t'}+(\wt{z}+\wt{w}-2\sqrt{K'})\wt{z}_{x'}=-2\mu^2\wt{\phi}_{x'},
		\\
		&-\mu^2\wt{\phi}_{x'x'}=e^{\frac{\wt{w}-\wt{z}}{2\sqrt{K'}}}-e^{\wt{\phi}},
	\end{split}
\end{equation}
where $K':=K\mu^2$. We notice that $\partial_x w_0(0)=\partial_{x'}\wt{w}_0(0)$ and  $\partial_{x'}^3\wt{w}_0(0)= \mu^2\partial_x^3w_0(0)= 6\veps^{-4}$.  The coefficient $\mu$ of the system \eqref{wt_eq} does not affect our analysis as long as $\mu$ is independent of $\veps$. In conclusion, we can relax \eqref{init_w_3} to any $w_0$ satisfying \eqref{init_w_3'}. We remark that for the one-dimensional Euler-Poisson system (\eqref{wt_eq} without the forcing term $\phi$), the condition for $\partial_x^3w_0$ in  \eqref{init_w_3'} can be relaxed to $\partial_x^3w_0>0$.

\subsection{Useful inequalities}\label{inequalities}

In this subsection,  we give a set of   inequalities that are used in the course of our analysis. 
We note that 
$\overline{W}$ of \eqref{Burgers_SS} is an implicit solution of 
\begin{equation}\label{Wbar_1}
	y=-\overline{W}-\overline{W}^3.
\end{equation}
Differentiating \eqref{Wbar_1} in $y$, we obtain
\begin{equation}\label{Wbar_2}
	\overline{W}'=-\frac{1}{1+3\overline{W}^2}
\end{equation}
and
\begin{equation}\label{W''_eq}
	\overline{W}'' = \frac{6\overline{W}^2}{1+3\overline{W}^2}\frac{(\overline{W}')^2}{-\overline{W}}.
\end{equation}
From \eqref{Wbar_2}, we have 
\begin{equation}\label{Wbar_sign}
	\overline{W}'\leq 0, \qquad y\in\mathbb{R}.
\end{equation}
Combining with the fact that $\overline{W}(0)=0$, we   see that
\begin{equation}\label{sign_W}
	\overline{W}(y)< 0 \quad \text{for}\quad y > 0, \qquad \overline{W}(y) > 0 \quad \text{for}\quad y < 0.
\end{equation}
By Young's inequality, we have
\begin{equation*}
	-\overline{W}=-\frac{\overline{W}}{y^{2/3}}\cdot y^{2/3}\leq -\frac{\overline{W}^3}{3y^2}+\frac{2y}{3}, \qquad y > 0.
\end{equation*}
This with \eqref{Wbar_1} gives
\begin{equation}\label{Wbar_3}
	-\overline{W}^3\geq \frac{y^3}{3y^2+1},  \qquad y\geq 0.
\end{equation}
Similarly, we have 
\begin{equation}\label{Wbar_4}
	-\frac{y^3}{1+3y^2}\leq \overline{W}^3, \qquad y\leq 0.
\end{equation}
From \eqref{Wbar_3} and \eqref{Wbar_4}, we get 
\begin{equation}\label{Wbarsq}
	\overline{W}^2\geq \frac{y^2}{(1+3y^2)^{2/3}}, \qquad y \in\mathbb{R}.
\end{equation}
Combining \eqref{Wbar_2} and  \eqref{Wbarsq}, we obtain a useful bound for small $|y|$, i.e.,
\begin{equation} \label{y-w-y2}
 -\frac{1}{1+\frac{3y^2}{(3y^2+1)^{2/3}}}\leq  \overline{W}'\leq 0.
\end{equation}

Next we shall obtain sharper bounds of $\overline{W}$ for large $|y|$. For any   $y\ne0$, setting $X:=-\overline{W}{y^{-1/3}}$, we have from \eqref{Wbar_1} that   $	f(X):=X^3+{y^{-2/3}}X-1=0$.  Let $a$ and $b$ be any positive numbers satisfying 
\begin{equation}\label{ab}
	b^3+ba^{-2/3}-1\le 0.
\end{equation} 
For $|y|\geq a>0$, we see that $f(1)=y^{-2/3} > 0$ and $f(b)\leq b^3+b a^{- 2/3}-1\le 0$. Combining with the fact that $f'(X)=3X^2+y^{-2/3}>0$, 
 we see that $0<b<1$ and that $X$ must be in the interval $(b,1)$. This implies that 
\begin{equation}\label{far_W}
	b|y|^{1/3}\leq |\overline{W}|\leq |y|^{1/3}, \qquad |y|\geq a.
\end{equation}

\begin{lemma}[Decay rate for $\overline{W}'$ and $\overline{W}''$] There exists a constant $C>0$ such that for all $y\in\mathbb{R}$ the following hold:
\begin{subequations}
		\begin{align}
			&|\overline{W}(y)|\leq  |y|,  \label{W<y}
			\\
			&|\overline{W}'(y)|\leq  C(1+y^2)^{-1/3}, \label{y-w-y} \\
			& |\overline{W}''(y)|\leq C(1+y^2)^{-5/6}. \label{Wyybar_bdd}
		\end{align}
	\end{subequations}
\end{lemma}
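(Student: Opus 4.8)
\textbf{Proof plan for the lemma (decay rates for $\overline{W}'$ and $\overline{W}''$).}
The plan is to derive all three bounds from the implicit relation $y = -\overline{W} - \overline{W}^3$ in \eqref{Wbar_1}, together with its differentiated forms \eqref{Wbar_2} and \eqref{W''_eq}, and the two-sided control on $\overline{W}$ already recorded in \eqref{Wbarsq} and \eqref{far_W}. The first estimate \eqref{W<y} is essentially immediate: from \eqref{Wbar_1} we have $|y| = |\overline{W}|(1+\overline{W}^2) \ge |\overline{W}|$, so $|\overline{W}(y)| \le |y|$ for all $y$. (This inequality already appears as a byproduct of \eqref{far_W} and \eqref{Wbar_1}, so I would just cite those.) The real content is in \eqref{y-w-y} and \eqref{Wyybar_bdd}, and the strategy there is to split into the regime of small $|y|$ and the regime of large $|y|$, using \eqref{Wbarsq}/\eqref{y-w-y2} for the former and \eqref{far_W} for the latter.

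First I would handle \eqref{y-w-y}. From \eqref{Wbar_2}, $|\overline{W}'(y)| = (1+3\overline{W}^2)^{-1} \le 1$, which already gives the bound when $|y|$ is bounded, say $|y|\le 1$, since there $(1+y^2)^{-1/3}$ is bounded below by a positive constant. For $|y|\ge 1$, I apply \eqref{far_W} with a fixed choice of $a$ and $b$ satisfying \eqref{ab} (for instance $a=1$ and $b$ the positive root of $b^3+b-1=0$, or any convenient admissible pair), to get $\overline{W}^2 \ge b^2 |y|^{2/3}$; hence
\[
|\overline{W}'(y)| = \frac{1}{1+3\overline{W}^2} \le \frac{1}{1+3b^2 |y|^{2/3}} \le \frac{C}{|y|^{2/3}} \le \frac{C}{(1+y^2)^{1/3}}
\]
for a suitable constant $C$, where in the last step I used $|y|^{2/3} \ge \tfrac12 (1+y^2)^{1/3}$ for $|y|\ge 1$. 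Combining the two regimes yields \eqref{y-w-y} with a single constant $C$.

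Next, for \eqref{Wyybar_bdd} I would start from \eqref{W''_eq}, which reads $\overline{W}'' = \dfrac{6\overline{W}^2}{1+3\overline{W}^2}\cdot\dfrac{(\overline{W}')^2}{-\overline{W}}$; since $\dfrac{6\overline{W}^2}{1+3\overline{W}^2} \le 2$, this gives $|\overline{W}''(y)| \le 2 \dfrac{(\overline{W}'(y))^2}{|\overline{W}(y)|}$. For $|y|$ bounded this is not directly useful because of the $1/|\overline{W}|$ factor near $y=0$, so there I argue differently: $\overline{W}$ is smooth near $0$ with $\overline{W}''(0)=0$ (from \eqref{Wbar-zero}), so $|\overline{W}''|$ is bounded on any compact set, which suffices since $(1+y^2)^{-5/6}$ is bounded below there. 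For $|y|\ge 1$ I combine $|\overline{W}'(y)| \le C|y|^{-2/3}$ from the previous step with the lower bound $|\overline{W}(y)| \ge b|y|^{1/3}$ from \eqref{far_W}, obtaining
\[
|\overline{W}''(y)| \le 2\,\frac{(\overline{W}'(y))^2}{|\overline{W}(y)|} \le 2\,\frac{C^2 |y|^{-4/3}}{b\,|y|^{1/3}} = \frac{C'}{|y|^{5/3}} \le \frac{C'}{(1+y^2)^{5/6}},
\]
again using $|y|^{5/3} \ge c(1+y^2)^{5/6}$ for $|y|\ge 1$. Taking the larger of the two constants over the two regimes gives \eqref{Wyybar_bdd}.

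The only mildly delicate point — and the step I would flag as the main obstacle, though it is more bookkeeping than conceptual — is ensuring the constants patch cleanly across the small-$|y|$/large-$|y|$ divide and that one genuinely controls $|\overline{W}''|$ near $y=0$ without circularity. This is resolved by noting $\overline{W}\in C^\infty$ in a neighborhood of $0$ (it solves the cubic \eqref{Wbar_1} with $\overline{W}'(0)=-1\ne 0$, so the implicit function theorem applies) together with the explicit Taylor data in \eqref{Wbar-zero}; on the complementary region $|y|\ge 1$ everything follows from the algebraic bounds \eqref{far_W} and \eqref{Wbar_2}. Everything else is elementary algebra with the cubic relation, so I would present it compactly rather than spelling out each inequality.
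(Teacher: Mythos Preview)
Your proposal is correct and follows essentially the same route as the paper: split into $|y|\le 1$ and $|y|\ge 1$, use $|\overline W'|\le 1$ on the bounded part and \eqref{far_W} with \eqref{Wbar_2}, \eqref{W''_eq} on the tail. The one place you work harder than necessary is the ``mildly delicate'' near-zero bound on $\overline W''$: instead of invoking smoothness and the implicit function theorem, the paper simply rewrites \eqref{W''_eq} as $|\overline W''| = \dfrac{6|\overline W|}{1+3\overline W^2}(\overline W')^2 \le 6|\overline W|\le 6|y|$, which removes the $1/|\overline W|$ singularity algebraically and gives the bounded-region estimate in one line.
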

\begin{proof}
Thanks to \eqref{Wbar_2}, which implies $|\overline{W}'|\leq 1$, we obtain \eqref{W<y} since
	\begin{equation*}
		|\overline{W}(y)|\leq \int^{|y|}_0|\overline{W}'(y')|\,dy' \leq |y|.
	\end{equation*}
	
For $a=1$, there exists $b=b(1)>0$ satisfying \eqref{ab}. From this and \eqref{far_W}, we get
	\begin{equation}\label{ab_1}
		b|y|^{1/3}\leq |\overline{W}| \leq |y|^{1/3}, \qquad |y|\geq 1.
	\end{equation} 
	From \eqref{Wbar_2} and \eqref{ab_1}, we see that 
	\begin{equation*}
		|\overline{W}'(y)|\leq \frac{1}{1+3b^2y^{2/3}}, \qquad |y| \geq 1,
	\end{equation*}
which implies \eqref{y-w-y} since $|\overline{W}'| \leq 1$ for all $y\in\mathbb{R}$.
	
From  \eqref{W''_eq}, \eqref{y-w-y} and \eqref{ab_1}, we see that there is a constant $C>0$ such that 
	\begin{equation*}
		|\overline{W}''|=  \frac{6\overline{W}^2}{1+3\overline{W}^2}\frac{(\overline{W}')^2}{|\overline{W}|} \leq Cy^{-5/3}, \qquad |y|\ge 1.
	\end{equation*} 
	On the other hand, from  \eqref{W''_eq} and $|\overline{W}'|\le 1$,  
	 one can easily check that 
	\begin{equation*}
		\begin{split}
			|  \overline{W}''  | & =  \frac{6|\overline{W}|}{1+3\overline{W}^2} (\overline{W}')^2    \le  C, \qquad  |y|\le 1.
		\end{split}
	\end{equation*} 
Combining the above two inequalities, we obtain  \eqref{Wyybar_bdd}.
\end{proof}

\begin{lemma}
	It holds that for all $y\in\mathbb{R}$,
	\begin{subequations}
		\begin{align}
			&1+2\overline{W}'+\frac{2}{1+y^2}\left(\frac{3}{2}+\frac{\overline W}{y}\right)\geq \frac{y^2}{5(1+y^2)}+\frac{16 y^2}{5(1+8y^2)},  \label{0605_m_3}
			\\
			&\frac{5}{2}+3\overline{W}' +\frac{1}{1+y^2}\left(\frac{3}{2}+\frac{\overline{W}}{y}\right)
			\geq \frac{2y^2}{3(1+y^2)}. \label{0605_m_5}
		\end{align}
	\end{subequations}
\end{lemma}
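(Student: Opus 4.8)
## Proof proposal for the inequalities \eqref{0605_m_3} and \eqref{0605_m_5}

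The plan is to reduce both inequalities to elementary one-variable estimates by exploiting the implicit representation of $\overline{W}$ from \eqref{Wbar_1}--\eqref{Wbar_2}, together with the sign information \eqref{Wbar_sign}--\eqref{sign_W} and the two-sided bounds \eqref{Wbarsq}, \eqref{far_W}. Both left-hand sides are even functions of $y$ (since $\overline{W}$ is odd, $\overline{W}'$ is even, and $\overline{W}(y)/y$ is even), so it suffices to treat $y\ge 0$; the case $y=0$ is checked directly using $\overline{W}(0)=0$, $\overline{W}'(0)=-1$ (both sides of \eqref{0605_m_3} equal $0$ at $y=0$, both sides of \eqref{0605_m_5} equal $0$ as well — actually $5/2 - 3 + 3/2 = 1 > 0$, consistent).

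For \eqref{0605_m_3}, first I would rewrite $1+2\overline{W}'$ using \eqref{Wbar_2}: $1 + 2\overline{W}' = 1 - \frac{2}{1+3\overline{W}^2} = \frac{3\overline{W}^2 - 1}{1+3\overline{W}^2}$. Next, the term $\frac{3}{2} + \frac{\overline{W}}{y}$ should be handled via \eqref{Wbar_1}, which gives $\frac{\overline{W}}{y} = \frac{\overline{W}}{-\overline{W}-\overline{W}^3} = \frac{-1}{1+\overline{W}^2}$, hence $\frac{3}{2} + \frac{\overline{W}}{y} = \frac{3}{2} - \frac{1}{1+\overline{W}^2} = \frac{1 + 3\overline{W}^2}{2(1+\overline{W}^2)}$. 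Substituting these into the LHS of \eqref{0605_m_3} turns it into a rational function of the single quantity $W := \overline{W}^2 \ge 0$, while the RHS involves $y^2$; using $y^2 = \overline{W}^2(1+\overline{W}^2)^2 = W(1+W)^2$ from \eqref{Wbar_1} squared, the whole inequality becomes a polynomial inequality in $W \ge 0$ after clearing the (positive) denominators $1+3W$, $1+W$, $1+W(1+W)^2$, and $1+8W(1+W)^2$. That polynomial inequality I would then verify by expanding and checking nonnegativity of the coefficients (or by a short factorization / discriminant argument). The same substitution strategy works verbatim for \eqref{0605_m_5}: $\frac{5}{2} + 3\overline{W}' = \frac{5}{2} - \frac{3}{1+3W} = \frac{1 + 15W}{2(1+3W)}$, and $\frac{1}{1+y^2}\big(\frac{3}{2} + \frac{\overline{W}}{y}\big) = \frac{1}{1+W(1+W)^2}\cdot\frac{1+3W}{2(1+W)}$, reducing \eqref{0605_m_5} again to a polynomial inequality in $W\ge 0$.

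I expect the main obstacle to be the bookkeeping in the final polynomial inequalities: after clearing denominators, \eqref{0605_m_3} produces a polynomial of moderately high degree in $W$ (roughly degree $7$--$8$, because of the $1+8W(1+W)^2$ factor interacting with $(1+W)^2$), and one must confirm all coefficients are nonnegative, or otherwise group terms cleverly to see nonnegativity. To keep this manageable I would first record and prove the auxiliary bound $\overline{W}^2 \ge \frac{y^2}{(1+3y^2)^{2/3}}$ from \eqref{Wbarsq} and the large-$|y|$ asymptotics $|\overline{W}| \sim |y|^{1/3}$ from \eqref{far_W}, which immediately give the correct leading behavior of both sides as $y\to\infty$ (LHS of \eqref{0605_m_3} $\to 1 + 2\cdot 0 + 0 = 1$, matching RHS $\to \tfrac15 + \tfrac{2}{5} = \tfrac{3}{5} < 1$; similarly for \eqref{0605_m_5}, LHS $\to \tfrac52 > \tfrac23$), so that the inequalities are only genuinely tight on a bounded range of $y$. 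On that bounded range, if the coefficient-nonnegativity route turns out too messy, a fallback is to perform the substitution $W = \overline{W}^2$, observe the resulting function of $W$ is smooth on $[0,\infty)$, and verify positivity by checking the value and derivative at $W=0$ together with a crude global lower bound — but I anticipate the polynomial approach will close cleanly once the denominators are cleared in the order indicated above.
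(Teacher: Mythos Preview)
Your approach is correct and genuinely different from the paper's. The key observation --- that the implicit relation $y=-\overline W(1+\overline W^2)$ lets you write $\overline W/y=-1/(1+\overline W^2)$ and $y^2=W(1+W)^2$ with $W:=\overline W^2$, thereby turning both inequalities into polynomial inequalities in the single variable $W\ge 0$ --- is cleaner than what the paper does. The paper instead splits $\mathbb R$ into a near region $|y|\le 0.7$ and a far region $|y|\ge 0.7$, bounds $\overline W'$ and $\overline W/y$ in each region by the approximate estimates \eqref{y-w-y2}, \eqref{W<y}, and \eqref{far_W} (with specific numerical choices $a=0.7$, $b=0.6$), and then verifies the resulting elementary one-variable inequalities by inspection. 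Your route avoids the case split and the numerical constants entirely; carrying it out for \eqref{0605_m_5}, after clearing denominators one finds the difference $\text{LHS}-\text{RHS}$ equals $\bigl(53W+84W^2+78W^3+92W^4+33W^5\bigr)/\bigl(6(1+3W)(1+W)P\bigr)\ge 0$, so the coefficient check really does close immediately. For \eqref{0605_m_3} the polynomial is higher degree (the denominator picks up the extra factor $1+8W(1+W)^2$), but the identity $1+8y^2=8(1+y^2)-7$ helps organize the expansion.

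Two minor slips to fix before executing: your expression $\tfrac52+3\overline W'=\tfrac{1+15W}{2(1+3W)}$ should be $\tfrac{15W-1}{2(1+3W)}$, and at $y=0$ the limit $\overline W/y\to -1$ makes the left side of \eqref{0605_m_5} equal $\tfrac52-3+\tfrac12=0$, not $1$; neither affects the method.
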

\begin{proof}
	By \eqref{y-w-y2} and \eqref{W<y}, we have 
	\begin{equation*}
		\begin{split}
			1+2\overline{W}'+\frac{2}{1+y^2}\left(\frac{3}{2}+\frac{\overline W}{y}\right) 
			&\geq 1-\frac{2}{1+\left(\frac{3y^2}{1+3y^2}\right)^{2/3}}+\frac{1}{1+y^2}.
		\end{split}
	\end{equation*} 
It is straightforward to check that for $|y| \leq 0.7$,
	\begin{equation*}
		1-\frac{2}{1+\left(\frac{3y^2}{1+3y^2}\right)^{2/3}}+\frac{1}{1+y^2} \geq \frac{y^2}{5(1+y^2)}+\frac{16y^2}{5(1+8y^2)}.
	\end{equation*}
Combining the above inequalities, we see that  \eqref{0605_m_3} holds for $|y| \leq 0.7$.

Let us consider the region $|y|\geq 0.7$. We let $a=0.7$ and $b=0.6$ so that \eqref{ab} is satisfied. Then, from \eqref{far_W}, we see that  $0.6|y|^{1/3}\leq |\overline{W}| \leq |y|^{1/3}$ for $ |y|\geq 0.7$. Using this and \eqref{Wbar_2}, we have	 
	 \begin{equation*}
	 	\overline{W}'\geq -\frac{1}{1.08y^{2/3}+1}.
	 \end{equation*} 
	 Thus, using the above inequalities, we see that for $|y| \geq 0.7$,
	\begin{equation*}
		\begin{split}
			1+2\overline{W}'+\frac{2}{1+y^2}\left(\frac{3}{2}+\frac{\overline W}{y}\right) 
			&\geq 1-\frac{2}{1.08y^{2/3}+1}+\frac{2}{1+y^2}\left(\frac{3}{2}-\min\left\{1,\frac{1}{y^{2/3}}\right\}\right)
			\\
			&\geq \frac{y^2}{5(1+y^2)}+\frac{16y^2}{5(1+8y^2)},
		\end{split}
	\end{equation*}
	where one can check the last inequality by straightforward calculation. We finish the proof of \eqref{0605_m_3}.

	Now, we prove \eqref{0605_m_5}. From  \eqref{y-w-y2}, we see that  
	\begin{equation*}
		1+\overline{W}'\geq 1-\frac{1}{1+\frac{3y^2}{(3y^2+1)^{2/3}}}
			\geq \frac{y^2}{3(1+y^2)}.
	\end{equation*} 
	 Together with \eqref{0605_m_3}, this implies that for $y\in \mathbb{R}$,
	\begin{equation*}
		\begin{split}
			\frac{5}{2}+3\overline{W}' +\frac{1}{1+y^2}\left(\frac{3}{2}+\frac{\overline{W}}{y}\right)
			&= 2(1+\overline{W}')+\frac{1}{2}\left(1+2\overline{W}'+\frac{2}{1+y^2}\left(\frac{3}{2}+\frac{\overline{W}}{y}\right)\right) 
			\\
			&\geq \frac{2y^2}{3(1+y^2)}+\frac{1}{2}\left(\frac{y^2}{5(1+y^2)}+\frac{16y^2}{5(1+8y^2)}\right) 
			\\
			&\geq \frac{2y^2}{3(1+y^2)}.
		\end{split}
	\end{equation*}
	This completes the proof.
\end{proof}


	\begin{lemma}
		There exists a constant $\lambda>1$ such that for all $y\in\mathbb{R}$,
		\begin{equation}\label{0605_m_4}
			\lambda |\overline{W}''|  \frac{ 1+y^2}{y^2} \int^{|y|}_0{\frac{(y')^2}{1+(y')^2 }\,dy'}\leq\frac{3y^2}{1+8y^2}+\frac{y^2}{30(1+y^2)}.
		\end{equation}
	\end{lemma}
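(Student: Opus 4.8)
The plan is to turn \eqref{0605_m_4} into a one–variable elementary inequality and verify it by splitting the range of $y$. Since $\overline{W}$ is odd and both sides of \eqref{0605_m_4} are even, it suffices to treat $y\ge 0$. From \eqref{Wbar_2} and \eqref{W''_eq} one gets the closed form $\overline{W}''=-6\overline{W}(1+3\overline{W}^2)^{-3}$, hence $|\overline{W}''(y)|=6|\overline{W}(y)|(1+3\overline{W}^2(y))^{-3}=6|\overline{W}(y)|\,|\overline{W}'(y)|^3$, and $\int_0^{y}\tfrac{(y')^2}{1+(y')^2}\,dy'=y-\arctan y$, which satisfies $0\le y-\arctan y\le\min\{y^3/3,\,y\}$ (the bound $y-\arctan y\le y^3/3$ follows since $\tfrac{d}{dy}\big(\arctan y-y+\tfrac{y^3}{3}\big)=\tfrac{y^4}{1+y^2}\ge0$ and the left side vanishes at $y=0$). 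Thus it is enough to exhibit $\lambda>1$ with
\begin{equation*}
\lambda\,\frac{6|\overline{W}(y)|}{(1+3\overline{W}^2(y))^3}\,\frac{1+y^2}{y^2}\,(y-\arctan y)\ \le\ \frac{3y^2}{1+8y^2}+\frac{y^2}{30(1+y^2)}\qquad\text{for all }y\ge0.
\end{equation*}

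Near the origin, say on $0\le y\le\ell$, I would use the crude bounds $|\overline{W}(y)|\le y$ (from \eqref{W<y}) and $|\overline{W}'(y)|\le1$ (from \eqref{Wbar_2}), which give $|\overline{W}''|=6|\overline{W}|\,|\overline{W}'|^3\le6y$, together with $y-\arctan y\le y^3/3$. This bounds the left side by $2\lambda\,y^2(1+y^2)$, while the right side is at least $\tfrac{3y^2}{1+8y^2}$, so on $[0,\ell]$ the claim reduces to $2\lambda(1+y^2)(1+8y^2)\le3$, which holds once $\ell$ and $\lambda-1$ are small (for instance $\ell=\tfrac15$, $\lambda=\tfrac{21}{20}$, since then $2\lambda(1+\tfrac1{25})(1+\tfrac8{25})<3$).

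On the complementary range $y\ge\ell$ I would exploit the quantitative decay of $\overline{W}$ and $\overline{W}''$. Here $\tfrac{1+y^2}{y^2}\le1+\ell^{-2}$ and $y-\arctan y\le y$, so the left side is at most $\lambda(1+\ell^{-2})|\overline{W}''(y)|\,y$. Combining $\overline{W}^2\ge y^2(1+3y^2)^{-2/3}$ from \eqref{Wbarsq} with $|\overline{W}|\le y$ gives, on $\ell\le y\le1$, the bound $|\overline{W}''|\le 6y\big(1+3y^2(1+3y^2)^{-2/3}\big)^{-3}$; and the two–sided estimate \eqref{far_W} with $(a,b)=(1,\tfrac{68}{100})$ (note $b^3+b-1\le0$) gives, on $y\ge1$, $|\overline{W}|\le y^{1/3}$ and $1+3\overline{W}^2\ge1+3b^2y^{2/3}$, hence $|\overline{W}''|\le6y^{1/3}(1+3b^2y^{2/3})^{-3}$, which is $O(y^{-5/3})$, so that the left side is $O(y^{-2/3})\to0$ while the right side stays bounded below by $\tfrac13$. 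On each of $[\ell,1]$ and $[1,\infty)$ the resulting inequality is elementary in $y$ and can be checked directly.

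The main obstacle is the intermediate range of $y$ of order one: there the near–origin expansion is no longer accurate and the far–field decay $|\overline{W}''|\lesssim y^{-5/3}$ is not yet effective, so neither crude regime works by itself and one has to use the genuinely quantitative bounds above. The only delicate point is to pin down admissible explicit constants — the cut–off $\ell$, the pair $(a,b)$ in \eqref{far_W}, and a value $\lambda>1$ — so that the final purely elementary inequality closes; this is exactly the type of direct verification already invoked for \eqref{0605_m_3} and \eqref{0605_m_5}, and it is comfortable numerically, since the worst ratio of the two sides of \eqref{0605_m_4} stays well below $1$ (it equals $\tfrac23$ as $y\to0$ and tends to $0$ as $y\to\infty$).
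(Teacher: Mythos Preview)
Your overall strategy matches the paper's: reduce to an elementary one--variable inequality via the explicit formula $|\overline{W}''|=6|\overline{W}|(1+3\overline{W}^2)^{-3}$, split the $y$--range, and verify the resulting inequality on each piece. The near--origin piece $[0,\ell]$ and the far--field piece $[1,\infty)$ are handled correctly. The gap is on the intermediate range $[\ell,1]$.

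There you replace $\tfrac{1+y^2}{y^2}(y-\arctan y)$ by the product of the separate bounds $\tfrac{1+y^2}{y^2}\le 1+\ell^{-2}$ and $y-\arctan y\le y$, obtaining $(1+\ell^{-2})y$. With $\ell=\tfrac15$ this is $26y$, while the true value at $y=\ell$ is about $0.068$, so you lose a factor of roughly $75$; the resulting elementary inequality on $[\ell,1]$ is false by this factor and cannot be ``checked directly''. Worse, no choice of $\ell$ rescues the scheme: your near--origin step forces $2\lambda(1+\ell^2)(1+8\ell^2)\le3$, hence $\ell\lesssim0.22$, whereas at $y=\ell$ your intermediate bound gives left side $\approx 6\lambda(1+\ell^{-2})\ell^2=6\lambda(1+\ell^2)$ against a right side $\approx3\ell^2$, forcing $\ell\gtrsim1.4$. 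Even the sharper observation $\tfrac{1+y^2}{y^2}(y-\arctan y)\le y$ (valid for all $y>0$, by monotonicity of $t^2/(1+t^2)$) is not enough on its own: at $y=1$ your bound $|\overline{W}''|\le 6y(1+3y^2(1+3y^2)^{-2/3})^{-3}\approx0.57$ then gives left side $\ge0.57\lambda$ versus right side $\approx0.35$.

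The paper closes this range by splitting at $|y|=3/2$ rather than at a small $\ell$, and, crucially, by \emph{not} discarding the factor $|y|-\arctan|y|$ on $|y|\le 3/2$: it keeps it and reduces the inequality to $180\lambda\,\mathcal G(y)\mathcal G_1(y)\mathcal G_2(y)\le1$ with explicitly monotone factors, which is then checked at the endpoints. On $|y|\ge 3/2$ it uses the sharp integral bound $\le|y|$ together with the two--sided estimate $\tfrac34|y|^{1/3}\le|\overline{W}|\le|y|^{1/3}$ from \eqref{far_W} with $(a,b)=(\tfrac32,\tfrac34)$. If you retain $y-\arctan y$ on the intermediate range and push the split point up to $\tfrac32$, your argument becomes essentially the paper's and does close.
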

\begin{proof}
	
	We split the domain $\mathbb{R}$ into two regimes: $|y|\leq 3/2$ and  $|y| \geq 3/2$. 
	We first consider the region $|y| \geq 3/2$. Since ${y^2(1+y^2)^{-1}}$ is increasing for $y>0$, we have   
	\begin{equation}\label{m_4_int'}
		\frac{1+y^2}{y^2}\int^{|y|}_0 \frac{(y')^2}{1+(y')^2}\,dy' \le  |y|.
	\end{equation}
By choosing  $a=3/2$ and $b=3/4$, from \eqref{ab} and \eqref{far_W},   we see that $\frac{3}{4}|y|^{1/3}\leq |\overline{W}|\leq |y|^{1/3}$ for $ |y|\geq 3/2$. 	 Using this fact with \eqref{Wbar_2} and  \eqref{W''_eq},  we obtain 
	\begin{equation}\label{m_4_large}
		\begin{split}
		|\overline{W}''|
		& =\left|\frac{6\overline{W}^2}{1+3\overline{W}^2}\frac{(\overline{W}')^2}{-\overline{W}}\right|   \leq \frac{6|y|^{2/3}}{1+3|y|^{2/3}} \left|\frac{(\overline{W}')^2}{\overline{W}} \right|\\
		& \leq \frac{6}{3/4}\frac{|y|^{1/3}}{1+3y^{2/3}}\frac{1}{(1+3\cdot (3/4)^2 y^{2/3})^2}, \qquad |y|\geq 3/2.
		\end{split}
	\end{equation}  
	Combining \eqref{m_4_int'} and \eqref{m_4_large}, we obtain 
	\begin{equation*}
		\begin{split}
			\lambda |\overline{W}''|  \frac{ 1+y^2}{y^2} 	\int^{|y|}_0{\frac{(y')^2}{1+(y')^2 }\,dy'}&\leq \lambda \frac{6}{3/4}\frac{|y|^{4/3}}{1+3y^{2/3}}\frac{1}{(1+3\cdot (3/4)^2 y^{2/3})^2}.
		\end{split}
	\end{equation*}
	Now we claim that  for some $\lambda>1$,
\[
\lambda\frac{6}{3/4}\frac{|y|^{4/3}}{1+3y^{2/3}}\frac{1}{(1+3\cdot (3/4)^2 y^{2/3})^2} \leq \frac{3y^2}{1+8y^2}+\frac{y^2}{30(1+y^2)}, \qquad |y|\geq 3/2,
\]
which is equivalent to $240 \lambda \mathcal{F}_1(y)\mathcal{F}_2(y)\leq 1$ for  $ |y|\geq 3/2$, where $\mathcal{F}_1(y)=\tfrac{1+y^2}{91+98y^2}$ and $\mathcal{F}_2(y)=\tfrac{1}{(1+3\cdot (3/4)^2y^{2/3})^2}\tfrac{1+8y^2}{y^{2/3}(1+3 y^{2/3})}$. Here, $\mathcal{F}_1$ and $\mathcal{F}_2$ are decreasing for $y\geq 0$. Hence, we have for any $\lambda \in (1,6/5)$,  $240 \lambda \mathcal{F}_1(y)\mathcal{F}_2(y) \leq 1$	for  $|y| \geq 3/2$.
We finish the proof of the claim.

	Next, we consider the region $|y|\leq 3/2$. 
	By \eqref{Wbar_2}, \eqref{W''_eq}, \eqref{y-w-y2} and \eqref{W<y},  we have 
	\begin{equation}\label{m_4_small}
		\begin{split} 
		|\overline{W}''|=6|\overline{W}||\overline{W}'|^3 \leq 6|y|\left(1+\frac{3y^2}{(3y^2+1)^{2/3}}\right)^{-3}.
		\end{split} 
	\end{equation} 
	Using the fact that $\textstyle\int^{|y|}_0 \frac{(y')^2}{1+(y')^2}\,dy' = |y|-\tan^{-1}|y|$ 	and \eqref{m_4_small}, we obtain a bound for the left side of \eqref{0605_m_4}, i.e.,
	\begin{equation*}
		\begin{split}
			\lambda |\overline{W}''|  \frac{ 1+y^2}{y^2} \int^{|y|}_0{\frac{(y')^2}{1+(y')^2 }\,dy'}&\leq 6\lambda |y|\left(1+\frac{3y^2}{(3y^2+1)^{2/3}}\right)^{-3}\frac{1+y^2}{y^2}(|y|-\tan^{-1}|y|).
		\end{split}
	\end{equation*}
	Now, we claim that for some $\lambda>1$,
	\begin{equation*}
		6\lambda |y|\left(1+\frac{3y^2}{(3y^2+1)^{2/3}}\right)^{-3}\frac{1+y^2}{y^2}(|y|-\tan^{-1}|y|) \leq \frac{3y^2}{1+8y^2}+\frac{y^2}{30(1+y^2)}, \quad |y|\leq 3/2,
	\end{equation*} 
	equivalently,
	\begin{equation*}
		180\lambda \underbrace{(1+8y^2)\left(1+\frac{3y^2}{(3y^2+1)^{2/3}}\right)^{-3}}_{=:\mathcal{G}(y)}\cdot\underbrace{\frac{(1+y^2)}{98y^2+91}}_{=:\mathcal{G}_1(y)}\cdot\underbrace{\frac{1+y^2}{|y|^3}(|y|-\tan^{-1}|y|)}_{=:\mathcal{G}_2(y)}\leq 1, \quad |y|\leq 3/2.
	\end{equation*}
	We first check that $\mathcal{G}(y)$ has its maximum at $y=0$, in the domain $|y|\leq 3/2$. Note that 
\begin{equation*}
	\frac{d}{dy}\mathcal{G}(y)=\frac{2y(3y^2+1)}{(3y^2+(3y^2+1)^{2/3})^4}\cdot\underbrace{(8(3y^2+1)^{5/3}-57y^2-9)}_{=:\mathcal{G}_0(y)},
\end{equation*}
where $\mathcal{G}_0$ satisfies $\mathcal{G}_0(0)<0$ and $\mathcal{G}_0(3/2)>0$. Hence, $\mathcal{G}(y)$ decreases on $[0,\alpha]$ and increases on $[\alpha,3/2]$, where $\alpha>0$ is a unique zero of  $\mathcal{G}_0(y)$. Comparing the values $\mathcal{G}(0)$ and $\mathcal{G}(3/2)$, we deduce that $\mathcal{G}(y)\leq \mathcal{G}(0)=1 $ for $|y| \leq 3/2$. 

On the other hand, one can easily check that  $\mathcal{G}_1(y) \leq 1/91$ and $\mathcal{G}_2(y) \leq  \mathcal{G}_2(3/2)$ for $|y|\leq 3/2$  since $\mathcal{G}_1$ decreases and $\mathcal{G}_2$ increases on $y>0$. Combining all, we conclude that for any $1<\lambda<1.01$ and $|y| \leq 3/2$, $180\lambda \mathcal{G}(y)\mathcal{G}_1(y)\mathcal{G}_2(y) \leq 1$. This proves our claim, and we complete the proof of \eqref{0605_m_4}.

\end{proof}

\begin{lemma}
There exists $\lambda>1$ such that for $|y|\geq 3$,  
	\begin{subequations}
		\begin{align}
			\begin{split}
				&\lambda |\overline{W}''| (y^{2/3}+8) \int^{|y|}_{0}{\frac{dy'}{{y'}^{2/3}+8}}\leq
				\\
				&\qquad \qquad \left(1-\frac{1}{y^{2/3}+8}+2\overline{W}'-\frac{2y^{2/3}}{3(y^{2/3}+8)}\left(\frac{3}{2}+\frac{\overline{W}}{y}+\frac{1}{y}\int^y_0 \frac{dy'}{y'^{2/3}+8}\right)\right),\label{0605_m_7}
			\end{split}
			\\
			&1-\frac{1}{y^{2/3}+8}+\overline{W}'-\frac{2y^{2/3}}{3(y^{2/3}+8)}\left(\frac{3}{2}+\frac{\overline W}{y}+\frac{1}{y}\int_{0}^{y}{\frac{dy'}{{y'}^{2/3}+8}}\right)>0. \label{0605_m_7'}
		\end{align}
	\end{subequations}
\end{lemma}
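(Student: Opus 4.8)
The plan is to reduce both inequalities to elementary one–variable inequalities in $t:=y^{2/3}$ on the half–line $t\ge 3^{2/3}$, and to verify those by splitting into a few explicit $t$–intervals. First a reduction: since $\overline{W}$ is odd in $y$ by \eqref{Wbar_1}, both $\overline{W}'$ and $|\overline{W}''|$ are even and $\int_0^y\frac{dy'}{|y'|^{2/3}+8}$ is odd, so every term in \eqref{0605_m_7} and \eqref{0605_m_7'} is even in $y$ and it suffices to argue for $y\ge 3$. Writing $\widetilde E(y)$ for the right–hand side of \eqref{0605_m_7} and $E(y)$ for the left–hand side of \eqref{0605_m_7'}, a direct comparison gives $E(y)=\widetilde E(y)-\overline{W}'(y)$, hence $E(y)\ge\widetilde E(y)$ by \eqref{Wbar_sign}; thus \eqref{0605_m_7'} follows once $\widetilde E(y)>0$ is established, and \eqref{0605_m_7} amounts to producing one $\lambda>1$ with $\lambda\,|\overline{W}''(y)|\,(y^{2/3}+8)\int_0^{|y|}\frac{dy'}{y'^{2/3}+8}\le\widetilde E(y)$. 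It is convenient to simplify, using $\tfrac32\cdot\tfrac{2y^{2/3}}{3(y^{2/3}+8)}=\tfrac{y^{2/3}}{y^{2/3}+8}$, to
\[
\widetilde E(y)=\frac{7}{y^{2/3}+8}+2\overline{W}'(y)-\frac{2y^{2/3}}{3(y^{2/3}+8)}\Bigl(\frac{\overline{W}(y)}{y}+\frac1y\int_0^y\frac{dy'}{y'^{2/3}+8}\Bigr),
\]
which exhibits the signs: $\overline{W}/y<0$ for $y>0$ by \eqref{sign_W}, so the last term is the sum of a helpful (nonnegative) piece and a harmful piece.

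Next I would record explicit two–sided bounds for $\overline{W}$, for $\overline{W}'=-(1+3\overline{W}^2)^{-1}$, and for $|\overline{W}''|=6|\overline{W}|(1+3\overline{W}^2)^{-3}$ (the last identity being immediate from \eqref{Wbar_2} and \eqref{W''_eq}), all in terms of $t$. On $[3,\infty)$ one uses \eqref{far_W} with $a=3$ and any $b$ obeying \eqref{ab}, e.g.\ $b=\tfrac45$, giving $b\,y^{1/3}\le|\overline{W}(y)|\le y^{1/3}$; since this choice is lossy for large $y$, on a tail $[a_1,\infty)$ with $a_1$ a fixed large number one applies \eqref{far_W} again with $a=a_1$ and a correspondingly larger admissible $b_1$ close to $1$. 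For the integral I would use the monotone bounds $\int_0^y\frac{dy'}{y'^{2/3}+8}\le\frac y8$ and $\int_0^y\frac{dy'}{y'^{2/3}+8}\le 3y^{1/3}$, the first sharper for small $y$ and the second for large $y$. Feeding these into $\widetilde E(y)>0$ — after discarding the nonnegative term $-\tfrac{2y^{2/3}}{3(y^{2/3}+8)}\tfrac{\overline{W}}{y}$ and clearing positive denominators — turns it into a quadratic inequality in $t$ on the bounded $t$–ranges (where $\int_0^y\le\tfrac y8$ is used) and into a linear–over–quadratic inequality on the tail (where $\int_0^y\le 3y^{1/3}$ is used). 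Similarly $\lambda\,|\overline{W}''|(y^{2/3}+8)\int_0^{|y|}(\cdots)\le\widetilde E(y)$ becomes, on each $t$–range and for $\lambda$ fixed slightly above $1$ (say $\lambda=\tfrac{101}{100}$), a polynomial inequality in $t$, which one verifies by comparing endpoint values and leading coefficients together with the monotonicity of the intervening rational functions.

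The main obstacle is that, unlike most of the auxiliary inequalities above, \eqref{0605_m_7} is close to saturation: near $y=3$ the ratio of its two sides is only of order a small constant, while uniformly crude estimates for $|\overline{W}''|$ (e.g.\ via $\tfrac{6\overline{W}^2}{1+3\overline{W}^2}\le 2$) or for the integral (e.g.\ via $3y^{1/3}$ alone) each lose a factor of comparable size; moreover the constant $b$ one may use in \eqref{far_W} with $a=3$ degrades as $y$ grows, being pinned by the endpoint $y=3$. This forces one to use the near–optimal $b$ together with the sharper $\tfrac y8$ bound on the bounded range, and a refreshed application of \eqref{far_W} with a larger base point on the tail — hence the splitting into a few explicit $t$–intervals. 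Once those choices are fixed, what remains is a finite, routine verification of rational–function inequalities, entirely in the spirit of the preceding lemmas.
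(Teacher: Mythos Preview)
The proposal is correct and follows essentially the same approach as the paper: both use the bounds $b|y|^{1/3}\le|\overline{W}|\le|y|^{1/3}$ from \eqref{far_W} (the paper with $b=0.84$, you with $b=\tfrac45$), the same integral estimates $\int_0^{|y|}\frac{dy'}{y'^{2/3}+8}\le\min\{|y|/8,\,3|y|^{1/3}\}$, and the same algebraic simplification of the right-hand side to $\tfrac{7}{y^{2/3}+8}+2\overline{W}'-\cdots$, leaving a routine verification of an explicit rational inequality in $t=y^{2/3}$. One minor efficiency: since the left side of \eqref{0605_m_7} is nonnegative, \eqref{0605_m_7} alone yields $\widetilde E\ge 0$, and then $E=\widetilde E-\overline{W}'>0$ by \eqref{Wbar_sign}, so your separate step of establishing $\widetilde E>0$ (and the attendant discarding of the helpful $\overline{W}/y$ term) is unnecessary.
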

\begin{proof}
By letting $a=3$ and $b=0.84$, we obtain from  \eqref{ab} and  \eqref{far_W} that 
\begin{equation}\label{Wbar_far}
	0.84|y|^{1/3}\leq |\overline{W}| \leq |y|^{1/3}, \qquad |y|\geq 3. 
\end{equation}
From \eqref{Wbar_2}, \eqref{W''_eq} and \eqref{Wbar_far}, we have 
\begin{equation}\label{Wbar''_far}
	|\overline{W}''|=\frac{6|\overline{W}|}{(1+3\overline{W}^2)^3}\leq \frac{6|y|^{1/3}}{(1+3\cdot(0.84)^2\cdot y^{2/3})^3}, \qquad |y|\geq 3.
\end{equation}
Thanks to \eqref{Wbar''_far}, the left hand side  of \eqref{0605_m_7} satisfies
\begin{equation*}
	\begin{split}
		|\overline{W}''|(y^{2/3}+8)\int^{|y|}_0 \frac{dy'}{(y')^{2/3}+8} 
		&\leq \frac{6|y|^{2/3}(y^{2/3}+8)}{(1+3\cdot (0.84)^2y^{2/3})^3}\min\left\{3,\frac{y^{2/3}}{8}\right\}
		\\
		&=:L, \qquad |y|\geq 3.
	\end{split}
\end{equation*}
Here we have used the fact that 
\begin{equation}\label{int-y13} \frac{1}{|y|^{1/3}} \int_0^{|y|} \frac{dy'}{ (y')^{2/3} +8 }\le \min\left\{3,\frac{y^{2/3}}{8}\right\}. 
\end{equation}
On the other hand, from \eqref{Wbar_2} and \eqref{Wbar_far}, we obtain
\begin{equation}\label{barw-3}
	\overline{W}'=-\frac{1}{1+3\overline{W}^2}\geq -\frac{1}{1+3\cdot(0.84)^2\cdot |y|^{2/3}}, \qquad |y|\geq 3.
\end{equation}
Thanks to \eqref{Wbar_far}, \eqref{int-y13} and \eqref{barw-3}, the right hand side  of \eqref{0605_m_7} satisfies for $|y|\geq 3$,
\begin{equation*}
	\begin{split}
	&	1-\frac{1}{y^{2/3}+8}+2\overline{W}'-\frac{2y^{2/3}}{3(y^{2/3}+8)}\left(\frac{3}{2}+\frac{\overline{W}}{y}+\frac{1}{y}\int^y_0 \frac{dy'}{y'^{2/3}+8}\right)
		\\
		&=\frac{7}{y^{2/3}+8}+2\overline{W}'-\frac{2}{3(y^{2/3}+8)}\left(\frac{\overline{W}}{y^{1/3}}+\frac{1}{y^{1/3}}\int^y_0 \frac{dy'}{y'^{2/3}+8}\right)
		\\
		&\geq \frac{7}{y^{2/3}+8}-\frac{2}{1+3\cdot (0.84)^2 y^{2/3}}-\frac{2}{3(y^{2/3}+8)}\left(-0.84+\min\left\{3,\frac{y^{2/3}}{8}\right\}\right)
		 =:R.
	\end{split}
\end{equation*}
For any $1<\lambda<1.1$,
it is straightforward to check that  $R-\lambda L >0$ for all $|y|\geq 3$. This proves \eqref{0605_m_7}.
Then, \eqref{0605_m_7'} immediately follows from \eqref{0605_m_7} and  $\overline{W}'<0$. 
\end{proof}


%
%
%
%
%
%
%
%
%
%
%

%

\subsection{Poisson equation}
We define a continuous function 
\begin{equation}\label{B_I}
	I(y):=(y^{2/3}+1)\int_{-\infty}^\infty \frac{e^{-|y-y'|}}{1+ |y'|^{2/3}}  \,dy', \quad y\in\mathbb{R}.
\end{equation}
By using  L'Hospital's Rule, one can check that the supremum of $I(y)$ over $y\in\mathbb{R}$ is finite. 
\begin{lemma}\label{Lem_Pois}
Let $f(y,s)$ be a continuous function such that
\begin{equation}\label{09}
\sup_{(y,s)\in\mathbb{R}\times [0,\infty)}\; (1+y^{2/3} ) |f(y,s)| \leq C_f
\end{equation}
for some constant $C_f>0$. Suppose that $C_f>0$ is a  constant such that 
	\begin{subequations}
		\begin{align}
			& C_f\sup_{y\in \mathbb{R}}I(y) \leq 1/4, \label{B_CCon1} 
			\\ 
			& e^{1/4}C_f\left(\sup_{y\in \mathbb{R}}I(y)\right)^2 \leq 2 \label{B_CCon2}
		\end{align}
	\end{subequations}
	hold, where $I(y)$ is defined in \eqref{B_I}. Then, the solution to the Poisson equation
	\begin{equation}\label{B_Pois}
		- \partial_{yy}\Phi = f(y,s) + 1 - e^\Phi
	\end{equation}
	satisfies for   $n=0,1,2$,
	\begin{equation}\label{B_Pois2} 
		 \sup_{(y,s)\in\mathbb{R}\times[0,\infty)}|(y^{2/3} + 1)\partial_y^n\Phi(y,s)| \lesssim C_f.
	\end{equation}
\end{lemma}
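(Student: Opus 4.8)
The plan is to solve the Poisson equation \eqref{B_Pois} by a fixed-point argument in a weighted space, using the explicit Green's function $\tfrac12 e^{-|y-y'|}$ of the operator $1-\partial_{yy}$. First I would rewrite \eqref{B_Pois} as $-\partial_{yy}\Phi + \Phi = f + 1 - e^\Phi + \Phi$, equivalently, using the representation formula for $(1-\partial_{yy})^{-1}$,
\[
\Phi(y,s) = \frac12 \int_{\mathbb{R}} e^{-|y-y'|} \big( f(y',s) + \big( 1 + \Phi(y',s) - e^{\Phi(y',s)} \big) \big)\, dy' .
\]
Note that the function $g(r):=1+r-e^r$ satisfies $g(0)=0$, $g'(r)= 1-e^r$, so $|g(r)| \le C r^2$ for $|r|$ bounded and $g$ is monotone-ish; the key structural point is that for $\Phi$ small, $\Phi - e^\Phi + 1$ is a quadratically small perturbation, so the map $T[\Phi](y,s):= \tfrac12\int_{\mathbb{R}} e^{-|y-y'|}( f(y',s) + g(\Phi(y',s)))\,dy'$ is a contraction on a small ball of the Banach space $X := \{ \Phi \in C(\mathbb{R}\times[0,\infty)) : \|\Phi\|_X := \sup (1+|y|^{2/3})|\Phi(y,s)| < \infty \}$.

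The key computational input is the bound on $I(y)$ from \eqref{B_I}: for any $h$ with $\sup (1+|y|^{2/3})|h(y)| \le A$, one has
\[
(1+y^{2/3})\Big| \tfrac12 \int_{\mathbb{R}} e^{-|y-y'|} h(y')\, dy' \Big| \le \tfrac{A}{2}(1+y^{2/3})\int_{\mathbb{R}} \frac{e^{-|y-y'|}}{1+|y'|^{2/3}}\, dy' \le \tfrac{A}{2}\, I(y) \le \tfrac{A}{2}\, \|I\|_{L^\infty}.
\]
So $\|T[0]\|_X \le \tfrac12 C_f \|I\|_{L^\infty} \le \tfrac18$ by \eqref{B_CCon1}. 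For the contraction and the invariance of the ball $B_R := \{\|\Phi\|_X \le R\}$ with $R := 2 C_f \|I\|_{L^\infty} \le \tfrac12$, I would use that on $|r|\le R \le 1/2$ one has $|g(r)| \le \tfrac12 e^{R} r^2 \le \tfrac12 e^{1/4} r^2$ and $|g(r_1)-g(r_2)| = |e^{r_1}-e^{r_2} - (r_1-r_2)| \le (e^{R}-1)|r_1-r_2| \le e^{1/4} \max\{|r_1|,|r_2|\} |r_1-r_2|$ (or simply $\le \tfrac12|r_1-r_2|$ since $e^{1/4}-1 < 1/2$). Then $|g(\Phi_1(y')) - g(\Phi_2(y'))| \le e^{1/4}\, \tfrac{R}{1+|y'|^{2/3}}\cdot \tfrac{\|\Phi_1-\Phi_2\|_X}{1+|y'|^{2/3}} \le \tfrac{e^{1/4} R}{1+|y'|^{2/3}}\|\Phi_1-\Phi_2\|_X$, and weighting as above,
\[
\|T[\Phi_1]-T[\Phi_2]\|_X \le \tfrac12 e^{1/4} R\, \|I\|_{L^\infty}\, \|\Phi_1-\Phi_2\|_X = e^{1/4} C_f (\|I\|_{L^\infty})^2 \|\Phi_1-\Phi_2\|_X \le \tfrac12 \|\Phi_1-\Phi_2\|_X
\]
by \eqref{B_CCon2}; a similar estimate with $\Phi_2=0$ plus $\|T[0]\|_X \le \tfrac18$ gives $T(B_R)\subset B_R$. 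The contraction mapping theorem yields a unique solution $\Phi\in B_R$, giving \eqref{B_Pois2} for $n=0$ with implied constant $2\|I\|_{L^\infty}$.

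For $n=1,2$ I would differentiate the integral representation. Since $\partial_y \tfrac12 e^{-|y-y'|} = -\tfrac12 \mathrm{sgn}(y-y') e^{-|y-y'|}$, we get $\Phi_y(y,s) = -\tfrac12\int_{\mathbb{R}} \mathrm{sgn}(y-y') e^{-|y-y'|}( f(y',s)+g(\Phi(y',s)))\, dy'$, and the same weighted estimate applies verbatim because $|\mathrm{sgn}(y-y')e^{-|y-y'|}| = e^{-|y-y'|}$; hence $(1+y^{2/3})|\Phi_y| \lesssim C_f$, using $|g(\Phi(y'))| \le \tfrac12 e^{1/4} |\Phi(y')|^2 \le \tfrac{C R^2}{1+|y'|^{2/3}}$ absorbed into the $f$-type bound. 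For $n=2$ I would not differentiate the kernel again (the second distributional derivative produces a delta); instead I read off $\Phi_{yy} = \Phi - f - 1 + e^\Phi = -f + g(\Phi) + \Phi - (e^\Phi - 1 - \Phi) \cdot 0$... more simply, from \eqref{B_Pois} directly, $-\Phi_{yy} = f + 1 - e^\Phi = f + g(\Phi) - \Phi + \Phi = f + g(\Phi)$... let me just write $\Phi_{yy} = -(f(y,s)+1-e^{\Phi(y,s)})$, and since $|1-e^{\Phi}| = |{-g(\Phi)} + \Phi| \le |\Phi| + |g(\Phi)| \le \tfrac{CR}{1+|y|^{2/3}}$, we obtain $(1+y^{2/3})|\Phi_{yy}| \le (1+y^{2/3})(|f| + |1-e^\Phi|) \lesssim C_f$, which is \eqref{B_Pois2} for $n=2$.

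The main obstacle, modest but real, is verifying that the two constraints \eqref{B_CCon1}--\eqref{B_CCon2} are exactly what is needed to simultaneously (a) place $T[0]$ in the ball $B_R$, (b) make $T$ a self-map of $B_R$, and (c) make $T$ a contraction there — i.e.\ tracking the precise numerology of $R$, $e^{1/4}$ (from $e^R$ with $R\le 1/2$), and $\|I\|_{L^\infty}$ so that the conditions as stated suffice; this requires care but no deep idea. A secondary point is justifying differentiation under the integral sign and the passage from the integral equation to the classical PDE \eqref{B_Pois} (elliptic regularity / bootstrapping, using continuity of $f$), and checking that the solution produced is the one relevant to \eqref{EP2_3} — but finiteness of $\sup_y I(y)$, guaranteed by the L'Hôpital computation already noted after \eqref{B_I}, makes all weighted integrals converge and the argument go through.
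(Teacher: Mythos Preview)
Your approach is essentially the paper's: rewrite \eqref{B_Pois} as a fixed point for the Green operator of $1-\partial_{yy}$ in the weighted space, use the kernel bound via $I(y)$ for self-mapping and contraction, then read off the $n=1,2$ cases from the representation formula and the equation itself. The paper packages this as an explicit iteration $\Phi_{n+1}=\tfrac12\int e^{-|y-y'|}(f-(e^{\Phi_n}-1-\Phi_n))\,dy'$, proves the uniform weighted bound $\|(1+y^{2/3})\Phi_n\|_{L^\infty}\le C_f\sup I$ by induction, and then shows $\{\Phi_n\}$ is Cauchy in the \emph{unweighted} $L^\infty$ norm; but this is the same argument as your Banach fixed point.

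There is, however, a genuine numerical slip in your write-up. You take $R=2C_f\|I\|_{L^\infty}\le \tfrac12$ but then use $e^{1/4}$ in the Taylor/Lipschitz bounds for $g$, which is inconsistent (with $R\le\tfrac12$ you only get $e^{1/2}$). More seriously, your contraction factor is $e^{1/4}C_f(\|I\|_{L^\infty})^2$, and \eqref{B_CCon2} only says this is $\le 2$, not $\le \tfrac12$ as you claim; so the contraction does not follow from the stated hypotheses with your choice of $R$. The fix is to take $R=C_f\|I\|_{L^\infty}\le\tfrac14$ (as the paper effectively does): then $e^R\le e^{1/4}$, the self-mapping estimate becomes $\tfrac12 R+\tfrac14 e^{1/4}R^2\|I\|_{L^\infty}\le R$, which is exactly \eqref{B_CCon2}, and the contraction in the unweighted $L^\infty$ norm follows from $|g'(r)|\le e^{1/4}-1<1$ on $|r|\le\tfrac14$ together with $\tfrac12\int e^{-|y-y'|}\,dy'=1$. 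With this correction your argument goes through and matches the paper's.
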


\begin{proof}

We define a sequence of functions as follows:
\[
\Phi_{n+1}   = \frac{1}{2} \int_{-\infty}^\infty e^{-|y-y'|}(f - (e^{\Phi_{n}} - 1 - \Phi_{n}))(y',s)\,dy' \quad \text{for } n \in \mathbb{N},
\]
where $\Phi_1$ is the solution to the linear inhomogeneous equation $(1 - \partial_{yy})\Phi_1 = f$. By induction, we claim that 
\begin{equation}\label{allPhi_14}
	\|(y^{2/3}+1)\Phi_n(y,s)\|_{L^{\infty}}\leq C_f\sup_{y\in \mathbb{R}}I(y)  \quad \text{ for all } n\in\mathbb{N}.
\end{equation} 
For $n=1$, \eqref{allPhi_14} holds since 
\begin{equation}\label{BPois1}
	\begin{split}
		|(y^{2/3}+1)\Phi_1|
		& = \left| \frac{(y^{2/3}+1)}{2}\int_{-\infty}^\infty e^{-|y-y'|} f(y',s) \,dy' \right| 
		\\
		& \leq  \frac{C_f}{2} I(y)
	\end{split}
\end{equation}
by  \eqref{09}.  Assume that \eqref{allPhi_14} holds for some $n=k$.  Then, using \eqref{B_CCon1}, \eqref{allPhi_14} and \eqref{BPois1}, we have
\begin{equation}\label{BPoisn}
	\begin{split}
		& (y^{2/3}+1)|\Phi_{k+1}(y,s)|
  \\ 
		& = (y^{2/3}+1)\left|\Phi_1(y,s)-\frac{1}{2}\int^{\infty}_{-\infty}e^{-|y-y'|}(e^{\Phi_k}-1-\Phi_k)\,dy' \right| 
		\\
		& \leq (y^{2/3}+1) \left(|\Phi_1| + \frac{1}{4} \int^{\infty}_{-\infty}e^{-|y-y'|}|\Phi_k|^2 e^{|\Phi_k|}\,dy' \right)   
		\\
		& \leq \frac{C_f}{2}\sup_{y\in \mathbb{R}}I  + \frac{e^{1/4}}{4}\big(C_f\sup_{y\in \mathbb{R}}I\big)^2 (y^{2/3}+1) \int^{\infty}_{-\infty}e^{-|y-y'|}(y'^{2/3}+1)^{-2}\,dy' 
		\\ 
		& \leq C_f\sup_{y\in \mathbb{R}}I \left( \frac{1}{2} + \frac{e^{1/4}}{4}C_f\big(\sup_{y\in \mathbb{R}}I\big)^2 \right),
	\end{split}
\end{equation}
where we have used
\[
|e^{\Phi_n} - 1 - \Phi_n| \leq  \frac{|\Phi_n|^2}{2}\sum_{j=2}^\infty \frac{|\Phi_n|^{j-2}}{j!/2} \leq \frac{|\Phi_n|^2}{2}e^{|\Phi_n|}
\]
in the second line. Together with \eqref{B_CCon2}, \eqref{BPoisn} implies that \eqref{allPhi_14} holds true for $n=k+1$. 

Now we show that $\{\Phi_n\}$ is Cauchy in $C_b(\mathbb{R}\times [0,\infty))$.  Since  $|\Phi_n| \leq 1/4$ by \eqref{B_CCon1} and \eqref{allPhi_14}, it holds that
\begin{equation*} 
	2(1-e^{-1/2}) e^{-1/4}  \leq 2(1-e^{-1/2}) e^{\Phi_n} \leq \frac{1-e^{-(\Phi_n-\Phi_{n-1})}}{\Phi_n-\Phi_{n-1}}e^{\Phi_n} \leq  -2(1-e^{1/2})e^{1/4} 
\end{equation*}
since $x \mapsto  \frac{1-e^{-x}}{x}$ is a positive decreasing function on $\mathbb{R}$. Hence, we see that
\begin{equation}\label{nu_n+11}
	\begin{split}
	&	\left| \frac{(e^{\Phi_n}-\Phi_n)-(e^{\Phi_{n-1}}-\Phi_{n-1})}{\Phi_n-\Phi_{n-1}}  \right|
		 = \left|1-e^{\Phi_n}\frac{1-e^{-(\Phi_n-\Phi_{n-1})}}{\Phi_n-\Phi_{n-1}} \right| 
		\\
		& \leq \max\left\{ 1-2e^{-1/4}(1-e^{-1/2}), -2e^{1/4}(1-e^{1/2})-1\right\} 
		  =:c_1 < 1.
	\end{split}
\end{equation}
Since $c_1<1$ and 
\begin{equation}\label{nu_n+1}
	\begin{split}
		|\Phi_{n+1}-\Phi_n|
		& \leq \frac{1}{2}\int^{\infty}_{-\infty}e^{-|y-y'|}|(e^{\Phi_n}-\Phi_n)-(e^{\Phi_{n-1}}-\Phi_{n-1})|\,dy' 
		\\ 
		& \leq  c_1\sup_{(y,s)\in\mathbb{R}\times[0,\infty)}|\Phi_{n}-\Phi_{n-1}|,
	\end{split}
\end{equation}  
we see  that $\{\Phi_n\}$ be a Cauchy sequence in $C_b(\mathbb{R}\times[0,\infty))$. Therefore,  $\lim_{n\rightarrow \infty}\Phi_n=\Phi$ exists uniformly in $(y,s)$. Furthermore, using \eqref{nu_n+11}, we obtain that
\begin{equation}\label{BPois2}
	\begin{split}
		\Phi 
		& =\lim_{n\rightarrow \infty}\Phi_{n+1}
		\\
		& =\lim_{n\rightarrow \infty} \frac{1}{2}\int^{\infty}_{-\infty}e^{-|y-y'|}( f- e^{\Phi_n}+1+\Phi_n)(s,y')\,dy' 
		\\
		&=\frac{1}{2}\int^{\infty}_{-\infty}e^{-|y-y'|}( f- e^{\Phi}+1+\Phi)(s,y')\,dy',
	\end{split}
\end{equation}
which means that $\Phi$ is the (unique) solution of Poisson equation.

By taking the limit of \eqref{allPhi_14}, we see that \eqref{B_Pois2} holds true for $n=0$. By taking the derivatives of \eqref{BPois2}, we get
\begin{subequations}\label{B_Pois3}
	\begin{align}
 		\partial_y \Phi  & =  \frac{1}{2}\int_{-\infty}^\infty -\frac{y-y'}{|y-y'|} e^{-|y-y'|}  (f - e^\Phi + 1 + \Phi)\,dy', 
 		\\
		\partial_{yy} \Phi  & =  \frac{1}{2}\int_{-\infty}^\infty  e^{-|y-y'|}  (f - e^\Phi + 1 + \Phi)\,dy'  - (f - e^\Phi + 1  + \Phi).
	\end{align}
\end{subequations} 
By applying \eqref{B_Pois2} for $n=0$ to \eqref{B_Pois3}, it is straightforward to see  that \eqref{B_Pois2} holds for $n=1,2$. 
\end{proof}

\subsection{Maximum Principles}
We present a     maximum principle, which is   modified  from the one developed in  \cite{BSV}, to apply to our analysis.
We consider the initial value problem: 
\begin{equation}\label{IVP-f}
	\begin{split} 
		& \partial_s f(y,s)+D(y,s) f(y,s)+U(y,s)\partial_yf(y,s)=F(y,s)+\int_{\mathbb{R}}f(y',s)K(y,s;y')\,dy',
		\\
		& f(y,s_0) = f_0(y), \quad 
		s\in[s_0,\infty), \quad y\in \mathbb{R}. 
	\end{split} 
\end{equation}

	\begin{lemma}
		\label{max_2}
		Let $f$ be a classical solution to IVP \eqref{IVP-f}. Let $\Omega \subseteq \mathbb{R}$ be any compact set. 
		Suppose that the following hold:  
		\begin{subequations}
			\begin{align}
				& \|f(\cdot,s)\|_{L^{\infty}(\Omega)}\leq m_0, \label{max_2_1}
				\\
				&  \|f(\cdot,s_0)\|_{L^{\infty}(\mathbb{R})}\leq m_0, \label{max_2_1'}
				\\
				& \int_{\mathbb{R}}|K(y,s;y')|\,dy'\leq \delta D(y,s)\quad \text{for}\quad (y,s)\in \Omega^c\times [s_0,\infty), \label{max_2_2}
				\\
				& \inf_{(y,s)\in \Omega^c\times [s_0,\infty)}D(y,s)\geq \lambda_D >0, \label{max_2_3}
				\\
				& \|F(\cdot,s)\|_{L^{\infty}(\Omega^c)}\leq F_0, \label{max_2_4}
				\\ 
				& \limsup_{|y|\rightarrow \infty}|f(y,s)| <  2m_0 \label{max_2_6}
			\end{align}
		\end{subequations} 
		for some $m_0, F_0, \lambda_D >0$ and $\delta<1$. If $m_0\lambda_D> F_0/(2-2\delta)$, 
		then $\|f(\cdot,s)\|_{L^{\infty}(\mathbb{R})}\leq 2m_0$.
	\end{lemma}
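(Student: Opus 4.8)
\textbf{Proof plan for Lemma~\ref{max_2}.}
The plan is to argue by contradiction using a barrier (comparison) function along characteristics. Suppose the conclusion fails, so there exist $s_\ast>s_0$ and $y_\ast\in\mathbb{R}$ with $|f(y_\ast,s_\ast)|>2m_0$. By \eqref{max_2_1'} the initial data is bounded by $m_0<2m_0$, and by \eqref{max_2_6} together with continuity, for each fixed $s$ the set where $|f(\cdot,s)|$ can reach the level $2m_0$ is contained in a fixed compact set; hence the first crossing time $s_1:=\inf\{s\ge s_0:\ \|f(\cdot,s)\|_{L^\infty(\mathbb{R})}=2m_0\}$ is well-defined, with $s_1\le s_\ast$, and is attained at some point $y_1$, i.e. $|f(y_1,s_1)|=2m_0$ and $\|f(\cdot,s)\|_{L^\infty(\mathbb{R})}\le 2m_0$ for all $s\in[s_0,s_1]$. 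Because of \eqref{max_2_1}, the crossing point must lie in $\Omega^c$: indeed $\|f(\cdot,s)\|_{L^\infty(\Omega)}\le m_0<2m_0$, so $|y_1|$ is such that $y_1\in\Omega^c$, and moreover there is a neighbourhood in $(y,s)$ of $(y_1,s_1)$ that stays inside $\Omega^c$. This is where all the hypotheses \eqref{max_2_2}--\eqref{max_2_4} are available.

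Next I would run the characteristic estimate. Let $\Psi(s)$ solve $\dot\Psi=U(\Psi,s)$ with $\Psi(s_1)=y_1$, traced backward in $s$; as long as $\Psi(s)\in\Omega^c$, the equation \eqref{IVP-f} gives, along $\Psi$,
\begin{equation*}
\frac{d}{ds}\big(f(\Psi(s),s)\big)=-D(\Psi(s),s)f(\Psi(s),s)+F(\Psi(s),s)+\int_{\mathbb{R}}f(y',s)K(\Psi(s),s;y')\,dy'.
\end{equation*}
Taking absolute values, using $\|f(\cdot,s)\|_{L^\infty(\mathbb{R})}\le 2m_0$ on $[s_0,s_1]$, the kernel bound \eqref{max_2_2}, the forcing bound \eqref{max_2_4}, and the lower bound \eqref{max_2_3}, one obtains a differential inequality of Gr\"onwall type for $g(s):=|f(\Psi(s),s)|$:
\begin{equation*}
\frac{dg}{ds}\ \ge\ -D\,g\ -\ F_0\ -\ \delta D\cdot 2m_0\ \ge\ -D\,g\ -\ F_0\ -\ 2\delta\,\lambda_D^{-1}\,D\,m_0,
\end{equation*}
valid while $\Psi(s)\in\Omega^c$. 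I would actually prefer the cleaner route: if $f(y_1,s_1)=2m_0$ (the case $f=-2m_0$ is symmetric), then at the crossing point $\partial_s\big(f(\Psi(s),s)\big)\big|_{s=s_1}\ge 0$ since $g$ increases to its maximum $2m_0$ from below; plugging into the characteristic ODE at $(y_1,s_1)$ gives
\begin{equation*}
0\ \le\ -D(y_1,s_1)\,(2m_0)+F(y_1,s_1)+\int_{\mathbb{R}}f(y',s_1)K(y_1,s_1;y')\,dy'\ \le\ -2m_0 D + F_0 + 2m_0\,\delta D,
\end{equation*}
where the last step uses $|f(\cdot,s_1)|\le 2m_0$ and \eqref{max_2_2}. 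Hence $2m_0(1-\delta)D(y_1,s_1)\le F_0$, so $D(y_1,s_1)\le F_0/(2m_0(1-\delta))$. But \eqref{max_2_3} forces $D(y_1,s_1)\ge\lambda_D$, contradicting the standing hypothesis $m_0\lambda_D>F_0/(2-2\delta)$, i.e. $\lambda_D>F_0/(2m_0(1-\delta))$. This contradiction proves $\|f(\cdot,s)\|_{L^\infty(\mathbb{R})}\le 2m_0$ for all $s$.

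The main obstacle is making the "first crossing time'' argument rigorous, and in particular justifying that at the first crossing the point $y_1$ lies in $\Omega^c$ and stays there in a space-time neighbourhood so that the pointwise differential inequality is legitimate — this is exactly the role of \eqref{max_2_6} (to rule out the supremum escaping to spatial infinity without being attained) combined with \eqref{max_2_1} (to push the crossing point off the compact set $\Omega$). A secondary technical point is that $\|f(\cdot,s)\|_{L^\infty}$ is only Lipschitz, not $C^1$, in $s$; this is handled as in the body of the paper (cf. the Rademacher-theorem argument around \eqref{AP_R1}) by working with the point $y_\ast(s)$ attaining the supremum and computing the one-sided derivative of $\|f(\cdot,s)\|_{L^\infty}$ along the characteristic through that point, which is bounded above by $\partial_s f$ evaluated there because $\partial_y f(y_\ast(s),s)=0$. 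Once these two points are in place, the estimate itself is the short Gr\"onwall/sign computation above and is routine.
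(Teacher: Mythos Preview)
Your proposal is correct and follows essentially the same approach as the paper's proof: a first-touching argument at the level $2m_0$, locating the touching point in $\Omega^c$ via \eqref{max_2_1} and \eqref{max_2_6}, and then evaluating the equation at that point to derive the contradiction $\lambda_D\le F_0/(2m_0(1-\delta))$. The paper phrases the touching time slightly differently (it picks $s_*$ as a time after which the norm stays $\ge 2m_0$, rather than the first time it equals $2m_0$), and it explicitly uses $\partial_y f(y_*,s_*)=0$ together with the Rademacher one-sided derivative of $\|f(\cdot,s)\|_{L^\infty}$, whereas you work directly with the characteristic derivative $\tfrac{d}{ds}f(\Psi(s),s)$; these are equivalent formulations of the same idea.
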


\begin{proof}
	Suppose to the contrary that $\|f(\cdot,s_1)\|_{L^{\infty}(\mathbb{R})} > 2m_0$ for some $s_1>s_0$. Then, by the continuity of $f(y,s)$ and \eqref{max_2_1'}, there exists $s_*\in(s_0, s_1)$  such that $\|f(\cdot,s)\|_{L^{\infty}(\mathbb{R})} \geq \|f(\cdot,s_*)\|_{L^{\infty}(\mathbb{R})}  =2m_0$ for all $s\in[s_*,s_1]$. Moreover, thanks to  \eqref{max_2_1} and \eqref{max_2_6}, there exists  $y_* \in \Omega^c$ such that $| f(y_*, s_*) | = \|f(\cdot,s_*)\|_{L^{\infty}(\mathbb{R})}  =2m_0$.
	This implies $\partial_yf(y_*,s_*)=0$.

	On the other hand, it holds that $\|f(\cdot,s_*-h)\|_{L^\infty} \geq |f(y_* - h U(y_*,s_*), s_* - h)| $ for any small $h>0$,  by the definition of $y_*$. 
	Thus, we obtain 
	\begin{equation}\label{AP_R1'} 
		\begin{split}
			\lim_{h \to 0^+} \frac{\|f(\cdot,s_*-h)\|_{L^\infty} - \|f(\cdot,s_*)\|_{L^\infty}}{-h} 
			& \leq (\partial_s + U(y_*,s_*)\partial_y)|f|(y_*,s_*),
		\end{split}
	\end{equation} 
	provided that the limit on the LHS of \eqref{AP_R1'} exists. Note that by Rademacher's theorem, 
	$ \| f(\cdot, s)\|_{L^{\infty}}$, being Lipschitz, is differentiable at almost all $s\in[s_0, s_1]$, and from \eqref{AP_R1'} and the fact that $\partial_y f(y_*, s_*) =0$, we have 
	\begin{equation} \label{L-thm'}
		\begin{split}
			\frac{d}{ds} \| f(\cdot, s)\|_{L^{\infty}} |_{s=s_*} 
			& \leq  \left\{ \begin{array}{l l}
				\partial_s f(y, s)|_{(y,s)=(y_*,s_*)} & \text{if } f(y_*,s_*)>0, \\
				-\partial_s f(y, s)|_{(y,s)=(y_*,s_*)} & \text{if } f(y_*,s_*)<0.
			\end{array}
			\right.  
		\end{split}
	\end{equation} 
	Since $\|f(\cdot,s)\|_{L^{\infty}(\mathbb{R})} \geq \|f(\cdot,s_*)\|_{L^{\infty}(\mathbb{R})}$ for all $s\in[s_*,s_1]$, we have $\frac{d}{ds}\|f(\cdot,s_*)\|_{L^{\infty}}\geq 0.$  
	Combining with \eqref{L-thm'}, 
	\begin{equation}\label{max_f'}
		\begin{split}
			\partial_s f(y, s)|_{(y,s)=(y_*,s_*)} \geq 0 & \quad \text{ if } f(y_*,s_*)>0, \\
			\partial_s f(y, s)|_{(y,s)=(y_*,s_*)} \leq 0 & \quad \text{ if } f(y_*,s_*)<0
		\end{split}
	\end{equation} 
	holds.
	
	Let us check the case $f(y_*,s_*)>0$, first. 
	By \eqref{max_2_2}, we obtain
	\begin{equation*}
		\left|\int_{\mathbb{R}} f(y',s_*)K(y_*,s_*;y')\,dy'\right|\leq \delta\|f(\cdot,s_*)\|_{L^{\infty}(\mathbb{R})}D(y_*,s_*)=\delta f(y_*,s_*)D(y_*,s_*).
	\end{equation*}
	On the other hand, $D(y_*,s_*)f(y_*,s_*)\geq 2m_0\lambda_D $, by the setting $f(y_*,s_*)=\|f(\cdot,s_*)\|_{L^{\infty}}=2m_0$ and \eqref{max_2_3}. 
	Thus, \eqref{max_2_3}, \eqref{max_2_4} imply
	\begin{equation*}
		\begin{split}
			(\partial_sf)(y_*,s_*)&\leq |F(y_*,s_*)|+\left(\left|\int_{\mathbb{R}} f(y',s_*)K(y_*,s_*;y')\,dy'\right|-D(y_*,s_*)f(y_*,s_*)\right)
			\\
			&\leq |F(y_*,s_*)|-(1-\delta)D(y_*,s_*)f(y_*,s_*)
			\\
			&\leq F_0-2(1-\delta)m_0\lambda_D <0.
		\end{split}
	\end{equation*}
	The last inequality holds by $m_0\lambda_D> F_0/(2-2\delta)$. This contradicts to \eqref{max_f'}.
	
	In case $f(y_*,s_*)<0$, similar computations hold. We have
	\begin{equation*}
		\left|\int_{\mathbb{R}} f(y',s_*)K(y_*,s_*;y')\,dy'\right|\leq \delta\|f(\cdot,s_*)\|_{L^{\infty}(\mathbb{R})}D(y_*,s_*)=-\delta f(y_*,s_*)D(y_*,s_*)
	\end{equation*}
	from \eqref{max_2_2}, and $D(y_*,s_*)f(y_*,s_*)\leq -2m_0\lambda_D $ holds by \eqref{max_2_3}. Combining with \eqref{max_2_3} and \eqref{max_2_4}, we obtain
	\begin{equation*}
		\begin{split}
			(\partial_sf)(y_*,s_*)&\geq -|F(y_*,s_*)|-\left(\left|\int_{\mathbb{R}} f(y',s_*)K(y_*,s_*;y')\,dy'\right|+D(y_*,s_*)f(y_*,s_*)\right)
			\\
			&\geq -|F(y_*,s_*)|-(1-\delta)D(y_*,s_*)f(y_*,s_*)
			\\
			&\geq -F_0+2(1-\delta)m_0\lambda_D >0.
		\end{split}
	\end{equation*}
	This contradicts to \eqref{max_f'}. 
\end{proof}

	Next, we present Lemma~\ref{rmk2}, yielding the decaying properties of the solutions to transport type equations  under proper assumptions.  This will be importantly used in several proofs in subsection~\ref{ch3.4}.

\begin{lemma}\label{rmk2}(Decaying properties as $|y|\rightarrow \infty$)
Let  $f$ be a smooth solution to the equation 
\begin{equation*}
	\partial_s f(y,s) + U(y,s)\partial_y f(y,s) +D(y,s)f(y,s) =F(y,s),
\end{equation*}  
where
$U$, $D$ and $F$ are smooth functions satisfying
\begin{subequations}
	\begin{align} 
		& \inf_{\{|y| \geq N ,\, s\in[s_0,\infty)\}}  U(y,s) \frac{y}{|y|}  > 0,\label{rmk2_ass0}
		\\
		& \inf_{\{|y| \geq N,\, s\in[s_0,\infty)\}}D(y,s)\geq \lambda_D,\label{rmk2_ass1}
		\\
		& \|F (\cdot, s) \|_{L^{\infty}(|y| \geq N)}\leq F_0e^{-s\lambda_F}\label{rmk2_ass2}
	\end{align}
\end{subequations}
for some {$ \lambda_D, \lambda_F, N  \geq  0$}.   Then it holds that  
\begin{equation*}
\begin{array}{l l}
\limsup_{| y |\rightarrow \infty}|f(y,s)|\leq \limsup_{|y|\rightarrow \infty}{|f(y,s_0)|}e^{-\lambda_D(s-s_0)}+\frac{F_0}{\lambda_D-\lambda_F}e^{-s\lambda_F} & \quad \text{if } \lambda_D>\lambda_F, \\
\limsup_{| y |\rightarrow \infty}|f( y ,s)|\leq \limsup_{|y|\rightarrow \infty}{|f(y,s_0)|}e^{-\lambda_D(s-s_0)}+\frac{F_0e^{-s_0\lambda_F}}{\lambda_F-\lambda_D}e^{-\lambda_D(s-s_0)} & \quad \text{if } \lambda_F>\lambda_D.
\end{array}
\end{equation*}
\end{lemma}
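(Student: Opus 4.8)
\textbf{Proof proposal for Lemma~\ref{rmk2}.}

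The plan is to reduce the statement to a one-dimensional ODE comparison along characteristics, exploiting the sign condition \eqref{rmk2_ass0} to ensure that characteristics starting far out stay far out. First I would fix $s>s_0$ and let $\psi(y,s';s)$ denote the backward characteristic, i.e.\ the solution of $\partial_{s'}\psi = U(\psi,s')$ with $\psi(y,s;s)=y$, defined for $s'\in[s_0,s]$. The key geometric observation, which I would establish first, is that if $|y|$ is large enough (say $|y|\ge N'$ for some $N'>N$ chosen below), then $|\psi(y,s';s)|\ge N$ for all $s'\in[s_0,s]$, and moreover $|\psi(y,s';s)|\to\infty$ as $|y|\to\infty$ uniformly in $s'$. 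This follows from \eqref{rmk2_ass0}: the vector field $U$ points outward on $\{|y|\ge N\}$, so the region $\{|y|\ge N\}$ is backward-invariant in the sense that a trajectory cannot cross inward as $s'$ decreases from $s$; more precisely, $\frac{d}{ds'}|\psi| = U(\psi,s')\,\mathrm{sgn}(\psi)\ge c_0>0$ on $\{|\psi|\ge N\}$, so $|\psi(y,s';s)|\ge |y| - c_0(s-s')\cdots$ — actually the cleaner statement is $|\psi(y,s';s)|\ge |y|$ is false in backward time, so I would instead argue $|y|=|\psi(y,s;s)|\ge |\psi(y,s';s)| + c_0(s-s') $ cannot be used directly; rather, since $|\psi|$ is increasing in the forward variable on $\{|\psi|\ge N\}$, if $\psi$ ever reached $|\psi|=N$ at some time $s''<s$ it would remain $\ge N$ for all later times up to $s$, which is consistent, so the right statement is: once $|\psi|\le N$ it can only have been $\le N$ for all earlier times, hence $\{|\psi(\cdot,s';s)|\ge N \text{ for all } s'\}$ contains $\{|y|\ge N\}$. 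I would phrase this carefully via continuity and the strict outward sign.

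Next, along such a characteristic, setting $g(s'):=f(\psi(y,s';s),s')$, the PDE becomes the linear ODE $\frac{d}{ds'}g = -D(\psi,s')g + F(\psi,s')$, whence by Duhamel
\begin{equation*}
f(y,s) = f(\psi(y,s_0;s),s_0)\,e^{-\int_{s_0}^s D(\psi,s')\,ds'} + \int_{s_0}^s e^{-\int_{s'}^s D(\psi,s'')\,ds''}\,F(\psi(y,s';s),s')\,ds'.
\end{equation*}
Using \eqref{rmk2_ass1} on $\{|\psi|\ge N\}$ gives $\int_{s'}^s D(\psi,s'')\,ds''\ge \lambda_D(s-s')$, and using \eqref{rmk2_ass2} gives $|F(\psi,s')|\le F_0 e^{-\lambda_F s'}$. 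Therefore
\begin{equation*}
|f(y,s)|\le |f(\psi(y,s_0;s),s_0)|\,e^{-\lambda_D(s-s_0)} + F_0\int_{s_0}^s e^{-\lambda_D(s-s')}e^{-\lambda_F s'}\,ds'.
\end{equation*}
The remaining integral is an elementary computation: when $\lambda_D>\lambda_F$ it is bounded by $\frac{1}{\lambda_D-\lambda_F}e^{-\lambda_F s}$ (after bounding $e^{-\lambda_D(s-s')}\le 1$ is too lossy; instead $\int_{s_0}^s e^{-\lambda_D s+\lambda_D s'-\lambda_F s'}ds' = e^{-\lambda_D s}\frac{e^{(\lambda_D-\lambda_F)s}-e^{(\lambda_D-\lambda_F)s_0}}{\lambda_D-\lambda_F}\le \frac{e^{-\lambda_F s}}{\lambda_D-\lambda_F}$), and when $\lambda_F>\lambda_D$ one factors out $e^{-\lambda_D(s-s_0)}$ to get the bound $\frac{F_0 e^{-\lambda_F s_0}}{\lambda_F-\lambda_D}e^{-\lambda_D(s-s_0)}$.

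Finally I would take $\limsup_{|y|\to\infty}$ on both sides. Here the point is that $|\psi(y,s_0;s)|\to\infty$ as $|y|\to\infty$ (from the first step), so $\limsup_{|y|\to\infty}|f(\psi(y,s_0;s),s_0)|\le \limsup_{|z|\to\infty}|f(z,s_0)|$, which yields exactly the asserted inequalities. The main obstacle — and the only place requiring genuine care rather than routine calculation — is the backward-invariance claim for the far region under \eqref{rmk2_ass0}: one must rule out that a characteristic ending at large $|y|$ dips into $\{|y|<N\}$ at an intermediate time, and must quantify that $|\psi(y,s_0;s)|\to\infty$ with $|y|$ uniformly on $[s_0,s]$; I expect to handle this by a continuity/contradiction argument of the same flavor as the one used for $\psi$ in the proof of Lemma~\ref{Ptilde_lem} (the strict inequality in \eqref{rmk2_ass0} propagated by continuity in $s'$). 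Everything downstream is Grönwall/Duhamel plus an explicit exponential integral.
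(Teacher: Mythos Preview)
Your overall plan --- integrate the transport equation along characteristics, apply Duhamel with the damping bound \eqref{rmk2_ass1} and forcing bound \eqref{rmk2_ass2}, compute the exponential integral, and pass to the $\limsup$ --- is correct and is exactly what the paper does. The paper phrases it with \emph{forward} characteristics $\psi(y,s)$ starting from $(y,s_0)$ rather than your backward characteristics from $(y,s)$; these are dual formulations of the same idea.

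However, your invariance step contains a genuine gap. The claim that $\{\,|\psi(\cdot,s';s)|\ge N \text{ for all } s'\in[s_0,s]\,\}\supset\{|y|\ge N\}$ is false, and the ``hence'' you wrote is a non-sequitur: from ``once $|\psi|\le N$ it stays $\le N$ at all earlier times'' you cannot conclude anything about the backward trajectory of a point with $|y|\ge N$ at the \emph{latest} time $s$. Concretely, take $N=1$, $U\equiv 1$ for $y>0$, and $s-s_0=2$: the backward characteristic from $y=1.5$ ends at $y_0=-0.5$, having crossed the whole region $\{|z|<1\}$. What \emph{is} true --- and what your $N'$ remark is groping toward --- is that for each fixed $s$ there exists $N'(s)<\infty$ such that $|y|\ge N'(s)$ forces the backward characteristic to stay in $\{|z|\ge N\}$; this follows because the forward flow $\psi_s$ is a monotone bijection mapping $\{|y_0|>N\}$ into itself, so its image of the compact set $[-N,N]$ is a bounded interval $[\psi_s(-N),\psi_s(N)]$, and any $y$ outside that interval has $|\psi(y,s_0;s)|>N$, whence forward invariance gives $|\psi(y,s';s)|>N$ for all $s'\in[s_0,s]$. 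With this correction your argument closes. The paper sidesteps the issue by working with forward characteristics throughout, proving $|\psi(y,s)|>|y|$ for $|y|>N$, and then spending a separate paragraph to show $\limsup_{|y|\to\infty}|f(\psi(y,s),s)|=\limsup_{|y|\to\infty}|f(y,s)|$ via the monotone bijectivity of $\psi_s$.
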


\begin{proof}
Let $\psi(y,s)$ be the solution to the equation $\partial_s\psi=U(\psi,s)$ with $\psi(y,s_0)=y$. We claim that
\begin{equation}\label{rmk2_claim1}
\begin{array}{l l}
\psi(y,s) > y & \quad \text{for }  y >N,  \;s>s_0,  \\
\psi(y,s) < y & \quad \text{for }  y<-N, \;s>s_0.
\end{array} 
\end{equation}
We only consider the case $y >N$ since the same argument applies to the case $y < -N$.  Suppose to the contrary that $\psi(\overline{y},\overline{s}) \leq \overline{y}$ for some $\overline{y}>N$ and $\overline{s}>s_0$. Then, there exists  $\overline{s}_\ast>s_0$ satisfying  $\psi(\overline{y},\overline{s}_\ast)= \overline{y}$ and $\psi(\overline{y},s) >  \overline{y}$ for $s\in(s_0,\overline{s}_\ast)$ since $\psi(\overline{y},s_0)=\overline{y}$ and 
\[
\partial_s\psi(\overline{y},s_0)=U(\psi(\overline{y},s_0),s_0) = U(\overline{y},s_0)>0
\]
due to \eqref{rmk2_ass0}. Now, the mean value theorem implies that there is $\overline{s}_{\ast\ast} \in (s_0,\overline{s}_{\ast})$ such that
\[
0 = \partial_s \psi(\overline{y},\overline{s}_{\ast\ast})  = U(\psi(\overline{y},\overline{s}_{\ast\ast}), \overline{s}_{\ast\ast}) >0,
\] 
 which is a contradiction. Here, the strict inequality holds due to \eqref{rmk2_ass0} and $\psi(\overline{y},\overline{s}_{\ast\ast}) > \overline{y} > N$. This proves \eqref{rmk2_claim1}.

	By integrating $f$ along $\psi$, we have
	\begin{equation*}
		f(\psi(y,s),s)=f(y,s_0)e^{-\int^s_{s_0}(D\circ\psi)\,ds'}+\int^s_{s_0}e^{-\int^s_{s'}(D\circ\psi)\,ds''}(F\circ\psi)\,ds'.
	\end{equation*}
	Since \eqref{rmk2_claim1} implies $|\psi(y,s)|>N$ for all $|y|>N$, 
	we have by \eqref{rmk2_ass1} and \eqref{rmk2_ass2} that for $|y|>N$ and $s \geq s_0$,
	\begin{equation*}
	\begin{array}{l l}
	|f(\psi(y,s),s)|\leq |f(y,s_0)|e^{-\lambda_D(s-s_0)}+\frac{F_0}{\lambda_D-\lambda_F}e^{-s\lambda_F} & \quad \text{if }\lambda_F<\lambda_D, \\
	|f(\psi(y,s),s)|\leq |f(y,s_0)|e^{-\lambda_D(s-s_0)}+\frac{F_0e^{-s_0\lambda_F}}{\lambda_F-\lambda_D}e^{-\lambda_D(s-s_0)} & \quad \text{if } \lambda_F>\lambda_D.
	\end{array}
	\end{equation*}
	
To finish the proof, we show that for each $s \geq s_0$, 
\begin{equation}\label{13}
\limsup_{|y|\rightarrow \infty}|f(\psi(y,s),s)| = \limsup_{|y|\rightarrow \infty}|f(y,s)|
\end{equation}
provided that $\limsup_{|y|\rightarrow \infty}|f(y,s)|$ is finite. We let $\psi_s(y)=\psi(y,s)$. For each $s \geq s_0$, the mapping $y \in \mathbb{R}\to \psi_s(y)\in \mathbb{R}$ is (strictly) increasing and onto.  
Using those properties of $\psi_s$, we claim that for any $n \geq 0$,
\begin{equation}\label{12}
\sup_{y\in\{|y| \geq n\}} | f(\psi_s(y),s) | = \sup_{z\in \{z\geq \psi_s(n)\} \cup \{z\leq \psi_s(-n)\} }|f(z,s)|.
\end{equation}
Indeed, since $\psi_s(y)$ is increasing in $y$, we have $|f(\psi_s(y),s) | \leq \sup_{ \{z\geq \psi_s(n)\} \cup \{z\leq \psi_s(-n)\} }|f(z,s)|$ for all $|y| \geq n$. 
On the other hand, for each $z \geq \psi_s(n)$, there is $\overline{y}$ such that $\psi_s(\overline{y})=z$ since $\psi_s$ is   onto, and thus, we have $\overline{y} \geq n$ since $\psi$ is increasing. Similarly, for each $z \leq \psi_s(-n)$, there is $\overline{y}$ such that $\psi_s(\overline{y})=z$ and $\overline{y} \leq -n$. Hence, we have 
\[
|f(z,s) |\leq \sup_{|y| \geq n} | f(\psi_s(y),\cdot) | \quad \text{for all } z\in \{z\geq \psi_s(n)\} \cup \{z\leq \psi_s(-n)\}.
\] 
This proves \eqref{12}.

From \eqref{rmk2_claim1}, it is clear that $a:= \limsup_{|y|\rightarrow \infty}|f(\psi_s(y),s)| \leq  \limsup_{|y|\rightarrow \infty}|f(y,s)| =:b$ holds. To show $a=b$, we suppose that $a <  b$. We choose sufficiently small $\veps>0$ satisfying $a+\veps < b-\veps$ such that for all sufficiently large positive numbers $N_1$ and $N_2$, the following holds:
	\begin{equation}\label{N1N2}
		\begin{split}
			\sup_{|y|\geq n} |f(\psi_s(y),s)| <a+\veps \qquad \text{if}\quad n\geq N_1,
			\\
			\sup_{|y|\geq n} |f(y,s)| >b-\veps \qquad \text{if}\quad n\geq N_2.
		\end{split}
	\end{equation} 
We choose sufficiently large $N_1, N_2$ satisfying $N_2 > \max\{\psi_s(N_1), -\psi_s(-N_1)\}$ so that 
	\begin{equation}\label{11}
		\sup_{|y|\geq N_1}|f(\psi_s(y),s)|= \sup_{ \{z\geq \psi_s(N_1)\} \cup \{z\leq \psi_s(-N_1)\} }|f(z,s)|\geq \sup_{|y|\geq N_2}|f(y,s)|
	\end{equation}
	holds, where the equality is from \eqref{12} and the inequality is due to \eqref{rmk2_claim1}. Combining \eqref{N1N2} and \eqref{11}, we obtain $a+\veps> b-\veps$, which is a contradiction. This proves \eqref{13}, and we finish the proof of Lemma~\ref{rmk2}.
\end{proof}

  \section*{Acknowledgments.}
B.K. was supported by Basic Science Research Program through the National Research Foundation of Korea (NRF) funded by the Ministry of science, ICT and future planning (NRF-2020R1A2C1A01009184).  J.B. was supported by the National Research Foundation of Korea grant funded by the Ministry of Science and ICT (NRF-2022R1C1C2005658).


\begin{thebibliography}{10}

\bibitem{BCK} Bae, J., Choi, J., Kwon, B.:  Formation of singularities in plasma ion dynamics, Nonlinearity 37 045011 (2024)
%
\bibitem{BK2} Bae, J., Kwon, B.: Small amplitude limit of solitary waves for the Euler--Poisson system. J. Differential Equations \textbf{266}, 3450-3478 (2019) 


%
\bibitem{BK} Bae, J., Kwon, B.: Linear stability of solitary waves for the isothermal Euler--Poisson system. Arch Rational Mech Anal 243, 257-327 (2022). 




%
%
%
\bibitem{BSV} Buckmaster, T., Shkoller, S., Vicol, V.: Formation of Shocks for 2D Isentropic Compressible Euler. Comm. Pure Appl. Math., 75: 2069-2120  (2022)
 
\bibitem{BSV2}  Buckmaster, T., Shkoller, S.,  Vicol, V.: Formation of Point Shocks for 3D Compressible Euler. Comm. Pure Appl. Math., 76: 2073-2191  (2023)



%
%
%
%

\bibitem{CSW} Cassel, K., Smith, F., Walker, J.: The onset of instability in unsteady boundary-layer separation. Journal of Fluid Mechanics. 315:223-256 (1996)


\bibitem{Ch} Chen, F.F.: Introduction to plasma physics and controlled fusion. 2nd edition, Springer (1984)


\bibitem{CGIM}   Collot, C., Ghoul, T.-E., Ibrahim, S., Masmoudi, N.: On singularity formation for the two dimensional unsteady Prandtl’s system. Preprint, (2018) arXiv:1808.05967  


\bibitem{CGM}   Collot, C., Ghoul, T.-E., Masmoudi, N.: Singularity formation for Burgers equation with transverse viscosity. Preprint, (2018) arXiv:1803.07826 [math.AP]

\bibitem{D} Dafermos, C. M.: Hyperbolic conservation laws in continuum physics, vol. 325 of Grundlehren der Mathematischen Wissenschaften [Fundamental Principles of Mathematical Sciences]. Springer, Berlin, (2000)

\bibitem{EF} Eggers J, Fontelos MA.: Singularities: Formation, Structure, and Propagation. Cambridge: Cambridge University Press  (2015) 


 

\bibitem{GHR} G\'erard-Varet, D., Han-Kwan, D., Rousset, F.: Quasineutral limit of the Euler-Poisson system for ions in
a domain with boundaries II, J. Ec. polytech. Math. 1, 343-386 (2014)
%
\bibitem{GGPS} Grenier, E., Guo, Y., Pausader, B. and Suzuki, M.:  
Derivation of the ion equation,
Quart. Appl. Math. 78, 305-332 (2020)


\bibitem{GP} Guo, Y., Pausader, B.: Global Smooth Ion Dynamics in the Euler-Poisson System, Commun. Math. Phys. 303: 89 (2011) 
%
%
%
%
%
%
%
%
%
%
%

\bibitem{J} John, F.: Formation of singularities in one-dimensional nonlinear wave propagation, Comm. Pure Appl. Math.27, 377-405 (1974)


 \bibitem{LLS}  Lannes, D., Linares, F., Saut, J.C.: The Cauchy problem for the Euler-Poisson system and derivation of the Zakharov-Kuznetsov equation, in: M. Cicognani, F. Colombini, D. Del Santo (Eds.), Studies in Phase Space Analysis with
Applications to PDEs, in: Progr. Nonlinear Differential Equations Appl., vol. 84, Birkhäuser, pp. 183-215  (2013)


\bibitem{L} Lax, P. D.: Development of singularities of solutions of nonlinear hyperbolic partial differentialequations, J. Mathematical Phys.5, 611-613  (1964)








\bibitem{TL}  Liu, T. P.: Development of singularities in the nonlinear waves for quasilinear hyperbolic partial
differential equations. J. Differential Equations 33, no. 1, 92–111 (1979)


\bibitem{NOS} Nishibata, S., Ohnawa, M. and Suzuki, M.: Asymptotic stability of boundary layers to the Euler-Poisson equations arising in plasma physics, SIAM J. Math. Anal. 44, 761-790 (2012)

\bibitem{OP} Oh, S. J., Pasqualotto, F.: Gradient blow-up for dispersive and dissipative perturbations of the Burgers equation. Preprint, (2021)  arXiv:2107.07172

%
%
%
%
%
\bibitem{S}  Suzuki, M.: Asymptotic stability of stationary solutions to the Euler–Poisson equations arising in plasma physics. Kinet Relat Models 4:569–88. (2011)








\bibitem{Y} Yang, R.: Shock Formation of the Burgers--Hilbert Equation. SIAM J. Math. Anal. 53,  5756--5802 (2021) 





\end{thebibliography}
 \end{document}